\documentclass[10pt]{article}
%%%%%%%%%%%%%%%%%%%%%%%%%%%%%%%%%%%%%%%%%%%%%%%%%%%%%%%%%%%%%%%%%%%%%%%%%%%%%%%%%%%%%%%%%%%%%%%%%%%%%%%%%%%%%%%%%%%%%%%%%%%%%%%%%%%%%%%%%%%%%%%%%%%%%%%%%%%%%%%%%%%%%%%%%%%%%%%%%%%%%%%%%%%%%%%%%%%%%%%%%%%%%%%%%%%%%%%%%%%%%%%%%%%%%%%%%%%%%%%%%%%%%%%%%%%%
\usepackage{hyperref}
\usepackage{amsmath, graphicx, latexsym, amssymb, amsthm, amsfonts}
\usepackage[mathscr]{eucal}
\usepackage{graphics}
\usepackage{color}
\usepackage{epsfig}
\usepackage{enumerate}
\usepackage{mathtools}
\usepackage{tabularx,makecell}

\setcounter{MaxMatrixCols}{10}
%TCIDATA{OutputFilter=Latex.dll}
%TCIDATA{Version=5.50.0.2953}
%TCIDATA{<META NAME="SaveForMode" CONTENT="1">}
%TCIDATA{BibliographyScheme=BibTeX}
%TCIDATA{LastRevised=Wednesday, February 20, 2019 08:10:26}
%TCIDATA{<META NAME="GraphicsSave" CONTENT="32">}

 \topmargin      0.0truein
  \oddsidemargin  0.0truein
  \evensidemargin 0.0truein
  \textheight     8.5truein
  \textwidth      6.2truein
  \headheight     0.0truein
  \headsep        0.3truein

\newtheorem{lemma}{Lemma}[section]
\newtheorem{theorem}[lemma]{Theorem}
\newtheorem{corollary}[lemma]{Corollary}

\newtheorem{proposition}[lemma]{Proposition}

\newtheorem{assumption}[lemma]{Assumption}

\newtheorem{remark}[lemma]{Remark}

\newcommand{\eps}{{\varepsilon}}
\newcommand{\La}{{\Lambda}}
\newcommand{\Prob}{P}

\newcommand{\Id}{\mbox{Id}}

\newcommand{\MM}{{\mathbb{M}}}

\newcommand{\cla}{{\mathcal{A}}}

\newcommand{\clc}{{\mathcal{C}}}
\newcommand{\clt}{{\mathcal{T}}}
\newcommand{\clr}{{\mathcal{R}}}
\newcommand{\clu}{{\mathcal{U}}}
\newcommand{\clg}{{\mathcal{G}}}
\newcommand{\clv}{{\mathcal{V}}}
\newcommand{\clx}{{\mathcal{X}}}
\newcommand{\clh}{{\mathcal{H}}}
\newcommand{\cll}{{\mathcal{L}}}
\newcommand{\cld}{{\mathcal{D}}}
\newcommand{\clf}{{\mathcal{F}}}
\newcommand{\cli}{{\mathcal{I}}}

\newcommand{\one}{{\boldsymbol{1}}}

\newcommand{\Om}{\mathnormal{\Omega}}
\newcommand{\veps}{\mathnormal{\varepsilon}}

\newcommand{\E}{{E}}

\newcommand{\calB}{{\cal B}}

\newcommand{\calP}{{\cal P}}

\newcommand{\be}{\begin{equation}}
\newcommand{\ee}{\end{equation}}

\newcommand{\RR}{{\mathbb R}}
\newcommand{\clp}{{\cal P}}
\newcommand{\clm}{{\cal M}}
\newcommand{\clb}{{\cal B}}
\newcommand{\NN}{{\mathbb N}}
\newcommand{\dbl}{d_{\tiny{\mbox{bl}}}}
\newcommand{\pconv}{\xrightarrow{P}}
\newcommand{\BM}{\mbox{BM}}
\newcommand{\tr}{\mbox{tr}}

\newcommand{\vs}{\varsigma}
\newcommand{\uu}{\varrho}
\newcommand{\yy}{\mathbf{y}}
\newcommand{\PZ}{{}}
\mathtoolsset{showonlyrefs}

\numberwithin{equation}{section}
\begin{document}

\title{Large Deviations for Small Noise Diffusions Over Long Time}
\author{Amarjit Budhiraja, Pavlos Zoubouloglou}
\maketitle

\begin{abstract}
We study two problems. First, we consider the large deviation behavior of empirical measures of certain diffusion processes as, simultaneously, the time horizon becomes large and noise becomes vanishingly small. The law of large numbers (LLN) of the empirical measure in this asymptotic regime is given by the unique equilibrium of the noiseless dynamics. Due to degeneracy of the noise in the limit, the methods of Donsker and Varadhan (1976) are not directly applicable and new ideas are needed. Second, we study a system of slow-fast diffusions where both the slow and the fast components have vanishing noise on their natural time scales. This time the LLN is governed by a degenerate averaging principle in which local equilibria of the noiseless system obtained from the fast dynamics describe the asymptotic evolution of the slow component. We establish a large deviation principle that describes probabilities of divergence from this behavior. On the one hand our methods require stronger assumptions than the nondegenerate settings, while on the other hand the  rate functions take  simple and  explicit forms that have striking differences from their nondegenerate counterparts.

\noindent\newline

\noindent \textbf{AMS 2010 subject classifications:} 60F10, 60J60,  60J25, 60H10. \newline

\noindent \textbf{Keywords:} Large deviations, empirical measure, stochastic approximations, multiscale diffusions, slow-fast dynamics, averaging principle, degenerate noise, Laplace asymptotics, stochastic control.
\end{abstract}

\section{Introduction}
\label{sec1}
In this work we study the large deviation behavior of certain stochastic dynamical systems with small noise over long time horizons. In order to motivate the problem of interest we begin with the following classical setting of Donsker-Varadhan large deviation theory \cite{donvar1, donvar2,donvar3} for ergodic diffusions.
Let $Z$ be a $\RR^d$-valued continuous stochastic process given as the solution of the following stochastic differential equation (SDE)
\begin{equation}
	dZ(t) = - \nabla \phi(Z(t)) dt +  dB(t), \; Z(0)= z_0
\end{equation}
where $B$ is a $d$-dimensional Brownian motion given on some probability space $(\Om, \clf, P)$, $z_0 \in \RR^d$,
 and $\phi:\RR^d \to \RR$ is a twice continuously differentiable function.
Suppose in addition that $\phi$ is bounded from below, has a bounded Hessian, and  $\|\nabla\phi(x)\| \to \infty$ as $\|x\| \to \infty$.
% is strongly convex with $\nabla \phi(0)=0$.
%  and that for some $c_1 \in (0,\infty)$
% \begin{equation}
% 	\label{eq:condonphi}
% 		\min\{|\phi(y)|,  \|\nabla\phi(y)\|^2\} \ge c_1(\|y\|^2 - 1).
% \end{equation}
Consider the empirical measure process associated with $Z$ defined as
\begin{equation}\label{eq:mut}
	\mu_t(A) \doteq \frac{1}{t} \int_0^t \delta_{Z(s)}(A) ds, \; t >0, A \in \clb(\RR^d),
\end{equation}
where $\delta_x$ denotes the Dirac probability measure at the point $x$ and $\clb(S)$ for a topological space $S$ denotes the associated Borel $\sigma$-field. From \cite{donvar3} it follows that under the above conditions on $\phi$, the collection $\{\mu_t\}$ of $\clp(\RR^d)$ valued random variables, where $\clp(\RR^d)$ is the space of probability measures on $\RR^d$ equipped with the topology of weak convergence, satisfies a large deviation principle (LDP) with rate function $I^Z: \clp(\RR^d) \to [0,\infty]$ and speed $t$, namely for all continuous and bounded $F: \clp(\RR^d) \to \RR$
\begin{equation}
	-t^{-1} \log E\left[e^{-tF(\mu_t)}\right] \to \inf_{\mu \in \clp(\RR^d)}[F(\mu) + I^Z(\mu)],
\end{equation}
where the rate function $I^Z$ is given as
\begin{equation}\label{eq:1205}
	I^Z(\mu) \doteq \sup_{g \in \cld^+}  \left(-\int_{\RR^d} \frac{(\cll g)(x)}{g(x)} \mu(dx)\right), \; \mu \in \clp(\RR^d),
\end{equation}
where $\cll$ is the infinitesimal generator of the Markov process $Z$, whose evaluation for $g \in C_b^2(\RR^d)$ (the space of twice continuously differentiable bounded functions with bounded derivatives) is given as
$$(\cll g) \doteq - \nabla \phi \cdot \nabla g + \frac{1}{2}\Delta g,$$
where $\Delta$ is the $d$-dimensional Laplacian, and $\cld^+$ is the space of  functions $g$ in the domain of $\cll$  that are uniformly bounded below by a positive constant. The above large deviation principle gives asymptotics of probabilities of deviations of the empirical measure process $\mu_t$ from its law of large numbers (LLN) limit, which is the unique stationary distribution of the Markov process $Z$, for large values of $t$.
This basic result has been extended in subsequent works in many different directions (see e.g.\cite{gart, chi, dea1, dinney, jai, neynum2,  flesheson, fenkur}).

Our first interest in the current work is in the study of analogous long-time behavior for small noise diffusions.
Specifically, we consider the following setting.
Let $Z^{\eps}$ be a $\RR^d$-valued continuous stochastic process given as the solution of the following SDE
\begin{equation}\label{eq:1140}
	dZ^{\eps}(t) = - \nabla \phi(Z^{\eps}(t)) dt + s(\eps) dB(t), \; Z^{\eps}(0)= z_0
\end{equation}
where $B$ and $\phi$ are as before and
$s:(0,\infty)\to (0, \infty)$ satisfies $s(\eps)\to 0$ as $\eps \to 0$.
Due to the degeneracy of the noise in the limit as $\eps \to 0$, we will need stronger conditions than those needed for a LDP for $\{\mu_t\}$ defined by \eqref{eq:mut}.
Specifically, we assume that, in addition to $\phi$ being twice continuously differentiable with a bounded Hessian, $\phi$ is strongly convex and $\nabla \phi(0)=0$.
Under these assumptions on $\phi$, $0$ is the unique equilibrium of the ordinary differential equation (ODE):
\begin{equation}\label{eq:nlessod}
	\dot{z} = \nabla \phi(z).
\end{equation}
%Let for $\eps>0$, $T_{\eps} \in (0,\infty)$  be such that $T_{\eps}\to \infty$ as $\eps \to 0$.
Also, it follows (see e.g. proof of Lemma \ref{theorem:auxillary}) that, as $\eps \to 0$,
\begin{equation}\label{eq:muepsdef}
	\mu^{\eps} \doteq \eps \int_0^{1/\eps} \delta_{Z^{\eps}(t)} dt \pconv \delta_0
\end{equation}
in $\clp(\RR^d)$.  Long time behavior of SDE with small noise  as in \eqref{eq:1140} is of interest, for example, in study of stochastic approximation schemes for approximating zeroes of a nonlinear function (cf. \cite{bor09, kus03, BMP12}).

One of the crucial ingredients in the proofs of \cite{donvar3} and other works on related themes is the nondegeneracy of the noise in the dynamics. This property is key in the proof of the lower bound where one invokes an ergodic theorem in order to suitably approximate near optimal paths in the variational problem describing the large deviation rate function. This feature of nondegeneracy is the main point of departure in the current work, as instead of empirical measures converging to the stationary distribution of an ergodic nondegenerate diffusion, in the current setting, these measures converge to a point mass given by the fixed point of the noiseless ODE in \eqref{eq:nlessod}.  The usual methods of studying empirical measure large deviations for Markov processes exploit nondegeneracy in the dynamics by considering relative entropies of near optimal measures with respect to the stationary distribution of the given diffusion. However these methods  are not applicable  here as typical measures of interest in our setting will be mutually singular, and therefore one needs new tools. We also remark that, if on the right side of \eqref{eq:1205} one naively replaces the second order operator with the limiting first order operator associated with the diffusion in \eqref{eq:1140}, namely $\cll^0g \doteq -\nabla \phi \cdot \nabla g$, the maximization in \eqref{eq:1205} gives $+\infty$ for any $\mu$ with a compact support and so  even a candidate for the rate function for $\mu^{\eps}$ is not immediate from \eqref{eq:1205} in this degenerate setting.

% The approach we take is based on the Bou\'{e}-Dupuis variational formula for log transforms of exponential moments of Brownian motion functionals\cite{boudup} and the weak convergence methods (cf. \cite{buddupbook}).
Our first result (Theorem \ref{theorem:ldp}) shows that  under the assumptions on $\phi$ made above, $\mu^{\eps}$ satisfies a large deviation principle  with speed $(\eps s^2(\eps))^{-1}$, and the associated rate function $I: \clp(\RR^d) \to [0,\infty]$ takes a particularly simple form given as
\begin{equation}
	\label{eq:ratefnsimpmod}
	I(\mu) = \frac{1}{2}\int_{\RR^d} \|\nabla \phi(y)\|^2 \mu(dy), \; \mu\in \clp(\RR^d).
\end{equation}
We note that unlike the rate function $I^Z$ associated with the ergodic diffusion $Z$, given in \eqref{eq:1205}, which is described through a variational formula,  the rate function in this small noise setting takes a surprisingly explicit form. 
The precise result we establish allows for a somewhat more general drift function and a state-dependent diffusion coefficient. The  conditions on the coefficients and the form of the rate function in this more general setting are given in Section \ref{sec:mainresult}.  The above result gives a LDP when simultaneously time becomes large and the noise intensity becomes small. A similar theme has recently been considered in \cite{dupwu}, where,  motivated by the problem of design of Monte-Carlo schemes, certain large deviation estimates have been established for suitable integrals  in the specific case where $s(\eps) = (\log(1/\eps))^{-1/2}$. In this case the relevant  techniques are those based on the Freidlin-Wentzell theory of quasipotentials of small noise diffusions \cite{frewen}. Note that in our result we do not make any assumptions on how $s(\eps)$ approches $0$. Furthermore, the paper \cite{dupwu} does not give a LDP for the empirical measure $\mu^{\eps}$.

The second focus of this work is the study of asymptotic behavior of fast-slow diffusions, when both slow and fast components have small noise in their natural time scales. The precise model of interest is described by a \PZ{$m+ d$} dimensional diffusion $(X^{\eps}, Y^{\eps})$ given as follows.
\begin{equation}
	\begin{aligned}
dX^\eps (t) &= b(X^\eps(t), Y^\eps(t))dt + s(\eps) \sqrt{\eps} \alpha(X^\eps(t))  dW(t), \quad X^\eps(0) = x_0, \, 0\le t \le T\\
 dY^\eps (t)& = -\frac{1}{\eps} \nabla_y U(X^\eps (t), Y^\eps (t))dt + \frac{s(\eps)}{\sqrt{\eps}}dB(t), \quad   Y^\eps(0) = y_0, \, 0\le t  \le T,
\end{aligned}
\label{eq:systemor}
\end{equation}
 Here, $T \in (0,\infty)$ is some fixed time horizon, $b: \mathbb{R}^{m+d} \to \mathbb{R}^m$, $\alpha:\mathbb{R}^{m} \rightarrow \mathbb{R}^{m \times k}$, $U: \mathbb{R}^{m+d} \rightarrow \mathbb{R} $
 are suitable coefficient functions, 
 $s(\eps)$ is as before,  and $W, B$ are $k$ and $d$ dimensional mutually independent Brownian motions respectively. Note that the natural time scale for $Y^{\eps}$ is $O(\eps)$ while that of $X^{\eps}$ is $O(1)$.  On these natural time scales the noise variances of the two processes are $O(s^2(\eps))$
 and $O(\eps s^2(\eps))$ respectively, both of which converge to $0$ as $\eps \to 0$.
 
 In contrast to the setting considered here, when $s(\eps) =1$, the above multiscale system falls within the framework of (nondegenerate) stochastic averaging principles, for which the associated large deviations theory has been well developed (cf. \cite{frewen, ver3, puhal1, dupspi, buddupgan2, puhal1}). Under appropriate conditions on the coefficient functions, these large deviations results give probabilities of deviations of the trajectory $X^{\eps}$, regarded as a random variable in the space $C([0,T]:\RR^m)$ (the space of continuous functions from $[0,T]$ to $\RR^m$ equipped with the usual uniform convergence topology), from its law of large number limit $X^0$ given as the solution of the following ODE:
 \begin{equation}
 	\dot{X}^0 = \bar b(X^0), \mbox{ where } \bar b(x) = \int_{\RR^d} b(x,y)\mu_x(dy), \; x \in \RR^m
 \end{equation}
 and for $x \in \RR^m$, $\mu_x$ is the unique stationary distribution of the diffusion
 $$dY_x (t) = - \nabla_y U(x, Y_x(t))dt + dB(t).$$
 The main insight that emerges from this LLN behavior is that the slow process, over the time scales at which the fast process equilibrates  towards its stationary distribution, stays approximately unchanged and its limit is governed by a parametrized family of stationary distributions associated with the fast diffusion where each stationary distribution corresponds to the local equilibrium of the fast process for a given value of the state of the slow process.
 
 In the setting of the current work ($s(\eps)\to 0$) the fast process on its natural time scale is driven by a small noise and thus in the scaling regime we consider, the asymptotics of the slow process  (under suitable conditions) are governed by the family of equilibria for the parametrized family of ODE:
 \begin{equation}
 	\dot{Y}_x = - \nabla_y U(x,Y_x).
 \end{equation}
 Under our assumptions, for each $x \in \RR^m$ the above ODE will have a unique equilibrium point $y(x) \in \RR^d$ and the LLN of $X^{\eps}$ defined in \eqref{eq:systemor} is given by 
 \begin{equation}
 	\dot{X}^0 = \bar b(X^0), \mbox{ where } \bar b(x) = b(x, y(x)), \; x \in \RR^m.
 \end{equation}
 The goal of this work is to study the behavior of  probabilities of large deviations of the  process $X^{\eps}$ from its LLN limit given by $X^0$. The key challenge in the degenerate setting considered here is that, unlike the case $s(\eps)=1$ where the local equilibria are mutually absolutely continuous, when $s(\eps)\to 0$ as $\eps \to 0$, the family of equilibria  are mutually singular (except the trivial case when they are the same). Once again the proof techniques used in the nondegenerate setting are not applicable here and different ideas are needed.
 In Theorem \ref{theorem:ldp-2} we establish a large deviation principle for $X^{\eps}$ under appropriate conditions on the coefficient functions. Our results in fact give a stronger result which provides a LDP for the pair 
 $(X^{\eps}, \Lambda^{\eps})$ in $C([0,T]: \RR^m) \times \clm_1$, where
 $\clm_1$ is the  space of finite measures $\nu$ on $\mathbb{R}^d \times [0,T]$ such that $\nu(\mathbb{R}^d \times [0,t]) =
  t$ for all $t \in [0,T]$, equipped with the weak convergence topology, and
 $\Lambda^{\eps}$ is a $\clm_1$ valued random variable defined as
 \begin{equation}\label{eq:557}
	 \Lambda^{\eps}(A \times [0,t]) \doteq \int_{[0,t]} 1_{A}(Y^{\eps}(s)) ds, \; t \in [0,T], \; A \in \clb(\RR^d).\end{equation}
 The precise rate function governing the LDP can be found in Section \ref{sec:m2} (see \eqref{eq:rate-function-2}) but we note here that in the special case where $m=k$ and $\alpha$ is the identity matrix, the rate function takes a simple explicit form as
 \begin{equation}
 	I(\xi, \mu) = \frac{1}{2} \int_{ \RR^d \times [0,T]} \|\nabla_y U(\xi(s), y)\|^2 \mu(dy\, ds) +
	\frac{1}{2} \int_{[0,T]} \left\|\dot{\xi}(t)-\int_{\RR^d} b(\xi(t), y)\mu(t,\, dy)\right\|^2 dt,
 \end{equation}
 for $(\xi, \mu) \in C([0,T]: \RR^m) \times \clm_1$, where $\mu(dy\, ds) = \mu(s, dy)\, ds$.
 Roughly, the second term in the rate function arises from the large deviations of the Brownian motion $W$ whereas the first term captures the deviations of the fast process from the collection of its local equilibria. More precisely, the first term can be interpreted as the instantaneous cost associated with the deviations of a set of points described by the measure $\mu(s,dy)$, for each time instant $s$, from its equilibrium point $y(\xi(s))$. Once again the form of the rate function has striking differences from that in the nondegenerate setting (cf. \cite{frewen, ver3, dupspi}).
 %as the instantaneous cost associated with deviating from the equilibrium point $y(\xi(s))$ to a set of points described by the measure $\mu(s,\, dy)$ for each time instant $s$. 
 
 We remark that the proof of a LDP for $\mu^{\eps}$ defined in \eqref{eq:muepsdef}  is a simpler analogue of the proof of Theorem  \ref{theorem:ldp-2}  and in fact can be deduced from it. However we present these results separately for two reasons. First, the basic idea of the proof (particularly of the LDP lower bound) is significantly  simpler  and clearer to see in the setting of Theorem \ref{theorem:ldp}  and sets the general framework  for the more involved setting in Theorem \ref{theorem:ldp-2}. Second, in Theorem \ref{theorem:ldp} we treat a more general setting than the one discussed in the Introduction because of which this result cannot be immediately deduced from Theorem \ref{theorem:ldp-2}.
 
 We now make comments on proof ideas.
 
 \subsection{Proof Strategy.}
 The starting point for the proofs of both Theorem \ref{theorem:ldp}
 and \ref{theorem:ldp-2} is a variational formula for moments of nonnegative functionals of finite dimensional Brownian motions due to Bou\'{e} and Dupuis\cite{boudup} (see Theorem \ref{Variational-1}). Using this formula the basic problem of large deviations reduces to establishing convergence of costs associated with certain stochastic control problems to those associated with suitable deterministic optimization problems. This convergence is shown by establishing a complementary set of asymptotic inequalities between the costs, one giving the large deviation upper bound (see \eqref{ineq-single-upper} and \eqref{ineq-two-upper}) while the other giving the large deviation lower bound 
 (see \eqref{ineq-single-lower} and \eqref{ineq-two-lower}). Proof of the upper bound proceeds by weak convergence arguments that also reveal the precise form of the large deviations rate function. This form emerges from a key orthogonality property (see \eqref{eq:eq321} for Theorem \ref{theorem:ldp} and \eqref{eq:eq321b} for Theorem \ref{theorem:ldp-2}) that is behind the inequalities in Lemma \ref{lem:ineqincos} and Lemma \ref{system-upper-inequality-representation} and which in turn give the large deviation upper bound. Proofs of the lower bounds are somewhat long and involved and require several approximating constructions. We only comment on the arguments for Theorem \ref{theorem:ldp-2} as those used for Theorem \ref{theorem:ldp} are simpler analogues. The basic approach in the proof of the lower bound is the construction of simple form near optimal paths $\xi^*$ and occupation measures $\nu^*$ for the deterministic optimization problem on the right side of \eqref{ineq-two-lower} (see Lemma \ref{lem:approxdisc}). This is then used to construct suitable controls and controlled processes for the prelimit stochastic system which appropriately converge to the chosen near optimum. In doing so one needs to ensure that the corresponding prelimit occupation measure $\bar \Lambda^{\eps}$ in \eqref{eq:laeps} charges the asymptotically correct periods of time in the correct regions of the state space of the fast process that are dictated by the near optimum occupation measure $\nu^*$. One also needs to ensure that the cost incurred in doing so is suitably close to the cost associated with the near optimum $(\xi^*, \nu^*)$. In achieving these dual goals one needs to design suitable controls that appropriately modify the dynamics of the fast process so that the local equilibria of the process are sufficiently close to $\nu^*(s, dy)$ at all time instants $s$. This construction, which is given in Section \ref{sec:conscont}, is at the heart of the lower bound proof. The idea is for the control to move the state process from one point in the support of $\nu^*(s,dy)$ to the next in a very small amount of time with negligible cost and then keep the process near this latter point for the correct amount of time as dictated by $\nu^*(s, dy)$ while incurring the optimum amount of cost. This basic idea takes a somewhat simpler form for the proof of Theorem \ref{theorem:ldp} and we refer the reader to Section \ref{subsec:single-lower} for a more detailed outline of the strategy for this setting.
 
The Rest of the paper is organized as follows. We close this section by summarizing the basic notation and terminology used. Sections \ref{sec:pfthmone} and \ref{sec:pfthmtwo} contain the proofs of Theorems \ref{theorem:ldp} and \ref{theorem:ldp-2} respectively. Organizations of these proofs are summarized at the beginning of the corresponding sections.

\subsection{Notation and Terminology.}

The following notation will be used.  
For a Polish space $\mathcal{X}$, we will denote the space of continuous, real-valued functions on $\mathcal{X}$ by $\mathcal{C}(\mathcal{X})$ and by $\mathcal{C}_c(\mathcal{X})$ (resp. $\mathcal{C}_b(\clx)$) the subset of
 $\mathcal{C}(\mathcal{X})$ consisting of functions with compact support (resp. that are bounded). We say  a function $f:\RR^d \to \RR$ is $\mathcal{C}^k, k \in \NN$
  if $f$ is continuously differentiable  $k$-times. Such a function is said to be in $\clc_b^k$ if the function and all its derivatives up to the $k$-th order are bounded.
  We denote by $\mathcal{C}^\infty(\mathbb{R}^d)$ the space of infinitely differentiable functions from $\mathbb{R}^d$ to $\mathbb{R}$.
   $\mathcal{C}([0,T]:\mathbb{R}^d)$ will denote the space of continuous functions from $[0,T]$ to $\mathbb{R}^d$ which will be equipped with the usual uniform topology induced by the sup-norm.  We denote by $L^2([0,T]:\mathbb{R}^d)$  the space of square integrable functions from $[0,T]$ to $\mathbb{R}^d$. For $v \in L^2([0,T]:\mathbb{R}^d)$, we write its $L^2$-norm
   $(\int_{[0,T]} \|v(s)\|^2 ds)^{1/2}$ as $\|v\|_2$.
   For $m, d \in \NN$ and a $\clc^2$
%some $x \in \mathbb{R}^m$, $y \in \mathbb{R}^d$ and a function 
$f: \RR^{m+d} \to \RR$,  $\mathcal{H}f$ will denote the $(m+d)\times(m+d)$-dimensional Hessian matrix of $f$ with regard to all variables, and for $(x,y) \in \RR^m\times \RR^d$, $\mathcal{H}_{x}f(x,y)$ will denote the $m \times m$ Hessian matrix of $f$ with regard to $x \in \RR^m$, and $\mathcal{H}_{y} f$ is defined analogously. Similarly $\nabla f$ denotes the $(m+d)$-dimensional vector that is the gradient of $f$, $\nabla_{x} f$ the $m-$dimensional vector that is the gradient of $f$ with regard to the variables in $x$, and $\nabla_y f$ is defined similarly. For a matrix $a$, we denote its transpose by $a^T$ and its trace (when meaningful) by $\tr(a)$. $\Id$ will denote the  identity matrix with dimension clear from the context.
We denote by $\clb(\clx)$ the Borel $\sigma$-field on $\clx$.  $\mathcal{P(\mathcal{X})}$ will denote the space of probability measures on ($\mathcal{X}, \clb(\clx)$), equipped with the topology of weak convergence. This topology can be metrized using the bounded-Lipschitz distance defined as: for $\mu, \nu \in \clp(\clx)$
\begin{equation} \label{eq:510n} \dbl (\mu,\nu) \doteq \sup_{f \in BL_1(\clx)} | \int f d\mu - \int f d\nu | ,\end{equation}
where $BL_1(\clx)$ is the space of all  Lipschitz functions from $\clx$ to $\RR$ that are bounded by $1$ and have Lipschitz constant bounded by $1$. For $x \in \clx$, $\delta_x$ will denote the Dirac probability measure concentrated at the point $x$. For $\clx$ valued random variables $X_n, X$, we denote the convergence in distribution (resp. in probability) of $X_n$ to $X$ as $X_n \Rightarrow X$ (resp.
$X_n \xrightarrow{P} X$).
%For some $\gamma \in \mathcal{P(\mathcal{X})}$, its $i-$th marginal will be denoted by $[\gamma]_i$. 
% A collection $\Phi$ of $\mathcal{X}-$valued probability measures is said to be tight if for all $\eps > 0$ there exists some compact set $K$ such that $\inf_{\gamma \in \Phi} \gamma(K) \geq 1- \eps$.
% Also, a collection $Z^\eps$ of $\mathcal{X}-$valued Random Variables is called tight if the collection of push-forward measures is tight.
$\BM(\clx)$ will  denote the set of bounded and measurable real valued functions on $\mathcal{X}$.
 For a bounded $\mathbb{R}^d$ valued function $f$ on $\mathcal{X}$, we denote $\|f\|_\infty \doteq  \sup_{x \in \mathcal{X}} \|f(x)\|$.
 A function $I: \clx \to [0, \infty]$ is called a $\textit{rate function}$ (on $\mathcal{X}$) if it  has compact level sets, i.e. for each $M \in (0, \infty)$ the level set $\{x \in \mathcal{X}: I(x) \leq M\}$ is a compact subset in $\mathcal{X}$. As a convention, infimum over an empty set is taken to be $\infty$. We will consider collections indexed by a positive parameter $\eps$, and by convention, $\eps$ will always take values in $(0,1)$. 

A collection of $\mathcal{X}-$valued stochastic processes $\{X^\eps\}$  is said to satisfy the $\textit{Laplace Principle}$ on $\mathcal{X}$ with rate function $I$ and speed $\alpha(\eps)$, where
$ \alpha(\eps) \rightarrow \infty$ as $\eps\rightarrow 0$ if for every $F \in \mathcal{C}_b(\mathcal{X})$
\begin{equation}
    \lim_{\eps\rightarrow 0} - \frac{1}{\alpha(\eps)} \log \E e^{-\alpha(\eps) F(X^\eps)} = \inf_{x \in \mathcal{X}} \bigg( F(x) + I(x)  \bigg).
\end{equation}
We say the Laplace upper (resp. lower) bound holds if the left side is bounded below (resp. above) by the right side.  We recall that the collection $\{X^\eps\}$  satisfies the $\textit{Large Deviation Principle}$ on $\mathcal{X}$ with rate function $I$ and speed $\alpha(\eps)$, if and only if it satisfies the Laplace principle.
% The validity of this equality amounts to the simultaneous validity of the $\textit{Laplace upper bound}$, i.e. the inequality
%
% \begin{equation}
% \nonumber
%     \liminf_{\eps \rightarrow 0} \left[ \frac{1}{\alpha(\eps)} \log \E e^{-\alpha(\eps) F(X^\eps)} \right] \geq \inf_{x \in \mathcal{X}} \bigg( F(x) + I(x)  \bigg),
% \end{equation}
% and of  the $\textit{Laplace lower bound}$, i.e. the inequality
% \begin{equation}
% \nonumber
%     \limsup_{\eps \rightarrow 0} \left[ \frac{1}{\alpha(\eps)} \log \E e^{-\alpha(\eps) F(X^\eps)} \right] \leq \inf_{x \in \mathcal{X}} \bigg( F(x) + I(x)  \bigg).
% \end{equation}

\section{Main Results}
\label{sec:mainresult}
In this section we present our two main results. The first result  concerns the LDP for the empirical measure of certain small noise diffusions while the second result studies large deviations for a class of slow-fast system of diffusions with vanishing noise. The results are described below in Sections \ref{sec:m1} and \ref{sec:m2} respectively.
\subsection{Empirical measure for Small Noise Diffusions.}
\label{sec:m1}
We will consider a somewhat more general setting than the one considered in the Introduction and after stating the main result we remark on how the model considered in the Introduction is covered by this result.
The collection of diffusions we study takes the form
\begin{equation}
    dY^\eps(t) =  -\frac{1}{\eps}  \psi(Y^\eps (t))dt + \frac{s(\eps)}{\sqrt{\eps}}  \sigma(Y^\eps (t))dB(t), \quad Y^\eps(0) = y_0, \, 0 \le t \le 1,
    \label{eq:SDE_single}
\end{equation}
where  $B$ is a $r$-dimensional $\{\clf_t\}_{0\le t \le 1}$ standard Brownian motion  given on some 
filtered probability space $(\Omega, \mathcal{F}, \{\clf_t\}, P)$ satisfying the usual conditions, $y_0 \in \RR^d$,    and $s(\eps) \to 0$ as $\eps \to 0$. Throughout, without loss of generality, we assume that $s(\eps) \in (0,1)$. We will make the following assumptions on the coefficient functions  $\psi$ and $\sigma$.

\begin{assumption}
\begin{enumerate}
Let $a = \sigma\sigma^T$.
\item $[\textit{Diffusion Coefficient}]$ $\sigma: \RR^d \to \RR^{d\times r}$ is a bounded Lipschitz map. The matrix function $a$ is uniformly nondegenerate: for some $c_a \in (0, \infty)$
$$v^Ta(y)v \ge c_a \|v\|^2 \mbox{ for all } y \in \RR^d, \; v \in \RR^d.$$
\item $[\textit{Drift Coefficient}]$ There is a $\mathcal{C}^2$ function $\phi: \mathbb{R}^d \rightarrow \mathbb{R}$ such that
\begin{enumerate}
    \item $\psi(y) = a(y) \nabla \phi (y)$ for all $y \in \mathbb{R}^d$,
    \item $\sup_{y \in \mathbb{R}^d} \|\mathcal{H} \phi (y) \| < \infty $,
    \item $\nabla \phi (y) = 0$ if and only if $y = 0$,
    \item 
	%and $\|\sigma^T(y) \nabla \phi (y)\|^2$ is a \textit{coercive} function in y: 
	$\|\nabla \phi (y)\|^2 \rightarrow \infty $ as $\|y\| \rightarrow \infty$.
\end{enumerate}
\item $[\textit{Asymptotic Stability}]$ 
% There is a bounded and Lipschitz function $\Lambda: \mathbb{R}^d \times \mathbb{R}^d \rightarrow \mathbb{R}^{d \times d}$, with $a(y)\Lambda(y,0)= a(0)$ and
%$\Lambda(y,y) = \Id$ for all $y \in \RR^d$, such that 
For every $x \in \mathbb{R}^d$ with 
$$\mathcal{V}_x(y) \doteq  a(x+y) \nabla \phi(x+y) - a(x) \nabla \phi(x), \, y \in \mathbb{R}^d,$$
 the ODE $\dot{\xi} = - \mathcal{V}_x (\xi)$ has 0 as the unique fixed point, which is globally asymptotically stable.

% \item $[\textit{Controllability}]$ There exists $c \in (0,\infty)$ such that for all $x,y \in \mathbb{R}^d$ there exists a function $u: [0,1] \mapsto \mathbb{R}^m$ with $\int_0^1 \|u(t)\|^2 dt \leq c (1 + \|x\|^2 + \|y\|^2)$ such that the solution to the differential equation
%
% \[ dy(t) = \sigma(y(t)) u(t) dt   \]
%
% with initial condition $y(0) = x$ has the property that $y(1) = y$.

\item $[\textit{Lyapunov Function}]$ For each $x \in \mathbb{R}^d$, there exists a $\clc^2$ function $V_x: \mathbb{R}^d \rightarrow \mathbb{R}$ such that for some $\alpha_i(x) \in (0,\infty), i = 1,2$ and $c_i(x) \in (0,\infty), i = 1,2$, the following hold:
%\[ \inf_{\xi \in \mathbb{R}^d} V_x (\xi) > \infty, \]
$ \| \mathcal{H} V_x\|_\infty < \infty$,
 \begin{align*}
	  \alpha_2(x) \|\xi\|^2 + 1 &\le V_x(\xi) \leq \alpha_1 (x) ( 1 + \|\xi\|^2),  \\
  \mathcal{V}_x(\xi) \nabla V_x(\xi) &\geq c_1(x)\|\xi\|^2 -c_2(x),  
 \end{align*}
for all $\xi \in \mathbb{R}^d$.
\end{enumerate}
\label{assumptions-1}
\end{assumption}
The following is our first main result.
\begin{theorem}
Suppose Assumption \eqref{assumptions-1} holds. Then, the map $I_1: \mathcal{P}(\mathbb{R}^d) \rightarrow [0,\infty]$ defined as 

\begin{equation}
I_1(\gamma) = \frac{1}{2} \int_0^1 \|\sigma^T (y) \nabla \phi(y)\|^2 \gamma(dy)
=\frac{1}{2} \int_0^1 \|\sigma^T (y) a^{-1}(y)\psi(y)\|^2 \gamma(dy),
 \quad \gamma \in \mathcal{P}(\mathbb{R}^d)
\label{eq:rate-function}
\end{equation}
is a rate function. Additionally, the collection $\{\mu^{\eps}\}$ of $\clp(\RR^d)$ valued random variables defined as
\begin{equation} \label{eq:occupational-1} 
	\mu^{\eps}(A) \doteq \int_0^1 \delta_{Y^{\eps}(t)}(A) dt, \, A \in \clb(\RR^d)
\end{equation}
 satisfies a LDP on $\mathcal{P}(\mathbb{R}^d)$ with rate function $I_1$ and speed $(\eps s^2(\eps))^{-1}$.
 % , i.e.
% \[ \lim_{\eps \rightarrow 0} - \eps s^2(\eps)\log \E \exp \left(- \frac{F(\mu^\eps)}{\eps s^2(\eps)} \right) = \inf_{\gamma \in \mathcal{P}(\mathbb{R}^d)}  \left( F(\gamma) + I_1(\gamma) \right)     \]
%   for all continuous and bounded $F:\mathcal{P}(\mathbb{R}^d) \rightarrow \mathbb{R}$.
\label{theorem:ldp}
\end{theorem}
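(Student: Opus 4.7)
\medskip\noindent\textbf{Proof plan.} The approach is the weak-convergence method for large deviations via the Bou\'e-Dupuis variational representation. Applied to $\exp(-(\eps s^2(\eps))^{-1} F(\mu^\eps))$ for $F\in \clc_b(\clp(\RR^d))$, and after the rescaling $v^\eps \doteq s(\eps)\sqrt{\eps}\, u^\eps$ of the Bou\'e-Dupuis control $u^\eps$, the representation becomes
\begin{equation*}
    -\eps s^2(\eps)\, \log E\!\left[e^{-(\eps s^2(\eps))^{-1} F(\mu^\eps)}\right]
    = \inf_{v^\eps} E\!\left[\tfrac12 \int_0^1 \|v^\eps(s)\|^2\, ds + F(\bar\mu^\eps)\right],
\end{equation*}
where $\bar\mu^\eps$ is the occupation measure of the controlled diffusion
\begin{equation*}
    d\bar Y^\eps(t) = \tfrac{1}{\eps}\bigl[\sigma(\bar Y^\eps(t)) v^\eps(t) - \psi(\bar Y^\eps(t))\bigr]\, dt + \tfrac{s(\eps)}{\sqrt{\eps}}\sigma(\bar Y^\eps(t))\, dB(t).
\end{equation*}
Establishing the Laplace principle then reduces to showing that this infimum converges to $\inf_\gamma[F(\gamma)+I_1(\gamma)]$. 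The explicit form of $I_1$ comes from a clean orthogonal decomposition: since $\psi = a\nabla\phi = \sigma\sigma^T\nabla\phi$, any $v$ satisfying the balance $\sigma(y) v = \psi(y)$ needed to hold $\bar Y^\eps$ at $y$ admits the decomposition $v = \sigma^T(y)\nabla\phi(y) + w$ with $\sigma(y) w = 0$, and $\langle \sigma^T(y)\nabla\phi(y), w\rangle = \langle \nabla\phi(y), \sigma(y) w\rangle = 0$ gives $\|v\|^2 = \|\sigma^T(y)\nabla\phi(y)\|^2 + \|w\|^2$. The minimum instantaneous cost of holding the process at $y$ is therefore exactly $\tfrac12\|\sigma^T(y)\nabla\phi(y)\|^2$. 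That $I_1$ is a rate function follows from lower semicontinuity via Fatou together with tightness of level sets, which uses the uniform nondegeneracy of $a$ (so $\|\sigma^T\nabla\phi\|^2 \ge c_a\|\nabla\phi\|^2$) and the coercivity $\|\nabla\phi(y)\|^2 \to \infty$ as $\|y\|\to\infty$ from Assumption \ref{assumptions-1}(2).

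\medskip\noindent\textbf{Upper bound.} For near-optimal $v^\eps$ with $E\int_0^1 \|v^\eps\|^2\, ds \le C$, I would first establish tightness of the lifted occupation measures
\begin{equation*}
    \bar Q^\eps(A \times C \times [0,t]) \doteq \int_0^t 1_A(\bar Y^\eps(s))\, 1_C(v^\eps(s))\, ds
\end{equation*}
on $\RR^d\times\RR^r\times[0,1]$ using the Lyapunov estimate of Assumption \ref{assumptions-1}(4) applied at $x=0$ (which gives uniform-in-$\eps$ control of $\|\bar Y^\eps\|^2$ on average) together with the $L^2$ bound on $v^\eps$. Passing to a subsequential limit $\bar Q$ with $\RR^d$-marginal $\gamma$, an It\^o expansion of smooth test functions of $\bar Y^\eps$, combined with the fact that the quadratic-variation term has coefficient $s^2(\eps)/\eps \to 0$ after the time integration, forces the drift to balance in the limit: $\int[\sigma(y)v - \psi(y)]\cdot\nabla g(y)\, \bar Q(dy\,dv\,ds) = 0$ for a suitable class of $g$, whence via disintegration $\sigma(y) v = \psi(y)$ for $\bar Q$-a.e.~$(y,v)$. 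The orthogonality identity above then yields $\liminf_\eps \tfrac12 E\!\int \|v^\eps\|^2 \ge \tfrac12 \int \|\sigma^T(y)\nabla\phi(y)\|^2\, \gamma(dy) = I_1(\gamma)$, and continuity of $F$ delivers the Laplace upper bound.

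\medskip\noindent\textbf{Lower bound and main obstacle.} For the matching lower bound, given $\gamma$ with $I_1(\gamma)<\infty$ and $\eta>0$, I would first approximate $\gamma$ by a finitely supported $\gamma^\delta = \sum_{i=1}^N p_i \delta_{y_i}$ with $I_1(\gamma^\delta) \le I_1(\gamma) + \eta$, using continuity of $\|\sigma^T\nabla\phi\|^2$ and density of such measures. On the partition of $[0,1]$ into consecutive blocks of lengths $p_1,\ldots,p_N$, I would design $v^\eps$ block-by-block with two phases: a brief transition of duration $\theta_\eps\to 0$ with $\theta_\eps/p_i \to 0$ during which a bounded control drives $\bar Y^\eps$ from the previous target to a neighborhood of $y_i$ (feasible on the fast time scale because, under any control $v$ with $\sigma(\bar Y^\eps) v = \psi(y_i)$, the induced drift $-\psi(\bar Y^\eps) + \psi(y_i) = -\mathcal{V}_{y_i}(\bar Y^\eps - y_i)$ has $y_i$ as globally asymptotically stable fixed point by Assumption \ref{assumptions-1}(3)), followed by a holding phase using the state-feedback control $v^\eps(t) = \sigma^T(\bar Y^\eps(t))\, a^{-1}(\bar Y^\eps(t))\, \psi(y_i)$, which keeps the drift equal to $-\mathcal{V}_{y_i}(\bar Y^\eps - y_i)$ and, via the Lyapunov function $V_{y_i}$ of Assumption \ref{assumptions-1}(4), confines $\bar Y^\eps$ to a shrinking neighborhood of $y_i$ for the rest of the block. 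Summing the contributions gives the asymptotic cost $\tfrac12\sum_i p_i\,\|\sigma^T(y_i)\nabla\phi(y_i)\|^2 = I_1(\gamma^\delta)$ and $\bar\mu^\eps \Rightarrow \gamma^\delta$, after which a diagonal argument over $\delta\to 0$ completes the lower bound. The main obstacle is precisely this construction: one must calibrate $\theta_\eps$, the bounded control during transitions, and the tightness of the holding phase so that (i)~transitions are short enough that their quadratic cost contribution is $o(1)$ yet long enough for the stabilizing drift (acting at rate $1/\eps$) to bring $\bar Y^\eps$ close to $y_i$, (ii)~the residual noise of amplitude $s(\eps)/\sqrt{\eps}$ during the holding phase does not spread $\bar Y^\eps$ beyond the permitted neighborhoods—this is where the quantitative bounds $\mathcal{V}_x(\xi)\cdot\nabla V_x(\xi) \ge c_1(x)\|\xi\|^2 - c_2(x)$ from Assumption \ref{assumptions-1}(4) and the boundedness of $\sigma$ are essential—and (iii)~the state-feedback holding control matches its idealized value $\sigma^T(y_i)\nabla\phi(y_i)$ in $L^2$ so that the total cost converges to $I_1(\gamma^\delta)$.
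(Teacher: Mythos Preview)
Your overall strategy matches the paper's: Bou\'e--Dupuis representation, tightness of lifted occupation measures plus an orthogonality argument for the upper bound, discrete approximation and block-by-block control for the lower bound. Your holding control $v^\eps(t)=\sigma^T(\bar Y^\eps(t))\,a^{-1}(\bar Y^\eps(t))\,\psi(y_i)$ coincides exactly with the paper's, and while your transition mechanism (leaning on the stabilizing drift $-\mathcal V_{y_i}$ rather than explicitly cancelling the drift and pushing linearly over a window of length $\eps$, as in the paper's Table~1) differs in detail, it would work for the same reasons.

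There is, however, a genuine gap in your upper-bound argument. From the weak identity
\[
\int [\sigma(y)v-\psi(y)]\cdot\nabla g(y)\,\bar Q(dy\,dv)=0\qquad\text{for all }g\in C_c^2
\]
you \emph{cannot} conclude that $\sigma(y)v=\psi(y)$ for $\bar Q$-a.e.\ $(y,v)$. After disintegrating $\bar Q(dy\,dv)=q(y,dv)\,\hat Q(dy)$ and setting $u(y)=\int v\,q(y,dv)$, the identity only says that the vector measure $[\sigma(y)u(y)-\psi(y)]\,\hat Q(dy)$ is divergence-free in the sense of distributions, which is strictly weaker than vanishing (think of a rotational field against Lebesgue $\hat Q$ in $\RR^2$). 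The paper's route is to test with $g=\phi$ itself: since $\psi=a\nabla\phi=\sigma\sigma^T\nabla\phi$, this single choice gives precisely the integral orthogonality
\[
\int \bigl(u(y)-\sigma^T(y)\nabla\phi(y)\bigr)\cdot\sigma^T(y)\nabla\phi(y)\,\hat Q(dy)=0,
\]
after which Jensen ($\int\|v\|^2q(y,dv)\ge\|u(y)\|^2$) and the expansion of $\|u(y)\|^2$ around $\sigma^T(y)\nabla\phi(y)$ yield $\int\|v\|^2\,\bar Q\ge\int\|\sigma^T\nabla\phi\|^2\,\hat Q$. Because $\phi\notin C_c^2$, reaching this test function requires a nontrivial truncation lemma (the paper's Lemma~\ref{single-upper-lemma-orthogonality-extension}): one builds $\phi_M\in C_c^2$ with $\phi_M=\phi$ on $\{\|y\|\le M\}$ and $\sup_M\|\nabla\phi_M(y)\|/(1+\|y\|)<\infty$, and uses the moment bound $\int(\|y\|^2+\|v\|^2)\,\bar Q<\infty$ (inherited from the prelimit via Fatou) to pass to the limit $M\to\infty$. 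The point you are missing is that only the \emph{specific} test function $\phi$ makes the cross term vanish, and the approximation needed to justify that choice is where Assumption~\ref{assumptions-1}(2b) on the bounded Hessian actually enters the upper bound.
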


\begin{remark}\label{rem:diffscal}
	Define $Z^{\eps}(t) \doteq Y^{\eps}(t\eps)$, $t \ge 0$. Then $Z^{\eps}$ satisfies
	$$dZ^{\eps}(t) = -\psi(Z^\eps (t))dt + s(\eps) \sigma(Z^\eps (t))d\tilde B(t), \quad Z^\eps(0) = y_0,$$
	where $\tilde B$ is a $r$-dimensional Brownian motion and $\mu^{\eps}$ can be rewritten as
	\begin{equation}\label{eq:115nn}
		\mu^{\eps}(A) \doteq \eps \int_0^{1/\eps} \delta_{Z^{\eps}(t)}(A) dt, \, A \in \clb(\RR^d).\end{equation}
	Thus the above theorem gives a LDP for the $1/\eps$-time horizon empirical measure of $Z^{\eps}$ with speed $(\eps s^2(\eps))^{-1}$.
\end{remark}

% Under Assumptions \ref{assumptions-1}, \eqref{eq:Y-bar} can be recast as
%
% \begin{gather}
% \nonumber
%     \Bar{Y}^\eps (t) = y_0 -\frac{1}{\eps} \int_0^t
%     \mathcal{V}_0(\Bar{Y}^\eps (s)) ds + \frac{s(\eps)}{\sqrt{\eps}} \int_0^t \sigma(\Bar{Y}^\eps (s)) dB(s) \\
%     \nonumber
%     +\frac{1}{\eps} \int_0^t \sigma(\Bar{Y}^\eps (s)) v (s) ds.
% \end{gather}

%In the following remark, 

\begin{remark}
	We now give examples where Assumption \ref{assumptions-1} is satisfied. 
	\begin{enumerate}[(a)]
		\item Suppose that $\sigma(y) = \sigma$ for all $y \in \mathbb{R}^d$ and $a = \sigma \sigma^T$ is invertible. Suppose further that there is a $\mathcal{C}^2$ strongly convex function $\Tilde{\phi}:\mathbb{R}^d \rightarrow \mathbb{R}$ such that $\| \mathcal{H} \Tilde{\phi}\|_\infty < \infty$, $\nabla \Tilde{\phi} (0) = 0$, and $\psi(y) = \nabla \tilde{\phi}(y)$. Then Assumption \ref{assumptions-1} is satisfied. This is checked as follows. \\
		Part 1  and 2(a,b,c) of the assumption are clearly satisfied.\\
		 % Let $\phi = a^{-1} \tilde{\phi}$. Then,  Part 2(a) clearly holds   and also part (b) holds by assumption. For part $2(c)$ note that $\sigma^T \nabla \phi(0) = \sigma^T a^{-1} \nabla \tilde{\phi}(0) = 0$ by assumption and, conversely, if $\sigma^T \nabla \phi(y) = 0$ then
% 		 $$\nabla \tilde{\phi}(y) = a \nabla \phi(y) = \sigma \sigma^T \nabla \phi(y) = 0,$$
% 		 which by the strong convexity of $\tilde{\phi}$ says that $y = 0$. This proves part 2(c).\\
		Consider now  part 2(d). Strong convexity  implies that there exists some $m \in (0,\infty)$, such that for all $x,y \in \mathbb{R}^d$ 
		$$\tilde{\phi}(y) \geq \tilde{\phi}(x) + \nabla \tilde{\phi}(x) (y-x) + m \|y-x\|^2.$$
		 Taking $x = 0$, we see that $\tilde{\phi}(y) \geq \tilde{\phi}(0) + m \|y\|^2$.
		  Also, $\nabla\tilde{\phi}(y) = \int_0^1 \mathcal{H}\tilde{\phi}(ty) \cdot y dt$ and so $\|y\| \|\nabla\tilde{\phi}(y)\| \geq y \cdot \nabla \tilde{\phi}(y)  = \int_0^1 y \cdot \mathcal{H} \tilde{\phi}(ty) \cdot y dt \geq m \|y\|^2$. It then follows $\|\nabla\tilde{\phi}(y)\| \geq m \|y\|$, for all $y \in \mathbb{R}^d$. Thus 
		  since $a$ is invertible,
		  $\|\nabla \phi(y)\|^2 \rightarrow \infty $, as $\|y\| \rightarrow \infty$.
		   % Indeed, if this were not true, we can find $\|y_n\| \rightarrow \infty$ such that $\sigma^T \nabla \phi(y_n)$ is bounded for all $n$, which means that $\nabla \tilde{\phi} (y_n) = a \nabla \phi(y_n) = \sigma \sigma^T \nabla \phi(y_n)$ is also bounded, which contradicts the lower bound on $\|\nabla \tilde{\phi}(y)\|$.
		   This shows that part 2(d) holds as well. Thus we have shown that part 2 of the assumption holds.\\
Now consider part 3. Note that  for all $x,y \in \mathbb{R}^d$, $$\mathcal{V}_x(y) = a \left( a^{-1} \nabla \tilde{\phi}(x+y) - a^{-1} \nabla \tilde{\phi}(x)  \right) = \nabla \tilde{\phi}(x+y) - \nabla \tilde{\phi}(x).$$
 Since for every $x,$ the map $\eta_x$ defined as $\eta_x(y) \doteq \tilde{\phi}(x+y) - y \cdot \nabla \tilde{\phi}(x)$ is strongly convex (in $y$) and $\nabla \eta_x (0) = \clv_x(0) = 0$, we have part 3.\\
% For Part 4, take $u(t) = \sigma^T a^{-1}(y-x)$, for $0 \leq t \leq 1$, so that $y(1) = y(0) + \int_0^1 \sigma u(s) ds = x + \int_0^1 (y-x) dx = y$.\\
Finally consider part 4. Let 
$$V_x(\xi) \doteq  \tilde{\phi}(\xi + x) - \tilde \phi(x) - \xi \cdot \nabla \tilde{\phi}(x)+ 1.$$ 
By the assumption on the boundedness of $\mathcal{H} \tilde{\phi}$ we have that $\|\mathcal{H} V_x(\xi)\|_\infty <\infty$.
Also, by strong convexity, $V_x(\xi) \ge m \|\xi\|^2+1$ which gives the lower bound in the first inequality in part 4. The upper bound is an immediate consequence of $\|\mathcal{H} V_x(\xi)\|_\infty <\infty$.
For the second inequality in part 4, once more using strong convexity,
 $$\mathcal{V}_x(\xi) \cdot \nabla V_x(\xi) =  \| \nabla \tilde{\phi}(x+\xi) - \nabla \tilde{\phi}(x)\|^2\ge m^2 \|y\|^2.$$
 Thus we have shown that Assumption \ref{assumptions-1} is satisfied. Combining this observation with Remark \ref{rem:diffscal}, we see that, with $Z^{\eps}$ defined by \eqref{eq:1140}, the collection $\{\mu^{\eps}\}$ defined by \eqref{eq:115nn}  satisfies a LDP with rate function $I$ as in \eqref{eq:ratefnsimpmod}.

 \item Assumption \ref{assumptions-1} also holds for  diffusions with a multiplicative noise as the following example illustrates.\\
  Let $d = m = 1$ and suppose that $\psi: \RR \to \RR$ is such that $\psi(0) = 0$ and $\tilde{\phi}(x) = \int_0^x \psi(y) dy$ is a $\mathcal{C}^2$ strongly convex map with $\sup_x |\tilde\phi''(x)| < \infty$.\\
   Also suppose that for some $c_1, c_2>0$, $c_1 \leq |\sigma(x)| \leq c_2$ for all $x \in \mathbb{R}$,  $x \mapsto \sigma(x)$ is a differentiable Lipschitz function, and  $y^2 |\sigma'(y)|^2 \rightarrow 0,$ as $|y| \rightarrow \infty$.\\
    Then, Assumption \ref{assumptions-1} holds with $\phi(x) = \int_0^x \frac{1}{a(y)} \tilde{\phi}'(y) dy,$ where $a(y) = \sigma^2(y)$. To see this note the following.
	
Part 1 holds by assumption. Part 2 (a) follows from the identity  $a(x) \phi'(x) = \tilde{\phi}'(x) = \psi(x)$.  Part 2(b) is immediate from our assumption and boundedness of $\tilde \phi''$.  For 2(c), from our condition on $\psi$, we have  $\phi'(x) = 0$ iff $x = 0$ and that $|\sigma(x)|>0$ for all $x$. Finally, part 2(d) follows from the strong convexity of $\phi$ and the bounds on $|\sigma(x)|$.\\
 For part 3, note that
 % define $\Lambda(\xi,x) = \frac{a(x)}{a(\xi)}$. Clearly, $\Lambda$ is bounded Lipschitz with $\Lambda(x,x) = 1$ for all $x$. Also
 $\mathcal{V}_x(y) = 
 %a(x+y) (\phi'(x+y) - \frac{a(x)}{a(x+y)} \phi'(x))
  \tilde{\phi}'(x+y) - \tilde{\phi}'(x).$
  Since for all $x$, $\eta_x(y) = \tilde{\phi}(x+y) -  y \tilde{\phi}'(x)$ is strongly convex in $y$, $\mathcal{V}_x(y) = 
  \eta_x'(y)$, and $\mathcal{V}_x(0) = a(x) \phi'(x) - a(x) \phi'(x) =0$, we have that part 3 is satisfied. \\
  % For part 4, consider $x,y \in \mathbb{R}$. Note that the ODE $dy(t) = (y-x) dt$ with $y(0)=x$ can be written as $dy(t) = \sigma(y(t))u(t) dt$, $y(0)=x$, with $u(t) = \frac{1}{\sigma(y(t))} (y-x)$ for $0 \leq t \leq 1$. Then part 4 holds with this choice of $u$.\\
  Finally part 4 is verified as in (a).
   % Finally consider  part 4. Take $V_x(\xi) = \tilde{\phi}(x+\xi) - \tilde \phi(x)-y \cdot \tilde{\phi}'(x)+1$. Then, the first two statements in part 4 follow from the strong convexity of $\tilde \phi$ and the boundedness of the second derivative. For the final inequality in 4 note that
%   $\mathcal{V}_x(\xi) \nabla V_x(\xi) = \left[ \tilde{\phi}'(x+\xi) - \tilde{\phi}'(x)    \right]^2$ from which the desired bound follows from the strong convexity of $\tilde{\phi}$. 
Thus we have verified all statements in Assumption \ref{assumptions-1}.\\
   In a similar manner one can construct multidimensional examples with multiplicative noise as well.
\end{enumerate}

\end{remark}

\subsection{Multiscale System of Diffusions with Vanishing Noise.}
\label{sec:m2}
In this section we present our main result for the multiscale system $(X^{\eps}, Y^{\eps})$ introduced in \eqref{eq:systemor}.
We begin with our main assumption on the coefficients.

\begin{assumption}
	The functions $b, \alpha$ and $U$ satisfy the following.
\begin{enumerate}
    \item $[\textit{Coefficients of the slow component}]$
    The coefficients $b: \mathbb{R}^m \times \mathbb{R}^d \rightarrow \mathbb{R}^m$ and $\alpha: \mathbb{R}^m \rightarrow \mathbb{R}^{m \times k}$ are Lipschitz: there exist some $L_b, L_\alpha \in (0,\infty)$ such that for all $x,x' \in \mathbb{R}^m, y,y' \in \mathbb{R}^d$ 
    \[ \|b(x',y') -b(x,y)\| \leq L_b ( \|x'-x\| + \|y'-y\|), \;\;
     \|\alpha(x') -\alpha(x)\| \leq L_\alpha \|x'-x\|.\]
  Furthermore  $\alpha$ is a bounded function.
    
    \item $[\textit{Coefficients of the fast component}]$ 
    $U: \mathbb{R}^{m+d} \rightarrow \mathbb{R}$ is a $\mathcal{C}^2$ function such that the following hold:
    \begin{enumerate}
\item $[\textit{Growth of Hessian}]$
        \[ \sup_{(x,y) \in \mathbb{R}^m \times \mathbb{R}^d} \left( \|\mathcal{H}_y U(x,y)\| + \frac{\|\mathcal{H}_x U(x,y)}{1+\|x\|+\|y\|}  \right) \doteq L_{\mathcal{H}U} < \infty.  \]
            
        \item $[\textit{Growth of $x$-gradient}]$ 
        \[ \sup_{(x,y) \in \mathbb{R}^m \times \mathbb{R}^d} \frac{\| \nabla_x U(x,y)\|}{1 + \|x\| + \|y\|} < \infty. \]

        \item $[\textit{Lower bound on $U$ and its $y$-gradient}]$ There exist constants $L^1_{low}, L^2_{low} \in (0, \infty)$ such that
        
        \begin{equation}
        \nonumber
            \inf_{x \in \mathbb{R}^m} \bigg( U(x,y) + \|\nabla_y U(x,y) \|^2   \bigg) \geq L^1_{low} \|y\|^2 - L^2_{low}.
        \end{equation} 
        
        \item $[\textit{Stability of the fast component}]$ For each $(x,z) \in \mathbb{R}^{m+d}$, the ODE
        \begin{equation}
            \nonumber
            \dot{u} = - \mathcal{V}_{x,z}(u)
        \end{equation}
        where the function $\mathcal{V}_{x,z}: \RR^d \to \RR^d$ defined as $\mathcal{V}_{x,z}(y) = \nabla_y U(x,y+z) - \nabla_y U(x,z)$ for $y \in \mathbb{R}^d$ has $0$ as the unique fixed point, which is globally asymptotically stable.
        
        \item $[\textit{Growth and smoothness of $y$-gradient }]$  The map $(x,y) \mapsto \nabla_y U(x,y)$ is Lipschitz. For each $y \in \mathbb{R}^d$,
        \[ \sup_{x \in \mathbb{R}^m} \|\nabla_y U(x,y)\| < \infty.    \]
        
        \item $[\textit{Fixed point}]$ There exists a  Lipschitz map $\theta: \mathbb{R}^m \rightarrow \mathbb{R}^d$ such that, for each $x \in \mathbb{R}^m$, $\nabla_yU(x,\theta(x)) = 0$. 
            \end{enumerate}

    % \item DO WE NEED THIS AFTER ALL ?? $\nabla_y U$ is a coersive map with regard to $y$ for the initial condition $x^0 \in \mathbb{R}^m$, that is
 %
 %    \[ \|\nabla_y U(x^0,y) \| \rightarrow \infty,  \]
 %    as $\|y\| \rightarrow \infty$.
    
\end{enumerate}
\label{Assumptions:system}
\end{assumption}

\begin{remark}
Let $\theta: \mathbb{R}^m \rightarrow \mathbb{R}^d$ be in $\mathcal{C}_b^2$. 
Let $\phi: \RR^d \to \RR$ be a $\clc^2$ strongly convex function with bounded Hessian such that $\nabla \phi(0) =0$. Then $U(x,y) \doteq \phi(y - \theta(x))$, $(x,y) \in \RR^{m+d}$, is one basic example that satisfies  Assumption \ref{Assumptions:system}(2). 
\end{remark}

Recall the space $\clm_1$ introduced above  \eqref{eq:557} which is equipped with the weak convergence topology. Once more we will metrize it using the bounded-Lipschitz distance as in \eqref{eq:510n}.
Also recall the collection of $\clm_1$ valued random variables $\{\Lambda^{\eps}\}$ defined in \eqref{eq:557}.
We now introduce the rate function associated with a LDP for $(X^{\eps}, \Lambda^{\eps})$. 
Let $\clx \doteq \mathcal{C}([0,T]:\mathbb{R}^m)$.
Note that a $\nu \in \clm_1$ can be disintegrated as $\nu(dy\, ds) = \hat{\nu}_s(dy) ds$ where
   $s \mapsto \hat{\nu}_s$ is a measurable map from $[0,T]$ to $\mathcal{P}(\mathbb{R}^d)$. 
 
 Now, for $(\xi, \nu) \in \mathcal{X}\times \clm_1  $, let $\mathcal{U}(\xi, \nu)$ be the class of all $v \in  L^2([0,T]:\mathbb{R}^k)$ such that $\xi$ solves
\begin{equation}
    \nonumber
    \xi(t) = x_0 + \int_0^t \int_{\mathbb{R}^d} b(\xi(s),y) \hat{\nu}_s(dy) ds + \int_0^t \alpha(\xi(s)) v(s) ds, \; t \in [0,T].
\end{equation}
 Define $I_2 : \mathcal{X} \times \clm_1 \rightarrow [0,\infty]$ as

\begin{equation}
    I_2(\xi, \nu) = \inf_{ v \in \mathcal{U}(\xi,\nu)}  \frac{1}{2} \int_0^T \left( \|v(s)\|^2 +  \int_{\mathbb{R}^d} \|\nabla_y U (\xi(s),y) \|^2 \hat{\nu}_s(dy)  \right) ds, \; (\xi, \nu) \in \clx\times \clm_1.
    \label{eq:rate-function-2}
\end{equation}
The following is the second main result of this work.
\begin{theorem}
Suppose Assumption \ref{Assumptions:system} holds. Then $I_2$ is a rate function on $\mathcal{X}\times \clm_1$. Furthermore, $(X^{\eps}, \Lambda^{\eps})$ defined by \eqref{eq:systemor} and \eqref{eq:557} satisfy a LDP on $\mathcal{X}\times \clm_1$ with rate function $I_2$ and speed $(\eps s^2(\eps))^{-1}$.
% , i.e.:
%  \[ \lim_{\eps \rightarrow 0} - \eps s^2(\eps)\log \E \exp \left(- \frac{F(X^\eps)}{\eps s^2(\eps)} \right) = \inf_{\xi \in \mathcal{X}}  \left( F(\xi) + I_2(\xi) \right)     \]
%   for all continuous and bounded $F:\mathcal{C}([0,T]:\mathbb{R}^m) \rightarrow \mathbb{R}$.
\label{theorem:ldp-2}
\end{theorem}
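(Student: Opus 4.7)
My plan is to implement the Boué–Dupuis variational scheme outlined in the Proof Strategy subsection. Set $\alpha(\eps) \doteq (\eps s^2(\eps))^{-1}$. For bounded continuous $F:\clx\times\clm_1\to\RR$, Theorem \ref{Variational-1} applied to the driving Brownian motions $(W,B)$, combined with the rescalings $v_{\text{orig}}=\sqrt{\alpha(\eps)}\,v$ and $u_{\text{orig}}=\sqrt{\alpha(\eps)}\,u$, gives
\begin{equation*}
    -\alpha(\eps)^{-1}\log \EE\!\left[e^{-\alpha(\eps)F(X^\eps,\Lambda^\eps)}\right] = \inf_{(v,u)} \EE\!\left[F(\bar X^\eps,\bar \Lambda^\eps) + \tfrac12\!\int_0^T\!(\|v(s)\|^2+\|u(s)\|^2)\,ds\right],
\end{equation*}
where the controlled pair satisfies
\begin{align*}
    d\bar X^\eps &= \bigl[b(\bar X^\eps,\bar Y^\eps) + \alpha(\bar X^\eps)v(t)\bigr]dt + s(\eps)\sqrt{\eps}\,\alpha(\bar X^\eps)\,dW,\\
    d\bar Y^\eps &= \tfrac{1}{\eps}\bigl[-\nabla_y U(\bar X^\eps,\bar Y^\eps) + u(t)\bigr]dt + \tfrac{s(\eps)}{\sqrt{\eps}}\,dB,
\end{align*}
and $\bar \Lambda^\eps$ is built from $\bar Y^\eps$ as in \eqref{eq:557}. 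The theorem then reduces to the two complementary inequalities \eqref{ineq-two-upper} and \eqref{ineq-two-lower} between this infimum and $\inf_{(\xi,\nu)}[F(\xi,\nu)+I_2(\xi,\nu)]$.

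For the upper bound I fix an almost-minimizing sequence $(v^\eps,u^\eps)$ with uniformly bounded $L^2$-cost. Assumption \ref{Assumptions:system}(2)(c) together with Itô applied to $U(\bar X^\eps,\cdot)$ provides moment bounds on $\bar Y^\eps$, which yields tightness of $(\bar X^\eps,\bar\Lambda^\eps,v^\eps(s)\,ds,u^\eps(s)\,ds)$. Along a convergent subsequence, the vanishing slow diffusion coefficient forces the limit $\xi$ to satisfy $\xi(t)=x_0+\int_0^t\!\!\int b(\xi(s),y)\hat\nu_s(dy)\,ds+\int_0^t \alpha(\xi(s))\bar v(s)\,ds$, placing $\bar v\in\mathcal{U}(\xi,\nu)$. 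The decisive step is the orthogonality identity \eqref{eq:eq321b}: applying Itô to $\nabla_y U(\bar X^\eps,\bar Y^\eps)$ and exploiting the $1/\eps$ factor in the fast SDE shows that, modulo vanishing boundary and martingale terms, $\int_0^T\!\!\int \nabla_y U(\bar X^\eps,y)\cdot u^\eps(s)\,\hat\nu^\eps_s(dy)\,ds$ converges to $\int_0^T\!\!\int \|\nabla_y U(\xi(s),y)\|^2\,\hat\nu_s(dy)\,ds$. Cauchy–Schwarz in $u^\eps$ then gives $\liminf \tfrac12\|u^\eps\|_2^2\geq \tfrac12\int_0^T\!\!\int \|\nabla_y U(\xi,y)\|^2\hat\nu_s(dy)\,ds$, which combined with weak lower semicontinuity of $\tfrac12\|v\|_2^2$ yields the Laplace upper bound. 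Applied to bounded-cost sequences, the same compactness argument proves that $I_2$ has compact level sets.

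For the lower bound I fix $(\xi^\ast,\nu^\ast)$ with $I_2(\xi^\ast,\nu^\ast)<\infty$ and a near-optimal $v^\ast\in\mathcal{U}(\xi^\ast,\nu^\ast)$. By Lemma \ref{lem:approxdisc} I may assume $\hat\nu^\ast_s=\sum_{i=1}^N p_i(s)\delta_{y_i(s)}$ with $p_i,y_i$ piecewise constant in $s$ and $\xi^\ast$ piecewise $C^1$. I then construct $(v^\eps,u^\eps)$ so that $(\bar X^\eps,\bar\Lambda^\eps)\to(\xi^\ast,\nu^\ast)$ in probability and the Boué–Dupuis cost converges to $I_2(\xi^\ast,\nu^\ast)$. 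Take $v^\eps\equiv v^\ast$. On each interval where $\hat\nu^\ast_s$ is constant, partition it into $N$ sub-blocks of lengths proportional to $p_i(s)$; within the $i$-th sub-block set $u^\eps(t)=\nabla_y U(\xi^\ast(t),y_i(s))$, so that by Assumption \ref{Assumptions:system}(2)(d) the point $y_i(s)$ is the globally asymptotically stable equilibrium of the corresponding noiseless controlled fast ODE. Between consecutive sub-blocks (and across interval boundaries) insert vanishingly short transition windows in which $u^\eps$ drives $\bar Y^\eps$ from $y_i(s)$ to the next target, following the construction of Section \ref{sec:conscont}.

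The hard part will be this fast-control construction. What must be shown quantitatively, via the Lyapunov structure furnished by Assumption \ref{Assumptions:system}(2)(c)(d), is that on the $\eps$-time scale $\bar Y^\eps$ descends to an $o(1)$-neighborhood of the current target $y_i(s)$ well inside each sub-block, so that $\bar\Lambda^\eps$ assigns the correct asymptotic mass $p_i(s)$ to $y_i(s)$; simultaneously, the aggregate length and $L^2$-cost of the transition windows, together with the $(s(\eps)/\sqrt{\eps})$-sized fast-noise contribution, must be $o(1)$ and $o(\alpha(\eps)^{-1})$ respectively. With these relaxation and transition estimates in place, the Boué–Dupuis cost of the constructed $(v^\eps,u^\eps)$ converges to $\tfrac12\|v^\ast\|_2^2+\tfrac12\int_0^T\!\!\int \|\nabla_y U(\xi^\ast,y)\|^2\hat\nu^\ast_s(dy)\,ds$, which by the approximation step is arbitrarily close to $I_2(\xi^\ast,\nu^\ast)$, completing the lower bound.
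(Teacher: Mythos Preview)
Your upper-bound sketch is essentially the paper's approach; the phrase ``applying It\^{o} to $\nabla_y U(\bar X^\eps,\bar Y^\eps)$'' is imprecise (one applies It\^{o} to compactly supported test functions $\eta(\bar Y^\eps)$ and then extends to $\eta = U(\bar X(t),\cdot)$ via an approximation lemma, which is what actually yields \eqref{eq:eq321b}), but the conclusion you draw from it is the right one.

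The lower-bound construction has a genuine gap. On each interval $[t_k,t_{k+1}]$ where $\hat\nu^*_s \equiv \nu^*_k = \sum_l p_{k,l}\delta_{y_{k,l}}$, you propose to partition into \emph{macroscopic} sub-blocks of lengths $p_{k,l}(t_{k+1}-t_k)$ and hold $\bar Y^\eps$ near $y_{k,l}$ on the $l$-th sub-block. With that scheme the occupation measure $\bar\Lambda^\eps$ does \emph{not} converge to $\nu^*$: its disintegration at time $s$ is (asymptotically) $\delta_{y_{k,l}}$ for $s$ in sub-block $l$, not the mixture $\nu^*_k$. Testing against any $f(y,s)$ that varies in $s$ over $[t_k,t_{k+1}]$ exposes the discrepancy. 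Consequently $(\bar X^\eps,\bar\Lambda^\eps)$ converges to some $(\tilde\xi,\tilde\nu)\neq(\xi^*,\nu^*)$, and the Laplace lower bound for the target $(\xi^*,\nu^*)$ is not obtained.

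The paper repairs this by inserting a second, $\eps$-dependent partition of mesh $\Delta_\eps$ with $\Delta_\eps\to 0$ and $\Delta_\eps^2/\eps\to\infty$: each $[t_k,t_{k+1}]$ is cut into $\sim(t_{k+1}-t_k)/\Delta_\eps$ tiny intervals, and within \emph{each} such interval the fast process cycles through all atoms $y_{k,1},\dots,y_{k,m(k)}$ in the correct proportions. This makes the occupation measure on every time window of length $\geq\Delta_\eps$ close to $\nu^*_k$, which is what delivers $\bar\Lambda^\eps\to\nu^*$. A secondary point: on the stabilizing portions the paper uses a \emph{feedback} control, freezing the slow variable at $\bar X^\eps(\sigma_{ijl})$ so that the resulting fast dynamics are exactly of the form treated by the stabilization lemma, independently of how close $\bar X^\eps$ already is to $\xi^*$. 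Your open-loop choice $u^\eps(t)=\nabla_y U(\xi^*(t),y_i)$ creates a circularity (the convergence $\bar Y^\eps\to y_i$ requires $\bar X^\eps\approx\xi^*$, which in turn requires the occupation measure of $\bar Y^\eps$ to be correct) that you have not resolved.
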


\section{A Variational Formula.}
\label{sec:varformula}
In this section we recall a basic variational formula for exponential moments of functionals of finite dimensional Brownian motions that was established in \cite{boudup}. In the form stated below, the result can be found in  \cite[Theorem 8.3]{buddupbook}. 

Let $(\Omega, \mathcal{F}, \Prob,\mathcal{F}_t)_{0 \leq t \leq T}$ be a filtered probability space where the filtration $\{\mathcal{F}_t\}_{0 \leq t \leq T}$ satisfies the usual conditions. For $p \in \mathbb{N}$, denote by $\mathcal{A}^p$ the collection of all $\mathcal{F}_t-$progressively measurable, $\mathbb{R}^p-$valued stochastic processes $\{u(t)\}_{0\leq t \leq T}$ that satisfy $\E \int_0^T \|u(s)\|^2 ds < \infty$. Also for $M \in (0,\infty)$ we denote by 
\begin{equation}\label{eq:954}
	S_M^p \doteq \{ h \in L^2([0,T]: \RR^p): \int_0^T \|h(s)\|^2 ds \leq M\}.
\end{equation}
This space will be equipped with the inherited weak topology on $L^2([0,T]: \RR^p)$, under which it is a compact space.
Let $\mathcal{A}_{b,M}^p = \{ u \in \mathcal{A}^p : u \in  S_M^p, a.s.\}$ and let $\mathcal{A}_b^p = \cup_{M=1}^\infty \mathcal{A}_{b,M}^p$. When clear from the context, we will drop $p$ from the notation in $\mathcal{A}^p,\mathcal{A}_{b,M}^p$, $\cla_b^p$, and $S_M^p$. 

Let $\beta$ be a $p-$dimensional, standard, $\{\mathcal{F}_t\}-$Brownian motion on this filtered probability space. 

\begin{theorem}
\label{Variational-1}
Let $G \in \BM(\mathcal{C}([0,T]:\mathbb{R}^p))$. Then,
    \begin{equation}
    \nonumber
    - \log \E \exp \{ - G(\beta) \} = \inf_{v \in \mathcal{R}} \E \left( G \left(\beta + \int_0^\cdot v(s) ds\right) + \frac{1}{2} \int_0^T \|v(s)\|^2 ds \right)
\end{equation}
where $\mathcal{R}$ can be either $\mathcal{A}$ or $\mathcal{A}_b$.
\end{theorem}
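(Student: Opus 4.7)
The plan is to establish the asserted equality by proving two matching inequalities and then verifying that replacing $\mathcal{A}_b$ by $\mathcal{A}$ in the infimum does not change its value. The essential tools are Girsanov's theorem, Jensen's inequality applied to the convex function $-\log$, the Brownian martingale representation theorem, and the classical Gibbs variational identity
$$-\log \E_P e^{-G(\beta)} = \inf_{Q \ll P}\bigl\{\E_Q G(\beta) + R(Q\|P)\bigr\},$$
whose infimum is attained at the tilted measure $Q^*$ with $dQ^*/dP = e^{-G(\beta)}/\E_P e^{-G(\beta)}$ (well-defined and satisfying $R(Q^*\|P) \le 2\|G\|_\infty$ since $G$ is bounded). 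The bridge between this identity and the sought variational formula is the Girsanov-based equality $R(Q\|P) = \tfrac12 \E_Q \int_0^T \|v\|^2 ds$, which holds whenever the canonical process under $Q$ equals $\tilde\beta + \int_0^\cdot v\, ds$ for a $Q$-Brownian motion $\tilde\beta$.

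\textbf{Upper bound (``$\le$'').} Fix $v \in \mathcal{A}_{b,M}$. Novikov's condition holds since $v \in S_M$ almost surely, so $Z_t \doteq \exp(-\int_0^t v \cdot d\beta - \tfrac12 \int_0^t \|v\|^2 ds)$ is a true $P$-martingale, and $dQ/dP \doteq Z_T$ defines a probability measure under which $\beta^v_t \doteq \beta_t + \int_0^t v(s)\, ds$ is a Brownian motion (Girsanov). Since $\beta^v$ has Wiener law under $Q$, $\E_Q e^{-G(\beta^v)} = \E_P e^{-G(\beta)}$. Writing the left-hand side as $\E_P[Z_T\, e^{-G(\beta^v)}]$, substituting $Z_T$, and using the identity $\int v \cdot d\beta = \int v \cdot d\beta^v - \int \|v\|^2 ds$ (which follows from $d\beta^v = d\beta + v\, dt$), one arrives at
$$\E_P e^{-G(\beta)} = \E_P \exp\bigl(-G(\beta^v) - \int_0^T v \cdot d\beta^v + \tfrac12 \int_0^T \|v\|^2 ds\bigr).$$
Applying Jensen's inequality to $-\log$ and noting that $\E_P \int_0^T v \cdot d\beta^v = \E_P\int_0^T \|v\|^2 ds$ (because $\E_P \int v \cdot d\beta = 0$ for bounded $v$), one obtains
$$-\log \E_P e^{-G(\beta)} \le \E_P\bigl[G\bigl(\beta + \int_0^\cdot v\, ds\bigr) + \tfrac12 \int_0^T \|v\|^2 ds\bigr],$$
and taking the infimum over $v \in \mathcal{A}_b$ completes this direction.

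\textbf{Lower bound (``$\ge$'') and main obstacle.} Apply the Brownian martingale representation theorem to the positive $P$-martingale $M_t \doteq \E_P[e^{-G(\beta)}\mid \mathcal{F}_t]$ to obtain a progressively measurable $\phi$ with $M_t = M_0\, \mathcal{E}\bigl(\int_0^\cdot \phi \cdot d\beta\bigr)_t$. Set $v^* \doteq -\phi$; by Girsanov, $\beta + \int_0^\cdot v^*\, ds$ is a $Q^*$-Brownian motion and $R(Q^*\|P) = \tfrac12 \E_{Q^*}\int_0^T \|\phi\|^2 ds$, so the Gibbs identity yields
$$-\log \E_P e^{-G(\beta)} = \E_{Q^*}\bigl[G(\beta) + \tfrac12 \int_0^T \|\phi\|^2 ds\bigr].$$
The main obstacle is translating this $Q^*$-side identity into a statement about $\E_P[G(\beta + \int v^*\, ds) + \tfrac12 \int \|v^*\|^2 ds]$ in the form required by the theorem, because $\E_P$ and $\E_{Q^*}$ of the superficially similar expressions involving $\phi$ and $\beta$ do not coincide. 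The standard resolution (as in \cite{boudup,buddupbook}) is a two-step approximation: first establish the inequality for cylinder functionals $G(\beta) = g(\beta_{t_1}, \ldots, \beta_{t_n})$, where the problem reduces to a finite-dimensional stochastic control problem solvable by Hamilton-Jacobi-Bellman dynamic programming, producing an explicit Markovian near-optimal control on the $P$-space; and second, extend to general bounded measurable $G$ via a monotone class / martingale convergence argument, exploiting the uniform bound $R(Q^*\|P) \le 2\|G\|_\infty$ to control the limit.

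\textbf{Extension from $\mathcal{A}_b$ to $\mathcal{A}$.} Given $v \in \mathcal{A}$ with finite cost, set $\tau_N \doteq \inf\{t \ge 0 : \int_0^t \|v\|^2 ds > N\} \wedge T$ and $v^N \doteq v \mathbf{1}_{[0,\tau_N]} \in \mathcal{A}_{b,N}$; boundedness of $G$ plus dominated convergence (with $v^N \to v$ in $L^2(P \otimes dt)$) gives
$\E_P[G(\beta + \int v^N\, ds) + \tfrac12 \int \|v^N\|^2 ds] \to \E_P[G(\beta + \int v\, ds) + \tfrac12 \int \|v\|^2 ds]$ as $N \to \infty$, so the infima over $\mathcal{A}_b$ and $\mathcal{A}$ agree, completing the proof.
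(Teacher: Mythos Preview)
The paper does not prove this theorem. It is stated in Section~\ref{sec:varformula} as a known result, with the remark that it ``was established in \cite{boudup}'' and ``in the form stated below, the result can be found in \cite[Theorem 8.3]{buddupbook}.'' There is therefore no paper-proof to compare against; the theorem functions purely as a cited tool that drives the subsequent Laplace-principle arguments.

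Your sketch is a faithful outline of the Bou\'e--Dupuis argument. The upper bound via Girsanov and Jensen is correct as written. For the lower bound you rightly flag the central difficulty: the Gibbs/relative-entropy identity lives on the $Q^*$-side, and converting $\E_{Q^*}[G(\beta)+\tfrac12\int\|\phi\|^2]$ into an expression of the form $\E_P[G(\beta+\int v\,ds)+\tfrac12\int\|v\|^2]$ requires the cylinder-function/dynamic-programming step you cite; this is exactly how \cite{boudup} and \cite[Chapter 8]{buddupbook} proceed, so your plan is on target even though the details are deferred.

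One small point to tighten: in your final ``extension'' paragraph you invoke dominated convergence for $G(\beta+\int v^N\,ds)\to G(\beta+\int v\,ds)$, but $G$ is only bounded \emph{measurable}, not continuous, so pointwise convergence of the integrand is not automatic. In fact this step is unnecessary: once you have $-\log\E e^{-G(\beta)}\le \inf_{\mathcal A_b}$ (upper bound) and $-\log\E e^{-G(\beta)}\ge \inf_{\mathcal A}$ (lower bound, via the near-optimal control produced in the cylinder/HJB argument, which lies in $\mathcal A$), the inclusion $\mathcal A_b\subset\mathcal A$ gives $\inf_{\mathcal A}\le\inf_{\mathcal A_b}$, and all three quantities coincide without any approximation of general $v\in\mathcal A$ by bounded controls.
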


The above theorem will be used in the proofs of both Theorems \ref{theorem:ldp} and \ref{theorem:ldp-2}.  In the first case, $T=1$ and the role of $\beta$ will be played by the Brownian motion $B$ (in particular $p= r$), while in the second case $\beta = (W,B)$ and $p= k+d$.

\section{Proof of Theorem \ref{theorem:ldp}.}
\label{sec:pfthmone}
In order to prove the theorem, we will first show in Section \ref{subsec:single-upper} the LDP upper bound, which in terms of Laplace asymptotics corresponds to the statement: for every $F \in \mathcal{C}_b(\mathcal{P}(\mathbb{R}^d))$
\begin{equation}
\label{ineq-single-upper}
    \liminf_{\eps \rightarrow 0}  - \eps s^2(\eps) \log \E e^{-\frac{F(\mu^\eps)}{\eps s^2(\eps)}}  \geq \inf_{\gamma \in \mathcal{P}(\mathbb{R}^d)} \bigg( F(\gamma) + I_1(\gamma)  \bigg).
\end{equation}
Then, in Section \ref{subsec:single-lower} we will prove the complementary lower bound: for every $F \in \mathcal{C}_b(\mathcal{P}(\mathbb{R}^d))$
\begin{equation}
\label{ineq-single-lower}
    \limsup_{\eps \rightarrow 0}  - \eps s^2(\eps) \log \E e^{-\frac{F(\mu^\eps)}{\eps s^2(\eps)}} \leq \inf_{\gamma \in \mathcal{P}(\mathbb{R}^d)} \bigg( F(\gamma) + I_1(\gamma)  \bigg).
\end{equation}
Finally, in Section \ref{sec:cptlvthm1} we show that the function $I_1$ has compact level sets.
% The inequalities in \eqref{ineq-single-upper} and \eqref{ineq-single-lower} together with the compactness of level sets of $I_1$ shown in Proposition \ref{prop:single-rate-level-sets}
Together these three results will complete the proof of Theorem \ref{theorem:ldp}.

Assumption \ref{assumptions-1} will be taken to hold throughout this Section.

\subsection{LDP Upper Bound}
\label{subsec:single-upper}

In this Section, we prove the inequality in \eqref{ineq-single-upper}.  Fix $F \in \mathcal{C}_b(\mathcal{P}(\mathbb{R}^d))$. 
Note that the coefficients $\psi$ and $\sigma$ are Lipschitz maps and thus the SDE in \eqref{eq:SDE_single} has a unique pathwise solution.
This says that, there exists a measurable map $\tilde{\mathcal{G}}^\eps: \mathcal{C}([0,1]:\mathbb{R}^r) \rightarrow \mathcal{C}([0,1]:\mathbb{R}^d)$ such that $Y^\eps = \tilde{\mathcal{G}}^\eps(B)$ and consequently, there is a measurable map $\mathcal{G}^\eps: \mathcal{C}([0,1]:\mathbb{R}^r) \rightarrow \mathcal{P}(\mathbb{R}^d)$ such that $\mu^\eps = \mathcal{G}^\eps(B)$ where $\mu^{\eps}$ is as in \eqref{eq:occupational-1}.

Fix $\eps > 0$ and apply Theorem \ref{Variational-1} with $p = r$, $\beta = B$, and $G$ replaced by $G^\eps = F \circ \mathcal{G}^\eps$. Then, we have

\begin{equation}
\label{eq:238}
\begin{aligned}
            - \eps s^2(\eps) \log \E e^{-\frac{F(\mu^\eps)}{\eps s^2(\eps)}} &= - \eps s^2(\eps) \log \E e^{-\frac{G^\eps(B)}{\eps s^2(\eps)}} \\
           & = \inf_{v \in \mathcal{A}_b} \E \left[ \frac{1}{2}\eps s^2 (\eps) \int_0^1 \|v(s)\|^2 ds + G^\eps \left( B + \int_0^\cdot v(s)ds \right)   \right] \\
            &= \inf_{v \in \mathcal{A}_b} \E \left[ \frac{1}{2}\int_0^1 \|v(s)\|^2 ds + G^\eps \left( B + \frac{1}{\sqrt{\eps} s(\eps)}\int_0^\cdot v(s)ds \right)   \right].
\end{aligned}
\end{equation}

Fix $\delta > 0$ and choose for each $\eps > 0$ a $\tilde{v}^\eps \in \mathcal{A}_b$ that is $\delta-$optimal for the right side. Then, for all $\eps>0$

\begin{equation}
    \nonumber
    - \eps s^2(\eps) \log \E e^{-\frac{F(\mu^\eps)}{\eps s^2(\eps)}} 
    \geq \E \left[ \frac{1}{2}\int_0^1 \|\tilde{v}^{\eps}(s)\|^2 ds + G^\eps \left( B + \frac{1}{\sqrt{\eps} s(\eps)}\int_0^\cdot \tilde{v}^{\eps}(s)ds \right)   \right] - \delta.
\end{equation}

Since $F$ is bounded, by a standard localization argument (see  \cite[Theorem 3.17 ]{buddupbook}) it follows that there is a $M \in (0,\infty)$, and for each $\eps >0, v^\eps \in \mathcal{A}_{b,M}$ such that 
\begin{equation}
    \nonumber
              - \eps s^2(\eps) \log \E e^{-\frac{F(\mu^\eps)}{\eps s^2(\eps)}} 
              \geq \E \left[ \frac{1}{2}\int_0^1 \|{v}^{\eps}(s)\|^2 ds + G^\eps \left( B + \frac{1}{\sqrt{\eps} s(\eps)}\int_0^\cdot {v}^{\eps}(s)ds \right)   \right] - 2 \delta.   
\end{equation}

Also by an application of Girsanov's Theorem, it is easy to see that $G^\eps(B + \frac{1}{\sqrt{\eps}s(\eps)} \int_0^\cdot v^\eps(s) ds) = \Bar{\mu}^\eps$, a.s., where 
\begin{equation}
        \Bar{\mu}^\eps(A) = \int_0^1 1_A(\Bar{Y}^\eps(s))ds, \quad A \in \mathcal{B}( \mathbb{R}^d)
    \label{eq:occupational-bar-1}
\end{equation}
and $\Bar{Y}^\eps$ solves 

\begin{equation}
    d\Bar{Y}^\eps (t) = -\frac{1}{\eps} \psi(\Bar{Y}^\eps (t))dt  + \frac{s(\eps)}{\sqrt{\eps}} \sigma(\Bar{Y}^\eps (t)) dB(t) +\frac{1}{\eps} \sigma(\Bar{Y}^\eps (t)) v^\eps(t) dt, \quad \Bar{Y}^\eps (0) = y^0.
    \label{eq:Y-bar}
\end{equation}
In particular,
\begin{equation}\label{eq:438}
              - \eps s^2(\eps) \log \E e^{-\frac{F(\mu^\eps)}{\eps s^2(\eps)}} 
              \geq \E \left[ \frac{1}{2}\int_0^1 \|{v}^{\eps}(s)\|^2 ds + F(\bar\mu^\eps)  \right] - 2 \delta .
\end{equation}
We begin with the following moment estimate.
\begin{lemma}
\label{lemma-single-exp-Y}
 We have that 
 \begin{equation}
     \nonumber
     \sup_{\eps} \E \int_0^1 \|\Bar{Y}^\eps(s)\|^2 ds < \infty, \quad \text{and} \quad \sup_\eps \sup_{0\le s \le 1} \eps \E\|\Bar{Y}^\eps(s)\|^2  < \infty.
 \end{equation}
\end{lemma}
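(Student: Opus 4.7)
The plan is to apply It\^o's formula to the Lyapunov function $V_0$ (from part 4 of Assumption \ref{assumptions-1}, with $x=0$) evaluated along the controlled trajectory $\bar{Y}^\eps$, and then exploit the dissipativity inequality
$$\mathcal{V}_0(\xi)\cdot\nabla V_0(\xi) \ge c_1(0)\|\xi\|^2-c_2(0)$$
to control the growth. The key observation is that $\nabla\phi(0)=0$, so $\mathcal{V}_0(y)=a(y)\nabla\phi(y)-a(0)\nabla\phi(0)=\psi(y)$, meaning the Lyapunov estimate directly applies to the drift of $\bar{Y}^\eps$.

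Applying It\^o's formula (after a standard stopping-time localization, which is justified at the end by monotone convergence using the lower bound $V_0\ge 1$ and the boundedness of $\sigma$) gives, for $0\le t\le 1$,
\begin{equation*}
\begin{aligned}
\E V_0(\bar{Y}^\eps(t)) = V_0(y_0) &- \frac{1}{\eps}\int_0^t \E[\nabla V_0(\bar{Y}^\eps)\cdot \psi(\bar{Y}^\eps)]\, ds + \frac{1}{\eps}\int_0^t \E[\nabla V_0(\bar{Y}^\eps)^T\sigma(\bar{Y}^\eps)v^\eps(s)]\, ds \\
&+ \frac{s^2(\eps)}{2\eps}\int_0^t \E[\tr(\mathcal{H}V_0(\bar{Y}^\eps)a(\bar{Y}^\eps))]\, ds.
\end{aligned}
\end{equation*}
The first integral is bounded below by $\frac{c_1(0)}{\eps}\int_0^t \E\|\bar{Y}^\eps\|^2\, ds - \frac{c_2(0)t}{\eps}$ using Assumption \ref{assumptions-1}(4). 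The last (trace) term is $O(s^2(\eps)/\eps)$ uniformly in $t\in[0,1]$, since $\|\mathcal{H}V_0\|_\infty<\infty$ and $a$ is bounded; with $s(\eps)\le 1$ this contributes $O(1/\eps)$.

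The main obstacle is the cross term involving the control $v^\eps$, which carries a $1/\eps$ prefactor. This is handled by Young's inequality: since $\|\mathcal{H}V_0\|_\infty<\infty$ implies $\nabla V_0$ is Lipschitz, so $\|\nabla V_0(y)\|^2\le C(1+\|y\|^2)$, and $\sigma$ is bounded, one obtains
\begin{equation*}
|\nabla V_0(\bar{Y}^\eps)^T\sigma(\bar{Y}^\eps)v^\eps(s)| \le \frac{\eta}{2}C\|\sigma\|_\infty^2(1+\|\bar{Y}^\eps\|^2) + \frac{1}{2\eta}\|v^\eps(s)\|^2.
\end{equation*}
Choosing $\eta$ small enough that $\frac{\eta C\|\sigma\|_\infty^2}{2}\le \frac{c_1(0)}{2}$ allows the $\|\bar{Y}^\eps\|^2$ contribution from this term to be absorbed into at most half of the dissipation coming from the drift. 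Since $v^\eps\in\mathcal{A}_{b,M}$, $\E\int_0^1\|v^\eps(s)\|^2\, ds\le M$, so the $\|v^\eps\|^2$ contribution is bounded by $M/(2\eta\eps)$.

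Combining these estimates and rearranging yields
\begin{equation*}
\E V_0(\bar{Y}^\eps(t)) + \frac{c_1(0)}{2\eps}\int_0^t \E\|\bar{Y}^\eps(s)\|^2\, ds \le V_0(y_0) + \frac{K}{\eps}
\end{equation*}
for some constant $K$ depending on $c_1(0),c_2(0),M,\eta,\|\mathcal{H}V_0\|_\infty,\|a\|_\infty$ but independent of $\eps$. Since $V_0\ge 1$, the left side is nonnegative, so taking $t=1$ and multiplying through by $2\eps/c_1(0)$ yields the first claimed bound $\sup_\eps \E\int_0^1\|\bar{Y}^\eps(s)\|^2\, ds<\infty$. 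Dropping the integral on the left and using the lower bound $V_0(\xi)\ge \alpha_2(0)\|\xi\|^2+1$ yields
$$\alpha_2(0)\E\|\bar{Y}^\eps(t)\|^2 \le V_0(y_0)+K/\eps,$$
so $\eps\,\E\|\bar{Y}^\eps(t)\|^2 \le \eps V_0(y_0)/\alpha_2(0)+K/\alpha_2(0)$ is bounded uniformly in $\eps\in(0,1)$ and $t\in[0,1]$, giving the second bound.
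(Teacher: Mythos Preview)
Your proof is correct and follows essentially the same approach as the paper: apply It\^o's formula to $V_0(\bar Y^\eps)$, use the identity $\psi=\mathcal{V}_0$ and the dissipativity bound from Assumption~\ref{assumptions-1}(4), control the cross term with the control $v^\eps$ via Young's inequality and the linear growth of $\nabla V_0$, and invoke $v^\eps\in\mathcal{A}_{b,M}$. The paper organizes the two conclusions slightly differently (first deriving the integral bound, then using it to justify that the stochastic integral has zero expectation before deducing the pointwise $\eps\,\E\|\bar Y^\eps(t)\|^2$ bound), but the argument is the same.
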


\begin{proof}
We note that from Assumption \ref{assumptions-1} parts 2 and 3, $\psi = \clv_0$.
Using this fact and applying It\^{o}'s lemma to $V_0(\Bar{Y}^\eps (t))$ we obtain, for $0\le t \le 1$
\begin{equation}
\label{single-upper-ito}
\begin{gathered}
    V_0(\Bar{Y}^\eps(t)) = V_0(y_0)  - 
    \frac{1}{\eps} \int_0^t \nabla V_0(\Bar{Y}^\eps(s)) \cdot \left(\mathcal{V}_0(\Bar{Y}^\eps(s)) - \sigma(\Bar{Y}^\eps(s)) v^\eps(s)) \right) ds \\
    + \frac{s(\eps)}{\sqrt{\eps}}  \int_0^t  \nabla V_0 (\Bar{Y}^\eps(s))^T \sigma(\Bar{Y}^\eps(s))  dB(s) 
+ \frac{s^2(\eps)}{2\eps} \int_0^t \tr( [\sigma^T \mathcal{H} V_0  \sigma](\Bar{Y}^\eps(s))) ds.   \end{gathered}
\end{equation} 
Let, for $m \in \NN$, $\tau_m = \inf\{t: \Bar{Y}^\eps(t) \geq m\}$. Taking expectations and rearranging terms we obtain
\begin{equation}
    \begin{aligned}
       \E \int_0^{ t\wedge \tau_m} \nabla V_0(\Bar{Y}^\eps(s)) \cdot \mathcal{V}_0(\Bar{Y}^\eps(s)) ds &\le   \eps V_0 (y_0)  
        +  \E \int_0^{t\wedge \tau_m} \nabla V_0 (\Bar{Y}^\eps(s))^T  \sigma (\Bar{Y}^\eps(s)) v^\eps(s)ds\\
		&  +  \frac{s^2(\eps)}{2}  \E \int_0^{t \wedge \tau_m} \tr( [\sigma^T \mathcal{H} V_0  \sigma](\Bar{Y}^\eps(s)))ds,
    \end{aligned}
    \label{eq:phi-tilde-bound}
\end{equation} 
where we have used the nonnegativity of $V_0$.

With $c_1(\cdot), c_2(\cdot)$ as in  Assumption \ref{assumptions-1} (part 4), we have
\begin{equation}
\nonumber
    \begin{gathered}
            c_1(0) \E \int_0^{t \wedge \tau_m} \|\Bar{Y}^\eps(s)\|^2 ds \leq  \E \int_0^{t \wedge \tau_m} \nabla V_0(\Bar{Y}^\eps(s)) \cdot \mathcal{V}_0 (\Bar{Y}^\eps(s)) ds + c_2(0) \\
            \leq c_2(0) + \eps V_0(y_0)  + \|\sigma\|_\infty \E \int_0^1 \|\nabla V_0(\Bar{Y}^\eps(s))\| \|v^\eps(s)\| ds + \frac{s^2(\eps)r}{2} \|\sigma\|_\infty^2 \|\mathcal{H} V_0\|_\infty.
    \end{gathered}
\end{equation}

Using the linear growth of $\nabla V_0$ (which follows from $\|\clh V_0\|_{\infty} <\infty$) and Young's inequality, we can find $\kappa_1 \in (0,\infty)$ such that for all $\eps > 0$
\begin{equation}
\label{single-lemma-exp-Ito}
  \E \int_0^1 \|\nabla V_0(\Bar{Y}^\eps(s))\| \|v^\eps(s)\| ds \leq \frac{c_1(0)}{2(\|\sigma\|_\infty+1)} \E \int_0^1 (1+\|\Bar{Y}^\eps(s)\|^2) ds + \kappa_1 \E \int_0^1 \|v^\eps(s)\|^2 ds .
\end{equation}
Then,  by sending $m \rightarrow \infty$ in the previous display, and recalling that $v^{\eps} \in \cla_{b,M}$, we have,
\begin{equation}
    \label{eq:130}
    \frac{c_1(0)}{2} \E \int_0^1 \|\Bar{Y}^\eps(s)\|^2 ds \leq \kappa_2
	\end{equation}
where $\kappa_2 = c_2(0) +c_1(0) + \kappa_1 M + r\|\sigma\|^2_\infty \|\mathcal{H}V_0\|_\infty
+ V_0(y_0)$. This proves the first statement in the lemma.

Finally, using Assumption \ref{assumptions-1} (part 4) in \eqref{single-upper-ito} again and taking expectations, we have, for $0\le t \le 1$,
\begin{equation}
\begin{aligned}
\alpha_2(0) \eps E \|\Bar{Y}^\eps(t)\|^2	 &\le
\eps E V_0(\Bar{Y}^\eps(t)) \le \eps V_0(y_0) + c_2(0) + \frac{r}{2}\|\sigma\|^2_\infty \|\mathcal{H}V_0\|_\infty\\
& + \|\sigma\|_\infty E\int_0^1 \|\nabla V_0(\Bar{Y}^\eps(s))\| \|v^\eps(s)\| ds,
\end{aligned}
\end{equation}
where we have used the observation that the expected value of the stochastic integral in \eqref{single-upper-ito} is $0$ in view of \eqref{eq:130} and linear growth of $\nabla V_0$.
The second statement in the lemma is now immediate from \eqref{single-lemma-exp-Ito},\eqref{eq:130} and on  recalling that $v^{\eps} \in \cla_{b,M}$.
\end{proof}

We now introduce certain occupation measures which will play an important role in the proof of the upper bound.
For $\eps>0$, define a $\mathcal{P}(\mathbb{R}^{d+r} )-$valued random variable $\Bar{Q}^\eps$ as
\begin{equation}
    \Bar{Q}^\eps (A \times B) = \int_0^1 1_A (\Bar{Y}^\eps(s)) 1_B( v^{\eps}(s)) ds, \, A\in \mathcal{B}( \mathbb{R}^d), B\in \mathcal{B}( \mathbb{R}^r)
    \label{eq:occupational-2}
\end{equation}
The following proposition gives the tightness of the collection $\{\Bar{Q}^\eps\}$.
\begin{proposition}
\label{single-prop-tightness-Q}
The family of $\mathcal{P}(\mathbb{R}^{d+r})-$valued random variables $\{\Bar{Q}^\eps\}$ defined in \eqref{eq:occupational-2} is tight. Furthermore, 
\begin{equation}\label{eq:350}
	\sup_{\eps} \E \int_{\mathbb{R}^{d+r} } (\|y\|^2 + \|z\|^2) \Bar{Q}^\eps(dy\, dz) < \infty .
\end{equation}
\end{proposition}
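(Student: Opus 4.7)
The plan is to first establish the uniform moment bound \eqref{eq:350}, and then deduce tightness from it by a standard Markov-inequality argument for random probability measures.

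For the moment bound, by Fubini and the definition of $\bar Q^\eps$ in \eqref{eq:occupational-2},
\[ \int_{\RR^{d+r}} (\|y\|^2 + \|z\|^2)\, \bar Q^\eps(dy\, dz) = \int_0^1 \|\bar Y^\eps(s)\|^2\, ds + \int_0^1 \|v^\eps(s)\|^2\, ds. \]
The first term on the right has uniformly bounded expectation by the first assertion of Lemma \ref{lemma-single-exp-Y}. The second term is bounded almost surely by $M$ because $v^\eps \in \cla_{b,M}$. Taking expectations and combining yields \eqref{eq:350}.

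For tightness, let $C \in (0,\infty)$ denote the uniform bound just obtained and, for $R > 0$, set $B_R \doteq \{(y,z) \in \RR^{d+r}: \|y\|^2 + \|z\|^2 \le R^2\}$, which is compact. Markov's inequality gives $\E \bar Q^\eps(B_R^c) \le C/R^2$ uniformly in $\eps$. Given $\delta > 0$, choose for each $k \in \NN$ a radius $R_k$ with $C/R_k^2 \le (2^{-k}\delta)^2$, and apply Markov again:
\[ P\bigl(\bar Q^\eps(B_{R_k}^c) > 2^{-k}\delta\bigr) \le \frac{C/R_k^2}{2^{-k}\delta} \le 2^{-k}\delta. \]
The set $K_\delta \doteq \{\nu \in \clp(\RR^{d+r}) : \nu(B_{R_k}^c) \le 2^{-k}\delta \text{ for all } k \ge 1\}$ is tight by construction (for any $\eta > 0$ one can pick $k$ with $2^{-k}\delta < \eta$), and it is closed in $\clp(\RR^{d+r})$ under weak convergence since each map $\nu \mapsto \nu(B_{R_k}^c)$ is lower semicontinuous (as $B_{R_k}^c$ is open). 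Hence $K_\delta$ is compact. A union bound over $k$ yields $P(\bar Q^\eps \notin K_\delta) \le \sum_{k \ge 1} 2^{-k}\delta = \delta$ for every $\eps$, which is precisely the statement of tightness of $\{\bar Q^\eps\}$ in $\clp(\clp(\RR^{d+r}))$.

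No step here presents a genuine obstacle: the $\bar Y^\eps$ moment bound was already done in Lemma \ref{lemma-single-exp-Y}, the control $v^\eps$ is by construction in $\cla_{b,M}$, and the passage from a uniform second-moment bound to tightness of the random occupation measures is a routine application of Markov's inequality combined with lower semicontinuity of $\nu \mapsto \nu(B_{R}^c)$.
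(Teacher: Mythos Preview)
Your proof is correct and follows essentially the same approach as the paper: both establish \eqref{eq:350} via the identity $\int (\|y\|^2+\|z\|^2)\,\bar Q^\eps(dy\,dz) = \int_0^1 \|\bar Y^\eps(s)\|^2\,ds + \int_0^1 \|v^\eps(s)\|^2\,ds$, invoking Lemma~\ref{lemma-single-exp-Y} for the first term and the constraint $v^\eps \in \cla_{b,M}$ for the second. The only difference is that the paper deduces tightness more tersely by passing to the mean measures $q^\eps(A) \doteq E\bar Q^\eps(A)$ and noting that tightness of $\{\bar Q^\eps\}$ follows from tightness of $\{q^\eps\}$ (a standard fact, cf.\ \cite[Theorem 2.11]{buddupbook}), whereas you spell out the Markov-inequality construction of compact sets in $\clp(\RR^{d+r})$ directly; both routes are equivalent and yours is a valid elaboration of what the paper leaves implicit.
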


\begin{proof}
	Define $q^{\eps} \in \clp(\RR^{d+r})$ as \PZ{$q^{\eps}(C) \doteq E(\Bar{Q}^\eps(C))$, for $C \in \clb(\RR^{d+r})$}.
	It suffices to prove that
	\begin{equation}\label{eq:149}
		\sup_{\eps}\int_{\RR^{d+r}} (\|y\|^2 + \|z\|^2) q^\eps(dy\, dz) < \infty.
	\end{equation}
Note that
$$\int_{\RR^{d+r}} (\|y\|^2 + \|z\|^2) q^\eps(dy\, dz) = E \int_0^1 \|\Bar{Y}^{\eps}(s)\|^2 ds  + E \int_0^1 \|v^{\eps}(s)\|^2 ds.$$
The estimate in \eqref{eq:149} now follows from Lemma \ref{lemma-single-exp-Y} and on using the fact that
$v^\eps \in \mathcal{A}_{b,M}$ for every $\eps \in (0,1)$.
\end{proof}
The next step will be to give a suitable characterization of the weak limit points of $\bar Q^{\eps}$. In order for that we present the following approximation lemma which will also be used in the proof of Theorem \ref{theorem:ldp-2}.
\begin{lemma}
\label{single-upper-lemma-orthogonality-extension}
Let $f: \mathbb{R}^d \rightarrow \mathbb{R}$ be a $\mathcal{C}^2$ function such that $\|\mathcal{H} f\|_\infty < \infty$. Then, there exists a sequence $\{f_M\}_{M \in \mathbb{N}}$ of $\mathcal{C}_c^2$ functions from $\mathbb{R}^d$ to $\mathbb{R}$ such that 

\begin{enumerate}
    \item for each $M \in \mathbb{N}$, $f_M(x) = f(x)$ for all $\|x\| \leq M$.
    \item $\sup_{M \in \mathbb{N}} \sup_{x\in \mathbb{R}^d} \frac{\|\nabla f_M(x)\|}{1 + \|x\|} < \infty$.
\end{enumerate}
\end{lemma}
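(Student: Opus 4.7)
The plan is to produce $f_M$ from $f$ by a smooth radial cutoff. Fix once and for all a function $\chi \in \mathcal{C}_c^\infty(\mathbb{R}^d)$ with $0 \leq \chi \leq 1$, $\chi(x) = 1$ for $\|x\| \leq 1$, and $\chi(x) = 0$ for $\|x\| \geq 2$; set $K_\chi \doteq \sup_{x \in \mathbb{R}^d} \|\nabla \chi(x)\| < \infty$. For $M \in \mathbb{N}$, define $\chi_M(x) \doteq \chi(x/M)$ and $f_M(x) \doteq \chi_M(x) f(x)$. Then $f_M \in \mathcal{C}_c^2$, with support contained in $\{\|x\| \leq 2M\}$, and $\chi_M \equiv 1$ on $\{\|x\| \leq M\}$, giving property (1) immediately.

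For property (2), the key observation is that $\|\mathcal{H}f\|_\infty < \infty$ yields controlled growth of $f$ and $\nabla f$. Specifically, Taylor's theorem gives constants $A_0, A_1 \in (0,\infty)$ (depending only on $f(0)$, $\nabla f(0)$, and $\|\mathcal{H}f\|_\infty$) such that
\begin{equation}
\nonumber
|f(x)| \leq A_0(1 + \|x\|^2), \qquad \|\nabla f(x)\| \leq A_1(1 + \|x\|), \quad x \in \mathbb{R}^d.
\end{equation}
By the product rule, $\nabla f_M(x) = \chi_M(x) \nabla f(x) + f(x) \nabla \chi_M(x)$, and $\nabla \chi_M(x) = M^{-1} (\nabla \chi)(x/M)$ is supported on the annulus $\{M \leq \|x\| \leq 2M\}$ with $\|\nabla \chi_M(x)\| \leq K_\chi / M$.

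To finish, I would estimate each term separately. The first term is uniform: $\|\chi_M(x) \nabla f(x)\| \leq A_1(1 + \|x\|)$, so $\|\chi_M \nabla f(x)\| / (1 + \|x\|) \leq A_1$ for all $M$ and $x$. For the second term, the bound is trivially zero off the annulus $\{M \leq \|x\| \leq 2M\}$, while on this annulus
\begin{equation}
\nonumber
\frac{\|f(x) \nabla \chi_M(x)\|}{1 + \|x\|} \leq \frac{A_0(1 + \|x\|^2)(K_\chi/M)}{1 + \|x\|} \leq \frac{A_0 K_\chi (1 + 4M^2)}{M(1 + M)},
\end{equation}
which is bounded uniformly in $M \in \mathbb{N}$. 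Combining these two bounds yields
\begin{equation}
\nonumber
\sup_{M \in \mathbb{N}} \sup_{x \in \mathbb{R}^d} \frac{\|\nabla f_M(x)\|}{1 + \|x\|} < \infty,
\end{equation}
establishing (2). The argument is routine once the cutoff is in place, so no substantive obstacle arises; the only care needed is tracking that the factor of $1/M$ from $\nabla \chi_M$ exactly compensates the quadratic growth of $f$ on the annulus where $\nabla \chi_M$ is active.
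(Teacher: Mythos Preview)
Your proof is correct and follows essentially the same approach as the paper: multiply $f$ by a smooth radial cutoff whose gradient is $O(1/M)$ and is supported on an annulus of scale $M$, then use the quadratic growth of $f$ (from the bounded Hessian) to bound the cross term. The only cosmetic difference is that the paper builds its cutoff by mollifying a piecewise-linear function of $\|x\|$, whereas you take a ready-made bump $\chi$ on $\mathbb{R}^d$ and rescale; the resulting estimates are the same.
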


\begin{proof}
Define for $r \in \NN$, $\tilde g_r : \mathbb{R} \to \mathbb{R}$ as
\[ \tilde g_r(y) = \begin{cases} 
      1 & y\leq r \\
      \frac{2r-y}{r} & r < y\leq 2r \\
      0 & 2r < y
   \end{cases}
\]
We now suitably mollify the above piecewise smooth function. Let $\xi: \RR \to \RR$ be defined as
\[ \xi(y) = \begin{cases} 
      c e^{-\frac{1}{1 - y^2}} & |y|< 1 \\
      0 &  |y| \geq 1
   \end{cases}, 
\]
where $c$ is a normalization constant that makes $\xi$ a probability density. 
Now define for $M \in \NN$ 
\begin{equation}
    g_M (y) = (\tilde g_{M+1} * \xi) (y) = \int_\mathbb{R} \tilde g_{M+1} (y -x) \xi (x) dx = \int_\mathbb{R} \tilde g_{M+1} (x) \xi (y-x) dx.
    \label{eq:smoothed}
\end{equation}
Note that $g_M(y) = 1$  for $y \le M$, $g_M(y)=0$ for $y\ge 2M+3$ and $g_M(y) \in [0,1]$ for all $y \in \RR$. Furthermore $g_M$ is smooth and 
\begin{equation}
	|g'_M(y)| \le \frac{1}{M},\, \mbox{ for } y \in \RR.
\end{equation}
This in particular says that, for all $x \in \RR^d$
\begin{equation}
	\|\nabla(g_M(\|x\|))\| = \left \| g'_M(\|x\|) \frac{x}{\|x\|}\right\| \le \frac{1}{M}.
\end{equation}
Define 
$$f_M(x) \doteq f(x) g_M(\|x\|), \; x \in \RR^d.$$
	Then note that $f_M$ is in $C^2_c$ and $f_M(x)=f(x)$ for $\|x\| \le M$. 
	Also, since $\sup_{x \in \RR^d} |\clh f(x)| < \infty$, there is a $C_1 \in (0, \infty)$ (independent of $M$) such that
	$$ |f(x)| \le C_1(1+\|x\|^2), \;\;  \|\nabla f(x)\| \le C_1(1+\|x\|) \; \mbox{ for all } x \in \RR^d.$$
	Finally note that, for $\|x\| < M$
	$$\|\nabla f_M(x)\| = \|\nabla f(x)\| \le C_1(1+\|x\|)$$
	and for $\|x\| \ge M$
	\begin{align*}
		\|\nabla f_M(x)\| &=  \left\|g_M(\|x\|)\nabla f(x) + f(x) \nabla g_M(\|x\|)\right\|\\
		&\le \|\nabla f(x)\| + \sup_{\|x\| \le 2M+3} \frac{|f(x)|}{M}\\
		& \le C_1(1+\|x\|) + C_1 \frac{1+(2M+3)^2}{M} \le 23 C_1(1+\|x\|).
	\end{align*}
	The result follows. 
\end{proof}
We now proceed with obtaining a characterization for the weak limit points of $\{\Bar{Q}^\eps\}$.
Recall from Proposition \ref{single-prop-tightness-Q} that this collection is tight.

\begin{lemma}\label{lem:ineqincos}
	Let $\Bar{Q}$ be a weak limit point of $\{\Bar{Q}^\eps\}$. Then, a.s.,
	$$\int_{\RR^{d+r}}  \|z\|^2 \Bar{Q}(dy\, dz) \ge \int_{\RR^{d+r}}  \|\sigma^T(y) \nabla \phi(y)\|^2 \Bar{Q}(dy\, dz).$$
	
\end{lemma}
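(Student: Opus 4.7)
The strategy is to derive an asymptotic orthogonality identity by applying It\^o's formula to $\phi(\bar Y^\eps)$, and then combine it with Cauchy--Schwarz in the limit. The key structural input is Assumption \ref{assumptions-1}(2)(a), which gives $\psi = a \nabla\phi = \sigma\sigma^T \nabla\phi$, so that $\nabla\phi(y)^T\psi(y) = \|\sigma^T(y)\nabla\phi(y)\|^2$. This is what makes the rate function $\frac{1}{2}\int \|\sigma^T\nabla\phi\|^2 d\gamma$ appear naturally from the cost $\frac{1}{2}\int\|z\|^2 d\bar Q$.

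The first step is to apply It\^o's formula to $\phi(\bar Y^\eps(t))$ on $[0,1]$. Since $\|\clh\phi\|_\infty<\infty$ and $\nabla\phi(0)=0$, one has $|\phi(y)|\le C(1+\|y\|^2)$ and $\|\nabla\phi(y)\|\le C(1+\|y\|)$, so the standard truncation provided by Lemma \ref{single-upper-lemma-orthogonality-extension} (applied to $\phi$) together with the second-moment bounds of Lemma \ref{lemma-single-exp-Y} justify passing to the untruncated identity. Multiplying by $\eps$ and using $\nabla\phi^T\psi = \|\sigma^T\nabla\phi\|^2$, I obtain
\begin{equation*}
\int_0^1 \|\sigma^T(\bar Y^\eps)\nabla\phi(\bar Y^\eps)\|^2\, ds \;=\; \int_0^1 \nabla\phi(\bar Y^\eps)^T\sigma(\bar Y^\eps)\, v^\eps(s)\, ds \;+\; R^\eps,
\end{equation*}
where $R^\eps = \eps[\phi(y_0)-\phi(\bar Y^\eps(1))] + \sqrt{\eps}\,s(\eps)\int_0^1 \nabla\phi^T\sigma\, dB + \tfrac{s^2(\eps)}{2}\int_0^1 \tr(\sigma^T\clh\phi\,\sigma)(\bar Y^\eps)\,ds$. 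Using $\sup_\eps E\int_0^1\|\bar Y^\eps\|^2 ds <\infty$ from Lemma \ref{lemma-single-exp-Y}, the It\^o isometry shows the stochastic integral term is $o(1)$ in $L^2$; the boundedness of $\tr(\sigma^T\clh\phi\,\sigma)$ makes the correction term $O(s^2(\eps))$; and the boundary term is controlled from above by $\eps[\phi(y_0) + C_0]\to 0$ (using the lower bound on $\phi$ that follows from strong/strict convexity structure implicit in the assumptions).

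Rewriting in terms of the occupation measure gives $A^\eps - C^\eps = R^\eps$ with $A^\eps = \int \|\sigma^T(y)\nabla\phi(y)\|^2\, d\bar Q^\eps$ and $C^\eps = \int \nabla\phi(y)^T\sigma(y)\, z\, d\bar Q^\eps$. Applying Cauchy--Schwarz to $C^\eps$ yields $C^\eps \le \sqrt{A^\eps B^\eps}$ where $B^\eps = \int\|z\|^2 d\bar Q^\eps \le M$. By Skorohod representation, along a subsequence $\bar Q^\eps\to\bar Q$ a.s. The cleanest route is to upgrade the prelimit identity to the limiting orthogonality relation
\begin{equation*}
\int \|\sigma^T(y)\nabla\phi(y)\|^2\, d\bar Q(y,z) \;=\; \int \nabla\phi(y)^T\sigma(y)\, z\, d\bar Q(y,z),
\end{equation*}
after which Cauchy--Schwarz applied under $\bar Q$ gives $A \le \sqrt{A}\sqrt{\int\|z\|^2 d\bar Q}$ and hence the claim.

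The main obstacle is passing the identity (not merely the inequality) through the weak limit, since both integrands have quadratic/bilinear growth in $(y,z)$ while Proposition \ref{single-prop-tightness-Q} only provides uniform second-moment control $\sup_\eps E\int(\|y\|^2+\|z\|^2)\,d\bar Q^\eps<\infty$, which is borderline for uniform integrability. To handle this I would proceed by smooth truncation: apply the It\^o computation to $\phi_M$ from Lemma \ref{single-upper-lemma-orthogonality-extension} so that $\nabla\phi_M$ and $\clh\phi_M$ are compactly supported, obtain the identity for each fixed $M$ with bounded integrands, pass $\eps\to 0$ (bounded continuous weak convergence), and finally let $M\to\infty$, using the uniform moment bound to control the remainder $\int(\nabla\phi-\nabla\phi_M)\cdot\sigma(\sigma^T\nabla\phi - z)\, d\bar Q$ via Cauchy--Schwarz. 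A simpler fallback --- sufficient for the statement --- is to derive from the prelimit identity the pathwise inequality $\sqrt{A^\eps}\le\sqrt{B^\eps}+\sqrt{R^\eps_+}$, which via Fatou applied to the nonnegative continuous integrand $\|\sigma^T\nabla\phi\|^2$ yields $\int\|\sigma^T\nabla\phi\|^2 d\bar Q \le \liminf_\eps B^\eps$, and then to argue that the same subsequential limit computation, when combined with Cauchy--Schwarz on $\bar Q$ via the orthogonality identity, forces the desired bound $A \le \int\|z\|^2 d\bar Q$.
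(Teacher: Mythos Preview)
Your main proposed route --- apply It\^o's formula to $\phi_M\in C_c^2$, pass to the weak limit $\bar Q$, then send $M\to\infty$ using the uniform linear-growth bound on $\nabla\phi_M$ from Lemma \ref{single-upper-lemma-orthogonality-extension} together with the a.s.\ bound $\int(\|y\|^2+\|z\|^2)\,d\bar Q<\infty$ --- is correct and is exactly the paper's argument. The paper organizes it slightly differently: it first writes the identity for a generic $\eta\in C_c^2$, disintegrates $\bar Q(dy\,dz)=\hat Q(dy)\,q(y,dz)$ with $u(y)=\int z\,q(y,dz)$, and then completes the square $\|u\|^2=\|u-\sigma^T\nabla\phi\|^2+\|\sigma^T\nabla\phi\|^2+2(\cdot)$ (with Jensen for $\int\|z\|^2\ge\int\|u\|^2$) rather than applying Cauchy--Schwarz. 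These are cosmetic differences; the content is the same.

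Two caveats on the other routes you sketch. First, in the direct-$\phi$ attempt you control the boundary term $-\eps\,\phi(\bar Y^\eps(1))$ via a lower bound on $\phi$, but Assumption \ref{assumptions-1} does not assume $\phi$ is bounded below (strong convexity appears only in the examples of Remark 2.2, not in the general hypotheses). The truncation route sidesteps this since $\phi_M$ is bounded. Second, your fallback does not close: from the prelimit inequality you get $A\le\liminf_\eps B^\eps$, while Fatou gives $\int\|z\|^2\,d\bar Q\le\liminf_\eps B^\eps$ with the inequality in the same direction, so no comparison between $A$ and $\int\|z\|^2\,d\bar Q$ follows. You really do need the limiting orthogonality identity $\int[\psi(y)-\sigma(y)z]\cdot\nabla\phi(y)\,d\bar Q=0$ (which your truncation route delivers), after which Cauchy--Schwarz (or the paper's square-expansion) finishes.
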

\begin{proof}
Let $\eta \in C_c^2(\RR^d)$. Then by It\^{o}'s formula
\begin{align*}
	\eta(\Bar{Y}^{\veps}(1)) - \eta(y_0) &= - \frac{1}{\veps}\int_0^1 \left[\psi(\Bar{Y}^{\veps}(s)) - 
	\sigma(\Bar{Y}^{\veps}(s))v^{\veps}(s)\right]\cdot\nabla \eta(\Bar{Y}^{\veps}(s)) ds\\
	&+ \frac{s(\veps) }{\veps^{1/2}}\int_0^1 \nabla \eta(\Bar{Y}^{\veps}(s))^T \sigma(
	\Bar{Y}^{\veps}(s))dB(s)
	+ \frac{s^2(\veps) }{2\veps} \int_0^1 \tr[\sigma \sigma^T\clh \eta](\Bar{Y}^{\veps}(s)) ds.
\end{align*}
Multiplying with $\veps$ in the above equation
\begin{align*}
	\veps\left[\eta(\Bar{Y}^{\veps}(1)) - \eta(y_0)\right] &= - \int_0^1 \left[\psi(\bar Y^{\veps}(s)) - 
	\sigma(\Bar{Y}^{\veps}(s))v^{\veps}(s)\right]\cdot \nabla \eta(\Bar{Y}^{\veps}(s)) ds\\
	&+ s(\veps)\veps^{1/2} \int_0^1 \nabla \eta(\bar Y^{\veps}(s))^T \sigma(\bar Y^{\veps}(s))dB(s)
	+ \frac{1}{2}s^2(\veps) \int_0^1 \tr[\sigma \sigma^T\clh\eta](\Bar{Y}^{\veps}(s)) ds.
\end{align*}
Sending $\veps $ to $0$  and using the fact that $\eta$, $\nabla\eta$
and $\clh \eta$ are bounded (as $\eta$ has compact support), we have, in probability,
\begin{equation}
	\lim_{\eps \to 0}\int_{\RR^{d+r}}\left[\psi(y) - \sigma(y)z\right]\cdot \nabla \eta(y) \Bar{Q}^{\veps}(dy\, dz) =
	\lim_{\eps \to 0}\int_0^1 \left[\psi(\bar Y^{\veps}(s)) - 
		\sigma(\Bar{Y}^{\veps}(s))v^{\veps}(s)\right]\cdot \nabla \eta(\Bar{Y}^{\veps}(s)) ds = 0.
\end{equation}

% Since by assumption $\eta$ has at most linear growth, from the second statement in Lemma \ref{lem:tighbd}, as $\veps \to 0$
% $$E|\veps\eta(\bar Y^{\veps}(1))|^2 \to 0.$$
We relabel the subsequence along which $\Bar{Q}^\eps \Rightarrow \Bar{Q}$, as $\Bar{Q}^\eps$.
Then from the square integrability in \eqref{eq:350}, and since  $\eta$ has compact support we have that as $\eps \to 0$,
\begin{equation}
	\lim_{\veps \to 0} \int_{\RR^{d+r}} \left[\psi(y) - \sigma(y)z\right]\cdot \nabla \eta(y) \Bar{Q}^{\veps}(dy\, dz)
	= \int_{\RR^{d+r}} \left[\psi(y) - \sigma(y)z\right]\cdot \nabla \eta(y) \Bar{Q}(dy\, dz).
\end{equation}
Combining the above two displays we have, a.s.,

\begin{align}
0 
&= \int_{\RR^{d+r}} \left[\psi(y) - \sigma(y)z\right]\cdot \nabla \eta(y) \Bar{Q}(dy\, dz).\label{eq:e8}
\end{align}
Disintegrate $\Bar{Q}$ as $\Bar{Q}(dy\, dz) = q(y,\, dz) \hat Q(dy)$, \PZ{where $\hat{Q}(A) \doteq \Bar{Q}(A \times \RR^r), A \in \clb(\RR^d)$ is the first marginal of $\Bar{Q}$ and $q$ is the regular conditional probability distribution (r.c.p.d.) on the second coordinate given the first coordinate.
} Now define
$$u(y)= \int_{\RR^r} z \, q(y,\, dz).$$
Note that the above integral is well defined a.s. for $\hat Q$ a.e. $y$, since from \eqref{eq:350} and Fatou's lemma 
\begin{equation}\label{eq:408}
	\int_{\mathbb{R}^{d+r} } (\|y\|^2 + \|z\|^2) \Bar{Q}(dy\, dz) < \infty \mbox{ a.s.}\end{equation}
Then, from \eqref{eq:e8}, and a standard separability argument, a.s., for all $\eta \in C_c^2(\RR^d)$,
\begin{equation}
	0= \int_{\mathbb{R}^{d} }  [\psi(y)- \sigma(y)u(y)]\cdot \nabla \eta(y) \hat Q(dy).\label{eq:etaisze}\end{equation}
We now argue that, although $\phi$ does not have compact support, we can replace $\eta$ by $\phi$ in the above identity, namely,
\begin{equation}
	0= \int_{\mathbb{R}^{d} }  [\psi(y)- \sigma(y)u(y)]\cdot \nabla \phi(y) \hat Q(dy). \label{eq:eq321}
\end{equation}
From Lemma \ref{single-upper-lemma-orthogonality-extension} there exists a sequence $\eta_M$ of functions in $\clc_c^2(\RR^d)$ such that
$\eta_M(y)= \phi(y)$ for all $\|y\|\le M$ and
\begin{equation}\label{eq:unifbdonnaeta}
	\sup_{M\in \NN} \frac{\|\nabla \eta_M(y)\|}{1+\|y\|} \doteq C_1 <\infty .
\end{equation}
Using the linear growth of $\nabla\phi$, and the boundedness of $a, \sigma$, we have for some $\kappa_1 \in (0, \infty)$
\begin{align}
	 \int_{\mathbb{R}^{d} }  |(\psi(y)- \sigma(y)u(y))\cdot\nabla \phi(y)| \hat Q(dy)
	 &\le \kappa_1 \int_{\mathbb{R}^{d} }  (1+ \|y\|^2 + \|u(y)\|^2) \hat Q(dy)\nonumber\\
	 &\le \kappa_1 \int_{\mathbb{R}^{d+r} }  (1+ \|y\|^2 + \|z\|^2)  \Bar{Q}(dy\, dz) < \infty\label{eq:333}
\end{align}
where we have used \eqref{eq:408}, the definition of $u(y)$, and Jensen's inequality.
Now
\begin{align}
\int_{\mathbb{R}^{d} } [\psi(y)- \sigma(y)u(y)]\cdot \nabla \phi(y) \hat Q(dy) &=
\int_{\|y\|<M} [\psi(y)- \sigma(y)u(y)]\cdot \nabla \phi(y) \hat Q(dy) \nonumber\\
&\quad + \int_{\|y\|\ge M} [\psi(y)- \sigma(y)u(y)]\cdot\nabla \phi(y) \hat Q(dy)\nonumber\\
&= \int [\psi(y)- \sigma(y)u(y)]\cdot\nabla \eta_M(y) \hat Q(dy)\nonumber\\
&\quad + \int_{\|y\|\ge M} [\psi(y)- \sigma(y)u(y)]\cdot(\nabla \phi(y) - \nabla \eta_M(y)) \hat Q(dy)\nonumber\\
&=\int_{\|y\|\ge M} [\psi(y)- \sigma(y)u(y)]\cdot(\nabla \phi(y) - \nabla \eta_M(y)) \hat Q(dy) \label{eq:849n}
\end{align}
where the last line is from \eqref{eq:etaisze} applied with $\eta = \eta_M$.
The last term converges to $0$ as $M \to \infty$, since  as in \eqref{eq:333} we have that, for some $\kappa_2 \in (0,\infty)$,
$$\int_{\|y\|\ge M} |(\psi(y)- \sigma(y)u(y))(\nabla \phi(y) - \nabla \eta_M(y))| \hat Q(dy)
\le \kappa_2 \int_{\|y\|\ge M} (1+ \|y\|^2 + \|u(y)\|^2) \hat Q(dy),$$
and the last term, due to \eqref{eq:333}, converges to $0$ as $M\to \infty$.
 This proves \eqref{eq:eq321}.
Finally note that
\begin{align*}
	\int_{\mathbb{R}^{d+r} }  \|z\|^2 \Bar{Q}(dy\, dz) &= \int_{\mathbb{R}^{d+r} }  \|z\|^2 q(y\, dz) \hat Q(dy)
	 \ge \int_{\mathbb{R}^{d} }  \|u(y)\|^2 \hat Q(dy)\\
	&= \int_{\mathbb{R}^{d} }  \|u(y) - \sigma^T(y) \nabla \phi(y)\|^2 \hat Q(dy)
	+ \int_{\mathbb{R}^{d} }  \|\sigma^T(y) \nabla \phi(y)\|^2 \hat Q(dy)\\
	&\quad + 2\int_{\mathbb{R}^{d} }  (u(y) - \sigma^T(y) \cdot \nabla \phi(y))\cdot (\sigma^T(y) \nabla \phi(y)) \hat Q(dy).
\end{align*}
The last term equals $0$ from \eqref{eq:eq321} since
$$\int_{\mathbb{R}^{d} }  (u(y) - \sigma^T(y) \nabla \phi(y))\cdot (\sigma^T(y) \nabla \phi(y)) \hat Q(dy)
= -\int_{\mathbb{R}^{d} }  [\psi(y)- \sigma(y)u(y)]\cdot \nabla \phi(y) \hat Q(dy).$$
This completes the proof of the lemma. 
\end{proof}

\subsubsection{Proof of the LDP upper bound.}
 We now complete the proof of the upper bound, namely of the inequality in \eqref{ineq-single-upper}.
 Denote by $[\Bar{Q}^{\veps}]_i$, $i=1,2$ the two marginals of $\Bar{Q}^{\veps}$ on $\RR^d$ and $\RR^r$ respectively. Let $\Bar{Q}$ be a weak limit point of $\Bar{Q}^{\veps}$. By a usual subsequential argument we can assume that the convergence $\Bar{Q}^{\veps} \Rightarrow \Bar{Q}$ holds along the full sequence.
Note that
\begin{align*}
	\liminf_{\veps \to 0}- s^2(\veps)\veps E \left[ \exp \left\{-\frac{1}{s^2(\veps)\veps} F(\mu^{\veps})\right\} \right]
	& \ge \liminf_{\veps \to 0} E \left [ F(\bar \mu^{\veps}) + \frac{1}{2}\int_0^1 \|v^{\veps}(t)\|^2 dt\right] - 2\delta\\
	& = \liminf_{\veps \to 0} E \left [ F([Q^{\veps}]_1) + \frac{1}{2}\int_{\RR^{d+r}} \|z\|^2 \Bar{Q}^{\veps}(dy\,dz)\right] - 2\delta\\
	&\ge E \left [ F(\hat Q) + \frac{1}{2}\int_{\RR^{d+r}} \|z\|^2  \Bar{Q}(dy\, dz)\right] - 2\delta\\
	&\ge E \left [ F(\hat Q) + \frac{1}{2}\int_{\RR^d} \|\sigma^T(y) \nabla \phi(y)\|^2 \hat Q(dy)\right] - 2\delta\\
	&= E \left [ F(\hat Q) + I_1(\hat Q)\right]- 2\delta\\
	&\ge \inf_{\gamma \in \calP(\RR^d)} \left [ F(\gamma) + I_1(\gamma)\right]- 2\delta.
\end{align*}
where the first inequality is from \eqref{eq:438},  the second line uses the definition of $\bar{Q}^{\eps}$, the third line uses the convergence of $\Bar{Q}^{\eps}$ to $\Bar{Q}$, the  lower semicontinutiy  of the $L^2$ norm and Fatou's lemma, and the decomposition $\Bar{Q}(dy\,dz) = \hat Q(dy) q(y, dz)$, the fourth line uses
Lemma \ref{lem:ineqincos}, and the fifth uses the definition of $I_1$. Since $\delta>0$ is arbitrary, the proof of the Laplace upper bound is complete. \hfill \qed

\subsection{Construction of a  Stabilizing Control}

In proving the lower bound we will need to construct certain controlled versions of \eqref{eq:SDE_single} that stay in the neighborhood of a specified state in $\RR^d$ for a given length of time. The following lemma will be key in such constructions.

\begin{lemma} \label{theorem:auxillary}
Fix $x \in \mathbb{R}^d$ and a \PZ{collection of points} $\{ x^\eps \}$ in $\mathbb{R}^d$. Let $\tilde{Y}^\eps (s) = Y^\eps (s,x,x^\eps)$ solve the equation
\[d\tilde{Y}^\eps (t) = -\frac{1}{\eps} \mathcal{V}_x (\tilde{Y}^\eps(t))dt + \frac{s(\eps)}{\sqrt{\eps}} \sigma(\tilde{Y}^\eps (t)+x) dB(t), \quad \tilde{Y}^\eps(0) = x^{\eps},\]
where $\mathcal{V}_x (y)$ is as in Assumption \ref{assumptions-1}(3). Then, the following hold.
\begin{enumerate}
    \item There exists  $\kappa_1 = \kappa_1(x) \in (0,\infty)$ such  that $\sup_{0 \leq s \leq 1} \E \|\tilde{Y}^\eps(s)\|^2 \leq \kappa_1 (1 + \|x^\eps\|)^2$ for all $\eps$.
    \item Suppose  $\sup_{\eps} \eps \|x^\eps\|^2 < \infty$. Then, for every $\kappa \in (0,1]$ \begin{equation} \sup_{t \in [\kappa,1]} \E \dbl \left(\frac{1}{t} \int_0^t \delta_{\tilde{Y}^\eps(s)} ds, \delta_0\right) \rightarrow 0, \quad \text{as} \quad \eps \rightarrow 0
\label{eq:auxilary-converg}
\end{equation}
\end{enumerate} 
\end{lemma}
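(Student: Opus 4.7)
The plan for Part 1 is an It\^o-Gronwall argument with the Lyapunov function $V_x$. Applying It\^o's formula to $V_x(\tilde Y^\eps(\cdot))$, localizing by $\tau_m=\inf\{t:\|\tilde Y^\eps(t)\|\geq m\}$, and taking expectations eliminates the stochastic integral and gives, with $h(t)\doteq\E V_x(\tilde Y^\eps(t))$,
\begin{equation}
h(t)=V_x(x^\eps)-\tfrac{1}{\eps}\E\int_0^t(\nabla V_x\cdot\mathcal V_x)(\tilde Y^\eps)\,ds+\tfrac{s^2(\eps)}{2\eps}\E\int_0^t\mathrm{tr}\bigl[\sigma\sigma^T\mathcal H V_x\bigr](\tilde Y^\eps)\,ds
\end{equation}
(after sending $m\to\infty$, using that $\tilde Y^\eps$ has finite second moments since the SDE has Lipschitz coefficients, and that $\nabla V_x$ and $\mathcal H V_x$ have linear and bounded growth respectively). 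Combining the dissipativity $\nabla V_x\cdot\mathcal V_x\geq c_1(x)\|\xi\|^2-c_2(x)$ with the upper bound $V_x(\xi)\leq\alpha_1(x)(1+\|\xi\|^2)$ gives $\nabla V_x\cdot\mathcal V_x\geq\gamma(x) V_x-\tilde c(x)$ where $\gamma(x)=c_1(x)/\alpha_1(x)$. Differentiating the integral equation in $t$ then yields the linear ODE inequality $h'(t)\leq -\gamma(x)h(t)/\eps+K(x)/\eps$, and Gronwall gives $h(t)\leq V_x(x^\eps)+K(x)/\gamma(x)$. The bound $\alpha_2(x)\|\xi\|^2\leq V_x(\xi)$ together with $V_x(x^\eps)\leq\alpha_1(x)(1+\|x^\eps\|^2)$ concludes Part 1.

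For Part 2 the plan is to show, for any sequence $\eps_n\to 0$ and $t_n\in[\kappa,1]$, that the random empirical measure $\mu^\eps(t)\doteq t^{-1}\int_0^t\delta_{\tilde Y^\eps(s)}\,ds$ converges weakly to the deterministic limit $\delta_0$; then the boundedness $\dbl\leq 2$ and bounded convergence will upgrade this to $\dbl$-expectation convergence, yielding the $\sup_{t\in[\kappa,1]}$ claim. The hypothesis $\sup_\eps\eps\|x^\eps\|^2<\infty$ forces $\eps V_x(x^\eps)=O(1)$, and integrating the Gronwall bound yields $\sup_\eps\sup_{t\in[\kappa,1]}\E\int_{\RR^d}\|y\|^2\mu^\eps(t)(dy)<\infty$, so $\{\mu^\eps(t)\}$ is tight in $\mathcal P(\RR^d)$. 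To characterize weak limits, apply It\^o's formula to $f(\tilde Y^\eps(\cdot))$ for $f\in C_c^\infty(\RR^d)$ and multiply through by $\eps$:
\begin{equation}
\eps\bigl[f(\tilde Y^\eps(t))-f(x^\eps)\bigr]=-\int_0^t(\mathcal V_x\cdot\nabla f)(\tilde Y^\eps)\,ds+s(\eps)\sqrt\eps\,M^\eps(t)+\tfrac{s^2(\eps)}{2}\int_0^t\mathrm{tr}[\sigma\sigma^T\mathcal Hf]\,ds.
\end{equation}
The left side is $O(\eps\|f\|_\infty)$, the martingale term satisfies $\E(s(\eps)\sqrt\eps M^\eps(t))^2=O(\eps s^2(\eps))$, and the It\^o correction is $O(s^2(\eps))$. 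Therefore $\E\bigl|\int(\mathcal V_x\cdot\nabla f)\,d\mu^\eps(t)\bigr|\to 0$ uniformly in $t\in[\kappa,1]$. Along any subsequential weak limit $\mu^{\eps_n}(t_n)\Rightarrow\mu^*$, continuity and boundedness of $\mathcal V_x\cdot\nabla f$ (from the compact support of $\nabla f$) together with a countable dense family of test functions give, almost surely, $\int(\mathcal V_x\cdot\nabla f)\,d\mu^*=0$ for every $f\in C_c^\infty(\RR^d)$.

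This stationary continuity equation, combined with the Lipschitz regularity of $\mathcal V_x$ inherited from the Lipschitz character of $a\nabla\phi$, identifies $\mu^*$ as invariant under the deterministic flow $\phi^x_t$ generated by $\dot\xi=-\mathcal V_x(\xi)$ via a standard uniqueness argument for the transport equation $\partial_t\nu+\mathrm{div}(\mathcal V_x\nu)=0$. The assumed global asymptotic stability of $0$ then forces $\mu^*=\delta_0$ almost surely: for any $g\in C_b(\RR^d)$, invariance and bounded convergence yield $\int g\,d\mu^*=\int g(\phi^x_t(\xi))\,d\mu^*(\xi)\to g(0)$ as $t\to\infty$. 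Since the limit is deterministic and independent of the subsequence, $\mu^\eps(t)\to\delta_0$ in probability uniformly for $t\in[\kappa,1]$, and $\dbl\leq 2$ with bounded convergence complete Part 2. The main technical obstacle is the passage in the penultimate step from the It\^o decomposition and moment bounds to the a.s.\ infinitesimal-invariance statement for the \emph{random} limit $\mu^*$, and from there to full invariance under the flow via the transport-equation uniqueness argument that, together with the global asymptotic stability of $0$, pins down $\mu^*=\delta_0$.
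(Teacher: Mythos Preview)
Your proof is correct and follows essentially the same route as the paper: It\^o's formula applied to $V_x(\tilde Y^\eps)$ for Part~1, and for Part~2 tightness of the empirical measures via the integrated second-moment bound, characterization of weak limit points through the It\^o expansion of smooth test functions, and global asymptotic stability to force the limit to be $\delta_0$. The only noteworthy difference is how Part~1 is closed. You pass to the differential inequality $h'(t)\le -\gamma(x)h(t)/\eps + K(x)/\eps$ and apply Gronwall, which is clean and also immediately yields (after integrating the sharper bound $h(t)\le e^{-\gamma t/\eps}V_x(x^\eps)+K/\gamma$) the time-integrated estimate needed for tightness in Part~2. The paper instead derives the two-time integral inequality $a_1 b^\eps(t)\le a_2(1+b^\eps(s))-\eps^{-1}\int_s^t(a_3 b^\eps(u)-a_4)\,du$ for $b^\eps(t)=\E\|\tilde Y^\eps(t)\|^2$ and runs a short contradiction argument with an explicit choice of $\kappa_1$. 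In Part~2 the paper makes the invariance step concrete: working with $\eta\in C_b^2$ (so that $T_s\eta$ stays in the class) it computes $\int T_t\eta\,d\pi-\int\eta\,d\pi=-\int_0^t\int\nabla(T_s\eta)\cdot\mathcal V_x\,d\pi\,ds=0$ directly, rather than invoking transport-equation uniqueness for Lipschitz fields; this avoids the black box but requires keeping the test-function class flow-invariant, which is why the paper uses $C_b^2$ rather than $C_c^\infty$.
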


% \begin{remark}
% A consequence of Assumptions \ref{assumptions-1} is global asymptotic stability, which then implies that if for some measure $\theta \in \mathcal{P}(\mathbb{R}^d)$ it holds that
%
% \[ \int_{\mathbb{R}^d} V_x (y) \nabla \eta(y)  \theta(dy) = 0      \]
%
% for all $\eta \in C_b^2 (\mathbb{R}^d)$, then it follows that our arbitrary measure is the Dirac measure $\theta = \delta_0$. I NEED A CITATION FOR THIS. The relationship above is analogous to $\int \mathcal{L} f \mu (du) = 0$ for MP generators, where since there is not BM term there is no 2nd order term in the ODE. I SHOULD ALSO WRITE SOME STUFF ABOUT THE DYNAMIC SYSTEM
% \end{remark}

\begin{proof}
Let $x \in \mathbb{R}^d$ be as in the statement of the theorem. In the following, dependence of various bounds on $x$ will not be noted explicitly. Let for $y \in \mathbb{R}^d, \sigma_x(y)\doteq \sigma(x+y)$. Applying It\^{o}'s formula to $V_x(\tilde{Y}^\eps(t))$, where $V_x$ is as in Assumption \ref{assumptions-1}(4), we have
\begin{equation}
\begin{aligned}
   V_x (\tilde{Y}^\eps(t)) &= V_x (x^\eps) - \frac{1}{\eps} \int_0^t \nabla V_x(\tilde{Y}^\eps (s)) \cdot \mathcal{V}_x (\tilde{Y}^\eps (s)) ds \\
   &+ \frac{s^2(\eps)}{2 \eps} \int_0^t \tr([\sigma_x^T\mathcal{H} V_x \sigma_x](\tilde{Y}^\eps (s))) ds + \frac{s(\eps)}{\sqrt{\eps}} \int_0^t  \nabla V_x (\tilde{Y}^\eps (s))^T  \sigma_x(\tilde{Y}^\eps (s)) dB(s).
  \end{aligned}
 \label{eq:Ito-phi}
\end{equation}

For $m > 0$ let $\tau_m = \inf\{t \ge 0: \|\tilde{Y}^\eps(t)\| \geq m\}$. Since by assumption  $V_x$ is nonnegative, 
\begin{equation}
       \E  \int_0^{t \wedge \tau_m} \nabla V_x(\tilde{Y}^\eps (s)) \cdot \mathcal{V}_x (\tilde{Y}^\eps (s)) ds 
       \leq \eps V_x(x^\eps)  + \frac{s^2(\eps)}{2} \E  \int_0^{t \wedge \tau_m} \tr([\sigma_x^T\mathcal{H} V_x \sigma_x](\tilde{Y}^\eps (s))) ds.
\end{equation}

Using Assumption \ref{assumptions-1}(4) again and sending $m \rightarrow \infty$, we have that for some $\kappa_2 \in (0,\infty)$,
\begin{equation}\label{eq:1055}
    \E \int_0^1 \|\tilde{Y}^\eps(s)\|^2 ds \leq \kappa_2(\eps \|x^\eps\|^2 + 1)
\end{equation}
for all $\eps>0$. This estimate together with the linear growth of $\nabla V_x$ says in particular that the expected value of the stochastic integral in \eqref{eq:Ito-phi} is $0$. From \eqref{eq:Ito-phi} and Assumption \ref{assumptions-1}(4) we also see that for some $a_i \in (0,\infty), i = 1, 2, 3,4$, and for all $t \in [0,1]$
\begin{equation}
    \label{eq:546n}
    \begin{gathered}
       a_1 \E \|\tilde{Y}^\eps(t)\|^2  \leq \E V_x (\tilde{Y}^\eps(t)) \leq a_2( 1 + \|x^\eps\|^2) - \frac{1}{\eps} \int_0^t \left( a_3\E \|\tilde{Y}^\eps(s)\|^2 -a_4\right) ds.
    \end{gathered}
\end{equation}
A similar argument shows that for $0 \le s \le t \le 1$ and with $b^\eps(t) = \E \|\tilde{Y}^\eps(t)\|^2$
% For $t \in [0,1],$ let . Thus, for some $a_1,a_2,a_3,a_4 \in (0,\infty)$ and for all $0 \leq s \leq t \leq 1$
\begin{equation}\label{eq:932}
    a_1 b^\eps(t) \leq a_2 (1 + b^\eps(s)) - \frac{1}{\eps} \int_s^t \left( a_3 b^\eps(u)- a_4  \right) du.
\end{equation}

Define $\kappa_1 \doteq  1\vee \frac{4 a_2}{a_1} \vee \frac{2a_2}{a_1}(\frac{a_4}{a_3} + 1)$. We claim that $b^\eps(t) \leq \kappa_1 (1 + \|x^\eps\|^2)$ for all $t \in [0,1]$ and $\eps \in (0,1)$. Suppose that the claim is false, then there is an $\eps \in (0,1)$ and a $t_0 \in [0,1]$ such that $b^\eps(t_0) = \kappa_1(1 + \|x^\eps\|^2)$. Let $k_1 \overset{.}{=} \frac{a_1 \kappa_1}{2a_2} - 1$. Note that by our choice of $\kappa_1$, $k_1\ge 1$. We can find some $s_0 \in [0,t_0)$ so that $b^\eps(s_0) = k_1 (1 +\|x^\eps\|^2)$ and $b^\eps(s) \geq k_1 (1 + \|x^\eps\|^2)$ for all $s \in [s_0,t_0]$. Then, from \eqref{eq:932},
\begin{equation*}
\begin{split}
a_1 \kappa_1 (1 + \|x^\eps\|^2) = a_1 b^\eps (t_0) &\leq a_2 (1 + k_1 (1 + \|x^\eps\|^2)) -\frac{1}{\eps} \int_{s_0}^{t_0} \left(a_3 b^\eps(s) - a_4 \right) ds \\
&\leq a_2 (1 + k_1(1 + \|x^\eps\|^2)) - \frac{1}{\eps} \int_{s_0}^{t_0} \left( a_3 k_1 (1 + \|x^\eps\|^2) - a_4 \right) ds \\
&\leq a_2 (k_1 + 1)(1 + \|x^\eps\|^2) - \frac{1}{\eps} \int_{s_0}^{t_0} \left( a_3 k_1 (1 + \|x^\eps\|^2) - a_4 \right) ds.
\end{split}
\end{equation*}
 By our choice of $k_1$ we see that $\left( a_3 k_1  - a_4 \right)  \geq 0$ and so, from the above display, and using the definition of $\kappa_1$ and $k_1$,
 $$a_1 \kappa_1 (1 + \|x^\eps\|^2) \leq a_2 (k_1 + 1) (1 + \|x^\eps\|^2) = \frac{a_1 \kappa_1}{2}(1 + \|x^\eps\|^2)$$
  which is clearly false. This gives us a contradiction, which proves the claim and completes the proof of part (1) of the proposition.  

Now we prove part (2) of the proposition. Fix $\kappa \in (0,1)$ and a collection $\{x^\eps\} \subset \mathbb{R}^d$ such that $\sup_{\eps} \eps \|x^\eps\|^2 < \infty$. 
Let, for $0 \leq t \leq 1$,
\begin{equation}
    \theta^\eps_t \doteq \frac{1}{t} \int_0^t \delta_{\tilde{Y}^\eps(s)} ds.
    \label{def:occupational-tilde}
\end{equation}
We argue by contradiction. Suppose that \eqref{eq:auxilary-converg} is false. Then, there exist some $\delta > 0,\kappa >0$, a sequence $\eps_n \rightarrow 0$, and $t_n \in [\kappa,1]$ such that for every $n \geq 1$, with $\pi^n \doteq \theta^{\eps_n}_{t_n}$
\begin{equation}\label{eq:145}
     \E \dbl ( \pi^n, \delta_0) \geq \delta.
\end{equation}
Using  \eqref{eq:1055} and our assumption on $\{x^\eps\}$ we see that $\{\pi^n\}$ is tight as a sequence of $\mathcal{P}(\mathbb{R}^d)$-valued random variables. Then, along a subsequence (labeled again by $n$), $\pi^n \Rightarrow \pi$, for some $\mathcal{P}(\mathbb{R}^d)$-valued random variable $\pi$.  Along the lines of Lemma \ref{lem:ineqincos}  we now see that, for every $\eta: \RR^d \to \RR$ in $\clc_b^2$, a.s.,
\begin{equation}\label{eq:552n}
    \int_{\mathbb{R}^d} \nabla \eta (y) \clv_x (y) \pi (dy) = 0.
\end{equation}
Indeed,  by It\^{o}'s formula, we have for all $n \in \mathbb{N}$,
\begin{equation}
\begin{aligned}
            \eta (\tilde{Y}^{\eps_n} (t_n)) &= \eta ( x^{\eps_n}) - \frac{1}{\eps_n}  \int_0^{t_n} \nabla \eta (\tilde{Y}^{\eps_n} (s)) \mathcal{V}_x (\tilde{Y}^{\eps_n} (s)) ds  \\
    &+ \frac{s^2 (\eps_n)}{2\eps_n} \int_0^{t_n} \tr([\sigma_x^T \mathcal{H} \eta \sigma_x](\tilde{Y}^{\eps_n} (s)))  ds + \frac{s(\eps_n)}{\sqrt{\eps_n}} \int_0^{t_n}  \sigma_x^T(\tilde{Y}^{\eps_n} (s)) \nabla \eta (\tilde{Y}^{\eps_n} (s)) dB(s).
\end{aligned}
\end{equation}    
Rearranging the terms,
\begin{equation}
\begin{aligned}
   \int_0^{t_n} \nabla \eta (\tilde{Y}^{\eps_n} (s)) \mathcal{V}_x (\tilde{Y}^{\eps_n} (s)) ds &= \eps_n \eta(x^{\eps_n}) - \eps_n \eta (\tilde{Y}^{\eps_n} (t_n)) 
   + \frac{s^2 (\eps_n)}{2} \int_0^{t_n} \tr([\sigma_x^T \mathcal{H} \eta \sigma_x](\tilde{Y}^{\eps_n} (s))  ds\\
   & + s(\eps_n)\sqrt{\eps_n} \int_0^{t_n} \left( \sigma_x^T(\tilde{Y}^\eps_n (s)) \nabla \eta (\tilde{Y}^{\eps_n} (s)) \right) dB(s)
\end{aligned}
\end{equation}
Taking limit as $n \rightarrow \infty$, the right hand side converges to $0$ since $\eps_n, s(\eps_n) \rightarrow 0$ and $\eta \in \clc_b^2$. Also, the left hand side can be rewritten as 
\[ \int_0^{t_n} \nabla \eta (\tilde{Y}^{\eps_n} (s)) \mathcal{V}_x (\tilde{Y}^{\eps_n} (s)) ds = t_n \int_{\mathbb{R}^d} \nabla \eta (y) \mathcal{V}_x (y) \pi^n (dy).  \]
Since $t_n \in [\kappa,1]$ for all $n$, we have, from the weak convergence of $\pi^n$ to $\pi$, the square integrability estimate in \eqref{eq:1055}, and  linear growth of $\clv_x$
from Assumption \ref{assumptions-1}, that, a.s., \eqref{eq:552n} holds.

Now, from the global asymptotic stability of the unique fixed point $0$ of the ODE 
$\dot{y} = -\mathcal{V}_x(y)$ (Assumption \ref{assumptions-1}(3)), we  see that $\pi = \delta_0$.
Indeed, denoting the solution of the above ODE with $y(0)=z \in \RR^d$ as $y_z(t)$ and defining
$$T_t\eta(z) \doteq \eta(y_z(t)), \;\; \clg \eta(z) \doteq -\nabla \eta(z) \cdot \clv_x(z), \;\; z \in \RR^d,$$
we see that 
$$\int_{\RR^d} T_t\eta(z) \pi(dz) - \int_{\RR^d}\eta(z)\pi(dz)  = \int_0^t \int_{\RR^d}  (\clg T_s\eta)(z) \pi(dz) ds =   -\int_0^t  \int_{\mathbb{R}^d} \nabla (T_s\eta) (z) \cdot \mathcal{V}_x (z) \pi (dz)\, ds = 0$$
which, on sending $t\to \infty$ and using the global asymptotic stability property, says that
$\int_{\RR^d} \eta(z) \delta_{0}(dz) = \int_{\RR^d}\eta(z)\pi(dz)$ a.s. for all $\eta \in \clc_b^2$, proving the identity $\pi = \delta_0$. However, since $\pi_n \Rightarrow \pi$, this contradicts the inequality in \eqref{eq:145}, completing the proof of part (2) of the proposition.
\end{proof}

\subsection{LDP Lower Bound}
\label{subsec:single-lower}

In this section, we prove the lower bound for the LDP, namely the inequality in \eqref{ineq-single-lower}.  Fix $\delta > 0$ and let $\gamma^* \in \mathcal{P}(\mathbb{R}^d)$ be $\delta-$optimum for the infimum on the right side of \eqref{ineq-single-lower}, namely,
\begin{equation}
    F(\gamma^*) + \int_{\mathbb{R}^d} \|\sigma^T(y) \nabla \phi(y)\|^2 \gamma^*(dy) = F(\gamma^*) + I_1(\gamma^*) \leq \inf_{\mu \in \mathcal{P}(\mathbb{R}^d)} \bigg( F(\mu) + I_1(\mu)  \bigg) + \delta.
\label{single-near-optimal-measure-selection}
\end{equation}

Denote by $\mathcal{P}_{dis}$ the class of all probability distributions on $\mathbb{R}^d$ that are supported on a finite set. Then, we can find a $\tilde{\mu} \in \mathcal{P}_{dis}$ such that
\begin{equation}
\label{single-discrete-approximation}
    |F(\gamma^*) - F(\tilde{\mu})| \leq \delta, \quad  \int_{\mathbb{R}^d} \|\sigma^T(y) \nabla \phi(y)\|^2 \tilde{\mu}(dy) \leq  \int_{\mathbb{R}^d} \|\sigma^T(y) \nabla \phi(y)\|^2 \gamma^*(dy) + \delta.
\end{equation}

Indeed,  consider an iid sequence of $\mathbb{R}^d-$valued random variables $\xi_1,\xi_2,\dots$ distributed as $\gamma^*$ on some probability space $(\tilde{\Omega},\tilde{\mathcal{F}},\tilde{P})$, then from the Glivenko-Cantelli theorem and the Strong Law of Large Numbers
\begin{equation}
    \frac{1}{n} \sum_{i=1}^n \delta_{\xi_i} \rightarrow \gamma^*, \;\;  \frac{1}{n} \sum_{i=1}^n \|\sigma^T(\xi_i) \nabla \phi(\xi_i)\|^2 \rightarrow \int_{\mathbb{R}^d} \|\sigma^T(y) \nabla \phi(y)\|^2 \gamma^*(dy),  \;  \tilde{P} \text{ a.s. }
\end{equation}
% and
% \begin{equation}
%     \nonumber
%     \frac{1}{n} \sum_{i=1}^n \|\sigma^T(\xi_i) \nabla \phi(\xi_i)\|^2 \rightarrow \int_{\mathbb{R}^d} \|\sigma^T(y) \nabla \phi(y)\|^2 \gamma^*(dy),, \quad \tilde{P} \text{a.s..}
% \end{equation}

Now fix a $\tilde{\omega}$ in the set of full measure on which the above two convergence results hold. It then follows with $\mu_n = \frac{1}{n} \sum_{i=1}^n \delta_{\xi_i(\tilde{\omega})}$ that

\begin{equation}
\nonumber
    |F(\gamma^*) - F(\mu_n)| \rightarrow 0, \quad  \int_{\mathbb{R}^d} \|\sigma^T(y) \nabla \phi(y)\|^2 \mu_n(dy) \rightarrow \int_{\mathbb{R}^d} \|\sigma^T(y) \nabla \phi(y)\|^2 \gamma^*(dy).
\end{equation}
This proves the statement in \eqref{single-discrete-approximation} and gives 
\begin{equation}
    F(\tilde\mu) + \int_{\mathbb{R}^d} \|\sigma^T(y) \nabla \phi(y)\|^2 \tilde\mu(dy)  \leq \inf_{\mu \in  \mathcal{P}(\mathbb{R}^d)} \bigg( F(\mu) + I_1(\mu)  \bigg) + 3\delta.
\label{snomes2}
\end{equation}
Now suppose that $\mbox{supp}( \tilde{\mu}) = \{ x_1,\dots, x_k \}$ with $\tilde{\mu}\{x_i\} = p_i$, namely $\tilde{\mu} = \sum_{i=1}^k p_i \delta_{x_i}$. In order to prove the lower bound in \eqref{ineq-single-lower} we will once more use the variational representation in \eqref{eq:238}. With this variational representation, the proof reduces to the construction of a suitable sequence of controls $v^\eps$ and controlled  empirical measures $\Bar{\mu}^{\eps}$ such that the associated costs
$$\E \left[ \frac{1}{2}\int_0^1 \|{v}^{\eps}(s)\|^2 ds + F(\bar\mu^\eps)  \right]$$
are asymptotitcally close as $\eps \rightarrow 0$ to 
$$F(\tilde{\mu}) +  \int_{\mathbb{R}^d} \|\sigma^T(y) \nabla \phi(y)\|^2 \tilde\mu(dy).$$
 In order for this asymptotic behavior, the controls should keep the empirical measure $\bar{\mu}^\eps = \int_0^1 \delta_{\bar{Y}^\eps(s)} ds$ asymptotically close to the discrete measure $\tilde{\mu}$ and keep the associated cost $\frac{1}{2} \int_0^1 \|{v}^\eps(s)\|^2 ds$ asymptotically close to $\frac{1}{2} \int_{\mathbb{R}^d} \|\sigma^T(y) \nabla \phi(y)\|^2 \tilde{\mu} (dy)$. 

Our strategy will be to construct controls that make the stochastic proces $\bar{Y}^\eps$ visit sequentially the $k$ points in the support of $\tilde{\mu}$ and spend approximately $p_i$ units of time in the vicinity of $x_i$ by incurring a control cost of approximately $p_i \|\sigma^T(x_i) \nabla \phi(x_i)\|^2$. More precisely, the control we will construct will have the following features:
\begin{itemize}
    \item In a short time and with negligible cost the process $\bar{Y}^\eps(t)$ travels to a neighbourhood of $x_1$.
    \item For approximately $p_1$ amounts of time the state is controlled to stay in the viccinity of $x_1$ while paying a total cost that is approximately $p_1 \|\sigma^T(x_1) \nabla \phi(x_1)\|^2$.
    \item The process is then moved to $x_2$ in a short time, while expending a negligible control cost. Then, it is controlled to stay in the vicinity of $x_2$ for $p_2$ units of time while paying a costrol cost of $p_2 \|\sigma^T(x_2) \nabla \phi(x_2)\|^2$.
    \item This is continued until we finish with all the $k$ positions. 
\end{itemize}
This construction is summarized in Table \ref{table:process-travel}. For $x,y \in \RR^d$, define
$\yy(t) \equiv \yy(x,y; t)\doteq x+ t(y-x)$, $0\le t \le 1$ and define
the function $\uu(t) \equiv \uu(x,y;t) \doteq \sigma^T(\yy(t)) a^{-1}(\yy(t))(y-x)$, $0\le t \le 1$.
Note that $\uu$ and $\yy$ satisfy
$$\yy(t) = x + \int_0^t \sigma(\yy(s)) \uu(s) ds, \; 0 \le t \le 1$$
and for some $c \in (0,\infty)$
\begin{equation}\label{eq:defnuu}
	\int_0^1 \|\uu(t)\|^2 \le c (1+ \|x\|^2 + \|y\|^2) \mbox{ for all } x,y \in \RR^d.\end{equation}
Define
$\uu^{\eps}_{x,y}(t) \doteq \uu(x,y; t/\eps)$ for $0 \le t \le \eps$. Also for $i=1, \ldots, k$, let $P_i \doteq \sum_{j=1}^{i} p_j$ and set $P_0=0$.
Also, for $x,y \in \RR^d$, let 
\[\Lambda(y,x) \doteq a(y)^{-1} a(x).
\]
\begin{table}[h!]
\centering
 \begin{tabular}{|c | c| c |} 
 \hline
State of $\Bar{Y}^\eps$ & Time & Control Process $v^{\eps}(s)$  \\ [0.5ex] 
 \hline\hline
 $y_0 \rightarrow x_1$ & $ \eps$ & $\bigg(\sigma^T(\Bar{Y}^\eps(s)) \nabla \phi (\Bar{Y}^\eps(s)) + \uu^{\eps}_{y_0,x_1}(s)\bigg) 1_{(0,\eps]}(s)$ \\ 
 \hline
 $x_1$ & $p_1 - \eps$ & $\sigma^T(\Bar{Y}^\eps(s)) \Lambda(\bar{Y}^\eps(s),x_1) \nabla \phi (x_1)1_{(\eps,P_1]}(s)$  \\
 \hline
 $\bar Y^{\eps}(P_1) \rightarrow x_2$ & $\eps$ & $\bigg(\sigma^T(\Bar{Y}^\eps(s)) \nabla \phi (\Bar{Y}^\eps(s)) + \uu^{\eps}_{\Bar{Y}^\eps(P_1),x_2}(s-P_1)\bigg) 1_{(P_1,P_1+\eps]}(s)$  \\
 \hline
 $x_2$ & $p_2 - \eps$ & $\sigma^T(\Bar{Y}^\eps(s)) \Lambda(\bar{Y}^\eps(s),x_2) \nabla \phi (x_2)1_{(P_1 + \eps,P_2]}(s)$  \\
 \hline
 \dots & \dots & \dots \\ [1ex] 
 \hline
 $\bar Y^{\eps}(P_{k-1}) \rightarrow x_k$ & $\eps$ & $\bigg(\sigma^T(\Bar{Y}^\eps(s)) \nabla \phi (\Bar{Y}^\eps(s)) + \uu^{\eps}_{\Bar{Y}^\eps(P_{k-1}),x_k}(s-
 P_{K-1})\bigg) 1_{(P_{k-1},P_{k-1} + \eps]}(s)$ \\ [1ex] 
 \hline
 $x_k$ & $p_k - \eps$ & $\sigma^T(\Bar{Y}^\eps(s)) \Lambda(\bar{Y}^\eps(s),x_k) \nabla \phi (x_k) 1_{(P_{k-1}+\eps,1]}(s)$ \\ [1ex] 
\hline
\end{tabular}
\caption{Construction of the Control Process. First column gives the approximate states (or transitions) for $\Bar{Y}^\eps$ under the selection of the controls, $v^\eps$.}
\label{table:process-travel}
\end{table}

Then the state controlled process is given by the equation \eqref{eq:Y-bar} with the control process $v^\eps$ defined in state feedback form as: For $i=0, 1, \ldots , k-1$
\begin{equation}
 v^\eps(t) \doteq 
 \begin{cases}
 	\sigma^T(\Bar{Y}^\eps(t)) \nabla \phi (\Bar{Y}^\eps(t)) + \uu^{\eps}_{\Bar{Y}^\eps(P_{i}),x_{i+1}}(t-P_{i}), \;\; \mbox{ if } t \in (P_i, P_i+\eps]\\
\sigma^T(\Bar{Y}^\eps(t)) \Lambda(\bar{Y}^\eps(t),x_{i+1}) \nabla \phi (x_{i+1}), \;\; \mbox{ if } t \in (P_i+\eps, P_{i+1}].	
 \end{cases}
   \label{eq:process}
 \end{equation}
%
%
% \begin{equation}
% \begin{gathered}
%     v^\eps(s) = y_0 1_{\{0\}}(s)  + \bigg(\sigma^T(\Bar{Y}^\eps(s)) \nabla \phi (\Bar{Y}^\eps(s)) + u_{y_0,x_1}(\frac{s}{\eps})\bigg) 1_{(0,\eps]}(s)  \\
%     + \sigma^T(\Bar{Y}^\eps(s)) \Lambda(\bar{Y}^\eps(s),x_1) \nabla \phi (x_1)1_{(\eps,P_1]}(s)
%     + \bigg(\sigma^T(\Bar{Y}^\eps(s)) \nabla \phi (\Bar{Y}^\eps(s)) + u_{\Bar{Y}^\eps(P_1),x_2}(\frac{s-P_1}{\eps})\bigg) 1_{(P_1,P_1+\eps]}(s) \\
%     + \sigma^T(\Bar{Y}^\eps(s)) \Lambda(\bar{Y}^\eps(s),x_2) \nabla \phi (x_2)1_{(P_1 + \eps,P_2]}(s)+ \dots \\
%      + \bigg(\sigma^T(\Bar{Y}^\eps(s)) \nabla \phi (\Bar{Y}^\eps(s)) + u_{\Bar{Y}^\eps(P_{k-1}),x_k}(\frac{s-P_{k-1}}{\eps})\bigg) 1_{(P_{k-1},P_{k-1} + \eps]}(s) \\+ \sigma^T(\Bar{Y}^\eps(s)) \Lambda(\bar{Y}^\eps(s),x_k) \nabla \phi (x_k) 1_{(P_{k-1}+\eps,1]}(s).
%     \label{eq:process}
% \end{gathered}
% \end{equation}
From the Lipschitz property of $\nabla \phi$, and $\Lambda$, and the boundedness and Lipschitz property of $\sigma^T$ we see that the above state feedback control is well defined and the corresponding SDE in \eqref{eq:Y-bar} has a unique solution. For the rest of the section, $\bar Y^{\eps}$ will denote the solution of this SDE  with the above feedback control.

The following lemma gives a key moment bound.
\begin{lemma}
\begin{equation}
    \sup_{0 \leq t \leq 1}   \sup_{\eps \in (0,1)} \E \|\Bar{Y}^\eps (t)\|^2 < \infty.  
    \label{bound-Y-bar}
\end{equation}
\label{lemma:bound-Y-bar}
\end{lemma}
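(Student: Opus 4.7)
The plan is to exploit the piecewise structure of the feedback control \eqref{eq:process} and to reduce the analysis on each subinterval to either a short transit estimate or to Lemma \ref{theorem:auxillary}(1). Concretely, I would partition $[0,1]$ into the $k$ stay intervals $(P_i+\eps,P_{i+1}]$ and the $k$ transit intervals $(P_i,P_i+\eps]$, $i=0,\dots,k-1$, and establish, for each $i$, bounds of the form
\begin{equation}
\sup_{t\in(P_i,P_{i+1}]}\E\|\Bar{Y}^\eps(t)\|^2 \le C_i\bigl(1+\E\|\Bar{Y}^\eps(P_i)\|^2\bigr),
\end{equation}
with $C_i$ independent of $\eps\in(0,1)$. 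Then an iteration in $i$, starting from the deterministic value $\Bar{Y}^\eps(P_0)=y_0$ and using that $k$ is finite, delivers \eqref{bound-Y-bar}.

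\textbf{Stay intervals.} The first step is the algebraic simplification that on $(P_i+\eps,P_{i+1}]$ the feedback control satisfies $\sigma(\Bar{Y}^\eps(t))v^\eps(t)=\sigma\sigma^T a^{-1}a(x_{i+1})\nabla\phi(x_{i+1})=\psi(x_{i+1})$, so the total drift in \eqref{eq:Y-bar} collapses to $-\eps^{-1}[\psi(\Bar{Y}^\eps(t))-\psi(x_{i+1})]=-\eps^{-1}\mathcal{V}_{x_{i+1}}(\Bar{Y}^\eps(t)-x_{i+1})$. Thus the shifted process $s\mapsto \Bar{Y}^\eps(P_i+\eps+s)-x_{i+1}$ satisfies exactly the SDE considered in Lemma \ref{theorem:auxillary} with $x=x_{i+1}$ and (random) initial condition $x^\eps=\Bar{Y}^\eps(P_i+\eps)-x_{i+1}$. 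By conditioning on $\mathcal{F}_{P_i+\eps}$ and applying the pointwise bound of Lemma \ref{theorem:auxillary}(1) (which, inspecting its proof, depends on $x^\eps$ only through $1+\|x^\eps\|^2$ and thus carries over to the random case by taking expectations), we obtain
\begin{equation}
\sup_{t\in(P_i+\eps,P_{i+1}]}\E\|\Bar{Y}^\eps(t)-x_{i+1}\|^2 \le 2\kappa_1(x_{i+1})\bigl(1+\E\|\Bar{Y}^\eps(P_i+\eps)\|^2+\|x_{i+1}\|^2\bigr),
\end{equation}
which yields the required estimate on the stay interval in terms of $\E\|\Bar{Y}^\eps(P_i+\eps)\|^2$.

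\textbf{Transit intervals.} On $(P_i,P_i+\eps]$ the same algebra shows that the $-\eps^{-1}\psi(\Bar{Y}^\eps(t))$ term is cancelled by the $\sigma^T\nabla\phi$ piece of $v^\eps$, leaving the drift $\eps^{-1}\sigma(\Bar{Y}^\eps(t))\mathfrak{u}^{\eps}_{\Bar{Y}^\eps(P_i),x_{i+1}}(t-P_i)$. Writing the resulting integral equation on $[P_i,P_i+t']$, using $\|\sigma\|_\infty<\infty$ together with Cauchy--Schwarz and the time-change identity
\begin{equation}
\int_{P_i}^{P_i+\eps}\|\mathfrak{u}^{\eps}_{\Bar{Y}^\eps(P_i),x_{i+1}}(s-P_i)\|^2\,ds=\eps\int_0^1\|\mathfrak{u}(\Bar{Y}^\eps(P_i),x_{i+1};r)\|^2\,dr \le c\eps\bigl(1+\|\Bar{Y}^\eps(P_i)\|^2+\|x_{i+1}\|^2\bigr)
\end{equation}
from \eqref{eq:defnuu}, one finds that the drift contribution is of order $1+\E\|\Bar{Y}^\eps(P_i)\|^2$ uniformly in $\eps$. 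The martingale term contributes $\frac{s^2(\eps)}{\eps}\cdot\eps\|\sigma\|_\infty^2\le \|\sigma\|_\infty^2$ by It\^o isometry and $s(\eps)\le 1$, which is harmless. This gives the transit bound with $C'_i$ independent of $\eps$.

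\textbf{Induction and obstacle.} Combining the transit and stay bounds gives $\E\|\Bar{Y}^\eps(t)\|^2\le C_i(1+\E\|\Bar{Y}^\eps(P_i)\|^2)$ for $t\in(P_i,P_{i+1}]$, and the induction in $i=0,\dots,k-1$ closes the argument since $\Bar{Y}^\eps(0)=y_0$ is fixed and $k$ is finite. The main delicate point I anticipate is the transit estimate: the drift carries an $\eps^{-1}$ factor on a time window of length $\eps$, so one has to be careful to keep the $L^2$ time-change for $\mathfrak{u}^{\eps}$ intact and to verify that the Lipschitz-growth contribution from Gronwall does not pick up an $\eps^{-1}$; this is precisely why the drift is written in the feedback form that cancels $\psi(\Bar{Y}^\eps)$ exactly, leaving only the $\mathfrak{u}^\eps$ piece whose $L^2$-norm on $[P_i,P_i+\eps]$ is itself $O(\eps)$, so that Cauchy--Schwarz converts the $\eps^{-2}$ prefactor into $O(1)$. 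The application of Lemma \ref{theorem:auxillary}(1) to a random (non-deterministic) initial condition via the Markov property is the other point to make explicit, but this is routine since the bound there is deterministic in $\|x^\eps\|^2$.
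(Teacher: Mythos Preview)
Your proposal is correct and follows essentially the same approach as the paper's own proof: induction over the partition points $P_i$, with the transit-interval estimate handled via the $L^2$ time-change for $\uu^\eps$ and \eqref{eq:defnuu}, and the stay-interval estimate reduced to Lemma \ref{theorem:auxillary}(1) after the algebraic cancellation that turns the drift into $-\eps^{-1}\mathcal{V}_{x_{i+1}}(\Bar Y^\eps - x_{i+1})$. The paper carries out exactly these two steps (see \eqref{lemma-single-induction-first} and \eqref{single-Y-tilde-def}), and your identification of the two delicate points (the $\eps^{-1}$ scaling on the transit window and the random initial condition for Lemma \ref{theorem:auxillary}) is accurate.
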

\begin{proof}
We will show via induction that for $i=0, 1, \ldots , k$,
\begin{equation}
	 \sup_{0 \leq t \leq P_i}   \sup_{\eps \in (0,1)} \E \|\Bar{Y}^\eps (t)\|^2 < \infty.
\end{equation}
Clearly the result is true for $i=0$. Suppose now that the result is true for some $i \in \{0,1,\dots,k-1\}$ and consider $i+1$. Then, for $t \in [P_i,P_i+\eps]$
\begin{equation} 
\nonumber
\Bar{Y}^\eps (t)  = \Bar{Y}^\eps(P_i) + \frac{1}{\eps}\int_{P_i}^t \sigma(\Bar{Y}^\eps (s)) \cdot \uu\left(\Bar{Y}^\eps(P_i),x_{i+1}; \frac{s-P_i}{\eps}\right) ds  + \frac{s(\eps)}{\sqrt{\eps}} \int_{P_i}^{t} \sigma(\Bar{Y}^\eps (s)) dB(s). 
\end{equation}
Thus with $c$ as in \eqref{eq:defnuu}
\begin{equation}
\begin{aligned}
    \E ( \sup_{P_i \leq t \leq P_i + \eps} \|\Bar{Y}^\eps (t)\|^2) &
    \leq  3 \E \|\Bar{Y}^\eps(P_i)\|^2 + 3\|\sigma\|^2_\infty \E \int_0^1 \|\uu(\Bar{Y}^\eps(P_i),x_{i+1}; \cdot)\|^2 ds + 12 \frac{s^2(\eps)}{\eps} \|\sigma\|^2_\infty \eps \\
    &\leq 3(1 + c \|\sigma\|_\infty^2) \E \|\Bar{Y}^\eps(P_i)\|^2 + 3c \|\sigma\|_\infty^2 \|x_{i+1}\|^2 + 12 \|\sigma\|_\infty^2 s^2(\eps) + 3 c \|\sigma\|_\infty^2.
\end{aligned}
\end{equation} 
Using the induction hypothesis we now have 

\begin{equation}
    \label{lemma-single-induction-first}
    \sup_{0 \leq t \leq P_i + \eps} \sup_{\eps \in (0,1)} \E \|\bar{Y}^\eps(t)\|^2 < \infty.
\end{equation}
Consider now $ P_i + \eps \leq t \leq P_{i+1}$ and define 
\begin{equation}
    \tilde{Y}^\eps_{i+1}(t) = \Bar{Y}^\eps (P_i + \eps +t) - x_{i+1}, \quad 0 \leq t \leq p_{i+1} - \eps.
    \label{single-Y-tilde-def}
\end{equation}
Then, for $t \in [0,p_{i+1}-\eps]$ 

\begin{equation}
    \begin{aligned}
        \tilde{Y}_{i+1}^\eps (t) &= \Bar{Y}^\eps (P_i + \eps) - x_{i+1}  
        - \frac{1}{\eps} \int_{P_i + \eps}^{P_i + \eps + t} \sigma(\Bar{Y}^\eps (s)) \sigma^T (\Bar{Y}^\eps (s))(\nabla \phi (\Bar{Y}^\eps (s)) - \Lambda(\bar{Y}^\eps(s),x_{i+1}) \nabla \phi (x_{i+1}) ) ds\\
		 &+ \frac{s(\eps)}{\sqrt{\eps}} \int_{P_i + \eps}^{P_i + \eps + t} \sigma(\Bar{Y}^\eps (s)) dB(s)   \\ 
        &= \Bar{Y}^\eps (P_i + \eps) - x_{i+1} - \frac{1}{\eps} \int_0^{t} \mathcal{V}_{x_{i+1}} (\tilde{Y}_{i+1}^\eps(s)) ds + \frac{s(\eps)}{\sqrt{\eps}} \int_0^{t} \sigma_{x_{i+1}}(\tilde{Y}_{i+1}^\eps (s)) d\tilde B_i(s),
    \end{aligned}
\end{equation} 
where $\sigma_x(y)$ is as introduced in the proof of Lemma \ref{theorem:auxillary} and
$\tilde B_i(s) \doteq B(t+P_i+\eps) - B(P_i+\eps)$ is a Brownian motion. Thus $\tilde{Y}^\eps_{i+1}$ satisfies the SDE in Lemma \ref{theorem:auxillary} with $x= x_{i+1}$ and
$x^{\eps} = \Bar{Y}^\eps (P_i + \eps) - x_{i+1}$.
  Consequently, from Lemma \ref{theorem:auxillary} 
\begin{equation}
\begin{aligned}
     \sup_{P_i + \eps \leq t \leq P_{i+1}}   \sup_{\eps } \E \|\Bar{Y}^\eps (t) \|^2 
     &\leq 2 \|x_{i+1}\|^2 + 2 \sup_{P_i + \eps \leq t \leq P_{i+1}} \sup_{\eps } \E \|\Bar{Y}^\eps (t) - x_{i+1}\|^2  \\
    & \leq 2 \|x_{i+1}\|^2 + 2 \sup_{0 \leq t
     \leq p_{i+1}-\eps} \sup_{\eps } \E \| \tilde{Y}^\eps_{i+1}(t)\|^2 \\
     &\leq 2 \|x_{i+1}\|^2 + 2 \kappa_1(x_{i+1}) (1 + \E \|\Bar{Y}^\eps(P_i+\eps)-x_{i+1}\|^2).
\end{aligned}
\end{equation}
Combining the above with \eqref{lemma-single-induction-first} we now have that
 
\begin{equation}
        \sup_{0 \leq s \leq P_{i+1}} \sup_{\eps \in (0,1)} \E \|\Bar{Y}^\eps (s)\|^2 < \infty.
    \label{bound:Y-bar}
\end{equation}
The proof is complete by induction.
\end{proof}

The next lemma shows that the control process moves the state process to the vicinity of $x_{j+1}$ in the time interval from $P_j$ to $P_j + \eps$.

\begin{lemma}
\label{single-lemma-process-movement}
For $i = 0, 1, \dots, k-1$
\[ \Bar{Y}^\eps(P_{i} + \eps) \xrightarrow{P} x_{i+1},  \quad \text{as} \quad \eps \rightarrow 0   \]
\end{lemma}
\begin{proof}
For $i = 0,1,\dots, k-1$ and $t \in [P_i, P_i+\eps]$,
\begin{equation}
        \Bar{Y}^\eps (t) = \Bar{Y}^\eps (P_i ) + \frac{1}{\eps} \int_{P_i}^t \sigma(\Bar{Y}^\eps (s)) \uu^{\eps}_{\Bar{Y}^\eps (P_i), x_{i+1}} (s-P_i) ds 
        + \frac{s(\eps)}{\sqrt{\eps}} \int_{P_i}^t \sigma(\Bar{Y}^\eps (s)) dB(s).
    \label{eq:Y-bar-control}
\end{equation}
Define $\Bar{Z}^\eps(t) = \Bar{Y}^\eps(P_i + \eps t)$ for $0 \leq t \leq 1$. Then \eqref{eq:Y-bar-control} can be written as 
\begin{equation}
    \begin{gathered}
        \Bar{Z}^\eps(t) = \Bar{Z}^\eps(0) + \int_0^t \sigma(\Bar{Z}^\eps(s)) \uu(\Bar{Y}^\eps (P_i), x_{i+1}; s) ds + s(\eps) \int_0^t \sigma(\Bar{Z}^\eps(s)) d\tilde B^{\eps}(s),
    \end{gathered}
    \label{eq:Z-bar-control}
\end{equation}
where $\tilde B^{\eps}(s) \doteq \eps^{-1/2} (B(s\eps+P_i)- B(P_i))$, $0\le s \le 1$, is a BM starting from the origin.
% We now show that the solutions to \eqref{eq:Z-bar-control} converge in probability to the solution $\bar{Z}$ of the ODE
Also consider the ODE given by the noiseless version of the above equation, namely,
\begin{equation}
    \tilde{Z}^{\eps}(t) = \Bar{Y}^\eps(P_i) + \int_0^t\sigma(\tilde{Z}^{\eps}(s)) \uu(\Bar{Y}^\eps (P_i), x_{i+1}; s) ds.
\end{equation}
Note that, by the definition of $\uu^\veps(\cdot)$, $ \tilde{Z}^{\eps}(1) = x_{i+1}$. Also, using Grönwalll's lemma and Lipschitz property of $\sigma$,
\begin{equation}\label{eq:425}
\|\Bar{Y}^\eps(P_i+\eps) - x_{i+1}\| =	\|\Bar{Z}^\eps(1) - \tilde{Z}^{\eps}(1)\|
\le s(\eps)\sup_{0\le t \le 1} \|M^{\eps}(t)\| \exp\{\|\sigma\|_{Lip} \int_0^1 \|\uu(\Bar{Y}^\eps (P_i), x_{i+1}; s)\|ds \},
\end{equation}
where $\|\sigma\|_{Lip} $ is the Lipschitz constant of $\sigma$ and $M^\eps(t) \doteq \int_0^t \sigma(\Bar{Z}^\eps(s)) d\tilde B^{\eps}(s)$. 
Next note that, from the properties of $\uu$,
$$\int_0^1 \|\uu(\Bar{Y}^\eps (P_i), x_{i+1}; s)\|ds \leq \left( c(\|\bar{Y}^\eps(P_i)\|^2 + \|x_{i+1}\|^2 + 1) \right)^{1/2} $$
 and that from Lemma \ref{lemma:bound-Y-bar}
 $\{\bar{Y}^\eps(P_i)\}_{\eps>0}$ is tight.
The result now follows on  sending $\eps \to 0$ in \eqref{eq:425}.
 \end{proof}

The following lemma shows that the empirical measure of $\bar{Y}^\eps(t)$ over the interval $[P_i+\eps,P_{i+1}]$ is concentrated near $x_{i+1}$.

\begin{lemma} 
As $\eps \rightarrow 0$, for each $i =0, 1, \dots, k-1$
 \[ \E \dbl \left( \frac{1}{p_{i+1}-\eps} \int_{P_{i} + \eps}^{P_{i+1}} \delta_{\Bar{Y}^\eps (s)} ds, \delta_{x_{i+1}}    \right) \rightarrow 0 . \]
 \label{lemma:occupational-conv}
\end{lemma}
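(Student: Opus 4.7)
The plan is to reduce the claim to a direct application of Lemma~\ref{theorem:auxillary}(2) after a deterministic shift by $x_{i+1}$ and a time translation by $P_i+\eps$.

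First I would verify the key algebraic identity that makes the feedback control on $(P_i+\eps, P_{i+1}]$ behave like the stabilizing dynamics of Lemma~\ref{theorem:auxillary}. Plugging the second branch of \eqref{eq:process} into \eqref{eq:Y-bar}, the drift on $(P_i+\eps, P_{i+1}]$ is
\begin{equation*}
-\tfrac{1}{\eps}\psi(\Bar{Y}^\eps(t)) + \tfrac{1}{\eps}\sigma(\Bar{Y}^\eps(t))\sigma^T(\Bar{Y}^\eps(t))\Lambda(\Bar{Y}^\eps(t),x_{i+1})\nabla\phi(x_{i+1}).
\end{equation*}
Since $\sigma\sigma^T(y)\Lambda(y,x_{i+1}) = a(y)a(y)^{-1}a(x_{i+1}) = a(x_{i+1})$ and $\psi(x_{i+1}) = a(x_{i+1})\nabla\phi(x_{i+1})$ by Assumption~\ref{assumptions-1}(2a), this drift collapses to $-\eps^{-1}[\psi(\Bar{Y}^\eps(t)) - \psi(x_{i+1})] = -\eps^{-1}\mathcal{V}_{x_{i+1}}(\Bar{Y}^\eps(t)-x_{i+1})$.

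Next I would set $\tilde{Y}^\eps_{i+1}(t) \doteq \Bar{Y}^\eps(P_i+\eps+t) - x_{i+1}$ for $t \in [0, p_{i+1}-\eps]$, and (if needed) extend it to $[0,1]$ by letting it continue to solve the same SDE. By the computation above and the definition of $\sigma_{x_{i+1}}(y)=\sigma(x_{i+1}+y)$, this is exactly the process from Lemma~\ref{theorem:auxillary} with $x = x_{i+1}$, driven by the shifted Brownian motion $\tilde B(t) = B(P_i+\eps+t) - B(P_i+\eps)$ (which is independent of $\calF_{P_i+\eps}$) and started at the $\calF_{P_i+\eps}$-measurable random point $x^\eps \doteq \Bar{Y}^\eps(P_i+\eps) - x_{i+1}$. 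Observe that, by the translation invariance of $BL_1(\RR^d)$,
\begin{equation*}
\dbl\!\left(\tfrac{1}{p_{i+1}-\eps}\int_{P_i+\eps}^{P_{i+1}}\delta_{\Bar{Y}^\eps(s)}ds,\;\delta_{x_{i+1}}\right) = \dbl\!\left(\tfrac{1}{p_{i+1}-\eps}\int_0^{p_{i+1}-\eps}\delta_{\tilde{Y}^\eps_{i+1}(s)}ds,\;\delta_0\right),
\end{equation*}
so it suffices to show the right side tends to $0$ in $L^1$, and for $\eps$ small enough we have $p_{i+1}-\eps \in [p_{i+1}/2, 1]$, which fits the range $[\kappa,1]$ of Lemma~\ref{theorem:auxillary}(2) with $\kappa = p_{i+1}/2$.

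The main subtlety, which I would treat as the principal obstacle, is that Lemma~\ref{theorem:auxillary} is stated for a \emph{deterministic} sequence $\{x^\eps\}$, while here $x^\eps$ is random. Two observations resolve this. First, by Lemma~\ref{lemma:bound-Y-bar} together with the strong Markov property applied at time $P_i+\eps$ (so that $\tilde B$ is independent of $x^\eps$), $\sup_\eps \E\|x^\eps\|^2 < \infty$, and in particular $\sup_\eps \eps\,\E\|x^\eps\|^2 < \infty$. Second, inspection of the proof of Lemma~\ref{theorem:auxillary} shows that the argument never uses that $x^\eps$ is deterministic: the moment bound in part~(1) passes through as $\sup_{s\le 1}\E\|\tilde Y^\eps(s)\|^2 \le \kappa_1 \E[(1+\|x^\eps\|)^2]$ by conditioning on $x^\eps$, and the contradiction argument in part~(2) (tightness of the occupation measures, It\^o-based identification of limits via test functions $\eta \in \calC_b^2$, and the resulting $\pi = \delta_0$ from the global asymptotic stability in Assumption~\ref{assumptions-1}(3)) only requires $\sup_\eps \E\int_0^1\|\tilde Y^\eps(s)\|^2 ds < \infty$, which holds here. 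I would briefly re-run those steps in the present setting (or state Lemma~\ref{theorem:auxillary} once and for all for random initial data with the appropriate moment bound) and thereby conclude
\begin{equation*}
\E\,\dbl\!\left(\tfrac{1}{p_{i+1}-\eps}\int_0^{p_{i+1}-\eps}\delta_{\tilde{Y}^\eps_{i+1}(s)}ds,\;\delta_0\right)\longrightarrow 0,
\end{equation*}
which is the desired statement.
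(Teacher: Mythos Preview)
Your proposal is correct and follows essentially the same route as the paper: shift by $x_{i+1}$ using translation invariance of $BL_1(\RR^d)$, time-translate by $P_i+\eps$, verify (via $\sigma\sigma^T\Lambda = a(x_{i+1})$ and $\psi = a\nabla\phi$) that the resulting process satisfies the SDE of Lemma~\ref{theorem:auxillary} with $x=x_{i+1}$, and then invoke that stabilization lemma.

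The only substantive difference is how you and the paper handle the \emph{random} initial point $x^\eps = \Bar Y^\eps(P_i+\eps)-x_{i+1}$. You propose to re-run the contradiction argument of Lemma~\ref{theorem:auxillary}(2) with random initial data, using the moment bound from Lemma~\ref{lemma:bound-Y-bar}; this works, since the tightness-plus-It\^o identification argument only needs $\sup_\eps \E\int_0^1 \|\tilde Y^\eps(s)\|^2\,ds<\infty$. The paper instead keeps Lemma~\ref{theorem:auxillary} as a black box for deterministic starts: it conditions on $\calF_{P_i+\eps}$, introduces $D^\eps_{i+1}(x) \doteq \E\,\dbl\big(\frac{1}{p_{i+1}-\eps}\int_0^{p_{i+1}-\eps}\delta_{Y^\eps(s,x_{i+1},x)}\,ds,\ \delta_0\big)$, observes (via the same contradiction device applied to a hypothetical bad sequence $x_n\in K$) that $\sup_{x\in K} D^\eps_{i+1}(x)\to 0$ for every compact $K$, and then uses tightness of $\{\tilde Y^\eps_{i+1}(0)\}$ --- which the paper gets from Lemma~\ref{single-lemma-process-movement} rather than the moment bound --- to conclude $\E\,D^\eps_{i+1}(\tilde Y^\eps_{i+1}(0))\to 0$. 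The paper's packaging is slightly more modular (no proof re-run), but the mathematical content is the same.
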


\begin{proof}
Fix $i = 0, \dots, k-1$. Since for a $z \in \mathbb{R}^d$ and $f \in BL_1(\RR^d)$, the map $x \mapsto f(x-z)$ is also in $BL_1(\RR^d)$, we see that
\begin{equation}
\label{eq:expectation-bounded Lipschitz}
\begin{split}
\E \dbl \left( \frac{1}{p_{i+1}-\eps} \int_{P_{i} + \eps}^{P_{i+1}} \delta_{\Bar{Y}^\eps (s)} ds, \delta_{x_{i+1}}    \right) &= \E \dbl \left( \frac{1}{p_{i+1}-\eps} \int_{P_{i} + \eps}^{P_{i+1}} \delta_{\Bar{Y}^\eps (s) - x_{i+1}} ds, \delta_{0}    \right) \\
 &= \E \dbl \left( \frac{1}{p_{i+1}-\eps} \int_{0}^{p_{i+1}-\eps} \delta_{\tilde{Y}^\eps_{i+1} (s)} ds, \delta_{0}    \right),
\end{split}
\end{equation}
 where the last line follows on recalling the definition of the process $\tilde{Y}^\eps_{i+1}$ from \eqref{single-Y-tilde-def}. 
 The expectation on the last line can be written as 
  \begin{equation}
     \nonumber
     ED_{i+1}^\eps (\bar{Y}^\eps(P_{i}+\eps) - x_{i+1}) = ED_{i+1}^\eps (\tilde{Y}_{i+1}^\eps(0))
 \end{equation}
where for $x \in \mathbb{R}^d$
\begin{equation}
    \nonumber
    D^{\eps}_{i+1}(x) \doteq  \dbl \left( \frac{1}{p_{i+1}-\eps} \int_{0}^{p_{i+1} - \eps} \delta_{Y^\eps(s,x_{i+1},x)} ds, \delta_{0} \right)
\end{equation}
and $Y^\eps(s,x_{i+1},x)$ is the process introduced in Lemma \ref{theorem:auxillary}.

From Lemma \ref{theorem:auxillary}, for any compact set $K \subset \mathbb{R}^d$, 
\begin{equation}
    \nonumber
    \sup_{x \in K} E D_{i+1}^\eps (x) \rightarrow 0, \quad \text{as} \quad \eps \rightarrow 0.
\end{equation}
From the tightness of $\{\tilde{Y}^\eps_{i+1}(0) \}$, which follows from Lemma \ref{single-lemma-process-movement}, we now see that 
\begin{equation}
    \nonumber
    ED_{i+1}^\eps (\tilde{Y}^\eps_{i+1}(0)) \rightarrow 0, \quad \text{as} \quad \eps \rightarrow 0.
\end{equation}
The result follows.
\end{proof}
The following lemma shows that the asymptotic costs under the control $v^\eps$ defined in \eqref{eq:process} is as desired.
\begin{proposition}
We have
\begin{equation}
    \nonumber 
    \limsup_{\eps \rightarrow 0}  \frac{1}{2}  \E \int_0^1 \|v^\eps (t)\|^2 dt = \frac{1}{2} \int_{\RR^d} \|\sigma^T (y) \nabla \phi (y) \|^2 \tilde \mu(dy).
\end{equation}
\label{prop:convergence-costs}
\end{proposition}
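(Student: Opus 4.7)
The plan is to decompose the total cost $\frac{1}{2}\E\int_0^1 \|v^\eps(t)\|^2 dt$ according to the two types of intervals appearing in the definition \eqref{eq:process} of $v^\eps$: the $k$ short ``transition'' intervals $(P_i, P_i+\eps]$ for $i=0,1,\dots,k-1$ and the $k$ long ``stay'' intervals $(P_i+\eps, P_{i+1}]$ for $i=0,1,\dots,k-1$. I will show that the transition intervals contribute $o(1)$, while the stay intervals together converge to $\sum_{i=1}^k p_i \|\sigma^T(x_i)\nabla\phi(x_i)\|^2 = \int \|\sigma^T(y)\nabla\phi(y)\|^2 \tilde\mu(dy)$.

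For the transition intervals, I would first use $\|a+b\|^2 \le 2\|a\|^2+2\|b\|^2$ to bound the cost on $(P_i,P_i+\eps]$ by
$$
\int_{P_i}^{P_i+\eps}\!\!\|\sigma^T(\bar Y^\eps(t))\nabla\phi(\bar Y^\eps(t))\|^2 dt + \int_{P_i}^{P_i+\eps}\!\!\|\uu^\eps_{\bar Y^\eps(P_i),x_{i+1}}(t-P_i)\|^2 dt.
$$
The first term is bounded by $\eps\|\sigma\|_\infty^2 \cdot \sup_{P_i\le t \le P_i+\eps} \|\nabla\phi(\bar Y^\eps(t))\|^2$, which using the linear growth of $\nabla\phi$ (from $\|\clh\phi\|_\infty<\infty$) and Lemma \ref{lemma:bound-Y-bar} has expectation $O(\eps)$. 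For the second term, changing variables to $s=(t-P_i)/\eps$ gives
$$
\eps\int_0^1 \|\uu(\bar Y^\eps(P_i),x_{i+1};s)\|^2 ds \le \eps\, c\bigl(1+\|\bar Y^\eps(P_i)\|^2+\|x_{i+1}\|^2\bigr)
$$
by \eqref{eq:defnuu}, whose expectation is again $O(\eps)$ by Lemma \ref{lemma:bound-Y-bar}. Thus the total transition contribution is $O(\eps)$, hence vanishes in the limit.

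For the stay intervals, on $(P_i+\eps,P_{i+1}]$ set $g_i(y) \doteq \tfrac{1}{2}\|\sigma^T(y)\Lambda(y,x_{i+1})\nabla\phi(x_{i+1})\|^2$. Since $\sigma$ is bounded and Lipschitz and $a$ is uniformly non-degenerate with $a$ bounded, the map $y\mapsto\Lambda(y,x_{i+1})=a(y)^{-1}a(x_{i+1})$ is bounded and continuous, hence $g_i$ is bounded and continuous. Moreover $\Lambda(x_{i+1},x_{i+1})=\Id$, so $g_i(x_{i+1}) = \tfrac{1}{2}\|\sigma^T(x_{i+1})\nabla\phi(x_{i+1})\|^2$. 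Writing the cost on this interval as $(p_{i+1}-\eps)\int g_i\,d\pi^\eps_i$ where $\pi^\eps_i \doteq \frac{1}{p_{i+1}-\eps}\int_{P_i+\eps}^{P_{i+1}}\delta_{\bar Y^\eps(s)}\,ds$, Lemma \ref{lemma:occupational-conv} gives $\dbl(\pi^\eps_i,\delta_{x_{i+1}})\xrightarrow{P}0$. Since $g_i$ is bounded and continuous, $\int g_i\,d\pi^\eps_i \xrightarrow{P} g_i(x_{i+1})$; bounded convergence then yields $\E\int g_i\,d\pi^\eps_i \to g_i(x_{i+1})$, so the expected stay cost on $(P_i+\eps,P_{i+1}]$ converges to $p_{i+1}\,g_i(x_{i+1}) = \tfrac{p_{i+1}}{2}\|\sigma^T(x_{i+1})\nabla\phi(x_{i+1})\|^2$. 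Summing over $i=0,\dots,k-1$ and combining with the negligible transition contribution yields the claimed limit.

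The main technical point is the passage $\int g_i\,d\pi^\eps_i \to g_i(x_{i+1})$ in expectation, since $g_i$ is only bounded continuous rather than in $BL_1(\RR^d)$. I would handle this by a standard approximation: for any $\eta>0$ there is a $g^\eta_i \in BL_1(\RR^d)$ (up to a constant multiple $\|g_i\|_\infty$) that agrees with $g_i$ on a large ball and differs by at most $\eta$ on that ball; using $\sup_\eps \E\|\bar Y^\eps(s)\|^2<\infty$ to control the tail and $\dbl(\pi^\eps_i,\delta_{x_{i+1}})\to 0$ in expectation on the ball, we conclude. All other steps are routine estimates.
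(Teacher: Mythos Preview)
Your proposal is correct and follows essentially the same route as the paper: the same transition/stay decomposition, the same $O(\eps)$ estimate on transition intervals via linear growth of $\nabla\phi$, \eqref{eq:defnuu}, and Lemma \ref{lemma:bound-Y-bar}, and the same use of Lemma \ref{lemma:occupational-conv} together with boundedness and continuity of $z\mapsto \sigma^T(z)\Lambda(z,x_{i+1})\nabla\phi(x_{i+1})$ on the stay intervals. Two minor remarks: for the transition term it is cleaner to bound $\int_{P_i}^{P_i+\eps}\|\nabla\phi(\bar Y^\eps(t))\|^2\,dt$ directly and use $\sup_t\E\|\bar Y^\eps(t)\|^2<\infty$ rather than pulling a $\sup_t$ inside the expectation (which Lemma \ref{lemma:bound-Y-bar} does not give); and your ``main technical point'' is a non-issue, since $\dbl$ metrizes weak convergence, so $\pi_i^\eps\to\delta_{x_{i+1}}$ in probability already yields $\int g_i\,d\pi_i^\eps\to g_i(x_{i+1})$ in probability for any bounded continuous $g_i$, and then bounded convergence finishes.
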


\begin{proof}
Note that
\begin{equation}
\frac{1}{2} \E \int_0^1 \|v^\eps(s)\|^2 ds =  \frac{1}{2} \E \sum_{i=0}^{k-1} \int_{P_i}^{P_i +\eps} \|v^\eps(s)\|^2 ds   +  \frac{1}{2} \E \sum_{i=0}^{k-1} \int_{{P_i+\eps}}^{P_{i+1}}\|v^\eps(s)\|^2 ds.  
\label{single-costs-decomp}
\end{equation}
We first argue that the cost associated with travel from $x_i$ to $x_{i+1}$ is negligible, i.e.
\begin{equation}
   \limsup_{\eps \to 0} \frac{1}{2} \E \int_{P_i}^{P_i+ \eps} \|v^\eps(s)\|^2 ds = 0, \quad \text{for all} \quad i = 0, \dots, k-1.  
    \label{single-negligible-costs-travel}
\end{equation} 
Indeed, using the linear growth of $\nabla \phi$ (which follows from Assumptions 2.1(2b)), the boundedness of $\sigma$, and the estimate for $\uu$ in \eqref{eq:defnuu}, for some $\kappa_1 \in (0,\infty)$
\begin{equation}
\nonumber
\begin{gathered}
    \E \int_{P_i}^{P_i +\eps} \|v^\eps(s)\|^2 ds \leq
    2 \E \int_{P_i}^{P_i+\eps} \|\sigma^T(\bar{Y}^\eps(s)) \nabla \phi(\bar{Y}^\eps(s))\|^2 ds + 2 \E \int_{P_i}^{P_i+\eps} \|\uu^{\eps}_{\Bar{Y}^\eps(P_i),x_{i+1}}(s)\|^2 ds \\
    \leq \kappa_1 \int_{P_i}^{P_i +\eps} E(1+ \|\Bar{Y}^\eps(s)\|^2) ds + 2 \eps  E\int_{0}^{1} \|\uu(\Bar{Y}^\eps(P_i),x_{i+1}, s)\|^2 ds\\
	\leq \kappa_1 \int_{P_i}^{P_i +\eps} E(1+ \|\Bar{Y}^\eps(s)\|^2) ds + 2 c \eps E (1+ \|x_{i+1}\|^2 + \|\Bar{Y}^\eps(P_i)\|^2).
\end{gathered}
\end{equation} 
The statement in \eqref{single-negligible-costs-travel} is now immediate from the moment bound in Lemma \ref{lemma:bound-Y-bar}.

Now we consider costs over the intervals $[P_i + \eps,P_{i+1}]$. For $i = 0,1,\dots,k-1$ consider the random probability measure

\begin{equation}
    \theta_i^\eps \doteq \frac{1}{p_{i+1} - \eps} \int_{P_i + \eps}^{P_{i+1}} \delta_{\bar{Y}^\eps(s)} ds.
\end{equation}
Then 

\begin{equation}
    \frac{1}{2} \E \int_{P_i + \eps}^{P_{i+1}} \|v^\eps(s)\|^2 ds = \frac{1}{2} (p_{i+1}-\eps)\E \int_{\mathbb{R}^d} \|\sigma^T (z) \Lambda(z,x_{i+1}) \nabla \phi(x_{i+1})\|^2 \theta^\eps_i (dz).
\end{equation}

From Lemma \ref{lemma:occupational-conv}, $\theta_i^\eps \rightarrow \delta_{x_{i+1}}$ in probability as $\eps \rightarrow 0$. Since $z \mapsto \Lambda(z,x_{i+1})$ and $\sigma$ are bounded and continuous functions and $\Lambda(x_{i+1},x_{i+1}) = \Id$, we now see that the right side converges, as $\eps \rightarrow 0$, to $\frac{1}{2} p_{i+1}\|\sigma^T(x_{i+1}) \nabla \phi(x_{i+1}) \|^2$. Combining the above with \eqref{single-negligible-costs-travel}, we now have the desired result from \eqref{single-costs-decomp}.
\end{proof}
We now show the convergence of the empirical measure associated with $\bar{Y}^\eps$. Recall $\tilde \mu$ introduced above \eqref{single-discrete-approximation}.
\begin{proposition}
 Let $\bar{\mu}^\eps = \int_0^1 \delta_{\bar{Y}^{\eps}(s)} ds$. Then, $\bar{\mu}^\eps$ converges to $\tilde \mu$ in probability as $\eps \rightarrow 0.$
 \label{prop:measure-conv}
\end{proposition}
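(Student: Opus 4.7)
The plan is to decompose the time interval $[0,1]$ into the ``travel'' pieces $[P_i, P_i+\eps]$ and the ``stay'' pieces $[P_i+\eps, P_{i+1}]$, show the travel pieces contribute negligibly in the bounded-Lipschitz distance, and then invoke Lemma \ref{lemma:occupational-conv} on each stay piece to identify the limit as $\tilde{\mu} = \sum_{i=0}^{k-1} p_{i+1} \delta_{x_{i+1}}$. Since convergence in the bounded-Lipschitz metric metrizes weak convergence, and $L^1$ convergence of $\dbl(\bar{\mu}^\eps, \tilde{\mu})$ implies convergence in probability, this will suffice.

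Concretely, for $f \in BL_1(\RR^d)$ I would write
\begin{equation*}
\int_{\RR^d} f \, d\bar{\mu}^\eps = \sum_{i=0}^{k-1} \int_{P_i}^{P_i+\eps} f(\bar{Y}^\eps(s))\, ds + \sum_{i=0}^{k-1} (p_{i+1}-\eps) \int_{\RR^d} f \, d\theta_i^\eps,
\end{equation*}
while $\int f\, d\tilde{\mu} = \sum_{i=0}^{k-1} p_{i+1} f(x_{i+1})$. Using $\|f\|_\infty \le 1$, the travel-period contributions are bounded deterministically by $k\eps$, and the contribution from the $\eps$-correction in the stay periods is likewise bounded by $k\eps$. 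Taking differences and then supremum over $f \in BL_1(\RR^d)$ yields
\begin{equation*}
\dbl(\bar{\mu}^\eps, \tilde{\mu}) \le 2k\eps + \sum_{i=0}^{k-1} (p_{i+1}-\eps)\, \dbl(\theta_i^\eps, \delta_{x_{i+1}}).
\end{equation*}

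Taking expectations and applying Lemma \ref{lemma:occupational-conv} to each summand shows $\E\, \dbl(\bar{\mu}^\eps, \tilde{\mu}) \to 0$ as $\eps \to 0$. By Markov's inequality this yields convergence in probability, completing the proof. There is no substantive obstacle here: the only mildly delicate point is that the bounded-Lipschitz metric behaves well under the above additive decomposition because $\|f\|_\infty \le 1$ on $BL_1$, which is exactly what makes the travel-period error terms $O(\eps)$ rather than requiring further moment control on $\bar{Y}^\eps$ (though such control is already available from Lemma \ref{lemma:bound-Y-bar} if a more refined estimate were needed).
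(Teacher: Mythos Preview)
Your proposal is correct and follows essentially the same argument as the paper: both decompose $[0,1]$ into the travel pieces $[P_i,P_i+\eps]$ and the stay pieces $[P_i+\eps,P_{i+1}]$, bound the travel contribution by $O(\eps)$ via $\|f\|_\infty\le 1$, and apply Lemma \ref{lemma:occupational-conv} on each stay piece to conclude $\E\,\dbl(\bar\mu^\eps,\tilde\mu)\to 0$. The paper phrases the decomposition at the level of the measures (writing $\bar\mu^\eps$ and $\tilde\mu$ as matching weighted sums and using subadditivity of $\dbl$), whereas you work directly with a test function $f$, but the two presentations are equivalent.
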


\begin{proof}
Note that one can write $\bar{\mu}^\eps$ as 

\begin{equation}
\nonumber
    \bar{\mu}^\eps =
    \sum_{j=0}^{k-1} (p_{j+1}-\eps) \frac{1}{p_{j+1}-\eps} \int_{P_j + \eps}^{P_{j+1}} \delta_{\bar{Y}^\eps(s)} ds + \eps \sum_{j =0}^{k-1} \frac{1}{\eps} \int_{P_j}^{P_j + \eps} \delta_{\bar{Y}^\eps(s)} ds.
\end{equation}
Also recall that 
\begin{equation}
   \tilde \mu = \sum_{j=0}^{k-1} p_{j+1} \delta_{x_{j+1}} = \sum_{j=0}^{k-1} (p_{j+1} - \eps) \delta_{x_{j+1}} + \sum_{j=0}^{k-1} \eps \delta_{x_{j+1}}.
\end{equation}
It then follows that 
\begin{equation}
    \nonumber
    \begin{gathered}
                \E \dbl (\bar{\mu}^\eps, \tilde \mu) \leq \sum_{j=0}^{k-1} (p_{j+1} - \eps) \E \dbl \left( \frac{1}{p_{j+1}-\eps} \int_{P_j + \eps}^{P_{j+1}} \delta_{\bar{Y}^\eps(s)} ds, \delta_{x_{j+1}}    \right) \\
                + \eps \sum_{j=0}^{k-1} \E \dbl \left( \frac{1}{\eps} \int_{P_j}^{P_j + \eps} \delta_{\bar{Y}^\eps(s)} ds, \delta_{x_{j+1}}   \right).
    \end{gathered}
\end{equation}
The result now follows  on sending $\eps \rightarrow 0$ and using Lemma \ref{lemma:occupational-conv}.
\end{proof}
\subsubsection{Proof of the LDP lower bound.}
We now complete the proof of \eqref{ineq-single-lower}.
We will apply Theorem \ref{Variational-1} with $\mathcal{R} = \mathcal{A}$. By a similar argument as in Section \ref{subsec:single-upper},

\begin{equation}
\begin{gathered} \nonumber
       -\eps s^2(\eps)  \log \E \exp \left(- \frac{F(\mu^\eps)}{\eps s^2(\eps)} \right) = \inf_{v \in \mathcal{A}} \E \left(  F \circ \mathcal{G}^\eps (B)  + \frac{1}{2} \int_0^1 \|v(s)\|^2 ds \right) \\ 
        \leq \E \left(  F(\Bar{\mu}^\eps) + \frac{1}{2} \int_0^1 \|v^\eps(s)\|^2 ds \right),
\end{gathered}
\end{equation}
where $\bar{\mu}^\eps$ is as introduced in Proposition \ref{prop:measure-conv} and $v^\eps$ is as constructed in \eqref{eq:process}.
Thus,
\begin{equation*}
\begin{split}
\limsup_{\eps \rightarrow 0} -\eps s^2(\eps)  \log \E \exp \left(- \frac{F(\mu^\eps)}{\eps s^2(\eps)} \right) &\leq
        \limsup_{\eps \rightarrow 0}
        \E \left(  F(\Bar{\mu}^\eps) + \frac{1}{2} \int_0^1 \|v^\eps(s)\|^2 ds \right)\\
       &= F(\tilde \mu) + \frac{1}{2} \int \|\sigma^T (y) \nabla \phi (y) \|^2 \tilde \mu(dy)\\
      %  = F(\tilde \mu) + \frac{1}{2} \int_0^1 \|\sigma^T (\Bar{Y}^\eps (s)) \nabla \phi(x) \|^2 \gamma^*(dx) \\ 
		&\leq \inf_{\gamma \in \mathcal{P}(\mathbb{R}^d)} \left( F(\gamma) + I_1(\gamma)    \right)  + 3\delta,
\end{split}
\end{equation*}
where the first equality is from Propositions \ref{prop:convergence-costs} and \ref{prop:measure-conv}, and the last inequality is from \eqref{snomes2}. Since $\delta>0$ is arbitrary, the inequality in \eqref{single-near-optimal-measure-selection} follows. \hfill \qed

\subsection{Compactness of Level Sets of $I_1$}
 \label{sec:cptlvthm1} 
In this section we show that 
the function $I_1$ as defined in \eqref{eq:rate-function} is a rate function.

Fix $M>0$. It suffices to show that the set $\{ \gamma \in \mathcal{P}(\mathbb{R}^d): I_1(\gamma) \leq M \}$ is compact. Consider a sequence $\gamma_n \in \{ \gamma \in \mathcal{P}(\mathbb{R}^d): I_1(\gamma) \leq M \}$. Then
\[ \int_{\RR^d} \|\sigma^T(y)\nabla \phi (y) \|^2  \gamma_n (dy) \leq 2M, \quad \text{for all}  \quad n \in \mathbb{N}. \]
We now argue that $\{\gamma_n\}$ is a tight sequence of probability measures on $\mathbb{R}^d$. Fix $\kappa>0$. From Assumption \ref{assumptions-1} (parts 1 and 2(d)) there exists $M_1 \in (0,\infty)$ such that for $\|y\| > M_1$, $\|\sigma^T(y) \nabla \phi(y)\|^2 \geq \frac{2M}{\kappa}.$ Then for all $n \in \mathbb{N}$
\begin{equation}
 \int_{\|y\| \geq M_1 } \gamma_n (dy) = \frac{\kappa}{2M} \int_{\|y\| \geq M_1 } \frac{2M}{\kappa} \gamma_n (dy) \\ \leq \frac{\kappa}{2M} \int_{\|y\| \geq M_1 } \|\sigma^T(y) \nabla \phi(y)\|^2 \gamma_n (dy) \leq \frac{\kappa}{2M} 2M = \kappa.
\end{equation}  
Since $\kappa>0$ is arbitrary, we have that $\{\gamma_n\}$ is tight. Thus, we can find some subsequence (labeled again as $n$) along which $\gamma_n \to \bar\gamma$ for some $\bar\gamma \in \clp(\RR^d)$.
 From Fatou's lemma 
\[  \int_{\RR^d} \|\sigma^T(y) \nabla \phi(y)\|^2 \bar\gamma(dy) \leq \liminf_n \int_{\RR^d} \|\sigma^T(y) \nabla \phi(y)\|^2 \gamma_{n} (dy) \leq 2M,    \]
which shows that $\bar\gamma \in \{ \gamma \in \mathcal{P}(\mathbb{R}^d): I(\gamma) \leq M \}$. Thus $\{ \gamma \in \mathcal{P}(\mathbb{R}^d): I(\gamma) \leq M \}$ is compact. \hfill \qed

\section{Proof of Theorem \ref{theorem:ldp-2}.}
\label{sec:pfthmtwo}
We now turn to the multiscale system introduced in Section \ref{sec:m2} and prove our second main result, namely Theorem \ref{theorem:ldp-2}.
As before, the proof proceeds by establishing the associated Laplace asymptotics. Recall the definitions of the spaces $\clx$ and $\clm_1$ from Section \ref{sec:m2}.
Section  \ref{subsec:two-scale-upper} shows that for every $F \in \clc_b(\clx \times \clm_1)$
\begin{equation}
\label{ineq-two-upper}
    \liminf_{\eps \rightarrow 0}  \left[- \eps s^2(\eps) \log \E e^{-\frac{F(X^\eps, \Lambda^{\eps})}{\eps s^2(\eps)}} \right] \geq \inf_{(\xi, \nu) \in \clx \times \clm_1} \bigg( F(\xi, \nu) + I_2(\xi, \nu)  \bigg)
\end{equation}
which gives the LDP upper bound.
Then, in Section \ref{subsec:two-scale-lowern} we  prove the complementary lower bound: for every $F \in \clc_b(\clx \times \clm_1)$
\begin{equation}
\label{ineq-two-lower}
    \limsup_{\eps \rightarrow 0}  \left[- \eps s^2(\eps) \log \E e^{-\frac{F(X^\eps, \Lambda^{\eps} )}{\eps s^2(\eps)}} \right] \leq \inf_{(\xi, \nu) \in \clx \times \clm_1} \bigg( F(\xi, \nu) + I_2(\xi, \nu)   \bigg).
\end{equation}
Finally Section \ref{sec:cptlevi2} proves that $I_2$ is a rate function. Theorem \ref{theorem:ldp-2} is an immediate consequence of these three results.
Throughout this section, Assumption \ref{Assumptions:system} will be taken to hold.

\subsection{LDP Upper Bound}
\label{subsec:two-scale-upper}

In this Section, we prove the inequality in \eqref{ineq-two-upper}. Under Assumption \ref{Assumptions:system}, the SDE system in \eqref{eq:systemor}
has a unique pathwise solution $(X^{\eps}, Y^{\eps})$ given on a filtered probability space $(\Omega, \mathcal{F}, \{\clf_t\}, P)$ satisfying the usual conditions and equipped with mutually independent $k$ and $d$ dimensional $\{\clf_t\}$-Brownian motions $\{W(t)\}$ and $\{B(t)\}$.
This says that, there exists a measurable map $\tilde{\mathcal{G}}^\eps: \mathcal{C}([0,T]:\mathbb{R}^{d+k}) \rightarrow \clx \times \clm_1$ such that $(X^\eps, \Lambda^{\eps})= \tilde{\mathcal{G}}^\eps(B,W)$ where $\Lambda^{\eps}$ is defined as in \eqref{eq:557}.

Fix $\eps > 0$ and $F \in \clc_b(\clx \times \clm_1)$  and apply Theorem \ref{Variational-1} with $p = k+d$, $\beta = (B,W)$, and $G$ replaced by $G^\eps = F \circ \mathcal{\tilde{G}}^\eps$. Then, as in Section \ref{subsec:single-upper} we have that
\begin{equation}
\begin{aligned}
            &- \eps s^2(\eps) \log \E e^{-\frac{F(X^\eps, \La^{\eps})}{\eps s^2(\eps)}} \\
            &= \inf_{(v_1,v_2) \in \mathcal{A}_{b}} \E \left[ \frac{1}{2}\eps s^2 (\eps) \int_0^T (\|v_1(s)\|^2  + \|v_2(s)\|^2) ds + G^\eps \left( B + \int_0^\cdot  {v}_1(s)ds, W + \int_0^\cdot v_2(s)ds \right)   \right] \\
            &= \inf_{(v_1,v_2) \in \mathcal{A}_{b}} \E \left[ \frac{1}{2} \int_0^T (\|v_1(s)\|^2  + \|v_2(s)\|^2) ds  + G^\eps \left( B + \frac{1}{\sqrt{\eps} s(\eps)}\int_0^\cdot v_1(s)ds, W + \frac{1}{\sqrt{\eps} s(\eps)}\int_0^\cdot v_2(s) ds \right)   \right].
\end{aligned}
\end{equation}
The classes $\cla_b= \cla_b^p$ and $\cla=\cla^p$ are as in Section \ref{sec:varformula}. Note that any $v \in \cla^{d+k}$ can be written as $v=(v_1, v_2)$ where $v_1 \in \cla^d$ and $v_2 \in \cla^k$ and if $v \in \cla^{d+k}_{b,M}$ then $v_1 \in \cla^{d}_{b,M}$ and $v_2 \in \cla^{k}_{b,M}$.

Fix $\delta > 0$ and choose for each $\eps > 0$ a $(\bar{v}_1^\eps, \bar{v}_2^\eps) \in \mathcal{A}_b$ that is $\delta-$optimal for the right side, namely,
\begin{equation}
    \begin{aligned}
    - \eps s^2(\eps) \log \E e^{-\frac{F(X^\eps, \La^{\eps})}{\eps s^2(\eps)}} 
    &\geq \E \left[ \frac{1}{2}\int_0^T \|\bar{v}^{\eps}_1(s)\|^2 ds + \frac{1}{2}\int_0^T \|\bar{v}^{\eps}_2(s)\|^2 ds \right. \\ 
	&\quad + \left. G^\eps \left( B + \frac{1}{\sqrt{\eps} s(\eps)}\int_0^\cdot \bar{v}^{\eps}_1(s)ds,W + \frac{1}{\sqrt{\eps} s(\eps)}\int_0^\cdot \bar{v}^{\eps}_2(s)ds \right)   \right] - \delta.
    \end{aligned}
\end{equation}
A similar localization argument as invoked in  Section \ref{subsec:single-upper} shows that, there is a $M \in (0,\infty)$, and for each $\eps >0, (v_1^\eps,v_2^\eps) \in \mathcal{A}_{b,M}$ such that 
\begin{equation}
    \label{system-upper-variational}
    \begin{aligned}
        - \eps s^2(\eps) \log \E e^{-\frac{F(X^\eps, \La^{\eps})}{\eps s^2(\eps)}} &
        \geq \E \left[ \frac{1}{2}\int_0^T \|v_1^{\eps}(s)\|^2 ds + \frac{1}{2}\int_0^T \|v_2^{\eps}(s)\|^2 ds \right. \\
		& \quad  \left. + G^\eps \left( B + \frac{1}{\sqrt{\eps} s(\eps)}\int_0^\cdot v_1^{\eps}(s)ds, W + \frac{1}{\sqrt{\eps} s(\eps)}\int _0^\cdot v_2^{\eps}(s)ds \right)   \right] - 2 \delta.  
    \end{aligned}
\end{equation}
Also by application of Girsanov's Theorem, it follows that 
$$G^\eps \left( B + \frac{1}{\sqrt{\eps} s(\eps)}\int_0^\cdot {v}^{\eps}_1(s)ds, W + \int _0^\cdot {v}^{\eps}_2(s)ds \right) = (\Bar{X}^\eps, \Bar{\La}^{\eps}),$$
 where $(\Bar{X}^\eps, \Bar{Y}^\eps)$ is the solution to
\begin{equation}
	\begin{aligned}
d\bar{X}^\eps (t) &= \left(b(\bar{X}^\eps(t), \bar{Y}^\eps(t))+\alpha(\bar{X}^\eps(t))v_2^\eps(t)\right)dt + s(\eps) \sqrt{\eps} \alpha(\bar{X}^\eps(t))  dW(t), \, 
 \bar{X}^\eps(0) = x_0,\\
   d\bar{Y}^\eps (t) &= \left(-\frac{1}{\eps} \nabla_y U(\bar{X}^\eps (t) \textit{,} \bar{Y}^\eps (t))+ \frac{1}{\eps} v_1^\eps(t)\right)dt + \frac{s(\eps)}{\sqrt{\eps}} dB(t), \,     \bar{Y}^\eps(0) = y_0
 \end{aligned}
\label{eq:system-bar-def}
\end{equation}
and 
\begin{equation}\label{eq:laeps}
	\bar{\La}^\eps(A \times [0,t]) \doteq \int_{[0,t]} 1_{A}(\Bar{Y}^{\eps}(s)) ds, \; t \in [0,T], \; A \in \clb(\RR^d).\end{equation}
Denote for $t \in [0,T]$, $\mathbb{M}_t = [0,t] \times \mathbb{R}^d \times \mathbb{R}^d$ and define $\clp(\MM_T)$ valued random variables $\bar{\Gamma}^\eps$
as
\begin{equation}
\label{eq:gamma_bar}
    \bar{\Gamma}^\eps (A \times B \times C) = \frac{1}{T} \int_0^T 1_A(s) 1_B (\bar{Y}^\eps (s)) 1_C (v^\eps_1 (s)) ds, \, A\in \clb([0,T]), B \in \clb(\RR^d), C \in \clb(\RR^d).
\end{equation}
By denoting $(ds \times dy \times dz)$ as $d\textbf{v}$, the first equation in \eqref{eq:system-bar-def}  can now be written as
\begin{align}
    \bar{X}^\eps (t)  = x^0 + T \int_{\mathbb{M}_t}  b(\bar{X}^\eps (s),y) \bar{\Gamma}^\eps (d\textbf{v}) + s(\eps) \sqrt{\eps} \int_0^t \alpha(\bar{X}^\eps(s)) dW(s) + \int_0^t \alpha(\bar{X}^\eps(s))v_2^\eps(s) ds
    \label{eq:XQ}
\end{align}
We begin by establishing the following  useful moment bound.
\begin{lemma}
For some $\eps_0 \in (0,1),$

\begin{equation}
\nonumber
    \sup_{\eps \in (0,\eps_0)} \left[ \E \sup_{0 \leq t \leq T} \|\bar{X}^\eps(t)\|^2 + \E \int_0^T \|\bar{Y}^\eps(t)\|^2 dt + \eps \sup_{0 \leq t \leq T} \E \|\bar{Y}^\eps(t)\|^2     \right] < \infty.
\end{equation}
\label{system-upper-moment-bound}
\end{lemma}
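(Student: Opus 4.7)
The plan is to imitate the single-component argument used in Lemma \ref{lemma-single-exp-Y}, with $U$ itself playing the role of the Lyapunov function for the fast component. I would first apply It\^o's formula to $\eps U(\bar X^\eps(\cdot), \bar Y^\eps(\cdot))$, using localizing stopping times $\tau_m = \inf\{t : \|\bar X^\eps(t)\| + \|\bar Y^\eps(t)\| \geq m\}$ so that the stochastic integrals are genuine martingales. The drift of $\bar Y^\eps$ contributes, after multiplication by $\eps$, a dissipative $-\|\nabla_y U\|^2 dt$ together with a control coupling $\nabla_y U \cdot v_1^\eps dt$ which I would split via Young's inequality as $\frac{1}{2}\|\nabla_y U\|^2 + \frac{1}{2}\|v_1^\eps\|^2$, absorbing half of the dissipation into the left-hand side; the residual $\frac{1}{2}\|v_1^\eps\|^2$ is uniformly controlled by $\|v_1^\eps\|_2^2 \le M$. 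The It\^o trace terms are of size $O(s^2(\eps))$ by the boundedness of $\mathcal{H}_y U$ and the linear growth of $\mathcal{H}_x U$ from Assumption \ref{Assumptions:system}(2a), while the slow-drift correction $\eps \nabla_x U \cdot (b + \alpha v_2^\eps)$ is handled by Young's inequality and the linear growth of $\nabla_x U$ (Assumption \ref{Assumptions:system}(2b)) and of $b$ (Assumption \ref{Assumptions:system}(1)), combined with boundedness of $\alpha$ and of $\|v_2^\eps\|_2^2$. After taking expectations and sending $m \to \infty$ by Fatou, this produces a master estimate
\begin{equation}
\label{master}
\eps E\,U(\bar X^\eps(t), \bar Y^\eps(t)) + \tfrac{1}{2} E \int_0^t \|\nabla_y U(\bar X^\eps(s), \bar Y^\eps(s))\|^2 ds \le K_1 + K_2 \eps \int_0^t \bigl( E\|\bar X^\eps(s)\|^2 + E\|\bar Y^\eps(s)\|^2 \bigr) ds.
\end{equation}

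To convert \eqref{master} into bounds on the three quantities in the lemma, I would invoke Assumption \ref{Assumptions:system}(2c) in the form $\|\nabla_y U(x,y)\|^2 \ge L^1_{low}\|y\|^2 - L^2_{low} - U(x,y)$, together with the quadratic upper bound $|U(x,y)| \le C(1 + \|x\|^2 + \|y\|^2)$ obtained by integrating the linear-growth gradient bounds from Assumption \ref{Assumptions:system}(2b,e). In parallel, a standard SDE moment estimate for $\bar X^\eps$ using Lipschitz continuity of $b$, boundedness of $\alpha$, the $L^2$-bound on $v_2^\eps$, Doob's maximal inequality, and Burkholder-Davis-Gundy yields
\begin{equation}
\label{xmoment}
E \sup_{0 \le u \le t} \|\bar X^\eps(u)\|^2 \le K_3 + K_4 \int_0^t \bigl( E \sup_{0 \le u \le s}\|\bar X^\eps(u)\|^2 + E\|\bar Y^\eps(s)\|^2 \bigr) ds.
\end{equation}
Combining \eqref{xmoment} with the $\bar Y^\eps$ estimate extracted from \eqref{master} and applying Gronwall closes the bounds $E \sup_t \|\bar X^\eps(t)\|^2 < \infty$ and $E \int_0^T \|\bar Y^\eps(s)\|^2 ds < \infty$. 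The third assertion $\eps \sup_t E\|\bar Y^\eps(t)\|^2 < \infty$ would then follow by applying \eqref{master} at the single time $t$ and invoking the pointwise bound $\eps U \ge \eps(L^1_{low}\|\bar Y^\eps\|^2 - L^2_{low} - \|\nabla_y U\|^2)$ from Assumption \ref{Assumptions:system}(2c), using the already-established uniform bound on $E\|\nabla_y U(\bar X^\eps, \bar Y^\eps)\|^2$ in $L^1([0,T])$ and the quadratic upper bound on $|U|$.

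The main obstacle is closing the resulting coupled fixed-point system. After the upper bound on $|U|$ is used to dispose of the $-U$ term arising from the coercivity of $\|\nabla_y U\|^2$, and after the boundary term $\eps E\,U(\bar X^\eps(t), \bar Y^\eps(t))$ on the left of \eqref{master} is bounded below via $-\eps C(1 + E\|\bar X^\eps(t)\|^2 + E\|\bar Y^\eps(t)\|^2)$, the unfavorable side of the resulting inequality contains both a constant multiple of $\int_0^t E\|\bar Y^\eps(s)\|^2 ds$ and an $O(\eps)$ multiple of the boundary $E\|\bar Y^\eps(t)\|^2$ itself. The latter is absorbed by the $\eps$-factor and the former must be dominated by $\frac{L^1_{low}}{2}\int_0^t E\|\bar Y^\eps(s)\|^2 ds$; closing these simultaneously requires $\eps$ below an explicit threshold $\eps_0$ determined by the constants in Assumption \ref{Assumptions:system}, which is exactly why the lemma is stated for $\eps \in (0, \eps_0)$ rather than for all $\eps \in (0,1)$.
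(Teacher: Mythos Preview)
Your approach is essentially the paper's: apply It\^o's formula to $U(\bar X^\eps,\bar Y^\eps)$, absorb the control coupling $\nabla_y U\cdot v_1^\eps$ by Young's inequality against the dissipative $-\|\nabla_y U\|^2$, invoke Assumption \ref{Assumptions:system}(2c) to convert to $\|y\|^2$-coercivity, couple with the standard Gronwall estimate \eqref{xmoment} for $\bar X^\eps$, and close for $\eps$ below a threshold determined by the constants. The one imprecision is in your last step: to bound $\eps E\|\bar Y^\eps(t)\|^2$ you need pointwise-in-$t$ control of $\eps E\|\nabla_y U(\bar X^\eps(t),\bar Y^\eps(t))\|^2$, and the $L^1([0,T])$ integral bound you cite does not give that; the cleaner route (which the paper takes) is simply to drop the nonpositive term $-\tfrac{1}{2}E\int_0^t\|\nabla_y U\|^2\,ds$ from \eqref{master}, leaving $\eps E\,U(\bar X^\eps(t),\bar Y^\eps(t))$ bounded by quantities already controlled, and then read off $\eps E\|\bar Y^\eps(t)\|^2$ from the coercivity of $U$.
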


\begin{proof}
	The constants $\kappa_i, i = 1,\dots,6$ in the proof will be positive reals that are independent of $t\in [0,T]$ and $\veps \in (0,1)$.
	Applying It\^{o}'s formula to $U(\bar X^{\veps}(t), \bar Y^{\veps}(t))$ we have for $t \in [0,T]$
	\begin{multline}\label{eq:eq1111}
	%\begin{aligned}
	U(\bar X^{\veps}(t), \bar Y^{\veps}(t))	
	= U(x_0, y_0) + \int_0^t \nabla_x U(\bar X^{\veps}(s), \bar Y^{\veps}(s))\cdot [b(\bar X^{\veps}(s), \bar Y^{\veps}(s)) + \PZ{\alpha(\bar X^{\veps}(s))} v_2^{\veps}(s)]	ds\\
	\quad  + s(\veps)\veps^{1/2} \int_0^t \nabla_x U(\bar X^{\veps}(s), \bar Y^{\veps}(s)) \alpha(\bar X^{\veps}(s)) dW(s)
	+ \frac{s^2(\veps)\veps}{2}\int_0^t  \tr([\alpha^T\clh_xU\alpha](\bar X^{\veps}(s), \bar Y^{\veps}(s))) ds\\
	\quad - \frac{1}{\veps} \int_0^t \left\| \nabla_yU(\bar X^{\veps}(s), \bar Y^{\veps}(s))\right\|^2 ds +
	\frac{1}{\veps} \int_0^t \nabla_yU(\bar X^{\veps}(s), \bar Y^{\veps}(s))\cdot v_1^{\veps}(s) ds\\
	\quad + \frac{s(\veps)}{\veps^{1/2}} \int_0^t \nabla_yU(\bar X^{\veps}(s), \bar Y^{\veps}(s)) dB(s)
	+ \frac{s^2(\veps)}{2\veps} \int_0^t \tr (\clh_yU(\bar X^{\veps}(s), \bar Y^{\veps}(s))) ds.
	%\end{aligned}
	\end{multline}
	From our assumption that $b$ is Lipschitz, $\alpha$ is bounded and $\nabla_yU$ is Lipschitz,  it follows by standard Grönwall estimates that for each fixed $\veps>0$ and $t\in [0,T]$
	\begin{equation}\label{eq:mombd439}
		E[\|\bar X^{\veps}(t)\|^2 + \|\bar Y^{\veps}(t)\|^2] <\infty.
	\end{equation} 
	This, in particular, in view of linear growth of $\nabla_x U$ and $\nabla_yU$,  says that the expectation of the stochastic integrals in \eqref{eq:eq1111} is $0$.
	Next note that by Assumption \ref{Assumptions:system}(2c) 
	$$U(\bar X^{\veps}(t), \bar Y^{\veps}(t)) 
	\geq L^1_{low} \|\bar Y^{\veps}(t)\|^2 - L^2_{low}.$$
	Also, using the linear growth of $b$ and $\nabla_x U$, the boundedness of $\alpha$, and the fact that $v^{\eps}_2 \in \cla^k_{b,M}$, it follows that for some $\kappa_1 \in (0,\infty)$, 
\begin{align*}
	E\left\|\int_0^t \nabla_x U(\bar X^{\veps}(s), \bar Y^{\veps}(s))\cdot [b(\bar X^{\veps}(s), \bar Y^{\veps}(s)) + \alpha(\bar X^{\veps}(s))v_2^{\veps}(s)]\right\| 
	\le \kappa_1 \int_0^t E(1+ \| \bar Y^{\veps}(s)\|^2 + \| \bar X^{\veps}(s)\|^2).
\end{align*}
By Young's inequality
\begin{align*}
&- \frac{1}{\veps} \int_0^t \left\| \nabla_yU(\bar X^{\veps}(s), \bar Y^{\veps}(s))\right\|^2 ds +
	\frac{1}{\veps} \int_0^t \nabla_yU(\bar X^{\veps}(s), \bar Y^{\veps}(s))\cdot v_1^{\veps}(s) ds\\
	&\le - \frac{1}{2\veps} \int_0^t \left\| \nabla_yU(\bar X^{\veps}(s), \bar Y^{\veps}(s))\right\|^2 ds + \frac{M}{2\eps}.
\end{align*}
Combining the above observations with \eqref{eq:eq1111} we now have, for some $\kappa_2 \in (0,\infty)$,
\begin{align}
	L^1_{low}(E\| \bar Y^{\veps}(t)\|^2 -1) &\le
	\frac{\kappa_2}{\veps} + \kappa_2 \int_0^t  E(1+ \| \bar Y^{\veps}(s)\|^2 + \| \bar X^{\veps}(s)\|^2) ds
	- \frac{1}{2\veps} \int_0^t E\left\| \nabla_yU(\bar X^{\veps}(s), \bar Y^{\veps}(s))\right\|^2 ds.
	\label{eq:s1141}
\end{align}

% Specifically, in the above inequality we have used Assumption \ref{assu:sconv}(4) to control the left side of the display in \eqref{eq:eq1111} and used linear growth of $\nabla_x U$ in Assumption \ref{assu:sconv}(2)  and the linear growth of $b$ (as $b$ is Lipschitz), and the estimate
%  \eqref{eq:s2e6.5s} on controls to estimate the second term on the right side of  \eqref{eq:eq1111},  the linear growth of $D^2_x$ in Assumption \ref{assu:sconv}(2) to control the fourth term, the inequality
% $|xy|\le \frac{1}{2}(|x|^2+ |y|^2)$ and the  estimate \eqref{eq:s2e6.5s} on controls to estimate the sixth term, and the boundedness of $D^2_y$ to estimate the last term on the right side of  \eqref{eq:eq1111}.

Applying Assumption \ref{Assumptions:system}(2c)  once more we see that
\begin{equation}
\int_0^t E\left\| \nabla_yU(\bar X^{\veps}(s), \bar Y^{\veps}(s))\right\|^2 ds \ge 	
\int_0^t (L^1_{low} E\|\bar Y^{\veps}(s)\|^2 - L^2_{low}) ds.
\end{equation}
Using this inequality in \eqref{eq:s1141} and rearranging terms, for some $\kappa_3, \kappa_4 \in (0,\infty)$
\begin{align*}
	L^1_{low} \int_0^t E\| \bar Y^{\veps}(s)\|^2 ds 
	%&\le  \int_0^t E\left\| \nabla_yU(\bar X^{\veps}(s), \bar Y^{\veps}(s))\right\|^2 \\
	&\le \kappa_3 + \kappa_4\veps \int_0^t  E(\| \bar Y^{\veps}(s)\|^2 + \| \bar X^{\veps}(s)\|^2) ds.
\end{align*}
% Specifically, for the first inequality in the above display we have used Assumption \ref{assu:sconv}(4) whereas in the second inequality we have used \eqref{eq:eq1111} (by taking the fifth term on the right side of the equation to the left side) and similar estimates on the various remaining terms on the right side of \eqref{eq:eq1111} as used for obtaining \eqref{eq:s1141}.
Choose  $\veps_1 \in (0,1)$ such that $\kappa_4\veps_1 \le \frac{L^1_{low}}{2}$.
Then there is a $\kappa_5 \in (0,\infty)$ such that for all $\veps \in (0, \veps_1)$ and $t \in [0,T]$
\begin{equation*}
	\int_0^t E\| \bar Y^{\veps}(s)\|^2 ds \le \kappa_5 (1 + \veps \sup_{0\le s \le t} E\| \bar X^{\veps}(s)\|^2).
\end{equation*}
Next, by standard Grönwall estimates on the first equation in \eqref{eq:system-bar-def}   there is a $\kappa_6 \in (0, \infty)$, such that for all $\veps \in (0,1)$ and $t \in [0,T]$
\begin{equation}
	\label{eq:mombd441}
	E\sup_{0\le s \le t} \|\bar X^{\veps}(s)\|^2 \le \kappa_6 E \int_0^t (1+ \|\bar Y^{\veps}(s)\|^2) ds.
\end{equation}	
Combining the last two estimates we see that with $\veps_0 = \min(\veps_1, 1/(2\kappa_5\kappa_6))$
\begin{equation*}
\sup_{\veps \in (0, \veps_0)}	\left(\int_0^ T E\| \bar Y^{\veps}(s)\|^2 ds + E \sup_{0\le s \le T}  \| \bar X^{\veps}(s)\|^2\right)  < \infty.
\end{equation*}
Finally, using the above estimate in \eqref{eq:s1141} we obtain 
$$\sup_{\veps \in (0,\veps_0)}
	 \veps \sup_{0\le t \le T} E\|\bar Y^{\veps}(t)\|^2 <\infty.
	 $$
	 This completes the proof of the lemma.
\end{proof}

For the rest of this subsection, we will assume  that $\eps \in (0, \eps_0)$ where
 $\eps_0$ is as in  the statement of previous lemma.
  In the next result we establish the tightness of various objects of interest. Recall the definition of  $S_M^p$, for $p \in \NN$, from \eqref{eq:954}.
\begin{proposition}
\label{system-upper-tightness}
The collection $(\bar{\Gamma}^\eps,\bar{X}^\eps,v_2^\eps)$ is tight in $\mathcal{P} (\mathbb{M}_T) \times \clx \times S^k_M$.
\end{proposition}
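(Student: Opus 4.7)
The plan is to establish tightness of each of the three factors separately, which suffices for tightness of the product. All nontrivial input comes from Lemma \ref{system-upper-moment-bound}.

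Tightness of $\{v_2^\eps\}$ in $S_M^k$ is immediate: since $v_2^\eps\in S_M^k$ a.s.\ and $S_M^k$ equipped with the weak $L^2$ topology is compact (as noted following \eqref{eq:954}), any $S_M^k$-valued family is trivially tight. For $\{\bar\Gamma^\eps\}$ in $\clp(\mathbb{M}_T)$, I would invoke the standard criterion that a family of random probability measures on a Polish space is tight whenever there exists a measurable $f\ge 0$ with relatively compact sub-level sets satisfying $\sup_\eps \E\int f\,d\bar\Gamma^\eps<\infty$. The natural choice is $f(s,y,z)=\|y\|^2+\|z\|^2$, whose sub-level sets have the form $[0,T]\times\{(y,z):\|y\|^2+\|z\|^2\le K\}$ and are therefore compact in $\mathbb{M}_T$. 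From the definition \eqref{eq:gamma_bar},
\[
  \E\int f\,d\bar\Gamma^\eps=\frac{1}{T}\,\E\int_0^T\!\bigl(\|\bar Y^\eps(s)\|^2+\|v_1^\eps(s)\|^2\bigr)\,ds,
\]
which is uniformly bounded: the first term by Lemma \ref{system-upper-moment-bound} and the second by $M/T$ since $v_1^\eps\in\cla_{b,M}^d$.

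For $\{\bar X^\eps\}$ in $\clx$ I would verify compact containment together with an Aldous-type increment estimate. Compact containment is immediate from the bound $\sup_\eps \E\sup_{0\le t\le T}\|\bar X^\eps(t)\|^2<\infty$ in Lemma \ref{system-upper-moment-bound} combined with Markov's inequality. For Aldous' condition, use the decomposition that for any $\{\clf_t\}$-stopping time $\tau\le T$ and $h\in[0,T-\tau]$,
\[
  \bar X^\eps(\tau+h)-\bar X^\eps(\tau)=\int_\tau^{\tau+h}\! b(\bar X^\eps,\bar Y^\eps)\,du+\int_\tau^{\tau+h}\!\alpha(\bar X^\eps)v_2^\eps\,du+s(\eps)\sqrt{\eps}\!\int_\tau^{\tau+h}\!\alpha(\bar X^\eps)\,dW.
\]
The goal is to show $\E\|\bar X^\eps(\tau+h)-\bar X^\eps(\tau)\|^2\le Ch$ uniformly in $(\tau,h,\eps)$, after which Markov's inequality delivers Aldous' criterion. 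Each piece is handled routinely: Cauchy--Schwarz in time combined with the linear growth of $b$ and the uniform bound on $\E\int_0^T(\|\bar X^\eps\|^2+\|\bar Y^\eps\|^2)\,du$ from Lemma \ref{system-upper-moment-bound} controls the drift integral; boundedness of $\alpha$ together with $\|v_2^\eps\|_2^2\le M$ controls the control term; It\^o's isometry together with $s^2(\eps)\eps\le 1$ controls the stochastic integral.

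No step presents a real obstacle. The only mild subtlety worth highlighting is that Lemma \ref{system-upper-moment-bound} controls $\bar Y^\eps$ in the integrated sense $\E\int_0^T\|\bar Y^\eps(s)\|^2\,ds<\infty$, while the available pointwise bound $\E\|\bar Y^\eps(t)\|^2$ degenerates as $\eps\to 0$ (being of order $1/\eps$). Consequently every estimate involving $\bar Y^\eps$ must be arranged via Cauchy--Schwarz in the time variable so that only the integrated bound is invoked, rather than a pointwise-in-$t$ bound.
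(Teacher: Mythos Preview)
Your proposal is correct and follows essentially the same approach as the paper: tightness of each factor separately, with $v_2^\eps$ trivial by compactness of $S_M^k$, $\bar\Gamma^\eps$ via the uniform moment bound $\sup_\eps \E\int(\|y\|^2+\|z\|^2)\,d\bar\Gamma^\eps<\infty$ coming from Lemma \ref{system-upper-moment-bound} and $v_1^\eps\in\cla_{b,M}^d$, and $\bar X^\eps$ via the same decomposition into drift, control, and martingale terms controlled by the same inputs. The only cosmetic differences are that the paper phrases the $\bar\Gamma^\eps$ argument through tightness of the mean measures and their marginals rather than a moment-function criterion, and handles $\bar X^\eps$ by directly asserting tightness of the bounded-variation and martingale parts separately rather than via Aldous' criterion; your observation about needing the integrated rather than pointwise bound on $\bar Y^\eps$ is exactly right.
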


\begin{proof}
The tightness of $\{v_2^\eps\}$ is immediate from the compactness of $S_M^k$.
In order to prove the tightness of $\{\bar{\Gamma}^\eps \}$, it suffices to show that the collection of non-random probability measures on $\MM_T$ defined as $\bar\gamma^\eps(A) \doteq  \E \bar\Gamma^\eps(A)$, $A \in \clb(\MM_T)$ is relatively compact (cf. \cite[Theorem 2.11]{buddupbook}).
In turn, to prove the tightness of $\{\bar\gamma^\eps\}$ it suffices to prove the tightness of its three marginals. The first marginal $ [\bar\gamma^\eps]_1$ is the normalized Lebesgue measure on $[0,T]$ for each $\eps$ so it is automatically tight.
For the second marginal we have,  for $R \in (0,\infty)$
$$[\bar\gamma^\eps]_2(y \in \RR^d: \|y\| \ge R) = \frac{1}{T} \int_0^T P(\|\bar Y^{\eps}(t)\| \ge R) \le \frac{1}{TR^2}\int_0^T E\|\bar Y^{\eps}(t)\| ^2 dt.$$
Tightness of $\{[\bar\gamma^\eps]_2\}$ is now immediate from Lemma \ref{system-upper-moment-bound}.
Also, for the third marginal, for $R \in (0,\infty)$
$$[\bar\gamma^\eps]_3(z \in \RR^d: \|z\| \ge R) = \frac{1}{T} \int_0^T P(\|v_1^{\eps}(t)\| \ge R) \le
\frac{1}{TR^2}\int_0^T E\|v_1^{\eps}(t)\| ^2 dt \le \frac{M}{TR^2}.$$
The tightness of $\{[\bar\gamma^\eps]_3\}$  follows. Thus we have shown the tightness of $\{\bar{\Gamma}^\eps \}$.

Finally consider $\{\bar{X}^\eps\}$. 
Write $\bar{X}^\eps =  \bar \clb^{\eps} + \bar \cla^{\eps}$, where, for $ \in [0,T]$,
$$\bar \clb^{\eps}(t) = x_0 + \int_0^t b(\bar X^{\veps}(r), \bar Y^{\veps}(r))dr + \int_0^t \alpha(\bar X^{\veps}(r)) v_2^{\veps}(r) dr$$
and
$\bar \cla^{\eps}(t) = s(\eps)\sqrt{\eps}\int_0^t \alpha(\bar X^{\veps}(s)) dW(s)$.
The tightness of $\{\bar \clb^{\eps} \}$ in $\clx$ is immediate from the moment bounds in Lemma \ref{system-upper-moment-bound}, the linear growth of $b$, the boundedness of $\alpha$ and since $v_2^{\eps} \in \cla^r_{b,M}$, while the tightness of $\bar \cla^{\eps}$ in $\clx$  follows from the boundedness of $\alpha$. This proves the tightness of $\{\bar{X}^\eps\}$ in $\clx$ and completes the proof of the lemma.
\end{proof}

By Lemma \ref{system-upper-tightness}, it follows that every subsequence has a further  subsequence along which $(\bar\Gamma^{\veps}, \bar X^{\veps}, v_2^{\veps})$ converges in distribution   to $(\bar \Gamma, \bar X,  v_2)$. 
We disintegrate the measure $\bar \Gamma$ as follows
\begin{equation}\label{eq:eq859}
	\bar \Gamma(dt\, dy\, dz) = \frac{1}{T} dt\, \hat{\gamma}_t(dy)\, q(t, y, dz) = \gamma(dt\, dy) q(t,y,dz) =  \frac{1}{T}  dt\, \hat{\Gamma}_t(dy\, dz).
\end{equation}
\PZ{In the above identity, $\gamma(dt\, dy) = \bar \Gamma(dt\times dy\times \RR^d)$ is the marginal distribution of $\bar \Gamma$ on the first two coordinates and $q$ is the r.c.p.d. on the third coordinate given the first two coordinates. Also, since 
$\bar \Gamma(A \times \RR^d \times \RR^d) =  \gamma(A\times \RR^d)=\frac{1}{T}\lambda(A)$, where $\lambda$ is the Lebesgue measure and $A \in \clb([0,T])$, the probability measure $\gamma$ can be disintegrated as $\gamma(dt\, dy) = \frac{1}{T} dt\, \hat{\gamma}_t(dy)$, which give the first two identities in the display. For the third identity we disintegrate the probability measure $\bar \Gamma$ as the marginal on the first coordinate (which is the normalized Lebesgue measure $\frac{1}{T} dt$) and the r.c.p.d. on the last two coordinates given the first coordinate, denoted as $\hat{\Gamma}_t(dy\, dz)$}. 
 The following lemma gives a characterization of the limit points $(\bar \Gamma, \bar X,  v_2)$.
 Recall the class $\clu(\xi, \nu)$ defined in Section \ref{sec:m2}. 
\begin{lemma}
\label{system-upper-representation-xi}
Let $(\bar \Gamma, \bar X,  v_2)$ be a weak limit point of $(\bar\Gamma^{\veps}, \bar X^{\veps}, v_2^{\veps})$. Define $\clm_1-$valued random variable
$\bar \Lambda$ as
$$\bar \Lambda([0,t]\times A) \doteq T\, \bar \Gamma([0,t]\times A \times \RR^d), \mbox{ for } t \in [0,T], A \in \clb(\RR^d).$$
Then
$v_2 \in \clu(\bar X, \bar \Lambda)$ a.s.
\end{lemma}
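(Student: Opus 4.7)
The plan is to pass to the limit in the representation \eqref{eq:XQ} along the subsequence on which $(\bar\Gamma^\eps, \bar X^\eps, v_2^\eps) \To (\bar\Gamma, \bar X, v_2)$, thereby producing
\[
\bar X(t) = x_0 + T\int_{\MM_t} b(\bar X(s),y)\,\bar\Gamma(d\mathbf{v}) + \int_0^t \alpha(\bar X(s))\, v_2(s)\, ds, \quad t \in [0,T], \text{ a.s.}
\]
In view of the disintegration $\bar\Gamma(ds\,dy\,dz) = T^{-1} ds\, \hat\Gamma_s(dy\,dz)$ and the definition $\bar\Lambda([0,t]\times A) = T\,\bar\Gamma([0,t]\times A\times \RR^d)$, one has $\hat{\bar\Lambda}_s(A) = \hat\Gamma_s(A\times\RR^d)$, and the above identity reads precisely as the integral equation defining membership of $v_2$ in $\clu(\bar X, \bar\Lambda)$. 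Throughout I would invoke Skorokhod's representation theorem and assume, after a relabeling, that the convergences hold almost surely on a common probability space.

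The first two terms on the right of \eqref{eq:XQ} are handled directly. The stochastic integral satisfies, via Doob's inequality and the boundedness of $\alpha$,
\[
\E \sup_{0\le t\le T}\Bigl\| s(\eps)\sqrt{\eps}\int_0^t \alpha(\bar X^\eps(s))\, dW(s) \Bigr\|^2 \le 4\, s^2(\eps)\,\eps\, T\, \|\alpha\|_\iy^2 \to 0.
\]
For the control term I would write
\[
\int_0^t \alpha(\bar X^\eps(s))\, v_2^\eps(s)\, ds - \int_0^t \alpha(\bar X(s))\, v_2(s)\, ds = \int_0^t [\alpha(\bar X^\eps(s))-\alpha(\bar X(s))]\, v_2^\eps(s)\, ds + \int_0^t \alpha(\bar X(s))\, [v_2^\eps(s)-v_2(s)]\, ds.
\]
The first summand is bounded by $\sqrt{M}\,\|\alpha(\bar X^\eps)-\alpha(\bar X)\|_2$ by Cauchy-Schwarz and vanishes by the Lipschitz property of $\alpha$ together with the uniform convergence of $\bar X^\eps$ to $\bar X$; the second summand tends to zero because $s\mapsto 1_{[0,t]}(s)\alpha(\bar X(s))$ is in $L^2([0,T]:\RR^{m\times k})$ (as $\alpha$ is bounded) and $v_2^\eps \rightharpoonup v_2$ weakly in that space.

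The hard part is the drift term, where the integrand depends on $\bar X^\eps$ and has linear growth in $y$, so the weak convergence of $\bar\Gamma^\eps$ does not apply directly. I would split
\[
T\int_{\MM_t} b(\bar X^\eps(s),y)\,\bar\Gamma^\eps(d\mathbf{v}) - T\int_{\MM_t} b(\bar X(s),y)\,\bar\Gamma(d\mathbf{v}) = A^\eps_1 + A^\eps_2,
\]
where $A^\eps_1 = T\int_{\MM_t} [b(\bar X^\eps(s),y) - b(\bar X(s),y)]\,\bar\Gamma^\eps(d\mathbf{v})$ and $A^\eps_2 = T\int_{\MM_t} b(\bar X(s),y)\,(\bar\Gamma^\eps - \bar\Gamma)(d\mathbf{v})$. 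The Lipschitz property of $b$ in $x$ (Assumption \ref{Assumptions:system}(1)) bounds $|A^\eps_1|$ by $T L_b \|\bar X^\eps - \bar X\|_\iy$, which vanishes a.s. For $A^\eps_2$ I would truncate: fixing a smooth cutoff $\chi_R:\RR^d \to [0,1]$ with $\chi_R(y)=1$ for $\|y\|\le R$ and $\chi_R(y)=0$ for $\|y\|\ge 2R$, the function $\psi_R(s,y,z) \doteq b(\bar X(s),y)\chi_R(y)\,1_{[0,t]}(s)$ is bounded and, for each fixed $\omega$, continuous on $\MM_T$ off a $\bar\Gamma(\omega)$-null set (the sole discontinuity is along $s=t$, whose $\bar\Gamma$-measure vanishes since the first marginal of $\bar\Gamma$ is $T^{-1}ds$). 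Portmanteau therefore yields $\int \psi_R\, d\bar\Gamma^\eps \to \int \psi_R\, d\bar\Gamma$ a.s. The truncation error is controlled uniformly in $\eps$ by the moment bound
\[
\sup_\eps \E \int_{\MM_T} \|y\|^2\,\bar\Gamma^\eps(d\mathbf{v}) = \frac{1}{T}\sup_\eps \E \int_0^T \|\bar Y^\eps(s)\|^2\, ds < \infty
\]
from Lemma \ref{system-upper-moment-bound}, together with Fatou for the corresponding bound on $\bar\Gamma$, giving an error of order $1/R$; sending first $\eps\to 0$ and then $R\to\iy$ shows $A^\eps_2 \to 0$ in probability.

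Assembling the three limits and using continuity in $t$ of both sides will yield the integral identity for all $t\in[0,T]$ simultaneously, establishing $v_2 \in \clu(\bar X,\bar\Lambda)$ a.s. The principal technical subtlety is the random, unbounded integrand $b(\bar X(\cdot),\cdot)$ in $A^\eps_2$, whose treatment requires both the Skorokhod coupling (to reduce to deterministic $\omega$-wise weak convergence of $\bar\Gamma^\eps$ with $\bar X$ held fixed) and the uniform second-moment bound on $\bar Y^\eps$ from Lemma \ref{system-upper-moment-bound}.
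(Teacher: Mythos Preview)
Your proposal is correct and follows essentially the same route as the paper: pass to the limit in \eqref{eq:XQ} after invoking Skorokhod's representation, handle the stochastic integral by Doob and the smallness of $s(\eps)\sqrt{\eps}$, split both the drift and control terms by first replacing $\bar X^\eps$ with $\bar X$ via the Lipschitz property and then using weak convergence. The only difference is expository: where the paper simply appeals to the continuity and linear growth of $b$ together with the uniform moment bound \eqref{eq:eq402} to justify passing the unbounded integrand through the weak limit, you spell out the underlying uniform-integrability argument via a smooth spatial cutoff $\chi_R$ and also make explicit why the indicator $1_{[0,t]}$ causes no trouble (its discontinuity set is $\bar\Gamma$-null since the first marginal is $T^{-1}\,ds$). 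Both treatments are standard and equivalent.
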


\begin{proof}
	From \eqref{eq:XQ}, for $t \in [0,T]$,
\begin{align}
	\bar X^{\veps}(t) 
	&= x_0 + T\int_{\MM_t} b(\bar X^{\veps}(s), y) \bar{\Gamma}^{\eps}(d\textbf{v})
	 + \int_0^t \alpha(\bar X^{\veps}(s)) v_2^{\veps}(s) ds + \clr^{\veps}(t),
	\label{eq:341nn}
\end{align}	
where $\clr^{\veps}(t) \doteq s(\veps)\veps^{1/2} \int_0^t \alpha(\bar X^{\veps}(s)) dW(s)$.
Note that
$$E\sup_{0\le t \le T}\|\clr^{\veps}(t)\|^2 \le  4Ts^2(\veps) \veps \|\alpha\|_{\infty}^2 \to 0, \mbox{ as } \veps \to 0.$$
	We assume without loss of generality that convergence of $(\bar\Gamma^{\veps}, \bar X^{\veps}, v_2^{\veps},
	\clr^{\veps})$ to $(\bar\Gamma, \bar X, v_2,
	0)$ in $\clp(\MM_T)\times \clx \times S_M^k\times \clx$
	holds along the 
	full sequence, and, by appealing to Skorohod representation theorem,  that the convergence holds a.s.
We need to show that $(\bar{\Gamma},\bar{X},{v}_2)$ satisfy  a.e., for all $t \in [0,T]$
	\begin{equation}
	    \bar{X}(t)  = x^0 + T\int_{\mathbb{M}_t} b(\bar{X}(s),y) \bar{\Gamma}(d\textbf{v}) +   \int_0^t  \alpha(\bar X(s))  {v}_2(s) ds.
	    \label{eq:limitQ}
	\end{equation}
Note that
\begin{align}	\label{eq:341n}
	\bar X^{\veps}(t) 
	&= x_0 + T\int_{\MM_t} b(\bar X(s), y) \bar{\Gamma}^{\eps}(d\textbf{v})
	 + \int_0^t \alpha(\bar X(s)) v_2^{\veps}(s) ds + \clr^{\veps}(t) + \clr_1^{\veps}(t),
\end{align}	
where
$$\clr_1^{\veps}(t) \doteq T\int_{\MM_t} (b(\bar X^{\eps}(s), y) - b(\bar X(s), y))\bar{\Gamma}^{\eps}(d\textbf{v}) + \int_0^t (\alpha(\bar X^{\eps}(s)) -  \alpha(\bar X(s))) v_2^{\veps}(s) ds .
$$
Using the Lipschitz property of $b$ and $\alpha$  from Assumption \ref{Assumptions:system}(1)
\begin{align*}
\sup_{0\le t \le T}\|\clr_1^{\veps}(t)\| &\le \sup_{0\le t \le T}  T\left\|
\int_{\MM_t} (b(\bar X^{\veps}(s), y) -  b(\bar X(s), y))\bar{\Gamma}^{\eps}(d\textbf{v})\right\|
+  \int_0^T \left\|\alpha(\bar X^{\eps}(s)) -  \alpha(\bar X(s))\right\|  \|v_2^{\veps}(s) \| ds
\\
&\le T(L_b + L_{\alpha}\sqrt{TM} )\sup_{0\le t \le T} \| \bar X^{\veps}(t) - \bar X(t)\| \to 0, \mbox{ as } \veps \to 0.
\end{align*}
From Lemma \ref{system-upper-moment-bound}, and since $v_1^{\eps} \in \cla^d_{b,M}$, 
\begin{equation}
	\label{eq:eq402}
		\sup_{\veps} E \int_{\MM_T} (\|y\|^2 + \|z\|^2) \bar{\Gamma}^{\eps}(d\textbf{v}) <\infty.
\end{equation}
Also, by convergence of $\bar{\Gamma}^{\eps}$ to $\bar{\Gamma}$, the continuity  of the map $(s,y,z) \mapsto b(\bar X(s), y)$, 
the linear growth of $b$ and the estimate in \eqref{eq:eq402}, it follows that, for each $t \in [0,T]$
$$
\lim_{\eps \to 0} \int_{\MM_t} b(\bar X(s), y) \bar{\Gamma}^{\eps}(d\textbf{v})
= \int_{\MM_t} b(\bar X(s), y) \bar{\Gamma}(d\textbf{v}).$$
Finally, for each $t \in [0,T]$, as $\eps \to 0$,
$$ \int_0^t \alpha(\bar X(s)) v_2^{\veps}(s) ds  \to \int_0^t \alpha(\bar X(s)) v_2(s) ds.$$
The result now follows on sending $\eps \to 0$ in \eqref{eq:341n}.
\end{proof}

The following lemma gives an important inequality for the costs that will be useful for the proof of the upper bound.
Recall the disintegration in \eqref{eq:eq859}.
\begin{lemma}
	Let $(\bar \Gamma, \bar X,  v_2)$ be as in Lemma \ref{system-upper-representation-xi}.
The following inequality holds a.s.
\begin{equation}
    \nonumber
    \int_{\mathbb{M}_T} \|z\|^2 \bar{\Gamma}(dt\,  dy\,  dz) \geq \frac{1}{T}\int_0^T \left( \int_{\mathbb{R}^d} \| \nabla_y U (\bar{X}(t),y)\|^2 \hat{\gamma}_t(dy)  \right) dt.
\end{equation}
\label{system-upper-inequality-representation}
\end{lemma}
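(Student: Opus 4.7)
The strategy closely parallels that of Lemma \ref{lem:ineqincos} in the single-scale setting. The key step is to establish the orthogonality identity
\[
0 = \int_{\mathbb{M}_T} \nabla_y U(\bar X(s), y)\cdot [\nabla_y U(\bar X(s), y) - z]\, \bar\Gamma(ds\, dy\, dz) \quad \text{a.s.},
\]
which is the two-scale analogue of \eqref{eq:eq321}. Once this is available, the conclusion is algebraic: disintegrate $\bar\Gamma(ds\, dy\, dz) = T^{-1}\, ds\, \hat\gamma_s(dy)\, q(s, y, dz)$, set $u(s, y) \doteq \int_{\mathbb{R}^d} z\, q(s, y, dz)$ (well defined for a.e.\ $(s, y)$ by \eqref{eq:eq402} and Jensen), and apply Jensen's inequality to get $\int \|z\|^2\, \bar\Gamma \geq T^{-1} \int_0^T \int \|u(s, y)\|^2\, \hat\gamma_s(dy)\, ds$. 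Expanding $\int \|u - \nabla_y U(\bar X(s), y)\|^2\, \hat\gamma_s \geq 0$ and using the orthogonality to identify $\int u\cdot\nabla_y U\, \hat\gamma_s$ with $\int \|\nabla_y U\|^2\, \hat\gamma_s$ yields $\int \|u\|^2\, \hat\gamma_s \geq \int \|\nabla_y U(\bar X(s), y)\|^2\, \hat\gamma_s(dy)$ for a.e.\ $s$, which delivers the claimed inequality after integrating over $[0, T]$.

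To produce the orthogonality, I will apply It\^o's formula to $g(\bar X^\eps(t), \bar Y^\eps(t))$ for an arbitrary $g \in C_c^2(\mathbb{R}^{m+d})$ under the dynamics \eqref{eq:system-bar-def}. Multiplying the resulting identity by $\eps$ renders the left-hand side $O(\eps)$, and every term carrying $\eps$ or $s(\eps)$ as an explicit prefactor---the $x$-drift contribution, both stochastic integrals, and the two second-order trace terms---vanishes as $\eps \to 0$ in $L^1$ or $L^2$, using the compact support of $g$ and the moment bounds of Lemma \ref{system-upper-moment-bound}. What survives is
\[
\lim_{\eps \to 0}\, T\int_{\mathbb{M}_T} \nabla_y g(\bar X^\eps(s), y)\cdot [\nabla_y U(\bar X^\eps(s), y) - z]\, \bar\Gamma^\eps(ds\, dy\, dz) = 0.
\]
Passing to a subsequence along which $(\bar\Gamma^\eps, \bar X^\eps, v_2^\eps)$ converges almost surely to $(\bar\Gamma, \bar X, v_2)$ via Skorokhod representation (justified by Proposition \ref{system-upper-tightness}), the integrand converges continuously since $\bar X^\eps \to \bar X$ uniformly and $\nabla_y g, \nabla_y U$ are continuous; the $y$-compact support inherited from $g$ and the uniform integrability supplied by \eqref{eq:eq402} then yield, a.s.,
\[
0 = \int_{\mathbb{M}_T} \nabla_y g(\bar X(s), y)\cdot[\nabla_y U(\bar X(s), y) - z]\, \bar\Gamma(ds\, dy\, dz), \quad g \in C_c^2(\mathbb{R}^{m+d}).
\]

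The remaining task is to upgrade this identity to $g = U$. Mimicking the truncation of Lemma \ref{single-upper-lemma-orthogonality-extension}, I set $g_{M, N}(x, y) \doteq U(x, y)\,\chi_M(\|y\|)\,\psi_N(\|x\|)$ where $\chi_M, \psi_N$ are the smoothed cutoffs of that lemma. Each $g_{M, N}$ lies in $C_c^2(\mathbb{R}^{m+d})$, so the orthogonality holds simultaneously for the countable family $\{g_{M, N}\}$ on a single event of full probability. Assumption \ref{Assumptions:system}(2) supplies the bounds $\|\nabla_y U(x, y)\| \leq C(1 + \|y\|)$ uniformly in $x$ and $|U(x, y)| \leq C(1 + \|x\|^2 + \|y\|^2)$, and combined with $|\chi_M'| \leq 1/M$ on $\{M \leq \|y\| \leq 2M\}$, the same manipulation as in Lemma \ref{single-upper-lemma-orthogonality-extension} yields an integrand dominated by a function that is $\bar\Gamma$-integrable (via \eqref{eq:eq402}, Fatou, and the pathwise boundedness of $\bar X$ on $[0, T]$). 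Sending $N \to \infty$ and then $M \to \infty$, dominated convergence transfers the orthogonality to $g = U$, and the algebraic argument of the first paragraph then completes the proof. The principal technical obstacle is precisely this two-variable truncation: one must simultaneously accommodate the quadratic $(x, y)$-growth of $U$ and control the cutoff-derivative terms, a difficulty absent from the single-variable approximation used in Lemma \ref{lem:ineqincos}.
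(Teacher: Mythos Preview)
Your approach is correct, but it takes a more involved route than the paper's and the detour is avoidable. The paper applies It\^o's formula to $\eta(\bar Y^\eps(t))$ for $\eta\in C_c^2(\mathbb{R}^d)$ --- a function of $y$ alone --- so no $x$-derivatives of the test function ever appear. After passing to the limit it obtains $0=\int_{\mathbb{M}_t}\nabla\eta(y)\cdot[\nabla_yU(\bar X(s),y)-z]\,\bar\Gamma$ for every $t$, disintegrates to get the identity for a.e.\ fixed $t$, and then sets $f(y)\doteq U(\bar X(t),y)$. Since Assumption~\ref{Assumptions:system}(2a) gives $\|\mathcal H_y U\|_\infty<\infty$, this $f$ satisfies the hypotheses of Lemma~\ref{single-upper-lemma-orthogonality-extension} directly, and the truncation is purely one-variable in $y$. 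In other words, the ``principal technical obstacle'' you identify --- the two-variable cutoff $\chi_M(\|y\|)\psi_N(\|x\|)$ --- is entirely sidestepped by the paper; the $\psi_N$ cutoff in your argument is doing no real work anyway, since once you evaluate at $x=\bar X(s)$ with $\|\bar X\|_\infty<\infty$ it becomes identically $1$ for large $N$.

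One small imprecision: from your integrated orthogonality over $\mathbb{M}_T$ you cannot conclude $\int\|u(s,\cdot)\|^2\hat\gamma_s\geq\int\|\nabla_yU(\bar X(s),\cdot)\|^2\hat\gamma_s$ for a.e.\ $s$ as you claim. What you do get, by expanding $\int_0^T\!\int\|u-\nabla_yU\|^2\,\hat\gamma_s\,ds\geq 0$ and using the integrated orthogonality, is the integrated inequality --- which is exactly the statement of the lemma, so no harm done. (The paper recovers the $t$-wise version because its identity holds for every $t\in[0,T]$ before differentiation.)
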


\begin{proof}
%
% For any function $\psi : \mathbb{R}^d \rightarrow \mathbb{R}$ with $\psi \in C_c^2$, we claim that the following orthogonality result holds.
%
% \begin{equation}
%     \nonumber
%     \int_{\mathbb{M}_T} \langle \nabla \psi (y), \nabla_y U (\bar{X}(s), y) - z \rangle \bar{\Gamma} (d\textbf{v}) = 0.
% \end{equation}
Let $\eta: \RR^d \to \RR$ be in $\clc_c^2$. Then It\^{o}'s formula applied to $\eta(\bar{Y}^\eps)$ gives
\begin{equation}
    \begin{aligned}
        \eta(\bar{Y}^\eps(t)) - \eta(y^0) &= \frac{s(\eps)}{\sqrt{\eps}} \int_0^t \nabla \eta(\bar{Y}^\eps(s)) dB(s) 
        -\frac{1}{\eps} \int_0^t \nabla \eta(\bar{Y}^\eps(s)) \cdot \nabla_y U(\bar{X}^\eps(s),\bar{Y}^\eps(s))  ds  \\ &+\frac{1}{\eps} \int_0^t \nabla \eta (\bar{Y}^\eps(s)) \cdot  v_1^\eps(s)  ds 
        + \frac{s^2(\eps)}{2\eps} \int_0^t \Delta \eta (\bar{Y}^\eps(s))  ds.
    \end{aligned}
\end{equation}
Multiplying by $\eps$ in the above equation and recalling the definition of the random measure $\bar{\Gamma}^\eps$, we have,
\begin{equation}
    \nonumber
    \begin{aligned}
        \eps \eta(\bar{Y}^\eps(t)) -\eps \eta(y^0) &= s(\eps)\sqrt{\eps} \int_0^t \nabla \eta(\bar{Y}^\eps(s)) dB(s) \\
       &- T\int_{\mathbb{M}_t}  \nabla \eta(y) \cdot ( \nabla_y  U(\bar{X}^\eps(s),y)- z)  \bar{\Gamma}^\eps(d\textbf{v}) + \frac{s^2(\eps)}{2} \int_0^t \Delta \eta (\bar{Y}^\eps(s))  ds.
    \end{aligned}
\end{equation}
Sending $\eps \rightarrow 0$ in the above display we have that, as $\eps \to 0$,
\begin{equation}
	\int_{\mathbb{M}_t}  \nabla \eta(y) \cdot ( \nabla_y  U(\bar{X}^\eps(s),y)- z) \bar{\Gamma}^\eps(d\textbf{v})  \to 0 \mbox{ in probability}.
\end{equation}
As in the proof of Lemma \ref{system-upper-representation-xi} we assume without loss of generality that
convergence of $(\bar\Gamma^{\veps}, \bar X^{\veps})$ to $(\bar\Gamma, \bar X)$
	holds along the 
	full sequence in a.s. sense.
From the Lipschitz property of $\nabla_y  U$ we now see that, as $\eps \to 0$,
$$
\int_{\mathbb{M}_t} \nabla \eta(y) \cdot ( \nabla_y  U(\bar{X}^\eps(s),y)- \nabla_y  U(\bar{X}(s),y)) \bar{\Gamma}^\eps(d\textbf{v})  \to 0.$$
Finally,   from the convergence of $\bar{\Gamma}^\eps$ to $\bar{\Gamma}$, the square integrability in \eqref{eq:eq402}, and the compact support property of $\eta$ we see that for all $t \in [0,T]$
\begin{equation}
    \nonumber
    \int_{\mathbb{M}_t} \nabla \eta (y) \cdot ( \nabla_y U (\bar{X}(s), y) - z) \bar{\Gamma}^{\eps} (d\textbf{v}) \to
	 \int_{\mathbb{M}_t}  \nabla \eta (y) \cdot (\nabla_y U (\bar{X}(s), y) - z)  \bar{\Gamma}(d\textbf{v}).
\end{equation}
Combining the last three convergence statements we have, a.s., for all $t\in [0,T]$, 
\begin{equation}
	 \label{eq:s2e8} \int_{\mathbb{M}_t}  \nabla \eta (y) \cdot ( \nabla_y U (\bar{X}(s), y) - z ) \bar{\Gamma}(d\textbf{v})=0.\end{equation}
Recall the disintegration in \eqref{eq:eq859} and define
$$\bar v_1(t,y) = \int_{\RR^d} z \, q(t,\, y,\, dz), \; t \in [0,T], y \in \RR^d.$$
Note that the integral is well defined a.s. for $\gamma$ a.e. $(t, y)$, since from \eqref{eq:eq402} and Fatou's lemma
\begin{equation}\label{eq:fatest}
	E \int_{[0,T]\times \RR^{2d}} (\|y\|^2 + \|z\|^2) \bar{\Gamma}(d\textbf{v}) =
	\frac{1}{T} E\int_{0}^T \int_{\RR^{2d}} (\|y\|^2 + \|z\|^2) \hat{\Gamma}_t(dy\, dz)  dt <\infty .
\end{equation}
From \eqref{eq:s2e8}, by a standard separability argument, a.s., for every $\eta \in \clc_c^2$ and $t\in [0,T]$
\begin{equation}\label{eq:1229}
\int_0^t \left[\int_{\RR^d} \left[\nabla_y U(\bar X(s),y) - \bar v_1(s,y)\right]\cdot \nabla \eta(y) \hat \gamma_s(dy)\right] ds=0.
\end{equation}
Also, from \eqref{eq:fatest}, a.s., for a.e. $t \in [0,T]$, 
\begin{equation}\label{eq:eq304}
	\int_{\RR^{2d}} (\|y\|^2 + \|z\|^2) \hat \Gamma_t(dy\, dz)   <\infty, 
\end{equation}
and for every $\eta \in \clc_c^2$,
\begin{equation}\label{eq:1229b}
\int_{\RR^d} \left[\nabla_y U(\bar X(t),y) - \bar v_1(t,y)\right]\cdot \nabla \eta(y) \hat \gamma_t(dy) =0.
\end{equation}
Fix $t \in [0,T]$ for which the above two equations  holds and denote $f(y)\doteq U(\bar X(t),y) $.
Now a similar argument as used in the proof of \eqref{eq:eq321} shows that we can replace $\eta$ with $f$ in the above identity.
Indeed, from Assumption \ref{Assumptions:system} the function $f$ satisfies the conditions  in Lemma \ref{single-upper-lemma-orthogonality-extension}. Also, from properties of $U$, an estimate similar to \eqref{eq:333} shows that, for some $\kappa_1 \in (0,\infty)$,
\begin{multline}
 \int_{\RR^{d}} \left|\left[\nabla f(y) - \bar v_1(t,y)\right]\cdot \nabla f(y)\right| \hat\gamma_t(dy)=	 \int_{\RR^{d}} \left|\left[\nabla_y U(\bar X(t),y) - \bar v_1(t,y)\right]\cdot \nabla f(y)\right| \hat\gamma_t(dy)\\
	 \le \kappa_1 \int_{\RR^d} (1+ \|y\|^2 + \|\bar v_1(t,y)\|^2) \hat \gamma_t(dy)
	 \le \kappa_1 \int_{\RR^{2d}} (1+ \|y\|^2 + \|z\|^2)  \hat \Gamma_t(dy\, dz) < \infty\label{eq:333n}
\end{multline}
where the second inequality on the second line is from Jensen's inequality while the finiteness asserted in the last inequality is from \eqref{eq:eq304}.
Now exactly as in \eqref{eq:849n} and discussion below it we see that \eqref{eq:1229b} holds with $\eta$ replaced by $f(\cdot) = U(X(t), \cdot)$, namely 
\begin{equation}
	\int_{\RR^d} \left[\nabla_y U(\bar X(t),y) - \bar v_1(t,y)\right]\nabla_y U(X(t), y) \hat \gamma_t(dy) =0 \mbox{ a.s.} \label{eq:eq321b}
\end{equation}
% From Lemma \ref{lem:approxcc} there exists a sequence $\{\eta_M\}_{M\in \NN}$ of functions in $C_c^2(\RR^d)$ such that
% $\eta_M(y)= \phi(y)$ for all $|y|\le M$ and
% \begin{equation}\label{eq:unifbdonnaeta}
% 	\sup_{M\in \NN} \sup_{y\in \RR^d}\frac{\|\nabla \eta_M(y)\|}{1+\|y\|} \doteq C_1 <\infty .
% \end{equation}
%
% Now
% \begin{align*}
% &\int \left[\nabla_y U(\bar X(t),y) - \bar v_2(t,y)\right]\cdot \nabla \phi(y) \hat \mu_t(dy)\\
%  &=
% \int_{\|y\|<M} \left[\nabla_y U(\bar X(t),y) - \bar v_2(t,y)\right]\cdot \nabla \phi(y) \hat \mu_t(dy) \\
% &\quad + \int_{\|y\|\ge M} \left[\nabla_y U(\bar X(t),y) - \bar v_2(t,y)\right]\cdot\nabla \phi(y) \hat \mu_t(dy)\\
% &= \int \left[\nabla_y U(\bar X(t),y) - \bar v_2(t,y)\right]\cdot\nabla \eta_M(y) \hat \mu_t(dy)\\
% &\quad + \int_{\|y\|\ge M} \left[\nabla_y U(\bar X(t),y) - \bar v_2(t,y)\right]\cdot(\nabla \phi(y) - \nabla \eta_M(y)) \hat \mu_t(dy)\\
% &=\int_{\|y\|\ge M} \left[\nabla_y U(\bar X(t),y) - \bar v_2(t,y)\right]\cdot(\nabla \phi(y) - \nabla \eta_M(y)) \hat \mu_t(dy)
% \end{align*}
% where the last line is from \eqref{eq:1229} applied with $\eta = \eta_M$.
% The last term converges to $0$ as $M \to \infty$ in view of \eqref{eq:333} and \eqref{eq:unifbdonnaeta} on observing that
% $$\int_{\|y\|\ge M} |(\nabla_y U(\bar X(t),y) - \bar v_2(t,y))(\nabla \phi(y) - \nabla \eta_M(y))| \hat \mu_t(dy)
% \le \kappa_2 \int_{\|y\|\ge M} (1+ \|y\|^2 + \|\bar v_2(t,y)\|^2) \hat \mu_t(dy).$$
% This proves \eqref{eq:eq321}.
Now the proof is completed as in Lemma \ref{lem:ineqincos}:
\begin{multline*}
	\int_{\RR^{2d}} \|z\|^2 \hat \Gamma_t(dy\, dz) = \int_{\RR^{2d}} \|z\|^2 q_t(y,\, dz) \hat \gamma_t(dy)
	 \ge \int_{\RR^d} \|\bar v_1(t,y)\|^2 \hat \gamma_t(dy)\\
	= \int_{\RR^d}  \|\bar v_1(t,y) - \nabla_y U(\bar X(t),y)\|^2 \hat \gamma_t(dy)
	+ \int_{\RR^d}  \|\nabla_y U(\bar X(t),y)\|^2 \hat \gamma_t(dy)\\
	\quad + 2\int_{\RR^d}  (\bar v_1(t,y) - \nabla_y U(\bar X(t),y))\cdot \nabla f(y) \hat \gamma_t(dy).
\end{multline*}
Since the third term equals $0$ from \eqref{eq:eq321b}, the lemma follows.
\end{proof}
\subsubsection{Proof of the LDP upper bound.}
We now complete the proof of the inequality in \eqref{ineq-two-upper}. 
Recall the weak limit point $(\bar \Gamma, \bar X,  v_2)$  of $(\bar\Gamma^{\veps}, \bar X^{\veps}, v_2^{\veps})$ as in Lemmas \ref{system-upper-representation-xi} and \ref{system-upper-inequality-representation}. By a standard subsequential argument we can assume without loss of generality that the convergence holds along the full sequence.
From \eqref{system-upper-variational} (and the observation below it)
\begin{equation*}
\begin{split}
	\liminf_{\veps \to 0}- s^2(\veps)\veps &E e^{-\frac{F(X^{\veps}, \La^{\eps})}{s^2(\veps)\veps}}
	 \ge \liminf_{\veps \to 0} E \left [ F(\bar X^{\veps}, \La^{\eps}) + \frac{1}{2}\sum_{i=1,2}\int_0^T \|v_i^{\veps}(t)\|^2 dt\right] - 2\delta\\
	 &= \liminf_{\veps \to 0} E \Big [ F(\bar X^{\veps}, \bar \La^{\eps}) + \frac{T}{2} \int_{\MM_T} \|z\|^2 \bar\Gamma^{\veps}( d\textbf{v})
		+ \frac{1}{2} \int_{0}^T \|v_2^{\veps}(t)\|^2 dt \Big ] - 2\delta\\
	&\ge E \left [ F(\bar X, \bar \La) + \frac{T}{2} \int_{\MM_T} \|z\|^2 \bar \Gamma( d\textbf{v})
		+ \frac{1}{2} \int_{0} ^T\|  v_2(t)\|^2 dt\right] - 2\delta\\
	&\ge {E \left [ F(\bar X, \bar \La) + \frac{1}{2}\int_0^T 
			\left (\int_{\RR^{d}} \| \nabla_y U(\bar X(t), y)\|^2 \hat\gamma_t(dy)\right) dt
		+ \frac{1}{2} \int_{0} ^T\|  v_2(t)\|^2 dt\right] - 2\delta}\\	
	&\ge E \left [ F(\bar X , \bar \La) + I_2(\bar X, \bar \La)\right]-2\delta \ge \inf_{(\xi, \nu) \in \clx \times \clm_1} \left [ F(\xi, \nu) + I_2(\xi, \nu)\right]-2\delta.
\end{split}
\end{equation*}
where the third line uses lower semicontinutiy  of the $L^2$ norm and Fatou's lemma, the fourth line uses Lemma \ref{system-upper-inequality-representation}, the fifth line uses the definition of $I_2$, the definition of $\bar \La$, and the property $v_2 \in \clu(\bar X, \bar \La)$ a.s. shown in Lemma \ref{system-upper-representation-xi}. Since $\delta>0$ is arbitrary, the result follows.
\hfill \qed

\subsection{Simple form near optimal paths.}
\label{sec:lowbd}
In preparation for the proof of the LDP lower bound we first prove a preliminary result which provides simple form near optimal paths that can then be well approximated by suitable controlled processes.
\begin{lemma}\label{lem:approxdisc}
	For each $\delta_0 \in (0,1)$ and a bounded Lipschitz $F: \clx \times \clm_1 \to \RR$, there is a $\xi^* \in \clx$, $\nu^* \in \clm_1 $, 
	$v^* \in L^2([0,T]:\RR^k)$, a partition
	$$0=t_0<t_1 \cdots < t_{K+1}=T$$ of $[0,T]$
	and probability measures $\nu^*_i$, $i=0, 1, \ldots , K$ on $\RR^{d}$ 
	%such that $\nu_i^*(\cdot) \doteq \Gamma_i^*(\cdot \times \RR^d)$
	with finite support, such that
	\begin{enumerate}
		\item $\nu^*(dy\,ds) = \hat\nu_s(dy) ds$, with $\hat\nu_t \doteq \nu^*_0 1_{\{0\}}(t)+ \sum_{i=0}^K \nu^*_i 1_{(t_i, t_{i+1}]}(t)$, $0\le t \le T$.
		\item $v^*(s) = v^*(t_i) \doteq v^*_i$ for $s \in [t_i, t_{i+1})$,  $i=0, 1, \ldots K$.
		\item For all  $t \in [0,T]$,
		$$\xi^*(t) = x_0 + \int_0^t \int_{\RR^{d}} b(\xi^*(s), y) \hat\nu_s(dy) ds + \int_0^t \alpha(\xi^*(s)) v^*(s) ds.$$
		\item  $(\xi^*, \nu^*) $ is $\delta_0$-optimal, i.e.,
		\begin{align*} 
			& F(\xi^*, \nu^*) +  \frac{1}{2}  \int_0^T \left[
			\left (\int_{\RR^{d}} \| \nabla_y U(\xi^*(s), y)\|^2 \hat\nu_{s}(dy)\right)+ \|v^*(s)\|^2\right] ds\\
	&\le \inf_{(\xi, \nu) \in \clx \times  \clm_1} \left [ F(\xi, \nu) + I_2(\xi, \nu)\right] + \delta_0.\end{align*}
	\end{enumerate}
\end{lemma}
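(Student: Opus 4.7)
The plan is to start with an arbitrary near-optimum for the infimum on the right side and then discretize both the control and the measure in stages while controlling the induced error. First, pick $(\xi_0,\nu_0) \in \clx\times\clm_1$ together with $v_0 \in \clu(\xi_0,\nu_0)$ satisfying
\[ F(\xi_0,\nu_0) + \tfrac{1}{2}\int_0^T\Bigl[\|v_0(s)\|^2 + \int_{\RR^d}\|\nabla_y U(\xi_0(s),y)\|^2 \hat\nu_{0,s}(dy)\Bigr]ds \le \inf_{\clx\times\clm_1}(F+I_2) + \tfrac{\delta_0}{4}, \]
where $\hat\nu_{0,s}$ denotes the disintegration of $\nu_0$. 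The reduction is then to produce $(\xi^*,\nu^*,v^*)$ of the required form whose total cost differs by at most $3\delta_0/4$ from the displayed quantity.

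Second, fix a uniform partition $\{t_i\}_{i=0}^{K+1}$ of $[0,T]$ with mesh $\rho>0$ and define, for $s \in [t_i,t_{i+1})$,
\[ v^{(\rho)}(s) \doteq \frac{1}{t_{i+1}-t_i}\int_{t_i}^{t_{i+1}} v_0(r)\,dr, \quad \mu^{(\rho)}_i \doteq \frac{1}{t_{i+1}-t_i}\int_{t_i}^{t_{i+1}} \hat\nu_{0,r}\,dr, \quad \hat\nu^{(\rho)}_s \doteq \mu^{(\rho)}_i. \]
Then $v^{(\rho)}\to v_0$ in $L^2$ by the $L^2$ martingale convergence theorem, and for any continuous integrand $g(s,y)$ that is $s$-Lipschitz locally uniformly in $y$, one has $\int_0^T\!\!\int g(s,y)\hat\nu^{(\rho)}_s(dy)ds \to \int_0^T\!\!\int g(s,y)\hat\nu_{0,s}(dy)ds$ as $\rho\downarrow 0$. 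Applied to $g(s,y)=b(\xi_0(s),y)$ and $g(s,y)=\|\nabla_y U(\xi_0(s),y)\|^2$, with uniform integrability in $y$ guaranteed by finiteness of $I_2(\xi_0,\nu_0)$ and the linear growth from Assumption \ref{Assumptions:system}(2), this yields convergence of both the ODE driver and the cost.

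Third, for each $i$ I replace $\mu^{(\rho)}_i$ by a finitely supported $\nu^*_i$ via the empirical-measure construction used in Section \ref{subsec:single-lower} just before \eqref{single-discrete-approximation}: on an auxiliary probability space let $\{Z^{(i)}_j\}_{j\ge 1}$ be iid with law $\mu^{(\rho)}_i$. By Glivenko-Cantelli the empirical measures $n^{-1}\sum_{j\le n}\delta_{Z^{(i)}_j(\omega)}$ converge weakly to $\mu^{(\rho)}_i$ almost surely, and the SLLN applied to $y\mapsto b(\xi_0(t_i),y)$ and $y\mapsto \|\nabla_y U(\xi_0(t_i),y)\|^2$ (both in $L^1(\mu^{(\rho)}_i)$ thanks to the cost bound above) gives convergence of the corresponding integrals. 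Fixing a sample $\omega^*$ on which all $K+1$ convergences hold simultaneously and choosing $n$ large enough, the empirical measure $\nu^*_i \doteq n^{-1}\sum_{j\le n}\delta_{Z^{(i)}_j(\omega^*)}$ meets the required tolerance. Then set $\hat\nu^*_s \doteq \nu^*_i$ and $v^*(s) \doteq v^{(\rho)}(t_i)$ on $[t_i,t_{i+1})$, and let $\xi^*$ be the unique solution of the ODE in item (3), which exists by standard Picard iteration since $x\mapsto \int b(x,y)\hat\nu^*_s(dy)$ and $x\mapsto \alpha(x)$ are Lipschitz.

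Finally, a Gronwall estimate comparing $\xi^*$ to $\xi_0$ yields $\sup_{[0,T]}\|\xi^*-\xi_0\|$ arbitrarily small once $\rho$ is small and $n$ is large, the drift discrepancy splitting into the $v$-part (controlled by $\|v^{(\rho)}-v_0\|_2$) and the measure-part (controlled by the errors from the second and third stages). The bounded Lipschitz property of $F$ then bounds $|F(\xi^*,\nu^*)-F(\xi_0,\nu_0)|$, while the cost integrals are controlled by first replacing $\xi^*$ with $\xi_0$ in the integrand using the Lipschitz-in-$x$ regularity of $\nabla_y U$ from Assumption \ref{Assumptions:system}(2e) and then invoking the second- and third-stage estimates. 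The principal difficulty lies in the third step: because $y\mapsto\|\nabla_y U(x,y)\|^2$ has quadratic growth, weak convergence of the fiber measures alone does not transfer to this integrand, so the passage from $\mu^{(\rho)}_i$ to $\nu^*_i$ must rely on the $L^1$-type convergence from the SLLN together with the uniform integrability supplied by the a priori cost bound.
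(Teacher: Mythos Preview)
Your argument is correct and follows the same overall strategy as the paper's proof: start from a near-optimal triple, pass to piecewise-constant data in time, replace each fiber measure by a finitely supported empirical approximation via the SLLN (exactly the device used in Section~\ref{subsec:single-lower} and in the paper's Section~5.2.3), and control all discrepancies through Gronwall together with the Lipschitz regularity of $b$, $\alpha$, and $\nabla_y U$.

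The one structural difference is that the paper carries out the time discretization in \emph{two} stages rather than one: it first mollifies $(\tilde v,\hat\nu_s)$ in the time variable (Section~5.2.1) to obtain continuous maps, and only then passes to piecewise-constant data by pointwise evaluation and local averaging (Section~5.2.2). The continuity gained in the first stage lets the paper use uniform moduli of continuity for both $v$ and $\hat\nu$ when estimating the piecewise-constant error. You bypass this by averaging $v_0$ and $\hat\nu_{0,s}$ directly over the partition cells. This works: for $v$, the convergence $v^{(\rho)}\to v_0$ in $L^2$ holds for any $L^2$ function (density of step functions; the martingale argument you cite literally requires nested partitions, but the conclusion stands), and Jensen even gives $\|v^{(\rho)}\|_2\le\|v_0\|_2$; for the measure part, the interchange-of-integration identity together with the $s$-Lipschitz bound $|g(s,y)-g(r,y)|\le C\,\omega_{\xi_0}(\rho)(1+\|y\|)$ for $g=\|\nabla_y U(\xi_0(\cdot),y)\|^2$, combined with the second-moment bound on $\nu_0$ coming from Assumption~\ref{Assumptions:system}(2c) and the finiteness of the cost, handles the unbounded integrand. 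Your route is therefore slightly more economical; the paper's extra mollification step buys cleaner pointwise estimates but is not essential.
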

We now proceed with the proof of the lemma. This proof  will be completed at the end of Section \ref{sec:discappr} by constructing a series of approximations for a near optimal $(\tilde \xi, \tilde \nu)$.

Fix a bounded Lipschitz function $F:\clx \times \clm_1 \to \RR$ and $\delta_0\in (0,1)$. 
Choose $(\tilde\xi, \tilde\nu) \in \clx\times \clm_1$ such that
\begin{equation}
	\label{eq:lowbd1}
	F(\tilde\xi, \tilde\nu) + I_2(\tilde\xi, \tilde\nu) \le \inf_{(\xi, \nu) \in \clx \times  \clm_1} \left [ F(\xi, \nu) + I_2(\xi, \nu)\right] +  \delta_0/5.
\end{equation}
Next, using the definition of $I_2$, choose $\tilde v \in \clu (\tilde\xi, \tilde\nu)$ such that, with $\tilde\nu(dy\,ds) = {\tilde \nu_s(dy)} ds$ 
\begin{equation}
	\label{eq:lowbd2}
	\frac{1}{2}  \int_0^T \left[
			\left (\int_{\RR^{d}} \| \nabla_y U(\tilde \xi(s), y)\|^2 \hat\nu_s(dy)\right)
			+  \|\tilde v(s)\|^2\right] ds \le I_2(\tilde\xi, \tilde\nu) + \delta_0/4.
\end{equation}
Note that $\tilde \xi, \tilde \nu, \tilde v$ satisfy
$$\tilde \xi(t) = x_0 + \int_0^t \int_{\RR^d} b(\tilde \xi(s), y) \hat \nu_s(dy) ds + \int_0^t  \alpha(\tilde \xi(s))\tilde v(s) ds, \; t \in [0,T].$$
From the lower bound on $\|\nabla_yU(x,y)\|^2$ in Assumption \ref{Assumptions:system}(2c) we see that
\begin{equation}\label{eq;307n}
	 \int_0^T\int_{\RR^{d}} \|y\|^2  {\tilde\nu_s(dy)} ds \le A_1,
\end{equation}
where $A_1 \doteq  (L^1_{low})^{-1} (2(\inf_{(\xi, \nu) \in \clx \times  \clm_1} \left [ F(\xi, \nu) + I_2(\xi, \nu)\right] + \|F\|_{\infty} + 1)+L^2_{low}T)$.
We also remark that $\hat \nu_0$ can be taken to be an arbitrary probability measure on $\RR^d$ and we will assume without loss of generality that
\begin{equation}\label{eq;313n}\int_{\RR^d} \|y\|^2 \hat \nu_0(dy) = 1.\end{equation}
We now proceed to our series of approximations.

\subsubsection{Approximating with continuous $\nu, v$.}
Fix $\delta  \in (0,1)$. The choice of $\delta$ will be identified at the end of this section.  
Using the uniform continuity  of $\tilde \xi$, choose $0<\eta<\delta^2$ such that
\begin{equation}\label{eq:425n}
	\|\tilde \xi(s)- \tilde \xi(s')\| \le \delta \mbox{ whenever } |s-s'|\le \eta.
\end{equation}
Let
$$v_{\eta}^*(s) = \frac{1}{\eta} \int_{s-\eta}^s \tilde v(r) dr,\;  \mu_{\eta,s}^* = \frac{1}{\eta} \int_{s-\eta}^s {\tilde\nu}_{r} dr, \;  s \in [0,T],$$
where $\tilde v(r)\doteq 0$ and $\hat\nu_r \doteq \hat\nu_0$ for $r \le 0$.
Note that $v_{\eta}^*$ and $\mu_{\eta}^*$ are continuous maps on $[0,T]$ with values in $\RR^k$ and $\clp(\RR^d)$ respectively.
Let
$\xi^*_{\eta}$ be given as the solution of
\begin{equation}\label{eq:xe}
	\xi^*_{\eta}(t) = x_0 + \int_0^t \int_{\RR^d} b(\xi^*_{\eta}(s), y) \mu_{\eta,s}^*(dy) ds + \int_0^t \alpha (\xi^*_{\eta}(s))v_{\eta}^*(s) ds, \; t \in [0,T].\end{equation}
Note that due to the Lipschitz property of $b$  and $\alpha$ the above equation has a unique solution.
Also note that $\tilde \xi$ can be represented as
\begin{equation}\label{eq:xe2}
	\tilde \xi(t) = x_0 + \int_0^t \int_{\RR^d} b(\tilde \xi(s), y) \mu_{\eta,s}^*(dy) ds + \int_0^t \alpha(\tilde \xi(s)) v_{\eta}^*(s) ds -
\clr_{1,\eta}(\tilde \xi,t), \; t \in [0,T],\end{equation}
where
\begin{align*}
	\clr_{1,\eta}(\tilde\xi, t) &= \int_0^t \int_{\RR^d} b(\tilde \xi(s), y) \mu_{\eta,s}^*(dy) ds - \int_0^t \int_{\RR^d} b(\tilde \xi(s), y) {\tilde\nu_s(dy)} ds\\
	&\quad + \int_0^t \alpha(\tilde \xi(s))v_{\eta}^*(s) ds - \int_0^t \alpha(\tilde \xi(s))\tilde v(s) ds.
\end{align*}
Using  an 
interchange of the order of integration.
\begin{multline*}
\int_0^t \int_{\RR^d} b(\tilde \xi(s), y) \mu_{\eta,s}^*(dy) ds - \int_0^t \int_{\RR^d} b(\tilde \xi(s), y) {\tilde\nu_s(dy)} ds
= \int_{-\eta}^{0} \int_{\RR^d} \left(\frac{1}{\eta} \int_{0}^{(r+\eta)\wedge t} b(\tilde \xi(s), y) ds\right) {\tilde\nu_0(dy)} dr\\
+ \int_{0}^{(t-\eta)^+} \int_{\RR^d} \left(\frac{1}{\eta} \int_{r}^{r+\eta} b(\tilde \xi(s), y) ds\right) \hat\nu_{r}(dy) dr - \int_0^{(t-\eta)^+} \int_{\RR^d} b(\tilde \xi(s), y) {\tilde\nu_s(dy)} ds\\
+ \int_{(t-\eta)^+}^{t} \int_{\RR^d} \left(\frac{1}{\eta} \int_{r}^{t} b(\tilde \xi(s), y) ds\right) \hat\nu_{r}(dy) dr
 - \int_{(t-\eta)^+}^{t} \int_{\RR^d} b(\tilde \xi(s), y) {\tilde\nu_s(dy)}\, ds.
 \end{multline*}
% \begin{align*}
% 	\xi^*_{\eta}(t) = x_0 + \int_{0}^{t-\eta} \int_{\RR^d} \left(\frac{1}{\eta} \int_{r}^{r+\eta} b(\xi^*_{\eta}(s), y) ds\right) \hat\nu_{r}(dy) dr + \int_{0}^{t-\eta} \tilde v(s) ds + \clr_{1,\eta}(\xi^*_{\eta},t),
% \end{align*}
% where
From the Lipschitz property of $b$
$$\left\|\frac{1}{\eta} \int_{r}^{r+\eta} b(\tilde \xi(s),y) ds - b(\tilde \xi(r), y) \right\| \le L_b \delta, r \in [0, (t-\eta)^+].$$
Combining this with the linear growth of $b$, for some $C(b) \in (0,\infty)$ depending only on the function $b$
$$
\left\|\int_0^t \int_{\RR^d} b(\tilde \xi(s), y) \mu_{\eta,s}^*(dy) ds - \int_0^t \int_{\RR^d} b(\tilde \xi(s), y) {\tilde\nu_s(dy)} ds\right\| \le \left(C(b)(1+ \|\tilde \xi\|_{\infty}) \eta + TL_b \delta\right).$$
A similar calculation shows that
$$
\left\| \int_0^t \alpha(\tilde \xi(s))v_{\eta}^*(s) ds - \int_0^t \alpha(\tilde \xi(s))\tilde v(s) ds\right\|
\le \delta(L_{\alpha}\sqrt{T} +2\|\alpha\|_{\infty}) \|\tilde v\|_2.$$
Thus, for all $t \in [0,T]$,
$$\|\clr_{1,\eta}(\tilde \xi, t)\| \le \left(C(b)(1+ \|\tilde \xi\|_{\infty}) \eta +  \delta(L_{\alpha}\sqrt{T} +2\|\alpha\|_{\infty}) \|\tilde v\|_2 + TL_b \delta\right) \le
\kappa_1 \delta$$
where $\kappa_1 \doteq C(b)(1+ \|\tilde \xi\|_{\infty}) + T L_b + (L_{\alpha}\sqrt{T} +2\|\alpha\|_{\infty}) \|\tilde v\|_2)$.
Combining the above estimate with \eqref{eq:xe} and \eqref{eq:xe2}, and using the Lipschitz property of $b$, we have by Grönwall's lemma
% \begin{align*}
% \|\xi^*_{\eta}(t) - \xi^*(t)\|
% \le c_2\int_{0}^{t}  \|\xi^*_{\eta}(s)- \xi^*(s)\| ds + (L_b+c_1(1+\|\xi^*\|_{\infty,T})) \delta
% \end{align*}
% Thus there is a $c_3$ such that
\begin{equation}\label{eq:536n}
	\|\xi^*_{\eta} - \tilde\xi\|_{\infty} \le \kappa_2\delta,
\end{equation}
where $\kappa_2 \doteq \kappa_1 e^{L_bT + T^{1/2} L_{\alpha}\|\tilde v\|_2}$.

Now we estimate the cost.
From \eqref{eq;307n}  and \eqref{eq;313n} we see that
\begin{equation}\label{eq:902n}
	 \int_0^T\int_{\RR^{d}} \|y\|^2\mu_{\eta,s}^*(dy) ds \le A_1+1.
\end{equation}
Using the Lipschitz 
 property of $\nabla_y U$ we see that for a $C(U) \in (0,\infty)$ depending only on $U$,
 $$\| \nabla_y U(x, y)\| ^2\le C(U)(1+\|x\|^2 +\|y\|^2)$$
  and
 \begin{equation}\label{eq:424n}
 	 \left|\| \nabla_y U(x, y)\|^2  - \| \nabla_y U(x', y)\|^2 \right| \le C(U)\|x-x'\|(1+\|x\| +\|x'\|+\|y\|) \mbox{ for all } x, x' \in \RR^m, y \in \RR^d.
 \end{equation}
 From this it follows that
 \begin{align*}
	&\left|\frac{1}{2}\int_0^T \int_{\RR^{d}} \| \nabla_y U(\xi^*_{\eta}(s), y)\|^2 \mu_{\eta,s}^*(dy) ds
	- \frac{1}{2}\int_0^T \int_{\RR^{d}} \| \nabla_y U(\tilde\xi(s), y)\|^2 \mu_{\eta,s}^*(dy) ds\right|\\
	&\le \kappa_2C(U)\delta \int_0^T \int_{\RR^{d}} (1 +\|y\|+ \|\tilde\xi\|_{\infty} + \kappa_2)\mu_{\eta,s}^*(dy) ds
	\le \kappa_2C(U)\delta (T(2+ \|\tilde\xi\|_{\infty} + \kappa_2) + A_1+1).
 \end{align*}
 Thus with $\kappa_3 \doteq \kappa_2C(U) (T(2+ \|\tilde\xi\|_{\infty} + \kappa_2) + A_1+1)$,
 \begin{equation}
 	\frac{1}{2}\int_0^T \int_{\RR^{d}} \| \nabla_y U(\xi^*_{\eta}(s), y)\|^2 \mu_{\eta,s}^*(dy) ds
	\le \frac{1}{2} \int_0^T \int_{\RR^{d}} \| \nabla_y U(\tilde\xi(s), y)\|^2 \mu_{\eta,s}^*(dy) ds + \kappa_3\delta.
 \end{equation}
 Also, by an interchange of order of integration, and using \eqref{eq;313n}
 \begin{multline*}
 	\int_0^T \int_{\RR^{d}} \| \nabla_y U(\tilde\xi(s), y)\|^2 \mu_{\eta,s}^*(dy) ds  =
	\int_{-\eta}^T \int_{\RR^{d}}  \frac{1}{\eta} \int_{r \vee 0}^{(r+\eta)\wedge T} \| \nabla_y U(\tilde\xi(s), y)\|^2 ds\, {\tilde\nu_r(dy)}\, dr\\
	\le \int_{0}^T \int_{\RR^{d}}  \frac{1}{\eta} \int_{r \vee 0}^{(r+\eta)\wedge T} \| \nabla_y U(\tilde\xi(s), y)\|^2 ds\, {\tilde\nu_r(dy)}\, dr + C(U)\delta^2(2+\|\tilde\xi\|^2_{\infty}).
 \end{multline*}
 Next, from \eqref{eq:424n} and \eqref{eq:425n},
 \begin{multline*}
 \int_{0}^T \int_{\RR^{d}}  \frac{1}{\eta} \int_{r \vee 0}^{(r+\eta)\wedge T} (\| \nabla_y U(\tilde\xi(s), y)\|^2  - \| \nabla_y U(\tilde\xi(r), y)\|^2) ds\, {\tilde\nu_r(dy)}\, dr\\	
 \le 2C(U) \delta \int_0^T \int_{\RR^{d}} (1+ \|\tilde\xi\|_{\infty} +\|y\|^2){\tilde\nu_r(dy)}\, dr
 \le 2C(U)\delta (T(1+\|\tilde\xi\|_{\infty} )+A_1).
 \end{multline*}
 Combining the last three displays
 \begin{equation}
 	\frac{1}{2}\int_0^T \int_{\RR^{d}} \| \nabla_y U(\xi^*_{\eta}(s), y)\|^2 \mu_{\eta,s}^*(dy) ds
	\le \frac{1}{2}\int_0^T \int_{\RR^{d}} \| \nabla_y U(\tilde \xi(s), y)\|^2 {\tilde\nu_s(dy)}\, ds + \kappa_4 \delta,
 \end{equation}
 where
 $$\kappa_4 \doteq \kappa_3 + C(U)(2+\|\tilde\xi\|^2_{\infty}) + 2C(U)(T(1+\|\tilde\xi\|_{\infty} )+A_1).$$
 Also, an interchange of order of integration  and application of Jensen's inequality  shows that
 \begin{equation}\label{eq:942n}
 \frac{1}{2}  \int_0^T  \|v_{\eta}^*(s)\|^2 ds \le \frac{1}{2}  \int_0^T \|\tilde v(s)\|^2 ds.\end{equation}
 From the last two displays and \eqref{eq:lowbd2} we now see that
 \begin{equation}
\frac{1}{2}  \int_0^T \left[
		\left (\int_{\RR^{d}} \| \nabla_y U(\xi^*_{\eta}(s), y)\|^2 \mu_{\eta,s}^*(dy)\right)
		+  \|v_{\eta}^*(s)\|^2\right] ds \le I_2(\tilde \xi, \tilde \nu) + \kappa_4\delta + \delta_0/4.	
 \end{equation}
 Define $\mu_{\eta}^* \in \clm_1$ as $\mu_{\eta}^*(dy\, ds) = \mu_{\eta,s}^*(dy)\, ds$.
We now estimate the distance between  $\mu_{\eta}^* $ and $\tilde \nu$. Recall that the space $\clm_1$ is equipped with the bounded Lipschitz distance defined in \eqref{eq:510n}. Fix $f\in BL_1(\RR^d\times [0,T])$.
Then, by  an interchange of order of integration,
\begin{align*}
	\int_{\RR^d\times [0,T]} f(y,s) \mu_{\eta}^*(dy, ds) &= \int_0^T \int_{\RR^{d}} f(y,s) \mu_{\eta,s}^*(dy)\, ds
	= \int_{-\eta}^T \int_{\RR^{d}} \frac{1}{\eta} \int_{r\vee 0}^{(\eta+r)\wedge T} f(y,s) ds\, {\tilde\nu_r(dy)}\, dr.
\end{align*}
Also, since $f$ is bounded by $1$,
$$\left|\int_{-\eta}^T \int_{\RR^{d}} \frac{1}{\eta} [(\eta+r)\wedge T - r\vee 0] f(y,r) {\tilde\nu_r(dy)}\, dr
- \int_0^T \int_{\RR^{d}} f(y,s){\tilde\nu_r(dy)}\, ds \right| \le 4\eta.$$
Using the Lipschitz property of $f$
$$\left|\int_{-\eta}^T \int_{\RR^{d}} \frac{1}{\eta} \int_{r\vee 0}^{(\eta+r)\wedge T} f(y,s) ds\, {\tilde\nu_r(dy)}\, dr
- \int_{-\eta}^T \int_{\RR^{d}} \frac{1}{\eta} [(\eta+r)\wedge T - r\vee 0] f(y,r) {\tilde\nu_r(dy)}\, dr \right| \le \eta(T+1).$$
The last three estimates show that
\begin{equation}
	\dbl (\mu_{\eta}^*,\tilde \nu) \le \eta(T+5).
\end{equation}
Combining the above with \eqref{eq:536n} we now have that
\begin{equation}
\|F(\tilde\xi, \tilde \nu) - F(\xi^*_{\eta}, \mu_{\eta}^*)	\|_{\infty} \le \kappa_5 \delta,
\end{equation}
where $\kappa_5 \doteq \|F\|_{Lip} (\kappa_2 + T+5)$ and $\|F\|_{Lip}$ is the Lipschitz constant of $F$.

Now choose $\delta>0$ such that $\max\{\kappa_5 \delta, \kappa_4\delta \}\le \delta_0/20$. With this choice of $\delta$ (and the corresponding $\eta$), we have
\begin{equation}\label{eq:1016n}
	F(\xi^*_{\eta}, \mu_{\eta}^*) + \frac{1}{2}  \int_0^T \left[
			\left (\int_{\RR^{d}} \| \nabla_y U(\xi^*_{\eta}(s), y)\|^2 \mu_{\eta,s}^*(dy)\right)
			+  \|v_{\eta}^*(s)\|^2\right] ds \le F(\tilde\xi, \tilde \nu) + I_2(\tilde \xi, \tilde \nu) + \delta_0/10 + \delta_0/4.	
\end{equation}
Henceforth we fix such an $\eta$ and denote the corresponding $(\xi^*_{\eta}, \mu_{\eta,s}^*, \mu_{\eta}^*, v_{\eta}^*)$
as simply $(\xi^*, \mu_s^*, \mu^*, v^*)$. 
\subsubsection{Approximating with piecewise constant $\nu, v$}
Fix $\delta \in (0,1)$. Once again the choice of $\delta$ will be identified at the end of the section. By construction, the $(v^*, \xi^*)$ obtained at the end of previous section are continuous.
Choose $0<\gamma \le \delta^2$ such that
\begin{equation}\label{eq:815n}
	\|\xi^*(s)-\xi^*(s')\| + \|v^*(s)-v^*(s')\| \le \delta \mbox{ whenever } |s-s'| \le \gamma,\; s, s' \in [0,T].\end{equation}
Let
$K \doteq \lfloor T/\gamma\rfloor$ and define $t_i \doteq i\gamma$ for $i=0, 1, \ldots K$ and $t_{K+1}=T$.
Define $\tilde\xi^*_{\gamma}(0) \doteq \xi^*(0)$ and
$$\tilde\xi^*_{\gamma}(s) \doteq \xi^*(t_i),\; s \in (t_i, t_{i+1}], \; i=0, 1, \ldots K.$$
Using the estimates in \eqref{eq:424n}  and \eqref{eq:902n}, we have that 
\begin{align}
	\left|\frac{1}{2}  \int_0^T 
			\left (\int_{\RR^{d}} \| \nabla_y U(\tilde\xi^*_{\gamma}(s), y)\|^2 \mu^*_{s}(dy)\right) ds
			- \frac{1}{2}  \int_0^T 
			\left (\int_{\RR^{d}} \| \nabla_y U(\xi^*(s), y)\|^2 \mu^*_{s}(dy)\right) ds\right|
			 &\le \kappa_6\delta,
\end{align}
where
$\kappa_6 \doteq C(U) (T (1+ \|\xi^*\|_{\infty}) + A_1 + 1)$.
Next note that
\begin{multline*}
\int_0^T \left (\int_{\RR^{d}} \| \nabla_y U(\tilde\xi^*_{\gamma}(s), y)\|^2 \mu^*_{s}(dy)\right) ds	
= \sum_{i=0}^K \int_{t_i}^{t_{i+1}} \left (\int_{\RR^{d}} \| \nabla_y U(\tilde\xi^*_{\gamma}(s), y)\|^2 \mu^*_{s}(dy)\right) ds\\
= \sum_{i=0}^K  \int_{\RR^{d}}\| \nabla_y U(\tilde\xi^*_{\gamma}(t_i), y)\|^2 \left (\frac{1}{t_{i+1}-t_i}\int_{t_i}^{t_{i+1}}  \mu^*_{s}(dy) ds \right)(t_{i+1}-t_i).
\end{multline*}
Let
$$\mu^*_{s,\gamma}(dy) \doteq \frac{1}{t_{i+1}-t_i}\int_{t_i}^{t_{i+1}}  \mu^*_{s}(dy) ds,\; s \in (t_i, t_{i+1}], \; i=0, 1, \ldots K$$
and $\mu^*_{0,\gamma}(dy) \doteq \mu_{t_0,\gamma}(dy)$.
Then
\begin{align*}
\int_0^T \left (\int_{\RR^{d}} \| \nabla_y U(\tilde\xi^*_{\gamma}(s), y)\|^2 \mu^*_{s}(dy)\right) ds
&= \int_0^T \left (\int_{\RR^{d}} \| \nabla_y U(\tilde\xi^*_{\gamma}(s), y)\|^2 \mu^*_{s,\gamma}(dy)\right) ds.
\end{align*}
Now define
$$v^*_{\gamma}(s) \doteq v^*(t_i),\; s \in (t_i, t_{i+1}], \; i=0, 1, \ldots K$$
and $v^*_{\gamma}(0) \doteq v^*(t_{0})$.
Then, for $t\in [0,T]$,
\begin{align*}
	\xi^*(t) &= x_0 + \int_0^t \int_{\RR^d} b(\xi^*(s), y) \mu^*_{s}(dy) ds + \int_0^t \alpha(\xi^*(s))v^*(s) ds\\
	&= x_0 + \int_0^t \int_{\RR^d} b(\tilde\xi^*_{\gamma}(s), y) \mu^*_{s}(dy) ds + \int_0^t \alpha(\tilde \xi_{\gamma}^*(s)) v^*_{\gamma}(s) ds + \clr_1(t),
\end{align*}
where
$\|\clr_1\|_{\infty} \le (T(L_b+1) + \|\alpha\|_{\infty} T + L_{\alpha}\|v^*\|_2 T^{1/2})\delta.$
Note that 
$$\int_0^t \int_{\RR^d} b(\tilde\xi^*_{\gamma}(s), y) \mu^*_{s}(dy) ds = \int_0^t \int_{\RR^d} b(\tilde\xi^*_{\gamma}(s), y) \mu^*_{s,\gamma}(dy) ds
+\clr_2(t),
$$
where, for some $C(b) \in (0, \infty)$, depending only on $b$,
$\|\clr_2\|_{\infty} \le 2\gamma C(b)(\|\xi^*\|_{\infty}+1).$
Thus,  for $t\in [0,T]$,
\begin{equation}\label{eq:554n}
	\tilde\xi^*_{\gamma}(t) = x_0 + \int_0^t \int_{\RR^d} b(\tilde\xi^*_{\gamma}(s), y) \mu^*_{s,\gamma}(dy) ds + \int_0^t \alpha(\tilde \xi_{\gamma}^*(s)) v^*_{\gamma}(s) ds + \clr(t),\end{equation}
where $\|\clr\|_{\infty} \le \delta(T(L_b+1)+ \|\alpha\|_{\infty} T + L_{\alpha}\|v^*\|_2 T^{1/2}+ 2 C(b)(\|\xi^*\|_{\infty}+1)+1)\doteq \kappa_7\delta$.
Let $\xi^*_{\gamma}$ be the unique solution of  the equation
\begin{equation}\label{eq:555n}\xi^*_{\gamma}(t) = x_0 + \int_0^t \int_{\RR^d} b(\xi^*_{\gamma}(s), y) \mu^*_{s,\gamma}(dy) ds + \int_0^t \alpha(\xi^*_{\gamma}(s)) v^*_{\gamma}(s) ds.\end{equation}
Then, by \eqref{eq:554n}, \eqref{eq:555n}, Grönwall's lemma and the Lipschitz property of $b$,
\begin{equation}\label{eq:1002n}
	\|\tilde \xi^*_{\gamma} -  \xi^*_{\gamma}\|_{\infty,T} \le \kappa_8\delta,\end{equation}
where $\kappa_8 = \kappa_7\exp\{TL_b+ (T+1)(\|v^*\|_2+1) L_{\alpha}\}$. Next, using the estimates in \eqref{eq:424n}  and \eqref{eq:902n}, we have that 
\begin{align}\label{eq:1003n}
\left |\frac{1}{2}\int_0^T\int_{\RR^{d}} \| \nabla_y U(\tilde\xi^*_{\gamma}(s), y)\|^2 \mu^*_{s,\gamma}(dy) \, ds	
- \frac{1}{2}\int_0^T \int_{\RR^{d}} \| \nabla_y U(\xi^*_{\gamma}(s), y)\|^2 \mu^*_{s,\gamma}(dy)\, ds\right| \le \kappa_9 \delta,
\end{align}
where $\kappa_9 = C(U)\kappa_8[(1+ \|\xi^*\|_{\infty} + \kappa_8) T + (A_1+1)]$.
Also, using \eqref{eq:815n}
\begin{align}\label{eq:1004n}\left | \frac{1}{2}\int_0^T \|v^*(s)\|^2 ds - \frac{1}{2}\int_0^T \|v^*_{\gamma}(s)\|^2 ds\right | \le \kappa_{10} \delta\end{align}
where $\kappa_{10} = (T +  \sqrt{T}\|\tilde v\|_2 )$.
Define $\mu^*_{\gamma} \in \clm_1$ as $\mu^*_{\gamma} (dy\, ds) = \mu^*_{s,\gamma} (dy)\, ds$. We now estimate the distance between $\mu^*_{\gamma} $ and $\mu^*$. Consider $f\in BL_1(\RR^d\times [0,T])$.
Then, for $k = 0, 1 , \ldots K$,
$$\left|\int_{t_k}^{t_{k+1}} \int_{\RR^d} f(y,s) \mu^*_{s, \gamma}(dy) ds  - \int_{t_k}^{t_{k+1}}\int_{\RR^d} f(y,s)\mu^*_{s}(dy) ds\right| \le \gamma (t_{k+1}- t_k).$$
From this it follows that
\begin{align}\label{eq:1005n}\dbl(\mu^*_{\gamma}, \mu^*) \le \gamma T \le \delta T.\end{align}
Now take $\delta$ to be small enough so that
$$\delta(\|F\|_{Lip}(\kappa_8+1+T)+\kappa_{10} +\kappa_9+\kappa_6) \le \delta_0/10.$$
Then, using the Lipschitz property of $F$, together with \eqref{eq:815n}, \eqref{eq:1002n},  \eqref{eq:1003n}, \eqref{eq:1004n} and \eqref{eq:1005n} it follows that 
% $$
% \frac{1}{2}  \int_0^T \left[
% 			\left (\int_{\RR^{d}} \| \nabla_y U(\xi^*_{\gamma}(s), y)\|^2 \mu_{\gamma,s}(dy)\right)
% 			+  \|v_{\gamma}(s)\|^2\right] ds \le I(\xi^*) + \kappa_2\delta.$$
% 			and
% $$\|F(\xi^*_{\gamma}) - F(\xi^*) \| \le \kappa_2\delta.$$
\begin{equation} 
\begin{split} \label{eq:313n}
&\frac{1}{2}  \int_0^T \left[
			\left (\int_{\RR^{d}} \| \nabla_y U(\xi^*_{\gamma}(s), y)\|^2 \mu^*_{\gamma,s}(dy)\right)+ \|v^*_{\gamma}(s)\|^2\right] ds
			+ F(\xi^*_{\gamma}, \mu^*_{\gamma})\nonumber \\
			&\le F(\xi^*, \mu^*) + \frac{1}{2}  \int_0^T \left[
			\left (\int_{\RR^{d}} \| \nabla_y U(\xi^*(s), y)\|^2 \mu_{s}^*(dy)\right)
			+  \|v^*(s)\|^2\right] ds + \delta_0/10\nonumber\\
			 &\le F(\tilde\xi, \tilde \nu) + I_2(\tilde \xi, \tilde \nu) + \delta_0/5 + \delta_0/4, 
\end{split}
\end{equation}
where the last line follows from \eqref{eq:1016n}.
Henceforth we will fix such a $\delta$ and suppress $\gamma$ in the notation for $\xi^*_{\gamma}, \mu^*_{\gamma,s}, \mu^*_{\gamma}, v^*_{\gamma}$. 
\subsubsection{Approximating using discrete measures}
\label{sec:discappr}
Finally, we will now approximate the piecewise constant trajectory of measures $\mu_s$ from the last section by a similar trajectory where the measures are discrete. Fix $\delta \in (0,1)$. An appropriate choice of $\delta$ will be identified at the end of the section.
Let
$0=t_0 <t_1 < \cdots t_{K+1} = T$ be the partition over which $\mu^*_s$ and $v^*(s)$ are piecewise constant.
Let $\hat\mu_i = \mu^*_{t_i}$, $i = 0, \ldots K$.
Note that, from \eqref{eq;307n}, for each $i$,
\begin{equation}\label{eq:cieq}
	\int_{t_i}^{t_{i+1}}\int_{\RR^{d}} \| \nabla_y U(\xi^*(s), y)\|^2 \hat \mu_{i}(dy)  ds\doteq C_i<\infty.\end{equation}
% from which and properties of $U$ (see Assumption \ref{Assumptions:system}(2c)) it follows that, for each $i$,
% $$\int_{\RR^{d}} \| y\|^2 \hat \mu_{i}(dy) \doteq C_i<\infty.$$

Recall the space $\mathcal{P}_{dis}$ defined in Subsection \ref{subsec:single-lower}. Then, as in \eqref{single-discrete-approximation}, for each $i$, there is a  $\hat \mu_{i,d} \in \mathcal{P}_{dis}$ such that, 
\begin{equation}\label{eq:nabuerra}
	\int_{t_i}^{t_{i+1}}\left|\int_{\RR^d} b(\xi^*(s), y) \hat\mu_{i,d}(dy) - \int_{\RR^d} b(\xi^*(s), y) \hat\mu_{i}(dy)\right| ds \le \delta,\end{equation}
\begin{equation}\label{eq:nabuerr}
	\left\| \int_{t_i}^{t_{i+1}}\int_{\RR^{d}} \| \nabla_y U(\xi^*(s), y)\|^2 \hat \mu_{i,d}(dy)\, ds - 
\int_{t_i}^{t_{i+1}}\int_{\RR^{d}} \| \nabla_y U(\xi^*(s), y)\|^2 \hat \mu_{i}(dy)\, ds\right\| \le \delta,
\end{equation}
and
\begin{equation}\label{eq:nabuer3}\dbl(\hat\mu_{i,d}, \hat \mu_{i})\le \delta.\end{equation}
Define
$$\mu_{t,d} \doteq \hat \mu_{i,d}, \; t \in (t_i, t_{i+1}], i =0, 1, \ldots K,$$
and $\mu_{0,d} \doteq \mu_{t_0,d}$.
Then, from \eqref{eq:nabuerra},
for $t\in [0,T]$,
\begin{align*}
	\xi^*(t) &= x_0 + \int_0^t \int_{\RR^d} b(\xi^*(s), y) \mu^*_{s}(dy) ds + \int_0^t \alpha(\xi^*(s)) v^*(s) ds\\
	&= x_0 + \int_0^t \int_{\RR^d} b(\xi^*(s), y) \mu_{s,d}(dy) ds + \int_0^t \alpha(\xi^*(s)) v^*(s) ds + \clr_1(t)
\end{align*}
where
$\|\clr_1\|_{\infty} \le K\delta$.
Let $\xi^*_d$ be the unique solution of
$$
\xi^*_d(t) = x_0 + \int_0^t \int_{\RR^d} b(\xi^*_d(s), y) \mu_{s,d}(dy) ds + \int_0^t \alpha(\xi_d^*(s)) v^*(s) ds.
$$
Then, from the Lipschitz property of $b$, 
\begin{equation}\label{eq:303n}
	\|\xi^*_d - \xi^*\|_{\infty} \le K\delta \exp\{L_bT +L_{\alpha}T^{1/2} \|v^*\|_2\} \doteq \kappa_{11}\delta .\end{equation}
Also, from \eqref{eq:cieq}, \eqref{eq:nabuerr}, and the lower bound in Assumption \ref{Assumptions:system}(2c)
$$\int_0^T \int_{\RR^{d}}  \|y\|^2 \mu_{s,d}(dy)\, ds \le (L^1_{low})^{-1}(\sum_{i=0}^K(C_i+1) + T L^2_{low}) \doteq \kappa_{12}.$$
From the last two estimates and \eqref{eq:424n}
\begin{align*}
\left |\frac{1}{2}\int_0^T \left (\int_{\RR^{d}} \| \nabla_y U(\xi^*_{d}(s), y)\|^2 \mu_{s,d}(dy)\right) ds	
- \frac{1}{2}\int_0^T \left (\int_{\RR^{d}} \| \nabla_y U(\xi^*(s), y)\|^2 \mu_{s,d}(dy)\right) ds\right| \le \kappa_{13} \delta,
\end{align*}
where
$$\kappa_{13} \doteq C(U)\kappa_{11}(T(1+  \|\xi^*\|_{\infty} + \kappa_{11}) + \kappa_{12}).$$
Now let $f \in BL_1(\RR^d\times [0,T])$.
Then, from \eqref{eq:nabuer3}, for $k = 0, 1 , \ldots K$,
$$
\left | \int_{\RR^d} \left(\int_{t_k}^{t_{k+1}} f(s,y) ds\right) \hat \mu_{k,d} - \int_{\RR^d} \left(\int_{t_k}^{t_{k+1}} f(s,y) ds\right) \hat \mu_{k}\right| \le (t_{k+1}-t_k) \delta.$$
This shows that, with $\mu_d(dy\, ds)\doteq \mu_{s,d}(dy)\, ds$,
\begin{equation} \label{eq:304n} \dbl(\mu_d, \mu^*) \le \delta T.\end{equation}
	Now choose $\delta \in (0,1)$ such that $\delta(\|F\|_{Lip}(\kappa_{11} +T) +\kappa_{13}+T) \le \delta_0/10$.
Then,  using the Lipschitz property of $F$, and  \eqref{eq:303n}, \eqref{eq:304n} and \eqref{eq:nabuerr} it follows that 
\begin{align}&\frac{1}{2}  \int_0^T \left[
			\left (\int_{\RR^{d}} \| \nabla_y U(\xi^*_{d}(s), y)\|^2 \mu_{s,d}(dy)\right)+ \|v^*(s)\|^2\right] ds
			+ F(\xi^*_{d}, \mu_{d})\nonumber \\
			&\le F(\xi^*, \mu^*) + \frac{1}{2}  \int_0^T \left[
			\left (\int_{\RR^{d}} \| \nabla_y U(\xi^*(s), y)\|^2 \mu_{s}^*(dy)\right)
			+  \|v^*(s)\|^2\right] ds + \delta_0/10\nonumber\\
			 &\le F(\tilde\xi, \tilde \nu) + I_2(\tilde \xi, \tilde \nu) + 22\delta_0/40,\label{eq:313nn}
\end{align}
where the last line uses \eqref{eq:313n}.\\
% $$
% \frac{1}{2}  \int_0^T \left[
% 			\left (\int_{\RR^{d}} \| \nabla_y U(\xi^*_{d}(s), y)\|^2 \mu_{d,s}(dy)\right)
% 			+  \|v_{d}(s)\|^2\right] ds \le I(\xi^*) + \kappa_3\delta.$$
% 			and
% $$\|F(\xi^*_{d}) - F(\xi^*) \| \le \kappa_3\delta.$$
% Thus
% \begin{align}&\frac{1}{2}  \int_0^T \left[
% 			\left (\int_{\RR^{d}} \| \nabla_y U(\xi^*_{d}(s), y)\|^2 \mu_{d,s}(dy)\right)+ \|v_{d}(s)\|^2\right] ds
% 			+ F(\xi^*_{d})\nonumber\\
% 			 &\le \inf_{\xi \in C([0,T]:\RR^m)} \left [ F(\xi) + I(\xi)\right] + 2(\sum_{i=1}^3\kappa_i+1)\delta.
% 			 \label{eq:eq357}
% \end{align}
%
% By taking $C = [2(\sum_{i=1}^3\kappa_i+1)]^{-1}$, 

\noindent {\bf Proof of Lemma \ref{lem:approxdisc}.} Lemma \ref{lem:approxdisc} now follows on combining the above display with \eqref{eq:lowbd1} and setting  $\xi^*\doteq \xi^*_d$, $v^*\doteq v^*$, $\nu^*_i\doteq\mu_{i,d}$, and {$\nu^*=\mu_d $}. \hfill \qed
%and $\mu_i= \hat \mu_{i,d}$.
%Henceforth we will suppress $d$ in the notation for $\xi^*_{d}, \mu_{d,s}, v_{d}$. 
\subsection{Construction of Controlled process}
\label{sec:conscont}
We will now use the trajectory $\xi^*$, control $v^*$ and measures $\nu_i^*$ given by Lemma \ref{lem:approxdisc} to construct suitable controls for the pre-limit stochastic system in \eqref{eq:system-bar-def}.

Recall that  $0=t_0 <t_1 < \cdots t_{K+1} = T$ is the partition over which $\hat\nu_s$ and $v^*(s)$ are piecewise constant. Also recall  the facts that 
 $\nu^*_i = \nu_{t_i}$, $i = 0, \ldots K$ and that $\nu_i^* \in \clp(\RR^d)$ has finite support for every $i$.
Let this discrete measure be represented as
$$\nu^*_i = \sum_{l=1}^{m(i)} p_{i,l} \delta_{y_{i,l}},$$
where $p_{i,l}>0$ for every $i,l$ and $y_{i,l}\in \RR^d$.

Let $\{\Delta_{\veps}\}_{\veps>0}$ be a collection of positive reals such that
$\Delta_{\veps}\to 0$ and $\Delta_{\veps}^2/\veps \to \infty$, as $\veps \to 0$. Without loss of generality,
$1> \Delta_{\eps} > \sqrt{\eps} >\eps$.
Let $N^i_{\veps} = \lfloor (t_{i+1}-t_i)/\Delta_{\veps}\rfloor$ and define
a partition of $[t_i, t_{i+1}]$, $i = 0, 1, \ldots K$ as
$$t_i \doteq s_{i,0} \le s_{i,1} \le \cdots \le s_{i,(N^i_{\veps}+1)} \doteq t_{i+1}$$
where
$$s_{i,j} \doteq \begin{cases}
	t_i + j \Delta_{\veps}, & j = 0, 1, \ldots , N^{i}_{\veps}\\
	t_{i+1}, & j = N^i_{\veps}+1
\end{cases},\;i = 0, 1, \ldots K. 
$$
Also define, for $i= 0, 1, \ldots K$,
$$
\Delta^{i,j}_{\veps} \doteq 
\begin{cases}
	\Delta_{\veps}, & j = 0, 1, \ldots , N^{i}_{\veps}-1\\
	t_{i+1} - N^{i}_{\veps}\Delta_{\veps}, & j = N^i_{\veps}
\end{cases}.
$$
For $y, y' \in \RR^d$, let
\begin{equation}\label{eq:859n}
	\uu^*(y,y',t) \doteq y'-y, \; 0 \le t \le 1,\end{equation}
and define, for $i= 0, 1, \ldots K$, $j = 0, 1, \ldots N^i_{\veps}-1$,
$$\uu^{\eps}(y,y',t) \doteq \frac{1}{\Delta_{\veps}}\uu^*(y,y',\frac{t}{\veps \Delta_{\veps}}), \; 0 \le t \le \veps \Delta_{\veps}.$$
Define $P_{il}$ for $i= 0, 1, \ldots K$, $l = 0, 1, \ldots, m(i)$, as
$$P_{i,0}\doteq 0, \; P_{i,l} \doteq P_{i,(l-1)} + p_{i,l}, \; l = 1, \ldots, m(i).$$
Denote, associated with the interval $[t_i, t_{i+1}]$ and its subinterval $[s_{ij}, s_{i,j+1}]$, for $l = 0, 1, \ldots, m(i)$,
$$\sigma_{i,j,l} \doteq s_{i,j} + P_{i,l}\Delta_{\veps}^{i,j}.$$
Occasionally we will write $\sigma_{i,j,l},s_{i,j}, P_{i,l}, p_{i,l}, y_{i,l}$ as $\sigma_{ijl},s_{ij}, P_{il}, p_{il}, 
y_{il}$ respectively, for brevity. Without loss of generality we assume that $\eps$ is sufficiently small so that  for some  $\underline{p} \in (0, \infty)$, $p_{il}- \eps \ge \underline{p} $ for all $i,l$.

\begin{figure}
    \centering
    \includegraphics[width=.95\textwidth]{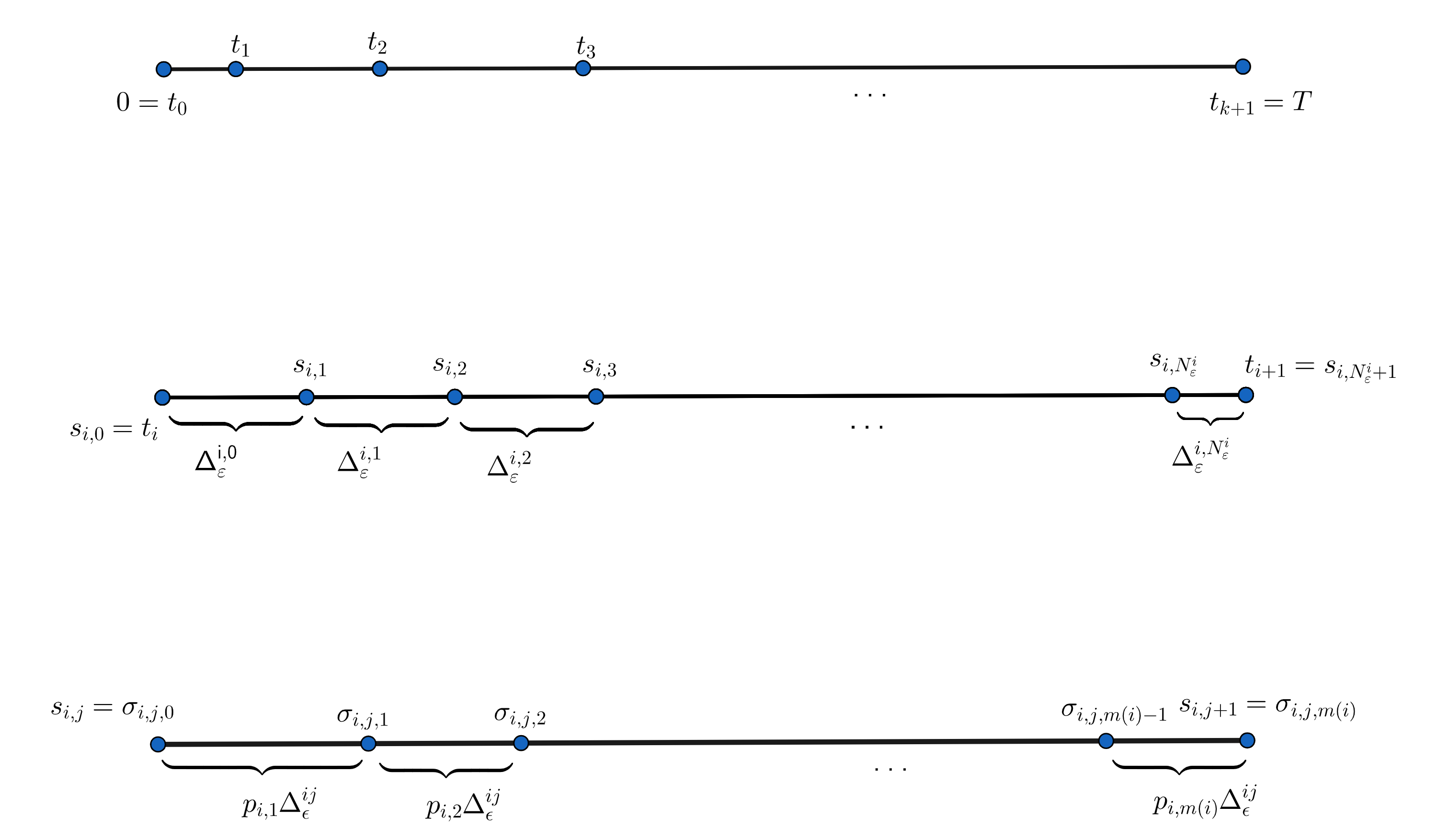}
    \caption{Construction of the successive partitions: first into intervals of the type $[t_i,t_{i+1}]$; then into subintervals of the type $[s_{i,j},s_{i,j+1}]$; finally into $[\sigma_{i,j,l},\sigma_{i,j,l+1}]$.}
    \label{fig:interval_construction}
\end{figure}

The state equations for the controlled processes are
\begin{equation}\label{eq:s2e6FS}
\begin{aligned}
	d\bar X^{\veps}(t) &= b(\bar X^{\veps}(t), \bar Y^{\veps}(t)) dt + s(\veps) \veps^{1/2} \alpha(\bar X^{\veps}(t)) dW(t) + \alpha(\bar X^{\veps}(t)) v^*(t) dt, \; \bar X^{\veps}(0) = x_0\\
	d\bar Y^{\veps}(t) &= - \frac{1}{\veps} [\nabla_y U(\bar X^{\veps}(t), \bar Y^{\veps}(t))  -  u^{\veps}(t)] dt + 
	\frac{s(\veps)}{\veps^{1/2}} d  B(t),  \; \bar Y^{\veps}(0) = y_0
\end{aligned}
\end{equation}
where $v^*$ is as in Lemma \ref{lem:approxdisc} and $u^{\veps}$ is given in state feedback form as follows.
For $i= 0, 1, \ldots K$, $j = 0, 1, \ldots N^i_{\veps}-1$, $l = 0, 1, \ldots, m(i)-1$,
%and  $t \in (s_{i,j}, s_{i,j+1}]$.
\begin{equation}\label{eq:defncont}
	u^{\veps}(t) \doteq
	\begin{cases}
		\nabla_y U(\bar X^{\veps}(t), \bar Y^{\veps}(t)) +  \uu^{\eps}(\bar Y^{\veps}(\sigma_{ijl})), y_{i,l+1}, t - \sigma_{ijl}),\\
		\quad \quad \quad\quad\quad\quad \mbox{ if } t \in (\sigma_{ijl}, \sigma_{ijl}+\veps \Delta_{\veps}]\\
		\nabla_y U(\bar X^{\veps}(t), \bar Y^{\veps}(t)) - 
		\nabla_y U(\bar X^{\veps}(\sigma_{ijl}), \bar Y^{\veps}(t))
		+ \nabla_y U(\bar X^{\veps}(\sigma_{ijl}), y_{i,l+1}),\\
		\quad \quad \quad\quad\quad\quad \mbox{ if } t \in (\sigma_{ijl}+\veps \Delta_{\veps},
		\sigma_{ij(l+1)}]\\
		\nabla_y U(\bar X^{\veps}(t), \bar Y^{\veps}(t)) + \bar Y^{\veps}(t),
		\quad \quad \quad\quad\quad\quad \mbox{ if } t \in (s_{i, N^i_{\veps}}, t_{i+1}].
		\end{cases}
\end{equation}
From the Lipschitz property of $\nabla_yU$ and $\alpha$ we see that \eqref{eq:s2e6FS} has a unique solution and the feedback control $u^{\eps}$ in \eqref{eq:defncont} is well defined.

\begin{table}[h!]
\centering
 \begin{tabular}{||c|c|c||} 
 \hline
Time interval  & State of $\Bar{Y}^\veps$ &  Control Process $u^\veps(s)$  \\ [0.5ex] 
 \hline\hline
 $[\sigma_{ij0},\sigma_{ij0}+\veps \Delta_\veps]$ & $\bar Y^{\eps}(s_{ij}) \rightarrow y_{i,1}$ & \makecell{ $\nabla_y U(\bar{X}^\veps (s) \textit{,} \bar{Y}^\veps (s)) + \uu^{\eps}(\bar{Y}^\veps(\sigma_{ij0}),y_{i,1},t-\sigma_{ij0})$ } \\ 
 \hline
 $(\sigma_{ij0}+\veps \Delta_\veps,\sigma_{ij1}]$ & $y_{i,1}$  & \makecell{$ \nabla_y U (\Bar{X}^\veps(s),\Bar{Y}^\veps(s)) - \nabla_y U (\Bar{X}^\veps(\sigma_{ij0}),\Bar{Y}^\veps(s))$   \\ 
 $+ \nabla_y U (\Bar{X}^\veps(\sigma_{ij0}) ,y_{i,1}) $}  \\
 \hline
 $(\sigma_{ij1},\sigma_{ij1} + \veps \Delta_\veps]$ &  $y_{i,1} \rightarrow y_{i,2}$ & \makecell{ $\nabla_y U(\bar{X}^\veps (s) \textit{,} \bar{Y}^\veps (s)) + \uu^{\eps}(\bar{Y}^\veps(\sigma_{ij1}),y_{i,2},t-\sigma_{ij1})$ } \\
 \hline
 \dots & \dots & \dots \\ [1ex]  \hline
 $(\sigma_{i,j,m(i)-1} + \veps \Delta_\veps,s_{i,j+1}]$ &
 $y_{i,m(i)}$ &  \makecell{$ \nabla_y U (\Bar{X}^\veps(s),\Bar{Y}^\veps(s)) - \nabla_y U (\Bar{X}^\veps(\sigma_{i,j,m(i)-1}),\Bar{Y}^\veps(s)) $  \\$
 + \nabla_y U (\Bar{X}^\veps(\sigma_{i,j,m(i)-1}) ,y_{i,m(i)}) $} \\
 % \\
%   \hline
% %   $(s_{N^0_\veps},t_1]$ & - & \makecell{$ \nabla_y U(\Bar{X}^\veps(s),\Bar{Y}^\veps(s)) + \bar{Y}^\veps(s) $
% }  \\ [1ex]
  \hline
\end{tabular}
\label{table:process-travel-system}
\caption{Construction of the control process $u^\veps$ on the interval $[s_{ij},s_{i,j+1}]$, $j = 0, 1, \ldots, N^i_{\eps}-1$, $i=0, 1, \ldots, K$. Second column denotes the approximate states/transitions for $\bar{Y}^\veps$ under the selection of the controls, $u^\veps$. }
\end{table}

We will show that this controlled process $\bar X^{\veps}$ and the corresponding $\bar \Lambda^{\eps}$ defined by \eqref{eq:laeps} converge suitably to the near optimal  $(\xi^*, \nu^*)$
in Lemma \ref{lem:approxdisc} and the associated costs converge appropriately as well.
In preparation for this result we first prove a stabilization lemma analogous to Lemma \ref{theorem:auxillary} used in the proof of Theorem \ref{theorem:ldp}. We suppress some details in the proof that are similar to that in Lemma \ref{theorem:auxillary}.
\subsubsection{A stabilization lemma.}
\label{sec:stablem}
Recall, for $x \in \RR^m,z \in \RR^d$ the function $\clv_{x,z}: \RR^d \to \RR^d$ as introduced in Assumption \ref{Assumptions:system}(2d).
For $R <\infty$, let
\begin{equation}\label{eq:ar}
	A_R \doteq \{ (x,y) \in \RR^m \times \RR^d: \|x\|^2 + \|y\|^2 \le R\}.\end{equation}	
\begin{lemma}\label{lem:llbcgcene}
	 There exists $\kappa_1 \in (1, \infty)$ such that for every  $z\in \RR^d$, and  a collection $\{(x^{\veps}, y^{\veps})\}_{\veps \in (0,1)}$ in  $\RR^{m+d}$, with
	 %$$\sup_{\veps \in (0,1)} (\|x^{\veps}\|^2+\|y^{\veps}\|^2)<\infty.$$
	  $\tilde Y(s)\equiv Y^{\veps}(s,z, x^{\veps}, y^{\veps})$ given as the solution of
	\begin{equation}\label{eq:yepsx}
			d\tilde Y(t) = -\frac{1}{\veps} \clv_{x^{\veps},z}(\tilde Y(t)) dt
			+ \frac{s(\veps)}{\sqrt{\veps}} dB(t), \; \tilde Y(0) = y^{\veps},
	\end{equation}
	we have, for all $\veps \in (0,1)$,
	\begin{enumerate}[(i)]
	\item $\,$
	$$\sup_{0\le t \le \Delta_{\veps}} E\|Y^{\veps}(t,z, x^{\veps},y^{\veps} )\|^2 < 
	\kappa_1(1+ \|x^{\veps}\|^2+ \|y^{\veps}\|^2+\|z\|^2).$$
	\item 
	%Suppose that $\sup_{\veps>0} (\|x^{\veps}\|^2+ \veps\|y^{\veps}\|^2/\Delta_{\veps}) < \infty$. 
	For every  $a \in (0,1)$,  $z\in \RR^d$, and $R <\infty$, as $\veps \to 0$,
	\begin{equation}\label{eq:contra}
		\sup_{t \in [a, 1]} \sup_{(\bar x, \bar y) \in A_R} E\left [ \dbl\left( \frac{1}{t\Delta_{\veps}} \int_0^{t\Delta_{\veps}} \delta_{Y^{\veps}(s,z, \bar x, \bar y )} ds, \delta_0\right)\right] \to 0.
\end{equation}
\end{enumerate}
\end{lemma}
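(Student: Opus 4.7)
The plan is to mirror the structure of Lemma~\ref{theorem:auxillary} from the single-scale setting, replacing the explicitly assumed Lyapunov function $V_x$ of Assumption~\ref{assumptions-1}(4) by a Lyapunov function built from the potential $U$ and the coercivity of Assumption~\ref{Assumptions:system}(2c). A repeatedly used scalar bound is
$\|\nabla_y U(x^\veps, z)\| = \|\nabla_y U(x^\veps, z) - \nabla_y U(x^\veps, \theta(x^\veps))\| \le L_{\clh U}\|z - \theta(x^\veps)\|$,
obtained from Assumption~\ref{Assumptions:system}(2e,f); this is linear in $\|x^\veps\|$ and $\|z\|$.

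For part~(i), I would apply It\^o's formula to $y \mapsto U(x^\veps, y + z)$ along $\tilde Y$. Since $\nabla_y U(x^\veps,\tilde Y + z) = \nabla_y U(x^\veps, z) + \clv_{x^\veps, z}(\tilde Y)$, the drift reads
\begin{equation*}
dU(x^\veps, \tilde Y+z) = -\frac{1}{\veps}\Big[\|\nabla_y U(x^\veps,\tilde Y+z)\|^2 - \nabla_y U(x^\veps,\tilde Y+z)\cdot \nabla_y U(x^\veps, z)\Big] dt + dM_t + \frac{s^2(\veps)}{2\veps}\tr(\clh_y U)\,dt,
\end{equation*}
where $M$ is a local martingale. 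Young's inequality on the cross term isolates a dissipation $-\tfrac{1}{2\veps}\|\nabla_y U(x^\veps,\tilde Y+z)\|^2$ against terms of order $\tfrac{1}{2\veps}\|\nabla_y U(x^\veps, z)\|^2$ controlled by the scalar bound above, plus a noise contribution bounded via Assumption~\ref{Assumptions:system}(2a). The coercivity $U + \|\nabla_y U\|^2 \ge L^1_{low}\|y\|^2 - L^2_{low}$ from Assumption~\ref{Assumptions:system}(2c) then converts this dissipation into effective control of $E\|\tilde Y + z\|^2$. A stopping-time localization $\tau_m = \inf\{t \ge 0: \|\tilde Y(t)\| \ge m\}$, taking expectations to kill the martingale, passing $m \to \infty$ via monotone convergence, and finally applying the inverse triangle inequality $\|y+z\|^2 \ge \tfrac12\|y\|^2 - \|z\|^2$ delivers the claimed bound with $\kappa_1$ depending only on $L^1_{low}, L^2_{low}, L_{\clh U}, L_\theta, \|\theta(0)\|$.

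For part~(ii), I would argue by contradiction, as in Lemma~\ref{theorem:auxillary}. Assume the convergence fails; then there exist $\delta > 0$, $\veps_n \to 0$, $t_n \in [a,1]$, and $(\bar x_n, \bar y_n) \in A_R$ with $E\, \dbl(\pi^n, \delta_0) \ge \delta$, where $\pi^n$ is the empirical measure of $Y^{\veps_n}(\cdot, z, \bar x_n, \bar y_n)$ over $[0, t_n \Delta_{\veps_n}]$. Using compactness of $A_R \times [a,1]$, pass to a subsequence along which $(\bar x_n, \bar y_n) \to (\bar x^*, \bar y^*) \in A_R$ and $t_n \to t^* \in [a,1]$. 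Part~(i) provides uniform second-moment bounds for $\pi^n$, so $\{\pi^n\}$ is tight; extract a weak limit $\pi$. Applying It\^o's formula to $\eta \in \clc_c^2(\RR^d)$ along $\tilde Y$, multiplying by $\veps_n$, and letting $n \to \infty$ (as in the derivation of \eqref{eq:556n}) gives, a.s.,
\begin{equation*}
\int_{\RR^d} \nabla \eta(y) \cdot \clv_{\bar x^*, z}(y)\, \pi(dy) = 0 \quad \text{for every } \eta \in \clc_c^2(\RR^d).
\end{equation*}
The semigroup identity used after \eqref{eq:556n}, together with global asymptotic stability of $0$ for $\dot u = -\clv_{\bar x^*, z}(u)$ from Assumption~\ref{Assumptions:system}(2d), forces $\pi = \delta_0$ a.s., contradicting $E\,\dbl(\pi^n, \delta_0) \ge \delta$.

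The main obstacle is the moment bound in part~(i) on the rescaled horizon $\Delta_\veps/\veps$, which diverges since $\Delta_\veps^2/\veps \to \infty$: a naive Gronwall estimate based only on the Lipschitz bound $\|\clv_{x^\veps, z}(y)\| \le L_{\clh U}\|y\|$ produces an unusable exponential factor $e^{\mathcal{O}(\Delta_\veps/\veps)}$. The argument must therefore genuinely exploit the dissipation generated by It\^o's formula on $U(x^\veps, \cdot + z)$ together with the coercivity in Assumption~\ref{Assumptions:system}(2c), rather than treating the drift as a perturbation.
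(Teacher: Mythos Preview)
Your proposal is correct and follows essentially the same strategy as the paper. Two small points are worth noting. For part~(i), the paper applies It\^o's formula to the \emph{tilted} Lyapunov $V_{x^\veps,z}(y) = U(x^\veps, y+z) - y\cdot\nabla_y U(x^\veps, z)$ rather than to $U(x^\veps,\cdot+z)$ directly; since $\nabla V_{x^\veps,z} = \clv_{x^\veps,z}$, the drift is exactly $-\tfrac{1}{\veps}\|\clv_{x^\veps,z}(\tilde Y)\|^2$ with no cross term, which sidesteps your Young's-inequality step and makes the coercivity argument from Assumption~\ref{Assumptions:system}(2c) cleaner. For part~(ii), you omit the role of the scaling $\veps_n/\Delta_{\veps_n}\to 0$: unlike Lemma~\ref{theorem:auxillary} where the time horizon is $[0,1]$, here It\^o's formula is applied on the shrinking interval $[0,t_n\Delta_{\veps_n}]$, and after dividing through by $t_n\Delta_{\veps_n}$ the boundary and stochastic-integral terms are of order $\veps_n/\Delta_{\veps_n}$ and $s(\veps_n)\sqrt{\veps_n/\Delta_{\veps_n}}$, which vanish precisely because $\Delta_\veps^2/\veps\to\infty$ (equivalently $\Delta_\veps > \sqrt{\veps}$). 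The paper also explicitly passes from $\clv_{\bar x_n,z}$ to $\clv_{\bar x^*,z}$ using the Lipschitz property of $\nabla_y U$ before taking the weak limit, a step your sketch leaves implicit.
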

\begin{proof}
	Let
	$$V_{x^{\veps},z}(y) = U(x^{\veps}, y+z) - y \cdot \nabla_y U(x^{\veps}, z), \; y \in \RR^d.$$
		Applying It\^{o}'s formula
\begin{equation}
\begin{split}
    \label{eq:256}
			V_{x^{\veps},z}(\tilde Y(t)) &= V_{x^{\veps},z}(y^{\veps}) -\frac{1}{\veps} \int_0^t \nabla V_{x^{\veps},z}(\tilde Y(s)) \cdot\clv_{x^{\veps},z}(\tilde Y(s))  ds \\
			&\quad + \frac{s(\veps)}{\sqrt{\veps}} \int_0^t  \nabla V_{x^{\veps},z}(\tilde Y(s))\cdot dB(s) + \frac{s^2(\veps)}{2\veps} \int_0^t  \Delta V_{x^{\veps},z}(\tilde Y(s)) ds.
\end{split}
\end{equation}
	Let $\tau_m = \inf\{t: \|\tilde Y(t)\| \ge m\}$. Then, 
\begin{equation}
\begin{split}
		 \label{eq:706}
     E\int_0^{t\wedge \tau_m} \nabla V_{x^{\veps},z}(\tilde Y(s))\cdot \clv_{x^{\veps},z}(\tilde Y(s)) ds
		 &= \veps (V_{x^{\veps},z}(y^{\veps})-EV_{x^{\veps},z}(\tilde Y(t\wedge \tau_m))) \\
   &+  \frac{s^2(\veps)}{2} E \int_0^{t\wedge \tau_m} \Delta V_{x^{\veps},z}(\tilde Y(s)) ds.
\end{split}
\end{equation}
Using the assumption on $U$ (Assumption \ref{Assumptions:system}(2b,2c)), we can find $c_1, c_2, c_3 \in (0, \infty)$ such that for all $x\in \RR^m$ and $y,z\in \RR^d$
	\begin{equation}\label{eq:1015}
		c_2\|y\|^2 - c_3 (1+\|x\|^2+\|z\|^2) \le V_{x,z}(y) \le c_1(1+ \|x\|^2+ \|y\|^2+ \|z\|^2)
	\end{equation}
	and
	\begin{equation*}
		c_2\|y\|^2 - c_3 (1+\|x\|^2+\|z\|^2) \le \left\|\nabla V_{x,z}(y)\right\|^2.
	\end{equation*}
	Using these observations, together with $\nabla V_{x^{\veps},z}\cdot \clv_{x^{\veps},z} = \|\nabla V_{x^{\veps},z}\|^2$, in \eqref{eq:706}, for $0\le t \le \Delta_{\veps}$
	\begin{align*}
	&E\int_0^{t\wedge \tau_m}	\left(c_2\|\tilde Y(s)\|^2 - c_3 (1+\|x^{\veps}\|^2+\|z\|^2)\right) ds\\
		&\le E\int_0^{t\wedge \tau_m} \nabla V_{x^{\veps},z}(\tilde Y(s))\cdot \clv_{x^{\veps},z}(\tilde Y(s)) ds\\
		&
		 \le \veps c_1(1+ \|x^{\veps}\|^2+ \|y^{\veps}\|^2+ \|z\|^2)
		 - \veps\left(c_2E\|\tilde Y(t\wedge \tau_m))\|^2 - c_3 (1+\|x^{\veps}\|^2+\|z\|^2)\right)
		  +  c_4s^2(\veps) \Delta_{\veps}.
	\end{align*}
	where $c_4 \doteq \sup_{x,y}\frac{1}{2}\|\clh_yU(x,y)\|$. 
	Thus, sending $m\to \infty$, for some $c_5 \in (0, \infty)$,
	\begin{equation}\label{eq:bdonint}
		E\int_0^{\Delta_{\veps}}	\|\tilde Y(s)\|^2  ds
	\le c_5\left( (1+ \|x^{\veps}\|^2+ \|z\|^2) \Delta_{\veps} +  \|y^{\veps}\|^2\veps\right).\end{equation}
	Also, from \eqref{eq:256} and \eqref{eq:1015} we see that 
	\begin{align*}
		c_2E\|\tilde Y(t)\|^2 - c_3 (1+\|x^{\veps}\|^2+\|z\|^2) &\le EV_{x^{\veps},z}(\tilde Y(t))\le
		c_1(1+ \|x^{\veps}\|^2+ \|y^{\veps}\|^2+ \|z\|^2)\\
		&\quad-\frac{1}{\veps} E\int_0^t \left(c_2\|\tilde Y(s)\|^2 - c_3 (1+\|x^{\veps}\|^2+\|z\|^2)\right) ds  +\frac{c_4s^2(\veps)}{\veps} t .
		\end{align*}
		This, using an argument similar to that below \eqref{eq:546n}, shows that for some $c_6 \in (0,\infty)$, for all $t \le \Delta_{\veps}$,
		$$E\|\tilde Y(t)\|^2 \le c_6(1+ \|x^{\veps}\|^2+ \|y^{\veps}\|^2+ \|z\|^2).$$
		The first part in the lemma now follows on taking $\kappa_1 = c_6$.
		
		In order to prove the second statement we argue via contradiction.
		Fix an $a \in (0,1)$ and an $R<\infty$.
		% a sequence $\{(x^{\veps}, y^{\veps})\}$ such that $\sup_{\veps>0} (\|x^{\veps}\|^2+ \veps\|y^{\veps}\|^2/\Delta_{\veps}) < \infty$.
		Suppose that the convergence in \eqref{eq:contra} fails to hold. Then there is a $\gamma>0$, a sequence $\veps_n \to 0$,  $t_n \in [a, 1]$ and $(x_n, y_n ) \in A_R$
		such that for every $n\ge 1$
		\begin{equation}\label{eq:contra2}
			 E\left [ \dbl\left( \frac{1}{t_n\Delta_{\veps_n}} \int_0^{t_n\Delta_{\veps_n}} \delta_{Y^{\veps_n}(s,z, x_{n}, y_{n})} ds, \delta_0\right)\right] > \gamma.
		\end{equation}
		Introduce
		random probability measures $\tilde Q^{n}$ on $\RR^d$  as
		\begin{equation}
			\label{eq:e7}
			\tilde Q^{n}(A ) \doteq \frac{1}{t_n\Delta_{\veps_n}}\int_0^{t_n\Delta_{\veps_n}} \one_A( Y^{\veps_n}(s,z, x_{n}, y_{n}))  ds, \;  A \in \calB(\RR^d).
		\end{equation}
		Using \eqref{eq:bdonint} and our assumption on $\{(x_n,y_n)\}$ we see that $\tilde Q^{n}$ is tight. Suppose that it converges in distribution along a subsequence to $\tilde Q$ along which we also have that $x_n \to x$ for some $x \in \RR^m$.
		Then, using the Lipschitz property of $\nabla_yU$, we have, for all $\eta: \RR^d \to \RR$ that are  in $\clc_b^2$,
		$$\int_{\RR^d} \left |\clv_{x_n,z}(y)  \cdot \nabla \eta(y) - \clv_{x,z}(y)  \cdot \nabla \eta(y)\right|  \tilde Q^n(dy) \to 0.$$
		Now, using  \eqref{eq:bdonint} and the property $\Delta(\eps)^2/\eps \to \infty$, along the lines of the proof of Lemma \ref{theorem:auxillary}  (see equation \eqref{eq:552n} therein), for all $\eta$ as above,
		\begin{equation}\label{eq:echhcrit}
			\int_{\RR^d} \clv_{x,z}(y)  \cdot \nabla \eta(y)  \tilde Q(dy) =0, \mbox{ a.s.}
		\end{equation}
		From Assumption \ref{Assumptions:system}(2d) we now have by similar arguments as as the end of Lemma \ref{theorem:auxillary} that $\tilde Q$ must be $\delta_0$ which contradicts \eqref{eq:contra2}.
		This completes the proof of the second statement in the lemma.
		\end{proof}
\subsubsection{Some preliminary estimates.}
In this subsection we collect some  estimates that will be used to show the convergence of $\bar X^{\veps}$ and the associated costs to appropriate limits.  We begin with the following lemma which gives a key moment bound. 
%Let $\clf_t = \sigma\{W(s), B(s), s \le t\}$.
\begin{lemma}
\label{lem:keymombdne} 
Let $M_1 \doteq \sup_{0\le t \le T}\sup_{\veps \in (0,1)} 	E(\|\bar Y^{\veps}(t)\|^2+ \|\bar X^{\veps}(t)\|^2).$ Then, $M_1 < \infty$.
\end{lemma}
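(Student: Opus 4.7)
The plan is to combine a Gronwall argument for $\bar X^{\veps}$ with a cell-by-cell induction for $\bar Y^{\veps}$, using the specific form of the feedback control $u^{\veps}$ on each of the three types of sub-intervals described in \eqref{eq:defncont}.

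First I would dispatch the slow component: applying It\^{o}/BDG to the first equation in \eqref{eq:s2e6FS}, using the Lipschitz-and-linear-growth property of $b$, the boundedness of $\alpha$, and the fact that $\|v^*\|_2$ is a fixed finite constant supplied by Lemma \ref{lem:approxdisc}, a standard Gronwall estimate gives
\begin{equation}
E\sup_{0\le r\le t}\|\bar X^{\veps}(r)\|^2 \;\le\; C_1 + C_2\int_0^t E\|\bar Y^{\veps}(s)\|^2 ds, \quad t\in [0,T],
\end{equation}
with $C_1,C_2$ independent of $\veps$. Thus it suffices to bound $\sup_{0\le t\le T} E\|\bar Y^{\veps}(t)\|^2$.

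Next I would treat the fast component inductively over the partition points $\sigma_{ijl}$. On a \emph{travel} sub-interval $(\sigma_{ijl},\sigma_{ijl}+\veps\Delta_{\veps}]$, the definitions of $u^{\veps}$ and $\uu^{\veps}(\cdot,\cdot,\cdot)$ together with $\uu^*(y,y',\tau)=y'-y$ (see \eqref{eq:859n}) cause the drift of $\bar Y^{\veps}$ to equal $(y_{i,l+1}-\bar Y^{\veps}(\sigma_{ijl}))/(\veps\Delta_{\veps})$, which is constant in the state, so by direct integration
\begin{equation}
\bar Y^{\veps}(\sigma_{ijl}+\veps\Delta_{\veps}) = y_{i,l+1} + \tfrac{s(\veps)}{\sqrt{\veps}}\bigl(B(\sigma_{ijl}+\veps\Delta_{\veps})-B(\sigma_{ijl})\bigr),
\end{equation}
and $\sup_{t\in(\sigma_{ijl},\sigma_{ijl}+\veps\Delta_{\veps}]}E\|\bar Y^{\veps}(t)\|^2 \le C(E\|\bar Y^{\veps}(\sigma_{ijl})\|^2 + \|y_{i,l+1}\|^2 + s^2(\veps)\Delta_{\veps})$ with $C$ universal. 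On a \emph{hold} sub-interval $(\sigma_{ijl}+\veps\Delta_{\veps},\sigma_{ij(l+1)}]$, setting $\tilde Y(t)=\bar Y^{\veps}(t+\sigma_{ijl}+\veps\Delta_{\veps})-y_{i,l+1}$, the explicit form of $u^{\veps}$ makes the drift of $\tilde Y$ collapse to $-\veps^{-1}\clv_{\bar X^{\veps}(\sigma_{ijl}),\,y_{i,l+1}}(\tilde Y(t))$, so conditioning on $\clf_{\sigma_{ijl}+\veps\Delta_{\veps}}$ I am in the exact setting of Lemma \ref{lem:llbcgcene}(i) (with $z=y_{i,l+1}$, $x^{\veps}=\bar X^{\veps}(\sigma_{ijl})$, and $y^{\veps}=\bar Y^{\veps}(\sigma_{ijl}+\veps\Delta_{\veps})-y_{i,l+1}$). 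Since the sub-interval has length at most $\Delta_{\veps}$, taking expectations I obtain
\begin{equation}
\sup_{t\in(\sigma_{ijl}+\veps\Delta_{\veps},\sigma_{ij(l+1)}]} E\|\bar Y^{\veps}(t)\|^2 \le \kappa_1 \bigl(1 + \|y_{i,l+1}\|^2 + E\|\bar X^{\veps}(\sigma_{ijl})\|^2 + E\|\bar Y^{\veps}(\sigma_{ijl}+\veps\Delta_{\veps})-y_{i,l+1}\|^2\bigr).
\end{equation}
Finally, on the short leftover interval $(s_{i,N^i_{\veps}},t_{i+1}]$ of length at most $\Delta_{\veps}$, a direct Gronwall bound applied to the resulting linear-drift SDE controls $E\|\bar Y^{\veps}(t)\|^2$ in terms of $E\|\bar Y^{\veps}(s_{i,N^i_{\veps}})\|^2$.

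Chaining these three estimates cell-by-cell along the partition, I inductively bound $E\|\bar Y^{\veps}(\sigma_{ijl})\|^2$ for all $(i,j,l)$, with constants depending only on the \emph{fixed} (and $\veps$-independent) numbers $K,\{N^i_{\veps}\}$ no, rather $K,\{m(i)\},\{y_{i,l}\}$ coming from Lemma \ref{lem:approxdisc}, the Lemma \ref{lem:llbcgcene} constant $\kappa_1$, and $\sup_{t\le T}E\|\bar X^{\veps}(t)\|^2$. Combined with the Gronwall bound for $\bar X^{\veps}$, closing the loop yields $M_1<\infty$. The main obstacle is bookkeeping so that the induction constants do not depend on $\veps$ through $N^i_{\veps}$: the key observation is that after every travel phase the state is \emph{reset} to within $O(s(\veps)\sqrt{\Delta_{\veps}})$ of the fixed point $y_{i,l+1}$, so the input $E\|\bar Y^{\veps}(\sigma_{ijl}+\veps\Delta_{\veps})-y_{i,l+1}\|^2$ into Lemma \ref{lem:llbcgcene}(i) is uniformly bounded, and Lemma \ref{lem:llbcgcene}(i) itself provides a bound only in terms of initial data (not a multiplicative blow-up along the $\sim N^i_{\veps}$ cells). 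Because $m(i)$ is fixed per $i$ and $K$ is fixed, the induction composes without degeneration and the final bound is uniform in $\veps\in(0,1)$.
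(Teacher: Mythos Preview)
Your plan is correct and matches the paper's approach in substance. Both arguments rest on the same three ingredients: (i) the explicit reset on travel sub-intervals giving $E\|\bar Y^{\veps}(\sigma_{ijl}+\veps\Delta_{\veps})-y_{i,l+1}\|^2=s^2(\veps)\Delta_{\veps}$; (ii) Lemma~\ref{lem:llbcgcene}(i) on hold sub-intervals, yielding $E\|\bar Y^{\veps}(t)\|^2\le C(1+E\|\bar X^{\veps}(\sigma_{ijl})\|^2)$; (iii) the stable linear drift on the leftover pieces. The paper packages the coupling between $\bar X^{\veps}$ and $\bar Y^{\veps}$ as an explicit discrete recursion $a_{l+1}+c\Delta_{\veps}a_l\le e^{c\Delta_{\veps}}(a_l+c\Delta_{\veps}a_{l-1})+c\Delta_{\veps}e^{c\Delta_{\veps}}$ for $a_l=E\|\bar X^{\veps}(\sigma_{jl})\|^2$ and then iterates over the $\sim N^i_{\veps}$ cells using $N^i_{\veps}\Delta_{\veps}\le T$, whereas you package it as a pointwise bound $E\|\bar Y^{\veps}(s)\|^2\le C(1+A(s))$ fed back into a global Gronwall for $A(t)=E\sup_{r\le t}\|\bar X^{\veps}(r)\|^2$. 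The two organizations are equivalent; yours is arguably tidier. One phrasing to tighten: on the leftover interval the drift is (with the intended sign) $-\veps^{-1}\bar Y^{\veps}$, so what you need there is the contraction/It\^{o} argument the paper invokes ``as below \eqref{eq:546n}'' rather than a naive Gronwall, but the conclusion is the same.
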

\begin{proof}
	By a straightforward conditioning argument and a recursion,  it suffices to prove the result with $T$ replaced with $t_1$.
	Henceforth, to reduce notation, we will denote, $p_{0,l}=p_l$, $P_{0,l}=P_l$ $y_{0,l}=y_l$, $\Delta_{\veps}^{0,j} =\Delta_{\veps}^{j}$,
	$s_{0,j}= s_j$, $\sigma_{0,j,l}= \sigma_{j,l}$
	for $j = 0, \ldots , N^0_{\veps}$ and $l = 1, \ldots , m(0)$. Also we write $m= m(0)$ and $N^0_{\veps} = N_{\veps}$.
	
	Consider an interval of the form $[\sigma_{j,l}, \sigma_{j, l+1}]$, $j = 0, \ldots , N_{\veps}-1$ and denote 
	as $\bar X^{\veps}(\sigma_{j,l}) = x^{\veps}$. 
	% Expected values appearing below will be
% 	taken conditional on $\clf_{\sigma_{j,l}}$.
	Then, with $c_0\ doteq 12$
	\begin{equation}\label{eq:eq323}
		E\sup_{\sigma_{j,l} \le t \le \sigma_{j,l}+\veps \Delta_{\veps}^j} \|\bar Y^{\veps}(t)\|^2
		\le c_0(E\|\bar Y^{\veps}(\sigma_{j,l})\|^2 +  \|y_{l+1}\|^2 + s^2(\veps)\Delta_{\veps}).
	\end{equation}
	and
	\begin{equation}\label{eq:eq902}
		E \|\bar Y^{\veps}(\sigma_{j,l}+\veps \Delta_{\veps}^j)\|^2 \le \|y_{l+1}\|^2 + s^2(\veps)\Delta_{\veps}.
	\end{equation}
Consider now the interval $[\sigma_{j,l}+\veps \Delta_{\veps}^j, \sigma_{j,l+1}]$.
Define $\bar Z^{\veps}(s) = \bar Y^{\veps}(s+\sigma_{j,l}+\veps \Delta_{\veps}^j) - y_{l+1}$ for
$s \in [0, \sigma_{j,l+1}-(\sigma_{j,l}+\veps \Delta_{\veps}^j)]$.
Then $\{\bar Z^{\veps}(s); s \in [0, \sigma_{j,l+1}-(\sigma_{j,l}+\veps \Delta_{\veps}^j)]\}$, 
solves
	\begin{align*}
		d\bar Z^{\veps}(t) &= -\frac{1}{\veps} \clv_{x^{\veps},y_{l+1}}(\bar Z^{\veps}(t)) dt + 
		\frac{s(\veps)}{\veps^{1/2}} 
	d [B(t+\sigma_{j,l}+\veps \Delta_{\veps}^j) - B(\sigma_{j,l}+\veps \Delta_{\veps}^j)],\\
	 \bar Z^{\veps}(0) &= \bar Y^{\veps}(\sigma_{j,l}+\veps \Delta_{\veps}^j)- y_{l+1}.
	 \end{align*}
	and conditioned on $\clf_{\sigma_{j,l}+ \veps \Delta_{\veps}^j}$,
	 %$\{\bar Z^{\veps}(s); s \in [0, \sigma_{j,l+1}-(\sigma_{j,l}+\veps \Delta_{\veps}^j)]\}$
	has the same law as $\{Y^{\veps}(s, y_{l+1}, x^{\veps}, y^{\veps}); s \in [0,(p_{l+1}-\veps)\Delta_{\veps}]\}$, with $y^{\veps} = \bar Y^{\veps}(\sigma_{j,l}+\veps \Delta_{\veps}^j)- y_{l+1}$, where $Y^{\veps}(s, z, x^{\veps}, y^{\veps})$ is as in Lemma \ref{lem:llbcgcene}.
	Thus, with $\kappa_1$ as in Lemma \ref{lem:llbcgcene}, and $j=0, \ldots , N_{\veps}-1$
\begin{equation}
\begin{split}
    	\sup_{ \sigma_{j,l}+\veps \Delta_{\veps}^j\le t\le \sigma_{j,l+1}}&\sup_{\veps \in (0,1)} 	E\|\bar Y^{\veps}(t)\|^2
		\le
	2 \|y_{l+1}\|^2 +
	2  \sup_{ \sigma_{j,l}+\veps \Delta_{\veps}^j\le t\le \sigma_{j,l+1}}\;\; \sup_{\veps \in (0,1)}	E\|\bar Y^{\veps}(t)- y_{l+1}\|^2\\
	&\le 2\|y_{l+1}\|^2 
	+ 2  \sup_{ 0 \le t \le (p_{l+1}-\veps)\Delta_{\veps}^j}\;\; \sup_{\veps \in (0,1)}	E\|\bar Z^{\veps}(t)\|^2\\
	&\le 2\|y_{l+1}\|^2 
	+ 2  \kappa_1 \sup_{\veps \in (0,1)}E(1+ \|\bar X^{\veps}(\sigma_{j,l})\|^2+ \|\bar Y^{\veps}(\sigma_{j,l}+\veps \Delta_{\veps}^j) - y_{l+1}\|^2+\|y_{l+1}\|^2)\\
	 &\le 2\|y_{l+1}\|^2 + 4  \kappa_1 \sup_{\veps \in (0,1)}E(1+ \|\bar X^{\veps}(\sigma_{j,l})\|^2+ \|\bar Y^{\veps}(\sigma_{j,l}+\veps \Delta_{\veps}^j)\|^2+\|y_{l+1}\|^2)\\
	 &\le 2\|y_{l+1}\|^2 + 8  \kappa_1\sup_{\veps \in (0,1)} E(1+ \|\bar X^{\veps}(\sigma_{j,l})\|^2+ \|y_{l+1}\|^2) \\
     &\le 10\kappa_1\sup_{\veps \in (0,1)}E(1+ \|\bar X^{\veps}(\sigma_{j,l})\|^2+ \|y_{l+1}\|^2),\label{eq:eq324}
\end{split}
\end{equation}
where the third inequality is from Lemma \ref{lem:llbcgcene} and second to last inequality is from \eqref{eq:eq902}. Next, for $t \in [\sigma_{j,l}, \sigma_{j, l+1}]$,
	$$\bar X^{\veps}(t) = \bar X^{\veps}(\sigma_{j,l}) + \int_{\sigma_{j,l}}^t b(\bar X^{\veps}(s), \bar Y^{\veps}(s)) ds
	+ s(\veps)\veps^{1/2}  \int_{\sigma_{j,l}}^t \alpha(\bar X^{\veps}(s))dW(s) +  \int_{\sigma_{j,l}}^t \alpha(\bar X^{\veps}(s)) v^* ds.$$
	Thus
	\begin{align*}
		\|\bar X^{\veps}(t)\|^2 &= \|\bar X^{\veps}(\sigma_{j,l})\|^2 +
		2\int_{\sigma_{j,l}}^t \bar X^{\veps}(s)\cdot b(\bar X^{\veps}(s), \bar Y^{\veps}(s)) ds
		+ 2s(\veps)\veps^{1/2} \int_{\sigma_{j,l}}^t  \bar X^{\veps}(s) \alpha(\bar X^{\veps}(s))dW(s)\\
		&\quad+  2\int_{\sigma_{j,l}}^t \bar X^{\veps}(s) \cdot  \alpha(\bar X^{\veps}(s)) v^* ds + s^2(\veps) \veps 
		\int_{\sigma_{j,l}}^t \tr[(\alpha^T\alpha)(\bar X^{\veps}(s))] ds.
	\end{align*}
	For some $c_1, c_2 \in (1,\infty)$, depending only on  $b, \alpha, v^*$ and $\{y_{il}\}$, for all $t \in [\sigma_{j,l},  \sigma_{j,l+1}]$, $j = 0, \ldots , N_{\veps}-1$, $l\ge 1$,
	\begin{equation} \label{eq:eq519n}
\begin{split}
\sup_{\sigma_{j,l} \le s \le t} E\|\bar X^{\veps}(s)\|^2  &\le E\|\bar X^{\veps}(\sigma_{j,l})\|^2 +
		c_1\int_{\sigma_{j,l}}^t \sup_{\sigma_{j,l} \le u \le s}E(1+ \|\bar X^{\veps}(u)\|^2 + \|\bar Y^{\veps}(u)\|^2) ds\\
		&\le E\|\bar X^{\veps}(\sigma_{j,l})\|^2 + c_2(1+ E\|\bar X^{\veps}(\sigma_{j,l})\|^2+ E\|\bar X^{\veps}(\sigma_{j,l-1})\|^2)\Delta_{\veps}\\
		&\quad + c_2\int_{\sigma_{j,l}}^t \sup_{\sigma_{j,l} \le u \le s}E\|\bar X^{\veps}(u)\|^2 ds.
\end{split}
\end{equation}

	where the last line is from \eqref{eq:eq323} and \eqref{eq:eq324}. The last estimate also holds for $l=0$ and $j>0$ by replacing $X^{\veps}(\sigma_{j,l-1})$ with $X^{\veps}(\sigma_{j-1,m})$.
	By Grönwall lemma, for $l\ge 1$,
	\begin{align}
		\sup_{\sigma_{j,l} \le s \le \sigma_{j,l+1}} E\|\bar X^{\veps}(s)\|^2 \le 
		e^{c_2\Delta_{\veps}}(E\|\bar X^{\veps}(\sigma_{j,l})\|^2 + c_2(1+ E\|\bar X^{\veps}(\sigma_{j,l})\|^2+ E\|\bar X^{\veps}(\sigma_{j,l-1})\|^2)\Delta_{\veps}).\nonumber\\
		\label{eq:eq536}
	\end{align}
	Letting $a_{l} = E\|\bar X^{\veps}(\sigma_{j,l})\|^2$, for $l\ge 1$,
	$$a_{l+1} \le e^{c_2\Delta_{\veps}}a_{l} + c_2(1+a_{l} +a_{l-1})\Delta_{\veps}e^{c_2\Delta_{\veps}},$$
	and so
	\begin{align*}a_{l+1} + c_2a_l \Delta_{\veps} &\le e^{c_2\Delta_{\veps}}a_{l} + c_2(1+a_{l} +a_{l-1})\Delta_{\veps}e^{c_2\Delta_{\veps}}
	+ c_2a_l \Delta_{\veps}\\
	&\le (a_l+c_2a_{l-1}\Delta_{\veps})e^{3c_2\Delta_{\veps}} + c_2 \Delta_{\veps}e^{c_2\Delta_{\veps}}.
	\end{align*}
	Thus, with $j\ge 0$ and $l\ge 1$ or $j\ge 1$ and $l\ge 0$,
	\begin{equation}\label{eq:eq527}
		(E\|\bar X^{\veps}(\sigma_{j,l+1})\|^2 +  c_2 \Delta_{\veps}E\|\bar X^{\veps}(\sigma_{j,l})\|^2) \le 
		e^{3c_2\Delta_{\veps}}(E\|\bar X^{\veps}(\sigma_{j,l})\|^2 +  c_2 \Delta_{\veps}E\|\bar X^{\veps}(\sigma_{j,l-1})\|^2) + c_2\Delta_{\veps} e^{c_2\Delta_{\veps}},
	\end{equation}
	where, as before, $X^{\veps}(\sigma_{j,l-1})$ is replaced with $X^{\veps}(\sigma_{j-1,m})$ when $l=0$ and $j\ge 1$.
	A similar calculation shows that, for some $\kappa \in (0,\infty)$ depending only on  $b, \alpha, v^*$ and $\{y_{i,l}\}$
	$$E\|\bar X^{\veps}(\sigma_{0,1})\|^2  \le \kappa (1+ \|x_0\|^2 + \|y_0\|^2).$$
	Combining this with \eqref{eq:eq323} and\eqref{eq:eq324}, and a recursion based on \eqref{eq:eq527},  it follows that, for some $c_3\in (0,\infty)$ and all $\eps$
	$$ \sup_{0 \le t \le \Delta_{\veps}} E\|\bar Y^{\veps}(t)\|^2 \le c_3(\|y_0\|^2+\|x_0\|^2+1).$$
	Using this estimate in the first line of \eqref{eq:eq519n} and Grönwall lemma, we have that for some $c_4<\infty$, for all $\eps$,
	\begin{equation}\label{eq:eq642}
		\sup_{0 \le t \le \Delta_{\veps}} E \|\bar X^{\veps}(t)\|^2 \le c_4(\|y_0\|^2+\|x_0\|^2+1).\end{equation}
	Using the above estimate in the recursion in \eqref{eq:eq527} and recalling that $N_{\veps}\Delta_{\veps}\le t_1 \le  T$, we see that, for some $c_5<\infty$, and all $\eps$
	$$\max_{0\le j \le N_{\veps}-1, 1 \le l \le m}E\|\bar X^{\veps}(\sigma_{j,l})\|^2 \le c_5(\|y_0\|^2+\|x_0\|^2+1).$$
	Applying the above inequality in \eqref{eq:eq536} we now see that, for some $c_6<\infty$ and all $\eps$
	\begin{equation}\label{eq:845n}\sup_{0 \le t\le s_{N_{\veps}}}E\|\bar X^{\veps}(t)\|^2 \le c_6(\|y_0\|^2+\|x_0\|^2+1).\end{equation}
	This together with \eqref{eq:eq323} and \eqref{eq:eq324} shows that, for some $c_7<\infty$, and all $\eps$
	$$\sup_{0 \le t\le s_{ N_{\veps}}}E\|\bar Y^{\veps}(t)\|^2 \le c_7(\|y_0\|^2+\|x_0\|^2+1).$$
	
	Next, by our choice of the control, note that for any $t \in [s_{0, N_{\veps}}, t_1]$
	$$\bar Y^{\veps}(t) = \bar Y^{\veps}(s_{0, N_{\veps}}) -\frac{1}{\veps} \int_{s_{0, N_{\veps}}}^t \bar Y^{\veps}(u) du + \frac{s(\veps)}{\veps^{1/2}} (B(t)-B(s_{0, N_{\veps}})).$$
	Applying It\^{o}'s formula with the function $f(y)=\|y\|^2$ and using an argument similar to that below \eqref{eq:546n}  we now see that, for some $c_8, c_9 \in (0,\infty)$ and all $\eps$ and  $t \in [s_{0, N_{\veps}}, t_1]$
	$$E\|\bar Y^{\veps}(t)\|^2 \le c_8(E\|\bar Y^{\veps}(s_{0, N_{\veps}})\|^2 + 1) \le c_9(\|y_0\|^2+\|x_0\|^2+1).$$
	Using this bound together with \eqref{eq:845n} and \eqref{eq:s2e6FS}, and the linear growth property of $b$, we now see by a straightforward application of
	Grönwall's lemma that, for some $c_{10} \in (0,\infty)$ and all $\eps$ and $t \in [s_{0, N_{\veps}}, t_1]$, 
	$$E\|\bar X^{\veps}(t)\|^2 \le 
	%c_{10}(E\|\bar X^{\veps}(s_{0, N_{\veps}})\|^2 + 1) \le 
	c_{10}(\|y_0\|^2+\|x_0\|^2+1).$$
The result follows.
\end{proof}
% \begin{corollary}\label{cor:433n}
% 	$\sup_{\veps \in (0,1)} 	E(\sup_{0\le t \le T} \|\bar X^{\veps}(t)\|^2) <\infty.$
% \end{corollary}
% \begin{proof}
% 	From linear growth of $b$ and boundedness of $\alpha$ there is a  $c_1 <\infty$ depending only on $b,\alpha, T$ such that for all $\veps$
% 	\begin{align*}
% 	\|\bar X^{\veps}\|_{\infty}^2 &\le c_1\Big(\|x_0\|^2 + \int_0^T  (1+\|\bar X^{\veps}(s)\|^2 + \|\bar Y^{\veps}(s))\| ^2 ds\\
% 	&\quad + s(\veps)^2 \veps \sup_{0\le t \le T}\| \int_0^t \alpha(\bar X^{\veps}(s)) dW(s)\|^2 + \int_0^T \|v^*(s)\|^2 ds\Big).
% 	\end{align*}
% 	The result follows on taking expectations, using the boundedness of $\alpha$, and applying Lemma \ref{lem:keymombdne}.
% \end{proof}
The next lemma is analogous to Lemma \ref{single-lemma-process-movement} and uses the properties of the control $\uu^*$ in \eqref{eq:859n}.
\begin{lemma}
	For $i= 0, 1, \ldots K$, $j = 0, 1, \ldots N^i_{\veps}-1$, $l= 0, 1, \ldots , m(i)-1$,
	$\bar Y^{\veps}(\sigma_{ijl} + \veps \Delta_{\veps}) \to y_{i,l+1}$ in $L^2(P)$.
\end{lemma}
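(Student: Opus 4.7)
The plan is to observe that the control $u^{\veps}$ has been designed precisely so that the nonlinear drift term $\nabla_y U(\bar X^{\veps}(t), \bar Y^{\veps}(t))$ cancels exactly on the short interval $(\sigma_{ijl}, \sigma_{ijl}+\veps\Delta_{\veps}]$, reducing the dynamics of $\bar Y^{\veps}$ to a linear ODE plus a small Brownian increment. The result will then follow by a direct $L^2$ computation, without invoking the stabilization lemma (Lemma \ref{lem:llbcgcene}).

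First, I would substitute the definition of $u^{\veps}$ from \eqref{eq:defncont} on the interval $(\sigma_{ijl}, \sigma_{ijl}+\veps\Delta_{\veps}]$ into the SDE \eqref{eq:s2e6FS} for $\bar Y^{\veps}$. The $\nabla_y U(\bar X^{\veps}(t), \bar Y^{\veps}(t))$ terms cancel and one is left with
\begin{equation*}
d\bar Y^{\veps}(t) = \frac{1}{\veps}\,\uu^{\veps}\bigl(\bar Y^{\veps}(\sigma_{ijl}), y_{i,l+1}, t-\sigma_{ijl}\bigr)\, dt + \frac{s(\veps)}{\veps^{1/2}} dB(t), \qquad t \in (\sigma_{ijl}, \sigma_{ijl}+\veps\Delta_{\veps}].
\end{equation*}
By definition, for $t-\sigma_{ijl} \in [0, \veps\Delta_{\veps}]$ (and using $\Delta_{\veps}^{i,j} = \Delta_{\veps}$ since $j \le N^i_{\veps}-1$),
\begin{equation*}
\uu^{\veps}\bigl(\bar Y^{\veps}(\sigma_{ijl}), y_{i,l+1}, t-\sigma_{ijl}\bigr) = \frac{1}{\Delta_{\veps}}\bigl(y_{i,l+1} - \bar Y^{\veps}(\sigma_{ijl})\bigr),
\end{equation*}
which is constant in $t$ on this interval.

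Integrating from $\sigma_{ijl}$ to $\sigma_{ijl}+\veps\Delta_{\veps}$, the drift contributes exactly $y_{i,l+1} - \bar Y^{\veps}(\sigma_{ijl})$, so
\begin{equation*}
\bar Y^{\veps}(\sigma_{ijl}+\veps\Delta_{\veps}) - y_{i,l+1} = \frac{s(\veps)}{\veps^{1/2}} \bigl(B(\sigma_{ijl}+\veps\Delta_{\veps}) - B(\sigma_{ijl})\bigr).
\end{equation*}
Taking the squared $L^2$ norm,
\begin{equation*}
E\bigl\|\bar Y^{\veps}(\sigma_{ijl}+\veps\Delta_{\veps}) - y_{i,l+1}\bigr\|^2 = \frac{s^2(\veps)}{\veps}\cdot d\cdot \veps\Delta_{\veps} = d\, s^2(\veps)\,\Delta_{\veps} \to 0
\end{equation*}
as $\veps \to 0$, since both $s(\veps)$ and $\Delta_{\veps}$ tend to $0$. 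There is no genuine obstacle here; the entire point of how $\uu^{\veps}$ was defined (with the $1/\Delta_{\veps}$ rescaling and the extra $1/\veps$ in the drift) is to produce exactly this cancellation, so $\bar Y^{\veps}$ travels from $\bar Y^{\veps}(\sigma_{ijl})$ to the target $y_{i,l+1}$ deterministically over the short time $\veps\Delta_{\veps}$ up to a vanishingly small Brownian fluctuation.
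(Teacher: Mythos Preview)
The proposal is correct and follows essentially the same argument as the paper: the control $u^{\veps}$ is designed to cancel the $\nabla_y U$ drift on $(\sigma_{ijl},\sigma_{ijl}+\veps\Delta_{\veps}]$, leaving $\bar Y^{\veps}(\sigma_{ijl}+\veps\Delta_{\veps}) = y_{i,l+1} + \frac{s(\veps)}{\veps^{1/2}}\bigl(B(\sigma_{ijl}+\veps\Delta_{\veps})-B(\sigma_{ijl})\bigr)$, whose $L^2$ distance from $y_{i,l+1}$ is $d\,s^2(\veps)\Delta_{\veps}\to 0$. Your write-up is slightly more explicit about the final $L^2$ computation, but the approach is identical.
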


\begin{proof}
	For $t \in [\sigma_{ijl} , \sigma_{ijl} + \veps \Delta^{ij}_{\veps}]$, 
	\begin{align*}
		\bar Y^{\veps}(t) &=  \bar Y^{\veps}(\sigma_{ijl})+ \frac{1}{\veps} \int_{\sigma_{ijl}}^t \uu^{\eps}(\bar Y^{\veps}(\sigma_{ijl}), y_{i,l+1}, s - \sigma_{ijl}) ds + 
		\frac{s(\veps)}{\veps^{1/2}} [B(t)- B(\sigma_{ijl})].
	\end{align*}
Thus
\begin{align*}\bar Y^{\veps}(\sigma_{ijl} + \veps \Delta_{\veps})& = \bar Y^{\veps}(\sigma_{ijl}) + \frac{1}{\veps \Delta_{\veps}} [y_{i,l+1}-\bar Y^{\veps}(\sigma_{ijl})]\veps \Delta_{\veps} + \frac{s(\veps)}{\veps^{1/2}} [B(\sigma_{ijl} + \veps \Delta_{\veps})- B(\sigma_{ijl})]\\
	&= y_{i,l+1} + \frac{s(\veps)}{\veps^{1/2}} [B(\sigma_{ijl} + \veps \Delta_{\veps})- B(\sigma_{ijl})] \to y_{i,l+1},
\end{align*}
in $L^2(P)$ as $\veps \to 0$.	
\end{proof}
Let 
\begin{equation}\label{eq:eq1030}\cli \doteq \{(i,j,l): i= 0, 1, \ldots K, \; j = 0, 1, \ldots N^i_{\veps}-1, \; l= 0, 1, \ldots , m(i)-1\}.
	\end{equation}
The next lemma is analogous to Lemma \ref{lemma:occupational-conv} and uses the stabilization lemma from Section \ref{sec:stablem}.
\begin{lemma}
	\label{lem:lem1006}
	As $\veps \to 0$, 
	%for all $i= 0, 1, \ldots K$, $j = 0, 1, \ldots N^i_{\veps}$, $l= 0, 1, \ldots , m(i)-1$,
	\begin{equation*}
		\sup_{(i,j,l)\in \cli} E d_{bl}\left ( \frac{1}{(p_{i,l+1}-\veps) \Delta_{\veps}} \int_{\sigma_{ijl} +\veps\Delta_{\veps}}^{\sigma_{i,j,l+1}} \delta_{\bar Y^{\veps}(s)}ds , \delta_{y_{i,l+1}}\right) \to 0
	\end{equation*}
	and
	\begin{equation*}
		\sup_{(i,j,l)\in \cli}E d_{bl}\left ( \frac{1}{p_{i,l+1} \Delta_{\veps}} \int_{\sigma_{ijl} }^{\sigma_{i,j,l+1}} \delta_{\bar Y^{\veps}(s)}ds , \delta_{y_{i,l+1}}\right)  \to 0.
	\end{equation*}
	
\end{lemma}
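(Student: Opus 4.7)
The plan is to reduce both statements to Lemma \ref{lem:llbcgcene}(ii) by exploiting the specific structure of the feedback control $u^{\veps}$ given in \eqref{eq:defncont}. The key observation is that on the interval $(\sigma_{ijl}+\veps\Delta_{\veps}, \sigma_{ij(l+1)}]$, substituting the second case of \eqref{eq:defncont} into the drift of $\bar Y^{\veps}$ yields the remarkable simplification
\begin{equation*}
\nabla_y U(\bar X^{\veps}(t), \bar Y^{\veps}(t)) - u^{\veps}(t) = \clv_{x^{\veps}, y_{i,l+1}}\bigl(\bar Y^{\veps}(t) - y_{i,l+1}\bigr),\qquad x^{\veps}\doteq \bar X^{\veps}(\sigma_{ijl}),
\end{equation*}
so the right hand side depends only on $\bar Y^{\veps}(t)-y_{i,l+1}$ and the frozen value $x^{\veps}$. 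Consequently, setting $\bar Z^{\veps}(u)\doteq \bar Y^{\veps}(u+\sigma_{ijl}+\veps\Delta_{\veps})-y_{i,l+1}$, the conditional law of $\bar Z^{\veps}$ given $\clf_{\sigma_{ijl}+\veps\Delta_{\veps}}$ coincides with the law of $\{Y^{\veps}(u, y_{i,l+1}, x^{\veps}, y^{\veps})\}$ from Lemma \ref{lem:llbcgcene}, with $y^{\veps}\doteq \bar Y^{\veps}(\sigma_{ijl}+\veps\Delta_{\veps})-y_{i,l+1}$.

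For the first limit, I would use translation invariance of $d_{bl}$ (if $f\in BL_1(\RR^d)$ then $z\mapsto f(z+y_{i,l+1})\in BL_1(\RR^d)$) to rewrite
\begin{equation*}
d_{bl}\!\left(\tfrac{1}{(p_{i,l+1}-\veps)\Delta_{\veps}}\!\int_{\sigma_{ijl}+\veps\Delta_{\veps}}^{\sigma_{ij(l+1)}}\!\delta_{\bar Y^{\veps}(s)}\,ds,\,\delta_{y_{i,l+1}}\right) = d_{bl}\!\left(\tfrac{1}{(p_{i,l+1}-\veps)\Delta_{\veps}}\!\int_0^{(p_{i,l+1}-\veps)\Delta_{\veps}}\!\delta_{\bar Z^{\veps}(u)}\,du,\,\delta_0\right).
\end{equation*}
Taking conditional expectation given $\clf_{\sigma_{ijl}+\veps\Delta_{\veps}}$ and using the law identification above, the left side equals $E[D^{\veps}(x^{\veps},y^{\veps})]$, where
\begin{equation*}
D^{\veps}(\bar x,\bar y)\doteq E\,d_{bl}\!\left(\tfrac{1}{(p_{i,l+1}-\veps)\Delta_{\veps}}\!\int_0^{(p_{i,l+1}-\veps)\Delta_{\veps}}\!\delta_{Y^{\veps}(u,\, y_{i,l+1},\bar x,\bar y)}\,du,\,\delta_0\right).
\end{equation*}
Since $p_{i,l+1}-\veps\in[\underline{p},1]$ by our standing assumption, Lemma \ref{lem:llbcgcene}(ii) applied with $a=\underline{p}$ and $z=y_{i,l+1}$ yields $\sup_{(\bar x,\bar y)\in A_R}D^{\veps}(\bar x,\bar y)\to 0$ for every fixed $R<\infty$.

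To convert this into a bound on the expectation I would split on the event $\{(x^{\veps},y^{\veps})\in A_R\}$. On this event $D^{\veps}(x^{\veps},y^{\veps})$ is bounded by $\sup_{A_R}D^{\veps}$; on its complement use the uniform estimate $d_{bl}\le 2$ together with the Markov inequality and Lemma \ref{lem:keymombdne}, giving
\begin{equation*}
P\bigl((x^{\veps},y^{\veps})\notin A_R\bigr)\le \frac{2M_1+4M_1+4\max_{i,l}\|y_{i,l+1}\|^2}{R},
\end{equation*}
a bound uniform in $(i,j,l)\in\cli$. Letting $\veps\to 0$ and then $R\to\infty$ establishes the first limit uniformly in $\cli$.

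For the second limit, decompose the integral over $[\sigma_{ijl},\sigma_{ij(l+1)}]$ into the travel part on $[\sigma_{ijl},\sigma_{ijl}+\veps\Delta_{\veps}]$ and the stay part on $[\sigma_{ijl}+\veps\Delta_{\veps},\sigma_{ij(l+1)}]$. The unnormalized travel contribution has total mass $\veps/p_{i,l+1}\le\veps/\underline{p}$, contributing at most $2\veps/\underline{p}$ to $d_{bl}$, while the stay contribution, rescaled by $(p_{i,l+1}-\veps)/p_{i,l+1}$, is controlled by the first limit via the triangle inequality. The main obstacle is ensuring uniformity across $\cli$ (whose cardinality grows as $\veps\to 0$ since $N_{\veps}^i\to\infty$); this is overcome because the moment bound $M_1$ in Lemma \ref{lem:keymombdne} is uniform in time, because there are only finitely many distinct support points $y_{i,l+1}$, and because Lemma \ref{lem:llbcgcene}(ii) already provides uniform convergence over $(\bar x,\bar y)\in A_R$ and $t\in[\underline{p},1]$.
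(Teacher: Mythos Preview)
Your proposal is correct and follows essentially the same approach as the paper: translation invariance of $\dbl$, identification of the conditional law of $\bar Z^{\veps}$ with the process in Lemma \ref{lem:llbcgcene}, splitting on the event $\{(x^{\veps},y^{\veps})\in A_R\}$ combined with the moment bound from Lemma \ref{lem:keymombdne}, and then sending $\veps\to 0$ followed by $R\to\infty$. You in fact supply more detail than the paper on the second statement (which the paper dismisses as ``immediate from the first'') and on why uniformity over $\cli$ holds despite its cardinality growing with $\veps$.
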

\begin{proof}
 	Recall the set $A_R$ defined in \eqref{eq:ar}  for $R>0$. As in \eqref{eq:expectation-bounded Lipschitz} 
	%Since for a $z \in \RR^d$ and $f \in BL_1$, $x\mapsto f(x-z)$ is in $BL_1$ as well, 
	we see that, for $R \in (0,\infty)$,
	\begin{align*}
		&Ed_{bl}\left ( \frac{1}{(p_{i,l+1}-\veps) \Delta_{\veps}} \int_{\sigma_{ijl} +\veps\Delta_{\veps}}^{\sigma_{i,j,l+1}} \delta_{\bar Y^{\veps}(s)}ds , \delta_{y_{i,l+1}}\right) \\
		&= E\dbl\left(\frac{1}{(p_{i,l+1}-\veps) \Delta_{\veps}} \int_{\sigma_{ijl} +\veps\Delta_{\veps}}^{\sigma_{i,j,l+1}}  \delta_{\bar Y^{\veps}(s)- y_{i,l+1}} ds, \delta_{0}\right)\\
		&= E\dbl \left(\frac{1}{(p_{i,l+1}-\veps) \Delta_{\veps}} \int_{0}^{\sigma_{i,j,l+1}-(\sigma_{ijl} +\veps\Delta_{\veps} )} 
		\delta_{\bar Z^{\veps}(s)} ds, \delta_{0}\right)\\
		&\le \sup_{(\bar x, \bar y) \in A_R}E\left[\dbl\left(\frac{1}{(p_{i,l+1}-\veps) \Delta_{\veps}}  \int_{0}^{(p_{i,l+1}-\veps)\Delta_{\veps} } \delta_{ Y^{\veps}(s,y_{i,l+1}, \bar x, \bar y)} ds, \delta_{0}\right)\right]\\
		&\quad\quad\quad\quad\times  P\{(\bar X^{\veps}(\sigma_{ijl}),  \bar Y^{\veps}(\sigma_{ijl} + \veps \Delta_{\veps} - y_{i,l+1})) \in A_R\}\\
		&\quad + \frac{2}{R} \sup_{0\le t \le T}E(\|\bar X^{\veps}(t)\|^2 + \|\bar Y^{\veps}(t)\|^2 + \max_{i,l} \|y_{il}\|^2)
	\end{align*}
	where, as in Lemma \ref{lem:keymombdne}, 
	$\bar Z^{\veps}(s) \doteq \bar Y^{\veps}(s+ \sigma_{ijl} +\veps\Delta_{\veps})- y_{i,l+1}$
	and, $Y^{\veps}(s, y_{i, l+1}, \bar x, \bar y)$ is as in Lemma \ref{lem:llbcgcene}, and the last line follows on noting that  $\sigma_{ij,l+1} -\sigma_{ijl} +\veps\Delta_{\veps}=(p_{i,l+1}-\veps)\Delta_{\veps}$
and that  
	conditioned on $\clf_{\sigma_{i,j,l}+ \veps \Delta_{\veps}}$,
	 $\{\bar Z^{\veps}(s); s \in [0, \sigma_{i,j,l+1}-(\sigma_{i,j,l}+\veps \Delta_{\veps})]\}$
	has the same law as $\{Y^{\veps}(s, y_{i,l+1}, x^{\veps}, y^{\veps}); s \in [0,(p_{i,l+1}-\veps)\Delta_{\veps}]\}$, with $y^{\veps} = \bar Y^{\veps}(\sigma_{i,j,l}+\veps \Delta_{\veps})- y_{i,l+1}$, $x^{\eps}=  \bar X^{\veps}(\sigma_{i,j,l})$.

	The first statement in the lemma now follows on applying Lemma \ref{lem:llbcgcene} and Lemma \ref{lem:keymombdne} by sending $\veps \to 0$ and then $R\to \infty$.
	The second statement is immediate from the first.
\end{proof}
\begin{corollary}
	\label{cor:cor545}
	For $i=0, 1, \ldots K$
	$$\limsup_{\veps \to 0} \max_{ j = 0, 1, \ldots N^i_{\veps}-1} E d_{bl}\left ( \frac{1}{ \Delta_{\veps}} \int_{s_{ij} }^{s_{i, j+1} } \delta_{\bar Y^{\veps}(s)} ds, \nu^*_{i}\right)  \to 0.$$
\end{corollary}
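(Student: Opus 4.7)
The plan is to decompose the interval $[s_{ij}, s_{i,j+1}]$ into the sub-sub-intervals $[\sigma_{i,j,l}, \sigma_{i,j,l+1}]$ for $l = 0, 1, \ldots, m(i)-1$ on which the control forces $\bar{Y}^{\veps}$ to stabilize near $y_{i,l+1}$, and then invoke Lemma \ref{lem:lem1006} on each piece. Recall from the construction of the partition that, for $j \in \{0, 1, \ldots, N^i_{\veps}-1\}$, we have $\Delta_{\veps}^{i,j} = \Delta_{\veps}$ and $\sigma_{i,j,l+1} - \sigma_{i,j,l} = p_{i,l+1}\Delta_{\veps}$, so
\begin{equation*}
    \frac{1}{\Delta_{\veps}} \int_{s_{ij}}^{s_{i,j+1}} \delta_{\bar{Y}^{\veps}(s)}\, ds
    = \sum_{l=0}^{m(i)-1} p_{i,l+1} \cdot \left(\frac{1}{p_{i,l+1}\Delta_{\veps}} \int_{\sigma_{i,j,l}}^{\sigma_{i,j,l+1}} \delta_{\bar{Y}^{\veps}(s)}\, ds\right),
\end{equation*}
while the target can be written as $\nu_i^* = \sum_{l=0}^{m(i)-1} p_{i,l+1}\delta_{y_{i,l+1}}$.

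Next I would use the elementary fact that for any probability measures $\mu_l, \gamma_l$ and weights $\alpha_l \ge 0$ with $\sum_l \alpha_l < \infty$, one has $\dbl\bigl(\sum_l \alpha_l \mu_l, \sum_l \alpha_l \gamma_l\bigr) \le \sum_l \alpha_l \dbl(\mu_l, \gamma_l)$; this follows immediately by testing against $f \in BL_1(\mathbb{R}^d)$ and applying the triangle inequality inside the supremum. Applying this with $\alpha_l = p_{i,l+1}$, $\mu_l = \frac{1}{p_{i,l+1}\Delta_{\veps}} \int_{\sigma_{ijl}}^{\sigma_{i,j,l+1}} \delta_{\bar{Y}^{\veps}(s)}\, ds$ and $\gamma_l = \delta_{y_{i,l+1}}$, and taking expectations, yields
\begin{equation*}
    E\dbl\left(\frac{1}{\Delta_{\veps}} \int_{s_{ij}}^{s_{i,j+1}} \delta_{\bar{Y}^{\veps}(s)}\, ds,\, \nu_i^*\right)
    \le \sum_{l=0}^{m(i)-1} p_{i,l+1}\, E\dbl\left(\frac{1}{p_{i,l+1}\Delta_{\veps}} \int_{\sigma_{ijl}}^{\sigma_{i,j,l+1}} \delta_{\bar{Y}^{\veps}(s)}\, ds,\, \delta_{y_{i,l+1}}\right).
\end{equation*}
Each summand on the right is bounded by the supremum appearing in the second convergence of Lemma \ref{lem:lem1006} over $(i,j,l) \in \cli$, which tends to $0$ as $\veps \to 0$; since the bound is uniform in $j$ and the number of terms $m(i)$ is finite and independent of $\veps$, taking $\max_{j}$ and then $\limsup_{\veps \to 0}$ gives the desired conclusion.

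There is no real obstacle here: all the work has been done in Lemma \ref{lem:lem1006}. The only small point to verify is that we are in the regime $\Delta_{\veps}^{i,j} = \Delta_{\veps}$, which is exactly the range $j \in \{0, \ldots, N^i_{\veps}-1\}$ over which the corollary quantifies; the final (possibly shorter) subinterval $j = N^i_{\veps}$ is deliberately excluded from the statement.
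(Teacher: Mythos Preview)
Your proof is correct and follows essentially the same argument as the paper: decompose the empirical measure over $[s_{ij}, s_{i,j+1}]$ as the $p_{i,l+1}$-weighted mixture of the empirical measures over $[\sigma_{ijl}, \sigma_{i,j,l+1}]$, apply the triangle inequality for $\dbl$, bound by the supremum over $\cli$, and invoke Lemma \ref{lem:lem1006}. Your additional remarks justifying the convexity inequality for $\dbl$ and the identity $\Delta_{\veps}^{i,j}=\Delta_{\veps}$ on the relevant range are accurate and slightly more detailed than the paper's version.
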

\begin{proof}
	Fix $i=0, 1, \ldots K$. Then, for all $j = 0,\dots,N_\veps^i-1$,
	\begin{multline*}
		E d_{bl}\left ( \frac{1}{ \Delta_{\veps}} \int_{s_{ij} }^{s_{i, j+1} } \delta_{\bar Y^{\veps}(s)} ds,  \nu^*_{i}\right) 
	= 	E d_{bl}\left (\sum_{l=0}^{m(i)-1}p_{i, l+1}  \frac{1}{ \Delta_{\veps} p_{i, l+1} } \int_{\sigma_{ijl}}^{\sigma_{i, j, l+1} } \delta_{\bar Y^{\veps}(s)} ds,  \sum_{l=0}^{m(i)-1}p_{i, l+1} \delta_{y_{i, l+1}}\right)\\
	\le \sum_{l=0}^{m(i)-1}p_{i, l+1} 
	E d_{bl}\left (\frac{1}{ \Delta_{\veps} p_{i, l+1} } \int_{\sigma_{ijl}}^{\sigma_{i, j, l+1} } \delta_{\bar Y^{\veps}(s)} ds, \delta_{y_{i, l+1}}\right)\\
	\le \sup_{(i,j,l)\in \cli} E d_{bl}\left (\frac{1}{ \Delta_{\veps} p_{i, l+1} } \int_{\sigma_{ijl}}^{\sigma_{i, j, l+1} } \delta_{\bar Y^{\veps}(s)} ds, \delta_{y_{i, l+1}}\right).
	\end{multline*}
	The result now follows from Lemma  \ref{lem:lem1006} on sending $\veps \to 0$.
\end{proof}
In order to prove the convergence of $\bar X^{\eps}$ to $\xi^*$, we now introduce an approximation to $\bar X^{\veps} $. Fix $L_{\veps}>0$ such that, as $\veps \to 0$, $L_{\veps}\to 0$ and
$L_{\veps}/\Delta_{\veps} \to \infty$.
Define for $t \in [0,T]$,
\begin{align*}
\hat  X^{\veps}(t) = x_0+ \int_0^t \frac{1}{L_{\veps}} \int_{(s-L_{\veps})}^{s} b(\hat X^{\veps}(s), \bar Y^{\veps}(r)) dr ds + s(\veps) \veps^{1/2} \int_0^t  \alpha(\hat X^{\veps}(s)) dW(s) + \int_0^t \alpha(\hat X^{\veps}(s)) v^*(s) ds,
\end{align*}
where we take $\bar Y^{\veps}(r) \doteq  y_0$ for $r\le 0$.
Unique solvability of the above equation follows from the Lipschitz property of 
$b$ and $\alpha$.
The following lemma shows that $\hat  X^{\veps}$ is close to $\bar  X^{\veps}$ for small $\veps$.
\begin{lemma}
	\label{lem:lem1023}
	As $\veps \to 0$, $E\|\hat  X^{\veps} - \bar  X^{\veps}\|^2_{\infty} \to 0$.
\end{lemma}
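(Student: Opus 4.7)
The plan is to work with the integral equations defining $\hat X^{\veps}$ and $\bar X^{\veps}$, decompose their difference into several explicit pieces, bound each piece separately, and close with Gronwall's inequality on $g(t)\doteq E\sup_{0\le s\le t}\|\hat X^{\veps}(s)-\bar X^{\veps}(s)\|^2$. I extend $\bar X^{\veps}(r)\doteq x_0$ and $\hat X^{\veps}(r)\doteq x_0$ for $r\le 0$, consistent with the convention $\bar Y^{\veps}(r)\doteq y_0$ already used in defining $\hat X^{\veps}$, so that the two coincide on $(-\infty,0]$. As a preliminary step I would first establish $\sup_{\veps\in(0,1)}\sup_{0\le t\le T}E\|\hat X^{\veps}(t)\|^2<\infty$ by a standard Gronwall argument on the integral equation for $\hat X^{\veps}$, using linear growth of $b$, boundedness of $\alpha$, $v^*\in L^2$, and the moment bound on $\bar Y^{\veps}$ from Lemma \ref{lem:keymombdne}. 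As a corollary one obtains the modulus estimate $E\|\hat X^{\veps}(s)-\hat X^{\veps}(r)\|^2\le C|s-r|$ for $0\le r\le s\le T$, $|s-r|\le 1$, with $C$ independent of $\veps$ (the diffusive contribution is $O(s^2(\veps)\veps|s-r|)$, hence negligible).

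Next I would write $\hat X^{\veps}(t)-\bar X^{\veps}(t)=A_1(t)+A_2(t)+A_3(t)+A_4(t)+A_5(t)$, where $A_1$ replaces $\hat X^{\veps}(s)$ inside the inner time average by $\hat X^{\veps}(r)$, $A_2$ replaces the resulting $\hat X^{\veps}(r)$ by $\bar X^{\veps}(r)$, and $A_3=\int_0^t[A^{\veps}g(s)-g(s)]\,ds$ is the pure averaging discrepancy with $g(r)\doteq b(\bar X^{\veps}(r),\bar Y^{\veps}(r))$ and $A^{\veps}g(s)\doteq L_{\veps}^{-1}\int_{s-L_{\veps}}^s g(r)\,dr$. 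The remaining $A_4$ is the stochastic integral of $\alpha(\hat X^{\veps})-\alpha(\bar X^{\veps})$ against $W$ with prefactor $s(\veps)\sqrt{\veps}$, and $A_5$ is the Lebesgue integral of the same difference against $v^*$. The terms $A_1$ and $A_3$ are the ``source'' errors that I aim to show vanish as $\veps\to 0$, while $A_2$, $A_4$, $A_5$ are absorbed into the Gronwall loop on $g(t)$.

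For $A_1$ the Lipschitz property of $b$ and Cauchy-Schwarz reduce it to the modulus estimate above, yielding $E\|A_1\|_{\infty}^2\le CL_{\veps}$. For $A_3$, a Fubini interchange in the double integral produces the explicit identity
\begin{equation*}
\int_0^t[A^{\veps}g(s)-g(s)]\,ds=\frac{1}{L_{\veps}}\int_{-L_{\veps}}^0 g(r)(r+L_{\veps})\,dr-\frac{1}{L_{\veps}}\int_{t-L_{\veps}}^t g(r)(r-(t-L_{\veps}))\,dr,
\end{equation*}
so $A_3$ reduces to two boundary integrals over intervals of length $L_{\veps}$; Cauchy-Schwarz together with the uniform $L^2$ bound on $g$ (from Lemma \ref{lem:keymombdne} and linear growth of $b$) gives $E\|A_3\|_{\infty}^2\le CL_{\veps}$. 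The term $A_2$ is bounded by $L_b\int_0^t\sup_{0\le r\le s}\|\hat X^{\veps}(r)-\bar X^{\veps}(r)\|\,ds$ via Lipschitz $b$, contributing $C\int_0^t g(s)\,ds$; $A_4$ is handled by Burkholder-Davis-Gundy, Lipschitz $\alpha$, and the smallness of $s^2(\veps)\veps$; $A_5$ is handled by Cauchy-Schwarz with $\|v^*\|_2<\infty$. Assembling these estimates gives $g(t)\le CL_{\veps}+C'\int_0^t g(s)\,ds$ with $C'$ independent of $\veps$, and Gronwall closes the argument: $g(T)\le CL_{\veps}e^{C'T}\to 0$.

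The main subtlety is the Fubini boundary identity used for $A_3$: in contrast with the other pieces, $A_3$ is not a Lipschitz correction but a genuine error between a function and its backward time average, and without the explicit identity above one cannot extract the factor $L_{\veps}$ needed for convergence. All other steps are routine applications of Lipschitz bounds, Cauchy-Schwarz, Burkholder-Davis-Gundy, and Gronwall.
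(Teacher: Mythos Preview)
Your proposal is correct and follows essentially the same approach as the paper: a Fubini/interchange argument isolates boundary terms of size $O(L_{\veps}^{1/2})$ from the averaging, a modulus-of-continuity estimate handles the time shift in the first argument of $b$, and Lipschitz bounds on $b,\alpha$ together with Gronwall close the loop. The only organizational difference is that you use the modulus of continuity of $\hat X^{\veps}$ (which is why you need your preliminary moment bound on $\hat X^{\veps}$), whereas the paper uses that of $\bar X^{\veps}$ (already available from Lemma~\ref{lem:keymombdne}), and your compact Fubini identity for $A_3$ corresponds to the paper's three remainder terms $\clr^{\veps}_1,\clr^{\veps}_2,\clr^{\veps}_3$.
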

\begin{proof}
	We begin by noting that 
	\begin{equation*}
\begin{split}
		\int_0^t \frac{1}{L_{\veps}} \int_{(s-L_{\veps})}^{s} b(\hat X^{\veps}(s), \bar Y^{\veps}(r)) dr ds &=
		\int_0^{t} \frac{1}{L_{\veps}} \int_{r}^{(r+L_{\veps})\wedge t} b(\hat X^{\veps}(s), \bar Y^{\veps}(r)) ds dr\\
		 &+ \int_{- L_{\veps}}^0  \frac{1}{L_{\veps}} \int_{0}^{r+L_{\veps}} b(\hat X^{\veps}(s), \bar Y^{\veps}(r)) ds dr.
\end{split}
\end{equation*}
Next note that
\begin{equation*}
\begin{split}
	\int_0^t b(\bar X^{\veps}(r), \bar Y^{\veps}(r)) dr
	% &= \int_0^{t-L_{\veps}}  b(\bar X^{\veps}(r), \bar Y^{\veps}(r)) dr
% 	+ \int_{t-L_{\veps}} ^t  b(\bar X^{\veps}(r), \bar Y^{\veps}(r)) dr\\
	&= \int_0^{t}   \frac{1}{L_{\veps}} \int_{r}^{(r+L_{\veps})\wedge t}  b(\bar X^{\veps}(r), \bar Y^{\veps}(r))  ds dr  + \clr^{\veps}_1(t)\\
%	&\quad + \int_{t-L_{\veps}} ^t \frac{1}{L_{\veps}} \int_{r}^{t} b(\bar X^{\veps}(r), \bar Y^{\veps}(r)) ds dr + 
	&= \int_0^{t}   \frac{1}{L_{\veps}} \int_{r}^{(r+L_{\veps})\wedge t}  b(\bar X^{\veps}(s), \bar Y^{\veps}(r))  ds dr \\
	%&\quad + \int_{t-L_{\veps}} ^t \frac{1}{L_{\veps}} \int_{r}^{t} b(\bar X^{\veps}(s), \bar Y^{\veps}(r)) ds dr\\
	 &+ \int_{- L_{\veps}}^0  \frac{1}{L_{\veps}} \int_{0}^{r+L_{\veps} } b(\bar X^{\veps}(s), \bar Y^{\veps}(r)) ds dr + \clr^{\veps}_2(t) + \clr^{\veps}_3(t)  + \clr^{\veps}_1(t),
\end{split}
\end{equation*}
where
$$\clr^{\veps}_1(t) \doteq \int_{t-L_{\veps}} ^t \frac{1}{L_{\veps}} \int_{t}^{r+L_{\eps}} b(\bar X^{\veps}(r), \bar Y^{\veps}(r)) ds dr,$$
$$\clr^{\veps}_2(t)\doteq \int_0^{t}   \frac{1}{L_{\veps}} \int_{r}^{(r+L_{\veps})\wedge t}  (b(\bar X^{\veps}(r), \bar Y^{\veps}(r)) - b(\bar X^{\veps}(s), \bar Y^{\veps}(r)))  ds dr,$$
$$\clr^{\veps}_3(t)\doteq  -\int_{- L_{\veps}}^0  \frac{1}{L_{\veps}} \int_{0}^{r+L_{\veps}} b(\bar X^{\veps}(s), \bar Y^{\veps}(r)) ds dr.$$
Thus, using the Lipschitz property of $b$ 
%and $\alpha$
\begin{equation}
	\|\hat X^{\veps}(t)-\bar X^{\veps}(t)\| \le L_b 
	\int_{-L_{\eps}}^{t}   \frac{1}{L_{\veps}} \int_{r\vee 0}^{(r+L_{\veps})\wedge t}  \|\hat X^{\veps}(s) - \bar X^{\veps}(s)\| ds dr +  D^{\eps} +  C^{\eps}(t) + \sum_{i=1}^3 \|\clr^{\veps}_i(t)\| 
%	\\
	% &\quad +
% 	L_b \int_{t-L_{\veps}} ^t \frac{1}{L_{\veps}} \int_{r}^{t}  \|\hat X^{\veps}(s) -\bar X^{\veps}(s))\| ds dr\\
% 	&\quad + L_b \int_{- L_{\veps}}^0  \frac{1}{L_{\veps}} \int_{0}^{(r+L_{\veps})}   \|\hat X^{\veps}(s) -\bar X^{\veps}(s))\| ds dr\\
%	 +\|\clr^{\veps}_2(t)\| + \|\clr^{\veps}_3(t)\| + \|\clr^{\veps}_1(t)\| + \|\clr^{\veps}_4(t)\|\\
%	\le (L_b+ L_{\alpha}) \int_0^t  \|\hat X^{\veps}(s) -\bar X^{\veps}(s))\|(1+ \|v^*(s)\|) ds + %D^{\eps} +  \sum_{i=1}^4 \|\clr^{\veps}_i(t)\| ,
\end{equation}
where
$$D^{\eps} =  s(\eps)\sqrt{\eps}\sup_{0\le t\le T}\|\int_0^t\alpha(\hat X^{\veps}(s)) dW(s) - \int_0^t\alpha(\bar X^{\veps}(s)) dW(s)\|$$
and
$$C^{\veps}(t) = \| \int_0^t (\alpha(\hat X^{\veps}(s)) - \alpha(\bar X^{\veps}(s))) v^*(s) ds \|.$$
We now consider the remainder terms.
Note that for some $c_1 \in (0,\infty)$ depending only on the coefficient $b$
\begin{align*}
	\|\clr^{\veps}_1(t)\| 
	% &\le  \int_{t-L_{\veps}} ^t \frac{1}{L_{\veps}} \int_{t}^{r+L_{\veps}}  \|b(\bar X^{\veps}(r), \bar Y^{\veps}(r))\|  ds dr\\
	\le  \int_{t-L_{\veps}} ^t \|b(\bar X^{\veps}(r), \bar Y^{\veps}(r))\|  dr
	\le c_1 L_{\veps}^{1/2} \left[\int_0^T (1 + \|\bar X^{\veps}(s)\|^2 + \|\bar Y^{\veps}(s)\|^2) ds\right]^{1/2} \doteq  \tilde \clr^{\veps}_1.
\end{align*}
Next, there is a $c_2 \in (0,\infty)$ depending only on $b,\alpha, v^*$ and $T$, such that  for $\lambda <\infty$, $0\le u \le s \le T$ with $|u-s|\le \lambda$
\begin{align}
\|\bar X^{\veps}(s) - \bar X^{\veps}(u)\| 
% &= \int_u^s \|b(\bar X^{\veps}(r), \bar Y^{\veps}(r))\| dr
% + \int_u^s v(r) dr + s(\veps) \veps^{1/2} \|B(s-B(u)\|\nonumber\\
% & \le c_2\lambda^{1/2} + s(\veps) \veps^{1/2} \varpi(B,\lambda)
% + c_2 \lambda^{1/2} \left[\int_0^T (1 + \|\bar X^{\veps}(s)\|^2 + \|\bar Y^{\veps}(s)\|^2) ds\right]^{1/2}\nonumber\\
\le c_2 \lambda ^{1/2} + s(\veps) \veps^{1/2} \varpi
+ c_2 \lambda ^{1/2} \left[\int_0^T (1 + \|\bar X^{\veps}(s)\|^2 + \|\bar Y^{\veps}(s)\|^2) ds\right]^{1/2} \doteq \clt_{\veps}(\lambda)\nonumber\\
\label{eq:eq1020}
\end{align}
where
$\varpi \doteq 2\sup_{0\le u \le  T} \|\int_0^u \alpha(\bar X^{\veps}(\tau)) dW(\tau)\|.$
%\sup_{0\le u \le s \le u+\lambda \le T} \|\int_u^s  \alpha(\bar X^{\veps}(\tau)) dW(\tau)\|.$$
Note that
$
\|\clr^{\veps}_2(t)\| \le  TL_b \clt_{\veps} (L_{\veps})\doteq \tilde \clr^{\veps}_2$.
% &\le  \int_0^{t-L_{\veps}}   \frac{1}{L_{\veps}} \int_{r}^{(r+L_{\veps})}  \|b(\bar X^{\veps}(r), \bar Y^{\veps}(r))
% -   b(\bar X^{\veps}(s), \bar Y^{\veps}(r))\| ds dr \\
% 	&\quad + \int_{t-L_{\veps}} ^t \frac{1}{L_{\veps}} \int_{r}^{t}
% 	\|b(\bar X^{\veps}(r), \bar Y^{\veps}(r))
% 	-   b(\bar X^{\veps}(s), \bar Y^{\veps}(r))\| ds dr\\
	% &\le L_b \int_0^{t-L_{\veps}}   \frac{1}{L_{\veps}} \int_{r}^{(r+L_{\veps})}  \|\bar X^{\veps}(r)-\bar X^{\veps}(s)\| ds dr\\
	% &\quad + L_b\int_{t-L_{\veps}} ^t \frac{1}{L_{\veps}} \int_{r}^{t} \|\bar X^{\veps}(r)-\bar X^{\veps}(s)\| ds dr\\
	
Also, from linear growth of $b$,
\begin{align*}
\|\clr^{\veps}_3(t)\|  &\le c_1\int_{- L_{\veps}}^0  \frac{1}{L_{\veps}} \int_{0}^{(r+L_{\veps})}  (1 + \|\bar X^{\veps}(s)\| + \|y_0\|) ds dr	\\
%&\le (1 +\|y_0\|) L_{\veps} + \int_0^{L_{\veps}} \|\bar X^{\veps}(s)\|  ds\\
&\le c_1(1 +\|y_0\|) L_{\veps} + c_1 L_{\veps}^{1/2} \left[\int_0^T (1 + \|\bar X^{\veps}(s)\|^2 ) ds\right]^{1/2} \doteq \tilde \clr^{\veps}_3.
\end{align*}
Finally,
$$ C^{\veps}(t)  \le L_{\alpha} \int_0^t  \sup_{0\le u \le s} \|\hat X^{\veps}(u) -\bar X^{\veps}(u))\| \|v^*(s)\| ds.$$
Thus
\begin{align*}
	\sup_{0\le u \le t} \|\hat X^{\veps}(u)-\bar X^{\veps}(u)\| 
	&\le   \int_0^t  \sup_{0\le u \le s} \|\hat X^{\veps}(u) -\bar X^{\veps}(u))\| (L_b + L_{\alpha} \|v^*(s)\| ) ds + D^{\eps}+ \sum_{i=1}^3 \tilde\clr^{\veps}_i.
\end{align*}
The result now follows on taking squared expectations and applying Grönwall lemma together with Lemma \ref{lem:keymombdne}.
\end{proof}

\subsubsection{Convergence of controlled process and costs.}
Let $\bar \La^{\eps}$ be defined by \eqref{eq:laeps} with $\bar Y^{\eps}$ given by \eqref{eq:s2e6FS}.
In this section we show  the convergence of the controlled process $(\bar X^{\eps}, \bar\La^{\eps})$ and estimate the  cost
$E \frac{1}{2} \int_0^T \|u^{\veps}(t)\|^2 dt$ from above as $\eps \to 0$.
\begin{lemma}\label{lem:xvepstoxi}
	As $\veps \to 0$, $(\bar X^{\veps}, \bar \La^{\eps}) \to (\xi^*, \nu^*)$ in probability in $\clx \times \clm_1$, where $(\xi^*, \nu^*)$ is as given by Lemma \ref{lem:approxdisc}.
\end{lemma}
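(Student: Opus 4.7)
The plan is to prove the two coordinates converge separately: $\bar\La^{\eps}\to\nu^*$ in $\clm_1$ and $\bar X^{\veps}\to\xi^*$ in $\clx$, both in $L^1(P)$ (which implies convergence in probability). For the trajectory, by Lemma \ref{lem:lem1023} it suffices to work with the smoothed approximation $\hat X^{\veps}$ and prove $E\|\hat X^{\veps}-\xi^*\|_\infty^2\to 0$.

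First, for $\bar\La^{\eps}\to\nu^*$, I would show $E\,\dbl(\bar\La^{\eps},\nu^*)\to 0$. For any $f\in BL_1(\RR^d\times[0,T])$, decomposing $[0,T]$ along the successive partitions $\{t_i\}$ and $\{s_{ij}\}$ gives
\[
\int f\,d\bar\La^{\eps}-\int f\,d\nu^* = \sum_{i,j}\int_{s_{ij}}^{s_{i,j+1}}\Bigl[f(\bar Y^{\veps}(s),s)-\int_{\RR^d}f(y,s)\,\nu^*_i(dy)\Bigr]ds.
\]
The Lipschitz property of $f$ in the time variable replaces $f(\cdot,s)$ by $f(\cdot,s_{ij})$ on each subinterval at $O(\Delta_{\veps})$ cost per term. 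Each summand is then controlled by $\Delta_{\veps}^{ij}\,\dbl\bigl(\Delta_{\veps}^{-1}\int_{s_{ij}}^{s_{i,j+1}}\delta_{\bar Y^{\veps}(s)}\,ds,\nu^*_i\bigr)$, and summing over at most $T/\Delta_{\veps}$ terms and invoking Corollary \ref{cor:cor545} (which gives uniform-in-$(i,j)$ convergence to $0$) yields the claim, since the total $O(\Delta_{\veps})$ boundary error vanishes.

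Next, for $\hat X^{\veps}\to\xi^*$, the term $s(\veps)\sqrt{\veps}\int_0^\cdot \alpha(\hat X^{\veps}(s))\,dW(s)$ vanishes uniformly in $L^2$ by boundedness of $\alpha$, while the $\alpha v^*$ term already matches the corresponding term in the $\xi^*$-equation of Lemma \ref{lem:approxdisc}. The key step is to show that
\[
R^{\veps}(t)\doteq \int_0^t\Bigl[\tfrac{1}{L_{\veps}}\int_{s-L_{\veps}}^s b(\hat X^{\veps}(s),\bar Y^{\veps}(r))\,dr - \int_{\RR^d}b(\hat X^{\veps}(s),y)\,\hat\nu_s(dy)\Bigr]ds
\]
tends to $0$ uniformly in $t$ in $L^1(P)$. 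Writing $\mu^{\veps}_s\doteq L_{\veps}^{-1}\int_{s-L_{\veps}}^s\delta_{\bar Y^{\veps}(r)}\,dr$, for $s$ in the interior set $\bigcup_i[t_i+L_{\veps},t_{i+1}]$ the averaging window $[s-L_{\veps},s]$ lies inside a single $[t_i,t_{i+1}]$ and is a union of roughly $L_{\veps}/\Delta_{\veps}\to\infty$ full $\Delta_{\veps}$-subwindows plus $o(L_{\veps})$ remainder, so Corollary \ref{cor:cor545} yields $E\,\dbl(\mu^{\veps}_s,\hat\nu_s)\to 0$ uniformly in such $s$. The boundary set $\bigcup_i[t_i,t_i+L_{\veps}]$ has Lebesgue measure $O(L_{\veps})$, and its contribution to $R^{\veps}$ is $O(L_{\veps})$ by linear growth of $b$ and the uniform second-moment bound of Lemma \ref{lem:keymombdne}. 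Once $R^{\veps}\to 0$ is established, the integral equation for $\hat X^{\veps}$ agrees, up to $o_P(1)$, with the one satisfied by $\xi^*$ in Lemma \ref{lem:approxdisc}, and Gronwall's lemma together with the Lipschitz constants of $b$ and $\alpha$ yields $E\|\hat X^{\veps}-\xi^*\|_\infty^2\to 0$.

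The main obstacle I anticipate is transferring the $\dbl$-convergence $\mu^{\veps}_s\to\hat\nu_s$ (a bounded-Lipschitz statement) to convergence of $\int_{\RR^d} b(\hat X^{\veps}(s),y)\,\mu^{\veps}_s(dy)$, since $y\mapsto b(x,y)$ is Lipschitz but not bounded. I would handle this by a truncation argument modeled on the construction in Lemma \ref{single-upper-lemma-orthogonality-extension}: approximate $y\mapsto b(\hat X^{\veps}(s),y)$ by a bounded-Lipschitz cutoff $b_N$ agreeing with $b$ on $\{\|y\|\le N\}$, apply $\dbl$-convergence of $\mu^{\veps}_s$ to the cutoff, and control the resulting tails uniformly in $\veps$ using the uniform $L^2$-bound on $\bar Y^{\veps}$ from Lemma \ref{lem:keymombdne} and the finite support of $\hat\nu_s$ furnished by Lemma \ref{lem:approxdisc}, then send $N\to\infty$.
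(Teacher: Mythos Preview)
Your proposal is correct and takes a genuinely different route from the paper. The paper argues by compactness: it first shows tightness of $(\bar X^{\veps},\bar\La^{\eps})$, extracts a subsequential limit $(X^*,\La^*)$, uses Skorohod's representation together with \eqref{eq:eq1231} to pass to the limit in the $\hat X^{\veps}$ equation, and then identifies $X^*=\xi^*$ by unique solvability of the limiting ODE; the identification $\La^*=\nu^*$ is obtained separately via the intermediate measure $\tilde\La^{\eps}=\mu^{\veps}_t(dy)\,dt$. By contrast, you aim for a direct quantitative estimate: you bound $E\,\dbl(\bar\La^{\eps},\nu^*)$ by summing Corollary~\ref{cor:cor545} over the $\{s_{ij}\}$ partition, and you close the $\hat X^{\veps}\to\xi^*$ convergence by Gronwall once the averaging remainder is shown to vanish. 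Both arguments rest on the same ingredients (Lemma~\ref{lem:keymombdne}, Corollary~\ref{cor:cor545}, Lemma~\ref{lem:lem1023}, and the smoothed occupation measures $\mu^{\veps}_s$), but your approach avoids tightness, subsequences, and Skorohod, at the price of the explicit truncation step for the unbounded integrand $y\mapsto b(\cdot,y)$ that the paper handles in one stroke through a.s.\ weak convergence plus uniform integrability.

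Two small points to tighten. First, to make the Gronwall step clean, it is better to put $\xi^*(s)$ rather than $\hat X^{\veps}(s)$ inside the remainder: write
\[
\hat X^{\veps}(t)-\xi^*(t)=\int_0^t\!\!\int_{\RR^d}[b(\hat X^{\veps}(s),y)-b(\xi^*(s),y)]\,\mu^{\veps}_s(dy)\,ds+\tilde R^{\veps}(t)+\text{(noise and }\alpha\text{-terms)},
\]
with $\tilde R^{\veps}(t)=\int_0^t\int_{\RR^d} b(\xi^*(s),y)[\mu^{\veps}_s-\hat\nu_s](dy)\,ds$; then the first integral is absorbed by Gronwall and the truncation argument for $\tilde R^{\veps}$ involves only the deterministic $\xi^*$, so the tail control via Lemma~\ref{lem:keymombdne} is immediate. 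Second, the argument as written yields $E\|\hat X^{\veps}-\xi^*\|_\infty\to 0$ rather than the $L^2$ statement you claim (the latter would require a uniform bound on $E[(\int\|y\|^2\mu^{\veps}_s(dy))^2]$, which is not provided); $L^1$ convergence is of course enough for the lemma.
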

\begin{proof}
	By Lemma \ref{lem:keymombdne}  and a calculation similar to \eqref{eq:eq1020}   we see that $\bar X^{\veps}$ is tight in $\clx$. The tightness of $\bar \Lambda^{\eps}$ is immediate from Lemma \ref{lem:keymombdne} once more.
	Suppose that $(\bar X^{\veps}, \bar \Lambda^{\eps})$ converges in distribution along a subsequence to  $(X^*, \Lambda^*)$. Then
	from Lemma \ref{lem:lem1023}, along the same subsequence, $\hat X^{\veps}$ converges in distribution to $X^*$ as well.
	Define for $s \in [0,T]$,  a $\clp(\RR^d)$ valued random variable $\mu^{\veps}_s $ as
	$$\mu^{\veps}_s(dy) = \frac{1}{L_{\veps}} \int_{(s-L_{\veps})}^{s} \delta_{\bar Y^{\veps}(r)} dr,$$
	where $L_{\veps}$ is as introduced above Lemma \ref{lem:lem1023} and, as before, $\bar Y^{\veps}(r) \doteq y_0$ for $r\le 0$.
	Then, for $t \in  [0,T]$,
	\begin{align}\label{eq:eq1239}
	\hat  X^{\veps}(t) = x_0+ \int_0^t  \int_{\RR^d} b(\hat X^{\veps}(s), y) \mu^{\veps}_s(dy) ds + s(\veps) \veps^{1/2} \int_0^t  \alpha(\hat X^{\veps}(s)) dW(s) + \int_0^t \alpha(\hat X^{\veps}(s)) v^*(s) ds.
	\end{align}
	Also, from Lemma \ref{lem:keymombdne},
	\begin{align}
		\sup_{\veps>0} \sup_{t\in [0,T]} E \int_{\RR^d} \|y\|^2 \mu^{\veps}_t(dy) 
	= \sup_{\veps>0} \sup_{t\in [0,T]} \frac{1}{L_{\veps}} \int_{(t-L_{\veps})}^{t}  E\|\bar Y^{\veps}(r)\|^2 dr \\
		 \le  \sup_{\veps>0} \sup_{t\in [0,T]} E\|\bar Y^{\veps}(t)\|^2 \doteq \kappa_1 <\infty.
		  \label{eq:eq1231}
	\end{align}
	
We now show that for each $i=0, 1\ldots K$, and  a compact $[\theta,\beta]\in (t_i, t_{i+1})$
\begin{equation}\label{eq:1201}
	\sup_{s \in [\theta,\beta]} E[\dbl(\mu^{\veps}_s,  \nu^*_i)] \to 0, \quad \text{as} \; \veps \to 0.
\end{equation}
Let  for $s \in [\theta,\beta]$
$$ \underline j^{\veps} = \min\{j: s_{ij} \ge s- L_{\veps}\}, \;  \bar j^{\veps} = \max\{j: s_{ij} \le s\}$$
and
$\bar L_{\veps} = \Delta_{\veps} (\bar j^{\veps}- \underline  j^{\veps})$.
Then
\begin{align*}
	\mu^{\veps}_s &= \frac{1}{L_{\veps}} \int_{(s-L_{\veps})}^{s} \delta_{\bar Y^{\veps}(r)} dr
	= \frac{1}{L_{\veps}} \sum_{j=\underline j^{\veps} }^{\bar j^{\veps}-1}   \Delta_{\veps} \frac{1}{\Delta_{\veps}} \int_{s_{ij}}^{s_{i,j+1}} \delta_{\bar Y^{\veps}(r)} dr + \frac{L_{\veps}-\bar L_{\veps}}{L_{\veps}} \gamma^{\veps}_s
\end{align*}
for some $\clp(\RR^d)$ valued $\gamma^{\veps}_s$.
Noting that
$\nu^*_i = \frac{1}{L_{\veps}} \sum_{j=\underline j^{\veps} }^{\bar j^{\veps}-1}   \Delta_{\veps} \nu^*_i
	+ \frac{L_{\veps}-\bar L_{\veps}}{L_{\veps}} \nu^*_i$
we see that
\begin{align*}
	\dbl(\mu^{\veps}_s , \nu^*_i) &\le \frac{1}{L_{\veps}} \sum_{j=\underline j^{\veps} }^{\bar j^{\veps}-1}   \Delta_{\veps} \dbl\left(\frac{1}{\Delta_{\veps}} \int_{s_{ij}}^{s_{i,j+1}} \delta_{\bar Y^{\veps}(r)} dr , \nu^*_i  \right ) + \frac{4\Delta_{\veps}}{L_{\veps}}.
	% &\le \dbl\left(\frac{1}{\Delta_{\veps}} \int_{s_{ij}}^{s_{i,j+1}} \delta_{\bar Y^{\veps}(r)} dr , \hat \mu_i  \right ) + \frac{6\Delta_{\veps}}{L_{\veps}}.
\end{align*}	
It then follows that, for $\eps$ sufficiently small,
\begin{align*}
	\sup_{s \in [\theta,\beta]} E\dbl(\mu^{\veps}_s , \nu^*_i) 
	&\le 
	\max_{ j = 0, 1, \ldots N^i_{\veps}-1} E\dbl \left(\frac{1}{\Delta_{\veps}} \int_{s_{ij}}^{s_{i,j+1}} \delta_{\bar Y^{\veps}(r)} dr , \nu^*_i  \right ) + \frac{4\Delta_{\veps}}{L_{\veps}}.
\end{align*}
The statement in \eqref{eq:1201} is now immediate from Corollary \ref{cor:cor545} and recalling that $L_{\veps}/\Delta_{\veps} \to \infty$ as $\veps \to 0$.

We now argue that  $X^*$ solves for  $i=0, 1, \ldots K$ and $t \in (t_i, t_{i+1}]$, 
\begin{equation}\label{eq:eq1243}
X^*(t) = X^*(t_i) + \int_{t_i}^t  \int_{\RR^d} b(X^*(s), y) \nu^*_i(dy) ds  + v^*_i  \int_{t_i}^t \alpha(X^*(s))ds.
\end{equation}
From the continuity of $X^*$,  it suffices to argue that for all  $[\theta,\beta]\in (t_i, t_{i+1})$
\begin{equation}\label{eq:eq1241}
X^*(\beta) = X^*(\theta) + \int_{\theta}^{\beta} \int_{\RR^d} b(X^*(s), y) \nu^*_i(dy) ds  + v^*_i \int_{\theta}^{\beta} \alpha(X^*(s))ds.
\end{equation}

Define $\clp(\RR^d\times [0,T])$ valued random variable $\nu^{\eps}$ as
$\nu^{\veps}(dy dt) \doteq  \frac{1}{T}\mu^{\veps}_t(dy) dt$ and with $\nu^*$ as in Lemma \ref{lem:approxdisc}, let
$\nu(dy\, dt) \doteq   \frac{1}{T}  \nu^*(dy\, dt)  =  \frac{1}{T} \hat\nu_t(dy) dt$  where $\hat\nu_t = \nu^*_i$ for $t \in (t_i, t_{i+1}]$.
Then, from \eqref{eq:1201}, as $\veps \to 0$,
\begin{align}\label{eq:759n}
	E \dbl(\nu^{\veps}, \nu) &\le \frac{1}{T} \int_0^T E\dbl(\mu^{\veps}_t, \hat\nu_t ) dt
	=\frac{1}{T} \sum_{i=0}^K\int_{t_i}^{t_{i+1}} E\dbl(\mu^{\veps}_t, \nu^*_i ) dt
	\to 0.
\end{align}
 By appealing to Skorohod representation theorem we can assume that
\begin{equation}\label{eq:236n}
	(\hat X^{\veps}(\cdot),  s(\veps) \veps^{1/2} 
\int_0^{\cdot}  \alpha(\hat X^{\veps}(s)) dW(s), \nu^{\veps}) \to (X^*(\cdot), 0, \nu)\end{equation}
a.s. in $C([0,T]:  \RR^{2m}) \times \clp(\RR^d\times [0,T])$.
Using the Lipschitz property of $b$
\begin{align*}
	% &\left\|  \int_{\alpha}^{\beta}  \int_{\RR^d} b(\hat X^{\veps}(s), y) \mu^{\veps}_s(dy) ds  - \int_{\alpha}^{\beta} \int_{\RR^d} b(X^*(s), y) \mu^{\veps}_s(dy) ds \right\|\\
	&\left\|  \int_{\theta}^{\beta}  \int_{\RR^d} b(\hat X^{\veps}(s), y) \nu^{\veps}(dy ds)  - \int_{\theta}^{\beta} \int_{\RR^d} b(X^*(s), y) \nu^{\veps}(dy ds) \right\|\\
	&\le L_b\frac{1}{T}\int_{\theta}^{\beta} \|\hat X^{\veps}(s) - X^*(s)\| ds
	 \le \frac{(\beta-\theta)}{T}\sup_{t \in [\theta, \beta]} \|\hat X^{\veps}(t) - X^*(t)\|  \to 0.
\end{align*}
Next using the fact that $\nu^{\veps} \to  \nu$, the moment bound in \eqref{eq:eq1231}, the continuity and linear growth of $b$, and a standard uniform integrability argument, we have
$$
\left\|\int_{\theta}^{\beta} \int_{\RR^d} b(X^*(s), y) \nu^{\veps}(dy ds)
- \int_{\theta}^{\beta} \int_{\RR^d} b(X^*(s), y) \nu(dy ds)\right\| \to 0.$$
Note that
\begin{equation}\label{eq:eq533}
\hat  X^{\veps}(\beta) = \hat  X^{\veps}(\theta)+ T\int_{\theta}^{\beta} \int_{\RR^d} b(\hat X^{\veps}(s), y) \nu^{\veps}(dy  ds) + s(\veps) \veps^{1/2} \int_{\theta}^{\beta}  \alpha(\hat X^{\veps}(s)) dW(s) + v^*_i  \int_{\theta}^{\beta}  \alpha(\hat X^{\veps}(s)) ds.	
\end{equation}
% Thus
% $$
% \int_{\alpha}^{\beta} \int_{\RR^d} b(X^*(s), y) \mu^{\veps}_s(dy )ds
% \to  \int_{\alpha}^{\beta} \int_{\RR^d} b(X^*(s), y) \hat\mu_i(dy )ds .$$
Combining the last three convergence statements and taking limit as $\veps \to 0$ in \eqref{eq:eq533}
we obtain \eqref{eq:eq1241}, which as observed previously proves \eqref{eq:eq1243}.
From the unique solvability of \eqref{eq:eq1243} and the definition of $\xi^*$ we have that $X^* = \xi^*$ which proves the convergence $\bar X^{\eps} \to \xi^*$.
Now we argue that $\La^* =\nu^*$. In view of \eqref{eq:759n}, it suffices to show that, as $\eps \to 0$,
\begin{align}\label{eq:800m}
	E \dbl(\tilde \La^{\eps} , \bar \La^{\eps} ) \to 0,
\end{align}
where $\tilde \La^{\eps} \doteq T\nu^{\veps}(dy dt) =\mu^{\veps}_t(dy) dt$.
Consider $f\in BL_1(\RR^d\times [0,T])$.  
Then, using the properties of $f$, an argument similar to that used in the proof of Lemma \ref{lem:lem1023} shows that
\begin{align*}
	&\left| \int_{\RR^d\times [0,T]} f(s,y) \bar \La^{\eps}(dy\, ds) - \int_{\RR^d\times [0,T]} f(s,y) \tilde \La^{\eps}(dy\, ds)\right|\\
	&= \left| \int_0^T f(r, \bar Y^{\eps}(r)) dr - \int_0^T \frac{1}{L_{\eps}} \int_{s-L_{\eps}}^s f(s, \bar Y^{\eps}(r)) dr\, ds\right|\\
	&\le \int_0^T \frac{1}{L_{\eps}} \int_{r}^{(r+L_{\eps}) \wedge T} |f(r, \bar Y^{\eps}(r))- f(s, \bar Y^{\eps}(r))| ds \, dr + \int_{-L_{\eps}}^0 \frac{1}{L_{\eps}} \int_0^{(r+L_{\eps})\wedge T} |f(s, \bar Y^{\eps}(r))| ds \, dr+ L_{\eps}\\
	&\le (2+T)L_{\eps}.
\end{align*}
Since $L_{\eps}\to 0$ as $\eps \to 0$ we have \eqref{eq:800m}, which, as discussed previously, shows $\La^* = \nu^*$.
This completes the proof of the lemma.
\end{proof}

The next lemma estimates the cost.

\begin{lemma}\label{lem:bdoncost}
	With $u^{\veps}$ defined as in \eqref{eq:defncont} and $\xi^*$ as in Lemma \ref{lem:approxdisc}
	$$\limsup_{\veps \to 0} E \frac{1}{2} \int_0^T \|u^{\veps}(t)\|^2 dt \le
	\frac{1}{2}  \int_0^T
				\int_{\RR^{d}} \| \nabla_y U(\xi^*(s), y)\|^2 \hat\nu_{s}(dy)ds.$$
\end{lemma}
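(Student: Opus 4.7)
My plan is to decompose the integral of $\|u^\eps(t)\|^2$ according to the three regimes described in \eqref{eq:defncont}: the transport sub-intervals $(\sigma_{ijl},\sigma_{ijl}+\eps\Delta_\eps]$, the rest sub-intervals $(\sigma_{ijl}+\eps\Delta_\eps,\sigma_{i,j,l+1}]$, and the terminal sub-intervals $(s_{i,N^i_\eps},t_{i+1}]$. I will show that the first and third types contribute asymptotically negligible cost in expectation, and that the middle type supplies exactly the claimed limiting quantity.

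\medskip

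\textbf{Negligibility of the transport and terminal phases.} On each transport interval I would use $\|u^\eps\|^2 \leq 2\|\nabla_y U(\bar X^\eps,\bar Y^\eps)\|^2 + 2\|\uu^\eps(\bar Y^\eps(\sigma_{ijl}),y_{i,l+1},\cdot)\|^2$. The first term is controlled by linear growth of $\nabla_y U$ (Assumption \ref{Assumptions:system}(2b,2e)) and Lemma \ref{lem:keymombdne}; integrating over the total transport duration (which is at most $(\sum_i N^i_\eps m(i))\cdot \eps\Delta_\eps = O(\eps)$) gives a vanishing contribution. For the second, $\|\uu^\eps\|^2 = \Delta_\eps^{-2}\|y_{i,l+1}-\bar Y^\eps(\sigma_{ijl})\|^2$, so after integrating over the interval of length $\eps\Delta_\eps$ and summing over $|\cli| = O(\Delta_\eps^{-1})$ sub-intervals (cf.\ \eqref{eq:eq1030}), the total is $O(\eps/\Delta_\eps^2)$, which vanishes because $\Delta_\eps^2/\eps\to\infty$. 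On each terminal interval $\|u^\eps\|^2 \leq 2\|\nabla_y U(\bar X^\eps,\bar Y^\eps)\|^2+2\|\bar Y^\eps\|^2$, and there are only $K+1$ such intervals each of length at most $\Delta_\eps$, so Lemma \ref{lem:keymombdne} yields $O(\Delta_\eps)\to 0$.

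\medskip

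\textbf{The main term.} On a rest sub-interval, setting $W_{ijl}(t) \doteq \nabla_y U(\bar X^\eps(t),\bar Y^\eps(t))-\nabla_y U(\bar X^\eps(\sigma_{ijl}),\bar Y^\eps(t))$, the control reads $u^\eps(t) = W_{ijl}(t)+\nabla_y U(\bar X^\eps(\sigma_{ijl}),y_{i,l+1})$. Using Lipschitzness of $\nabla_y U$ (Assumption \ref{Assumptions:system}(2e)), $\|W_{ijl}(t)\|\leq L \|\bar X^\eps(t)-\bar X^\eps(\sigma_{ijl})\|$, and the standard continuity estimate (analogous to \eqref{eq:eq1020}) yields $\sup_{t\in[\sigma_{ijl},\sigma_{i,j,l+1}]}E\|\bar X^\eps(t)-\bar X^\eps(\sigma_{ijl})\|^2 = O(\Delta_\eps+\eps)$. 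Summed over $O(\Delta_\eps^{-1})$ rest intervals of length $O(\Delta_\eps)$, the $W_{ijl}$ contribution is $O(\Delta_\eps)+O(\eps)\to 0$, together with cross-terms handled by Cauchy-Schwarz. Hence, up to $o(1)$ in expectation,
\begin{equation}
\tfrac{1}{2}\int_0^T\|u^\eps(t)\|^2\,dt \;=\; \tfrac{1}{2}\sum_{(i,j,l)\in\cli}(p_{i,l+1}-\eps)\Delta_\eps\,\|\nabla_y U(\bar X^\eps(\sigma_{ijl}),y_{i,l+1})\|^2 + o(1).
\end{equation}
A further Lipschitz replacement of $\bar X^\eps(\sigma_{ijl})$ by $\bar X^\eps(s_{ij})$ costs $O(\Delta_\eps)$ per interval, and the terms $\eps\Delta_\eps \|\nabla_y U(\bar X^\eps(s_{ij}),y_{i,l+1})\|^2$ summed over $(i,j,l)$ give $O(\eps)\to 0$.

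\medskip

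\textbf{Passage to the limit.} Collecting and recognizing $\sum_l p_{i,l+1}\|\nabla_y U(x,y_{i,l+1})\|^2 = \int_{\RR^d}\|\nabla_y U(x,y)\|^2\nu_i^*(dy)\doteq G_i(x)$, the expression reduces to $\tfrac{1}{2}\sum_{i=0}^K \sum_{j=0}^{N^i_\eps-1}\Delta_\eps\,G_i(\bar X^\eps(s_{ij}))$, which is a Riemann sum for $\tfrac{1}{2}\sum_i\int_{t_i}^{t_{i+1}}G_i(\bar X^\eps(s))\,ds = \tfrac{1}{2}\int_0^T \int_{\RR^d}\|\nabla_y U(\bar X^\eps(s),y)\|^2\hat\nu_s(dy)\,ds$ with Riemann error bounded via the $O(\sqrt{\Delta_\eps})$ modulus of continuity of $\bar X^\eps$ and quadratic growth of $G_i$. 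Finally Lemma \ref{lem:xvepstoxi} gives $\bar X^\eps\to\xi^*$ in probability, and together with the uniform moment bound of Lemma \ref{lem:keymombdne} (for uniform integrability) and the quadratic local-Lipschitz estimate on $G_i$ in \eqref{eq:424n}, I can pass to the limit to obtain the right-hand side $\tfrac{1}{2}\int_0^T\int_{\RR^d}\|\nabla_y U(\xi^*(s),y)\|^2\hat\nu_s(dy)\,ds$. The main obstacle is the bookkeeping to simultaneously control the $\Delta_\eps$-scale Riemann-sum error and the $O(\sqrt{\Delta_\eps})$-scale stochastic oscillations of $\bar X^\eps$; both are quantitative but need to be estimated carefully through Assumption \ref{Assumptions:system}(2e) and the moment bound of Lemma \ref{lem:keymombdne}.
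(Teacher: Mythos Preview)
Your decomposition into transport, rest, and terminal phases, and your treatment of the transport and terminal phases, mirror the paper's proof exactly. The difference lies in how the main (rest) contribution is handled.

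The paper does not pass through a Riemann sum for $\bar X^{\veps}$. Instead, on each rest interval it applies Young's inequality with an auxiliary parameter $\vs\in(0,1)$,
\[
\|u^{\veps}(t)\|^2 \le (1+\vs)\,\|\nabla_y U(\xi^*(s_{ij}),y_{i,l+1})\|^2
+\tfrac{4}{\vs}\bigl(T_1(t)+T_2(t)\bigr),
\]
where $T_1$ is your $\|W_{ijl}\|^2$ and $T_2=\|\nabla_y U(\bar X^{\veps}(\sigma_{ijl}),y_{i,l+1})-\nabla_y U(\xi^*(s_{ij}),y_{i,l+1})\|^2$. The $T_1$ term is controlled by $[\clt_{\veps}(\Delta_{\veps})]^2$ as you do; for $T_2$ the paper uses Assumption \ref{Assumptions:system}(2e) to truncate, bounding it by $\sup_t\|\bar X^{\veps}(t)-\xi^*(t)\|^2\wedge C$ plus the deterministic modulus $[\varpi(\xi^*,\Delta_{\veps})]^2$, and then invokes Lemma \ref{lem:xvepstoxi} with bounded convergence. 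Sending $\veps\to 0$ and then $\vs\to 0$ yields the claim directly, avoiding any Riemann-sum bookkeeping.

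Your route---keep $\bar X^{\veps}$ throughout, convert to a Riemann sum, then pass to the limit---is also valid, and is arguably more modular. One point to tighten: the last step, $E\int_0^T G_i(\bar X^{\veps}(s))\,ds\to\int_0^T G_i(\xi^*(s))\,ds$, is not justified by Lemma \ref{lem:keymombdne} alone, since second moments of $\bar X^{\veps}$ do not yield uniform integrability of the quadratic $G_i(\bar X^{\veps})$. The correct argument (and the one the paper implicitly uses via the $\wedge C$ in its $T_2$ estimate) is that $\hat\nu_s$ has \emph{finite support} and Assumption \ref{Assumptions:system}(2e) gives $\sup_x\|\nabla_y U(x,y_{i,l})\|<\infty$ for each support point, so each $G_i$ is \emph{bounded}; then convergence in probability plus bounded convergence suffices.
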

\begin{proof}
Note that
\begin{align*}
	E\int_0^T \|u^{\veps}(t)\|^2 dt = \sum_{i=0}^K \sum_{j=0}^{N^i_{\veps}-1} \sum_{l=0}^{m(i)-1} E \int_{\sigma_{ijl}}^{\sigma_{ij,l+1}} \|u^{\veps}(t)\|^2 dt + \sum_{i=0}^K   E \int_{s_{i, N^i_{\veps}}}^{t_{i+1}} \|u^{\veps}(t)\|^2 dt 
	%\sum_{l=0}^{m(i)-1} E \int_{\sigma_{ijl}}^{\sigma_{ij,l+1}} \|u^{\veps}(t)\|^2 dt
\end{align*}
Recall the notation $\cli$ from \eqref{eq:eq1030}.
	Then, for $(i,j,l) \in \cli$
	\begin{align*}
	 E \int_{\sigma_{ijl}}^{\sigma_{ij,l+1}} \|u^{\veps}(t)\|^2 dt = 	E \int_{\sigma_{ijl}}^{\sigma_{ijl}+\veps \Delta_{\veps}} \|u^{\veps}(t)\|^2 dt + E \int_{\sigma_{ijl}+\veps \Delta_{\veps}}^{\sigma_{ij,l+1}} \|u^{\veps}(t)\|^2 dt
	\end{align*}
For $t \in (\sigma_{ijl}, \sigma_{ijl}+\veps \Delta_{\veps})$, with $C(U)$ as introduced above \eqref{eq:424n},
\begin{align*}
\|u^{\veps}(t)\|^2  &= \|\nabla_y U(\bar X^{\veps}(t), \bar Y^{\veps}(t)) +  \uu^{\eps}(\bar Y^{\veps}(\sigma_{ijl}), y_{i,l+1}, t - \sigma_{ijl})\|^2\\
&\le 2(C(U)+2) \left(1 + \|\bar X^{\veps}(t)\|^2 + \|\bar Y^{\veps}(t)\|^2 +  \frac{1}{\Delta_{\veps}^2} (\|\bar Y^{\veps}(\sigma_{ijl})\|^2 + \|y_{i,l+1}\|^2 )\right)
\end{align*}
Thus, with $c_1 = 4(C(U)+2+ \max_{i,l} \|y_{i,l}\|^2)$,
\begin{align*}
E \int_{\sigma_{ijl}}^{\sigma_{ijl}+\veps \Delta_{\veps}} \|u^{\veps}(t)\|^2 dt 
\le \frac{c_1}{\Delta_{\veps}^2}\veps \Delta_{\veps}\left (1 + \sup_{0\le t \le T} E[ \|\bar X^{\veps}(t)\|^2 + \|\bar Y^{\veps}(t)\|^2]\right) \le \frac{c_2\veps}{\Delta_{\veps}},
\end{align*}
where $c_2= c_1(1+M_1)$ and $M_1$ is as in Lemma \ref{lem:keymombdne}.
Consequently, since $\Delta_{\eps}^2/\eps \to \infty$, as $\veps \to 0$, we have
\begin{align*}
\sum_{i=0}^K \sum_{j=0}^{N^i_{\veps}-1} \sum_{l=0}^{m(i)-1} 	E \int_{\sigma_{ijl}}^{\sigma_{ijl}+\veps \Delta_{\veps}} \|u^{\veps}(t)\|^2 dt  \le \frac{c_3T\veps \sum_{i=0}^K m(i)}{\Delta_{\veps}^2}  \doteq \theta(\eps)\to 0.
\end{align*}
Next consider $t \in (\sigma_{ijl}+\veps \Delta_{\veps}, \sigma_{ij,l+1})$. Fix $\vs \in (0,1)$. Then by Young's inequality,
\begin{align*}
\|u^{\veps}(t)\|^2  &=  \|\nabla_y U(\bar X^{\veps}(t), \bar Y^{\veps}(t)) - 
		\nabla_y U(\bar X^{\veps}(\sigma_{ijl}), \bar Y^{\veps}(t))
		+ \nabla_y U(\bar X^{\veps}(\sigma_{ijl}), y_{i,l+1})\|^2\\	
		&\le (1+\vs)T_3(t) + \frac{4}{\vs} (T_1 (t)+ T_2(t))
\end{align*}
where
\begin{align*}
T_1(t) &= 	\|\nabla_y U(\bar X^{\veps}(t), \bar Y^{\veps}(t)) - 
		\nabla_y U(\bar X^{\veps}(\sigma_{ijl}), \bar Y^{\veps}(t))\|^2\\
T_2(t)&= \| \nabla_y U(\bar X^{\veps}(\sigma_{ijl}), y_{i,l+1}) -  \nabla_y U(\xi^*(s_{ij}), y_{i,l+1})\|^2\\
T_3(t) &= \|\nabla_y U(\xi^*(s_{ij}), y_{i,l+1})\|^2.
\end{align*}
Let
$$\varpi(\xi^*, \Delta_{\veps}) \doteq  \sup_{0\le u \le s \le (u+\Delta_{\veps})\wedge T} \|\xi^*(s)- \xi^*(u)\|.$$
Observe that, for $(i,j,l) \in \cli$,
$$E\int_{\sigma_{ijl}+\veps \Delta_{\veps}}^{\sigma_{ij,l+1}}	T_3(t) dt = \|\nabla_y U(\xi^*(s_{ij}), y_{i,l+1})\|^2 (p_{i,l+1}-\veps) \Delta_{\veps}.$$
Thus, recalling Assumption \ref{Assumptions:system}(2a),
\begin{align*}
\sum_{l=0}^{m(i)-1} 	E\int_{\sigma_{ijl}+\veps \Delta_{\veps}}^{\sigma_{ij,l+1}}	T_3(t) dt
&\le \Delta_{\veps}\int_{\RR^d}		\|\nabla_y U(\xi^*(s_{ij}), y)\|^2 \nu^*_i(dy)\\
&=\int_{s_{ij}}^{s_{i,j+1}}\int_{\RR^d}\|\nabla_y U(\xi^*(s_{ij}), y)\|^2 \hat \nu_s(dy) ds\\
&\le \int_{s_{ij}}^{s_{i,j+1}}\int_{\RR^d} \left((1+ \vs)\|\nabla_y U(\xi^*(s), y)\|^2  + \frac{2}{\vs}L^2_{\mathcal{H}U}  [\varpi(\xi^*,\Delta_{\veps})]^2\right) \hat \nu_s(dy) ds.
\end{align*}
Consequently
$$
\sum_{i=0}^K \sum_{j=0}^{N^i_{\veps}-1} \sum_{l=0}^{m(i)-1} 	E\int_{\sigma_{ijl}+\veps \Delta_{\veps}^{ij}}^{\sigma_{ij,l+1}}	T_3(t) dt \le (1+\vs)\int_0^T \int_{\RR^d}\|\nabla_y U(\xi^*(s, y))\|^2 \hat \nu_s(dy) ds + \frac{2T}{\vs}L^2_{\mathcal{H}U}  [\varpi(\xi^*, \Delta_{\veps})]^2.$$
% \begin{align*}
% \sum_{i=0}^K \sum_{j=0}^{N^i_{\veps}-1} \sum_{l=0}^{m(i)-1} 	E\int_{\sigma_{ijl}+\veps \Delta_{\veps}^{ij}}^{\sigma_{ij,l+1}}	T_3(t) dt
% &= \sum_{i=0}^K \sum_{j=0}^{N^i_{\veps}-1} \sum_{l=0}^{m(i)-1} 	E\int_{\sigma_{ijl}+\veps \Delta_{\veps}^{ij}}^{\sigma_{ij,l+1}}	\|\nabla_y U(\xi^*(s_{ij}), y_{i,l+1})\|^2 dt \\
% &= \sum_{i=0}^K \sum_{j=0}^{N^i_{\veps}-1} \sum_{l=0}^{m(i)-1} 		\|\nabla_y U(\xi^*(s_{ij}), y_{i,l+1})\|^2 (p_{i,l+1}-\veps) \Delta_{\veps}\\
% &\le  \sum_{i=0}^K \sum_{j=0}^{N^i_{\veps}-1} \Delta_{\veps}\int_{\RR^d}		\|\nabla_y U(\xi^*(s_{ij}), y)\|^2 \nu^*_i(dy)\\
% &=\sum_{i=0}^K \sum_{j=0}^{N^i_{\veps}-1} \int_{s_{ij}}^{s_{i,j+1}}\int_{\RR^d}\|\nabla_y U(\xi^*(s_{ij}), y)\|^2 \hat \mu_s(dy) ds\\
% &\le \sum_{i=0}^K \sum_{j=0}^{N^i_{\veps}-1} \int_{s_{ij}}^{s_{i,j+1}}\int_{\RR^d}\|\nabla_y U(\xi^*(s), y)\|^2 \hat \mu_s(dy) ds + c_4 [\varpi(\xi^*, \Delta_{\veps})]^2\\
% &\le \int_0^T \int_{\RR^d}\|\nabla_y U(\xi^*(s, y)\|^2 \hat \mu_s(dy) ds + c_4 [\varpi(\xi^*, \Delta_{\veps})]^2
% \end{align*}
Next note that for $t \in (\sigma_{ijl}+\veps \Delta_{\veps}, \sigma_{ij,l+1})$, again using
Assumption \ref{Assumptions:system}(2a),
\begin{align*}
T_1(t) &= \|\nabla_y U(\bar X^{\veps}(t), \bar Y^{\veps}(t)) - 
		\nabla_y U(\bar X^{\veps}(\sigma_{ijl}), \bar Y^{\veps}(t))\|^2
		\le L^2_{\mathcal{H}U} [\clt_{\veps}(\Delta_{\veps})]^2,
\end{align*}
where $\clt_{\veps}(\cdot)$ is as in 
\eqref{eq:eq1020}.
% From Assumption \ref{assu:sconv}(5) we see that
% $$\max_{0\le i \le K}\max_{ 0\le  l \le m(i)} \sup_{x\in \RR^m} \|\nabla_y U(x, y_{i,l+1})\|  \doteq  K_1 <\infty.$$
Furthermore, using Assumption \ref{Assumptions:system}(2e), for some $C \in (0,\infty)$,
\begin{align*}
T_2(t) &=\| \nabla_y U(\bar X^{\veps}(\sigma_{ijl}), y_{i,l+1}) -  \nabla_y U(\xi^*(s_{ij}), y_{i,l+1})\|^2\\
&\le 2L^2_{\mathcal{H}U}\left(\sup_{0\le t \le T} \|\bar X^{\veps}(t) - \xi^*(t)\|^2\wedge C
 + [\varpi(\xi^*, \Delta_{\veps})]^2\right),
\end{align*}
for all $t\in [0,1]$ and $\eps \in (0,1)$.
Finally, using Lemma \ref{lem:keymombdne} and linear growth of $\nabla_yU$,
\begin{align*}
	\sum_{i=0}^K   E \int_{s_{i, N^i_{\veps}}}^{t_{i+1}} \|u^{\veps}(t)\|^2 dt \le c_3 \Delta_{\veps},
\end{align*}
where $c_3 = 2(K+1)(C(U)+1)(M_1+1)$.
Combining the above estimates 
\begin{multline*}
	E\int_0^T \|u^{\veps}(t)\|^2 dt
	\le (1+ \vs)^2\int_0^T \int_{\RR^d}\|\nabla_y U(\xi^*(s, y))\|^2 \hat \nu_s(dy) ds + (1+\vs)\frac{2T}{\vs} L^2_{\mathcal{H}U} [\varpi(\xi^*, \Delta_{\veps})]^2\\
	+ \frac{4T}{\vs} L^2_{\mathcal{H}U}\left(E[\clt_{\veps}(\Delta_{\veps})]^2 + 2E\left(\sup_{0\le t \le T} \|\bar X^{\veps}(t) - \xi^*(t)\|^2\wedge C
	+
  [\varpi(\xi^*, \Delta_{\veps})]^2\right)\right)
+ c_3 \Delta_{\veps} + \theta(\eps).
\end{multline*}
The result follows on  using Lemmas \ref{lem:keymombdne} and \ref{lem:xvepstoxi} upon first sending $\veps \to 0$ and then $\vs \to 0$.
\end{proof}
\subsubsection{Proof of the LDP lower bound.}
\label{subsec:two-scale-lowern}
Now we complete the proof of the lower bound in  
\eqref{ineq-two-lower}. From  \cite[Corollary 1.10]{buddupbook},  without loss of generality we can assume that $F$ is a bounded Lipschitz function.

From the variational formula in Section \ref{sec:varformula} it follows that
\begin{equation}
\begin{aligned}
            &- \eps s^2(\eps) \log \E e^{-\frac{F(X^\eps, \La^{\eps})}{\eps s^2(\eps)}} \\
            &= \inf_{(v_1,v_2) \in \mathcal{A}} \E \left[ \frac{1}{2} \int_0^T (\|v_1(s)\|^2 ds + \|v_2(s)\|^2) ds  + G^\eps \left( B + \frac{1}{\sqrt{\eps} s(\eps)}\int_0^\cdot v_1(s)ds, W + \frac{1}{\sqrt{\eps} s(\eps)}\int_0^\cdot v_2(s) ds \right)   \right],
\end{aligned}
\end{equation}
where $G^{\eps}$ is as in Section \ref{subsec:two-scale-upper} and $\cla$ is  as in Section \ref{sec:varformula} with $p=d+k$.
Then, since $(v_1, v_2) = (u^{\eps}, v^*) \in \cla$, where $u^{\eps}$ is as constructed in \eqref{eq:defncont} and $v^*$ is as in Lemma \ref{lem:approxdisc},
we have from the above variational formula that, with $\bar X^{\veps}$
as in \eqref{eq:s2e6FS} and  $\bar \La^{\eps}$ defined via \eqref{eq:laeps} with $\bar Y^{\eps}$ as in \eqref{eq:s2e6FS},
\begin{align*}
	&\limsup_{\veps \to 0}-s^2(\veps)\veps E \left[ \exp \left\{-\frac{1}{s^2(\veps)\veps} F(X^{\veps}, \La^{\eps})\right\} \right]\\
	&\le  \limsup_{\veps \to 0} E \left [ F(\bar X^{\veps}, \bar \La^{\eps}) + \frac{1}{2}\int_0^T \|v^*(t)\|^2 dt + \frac{1}{2}\int_0^T \|u^{\veps}(t)\|^2 dt\right]\\
	&\le  \left [ F(\xi^*, \nu^*) + \frac{1}{2}\int_0^T \|v^*(t)\|^2 dt + \frac{1}{2}\int_0^T \int_{\RR^d}\|\nabla_y U(\xi^*(s, y))\|^2 \hat \nu_s(dy) ds\right]\\
	&\le  \inf_{(\xi, \nu) \in \clx \times  \clm_1} \left [ F(\xi, \nu) + I_2(\xi, \nu)\right] + \delta_0.
\end{align*}
where the third line is from Lemmas \ref{lem:xvepstoxi} and \ref{lem:bdoncost}, and the last line follows from Lemma \ref{lem:approxdisc}(4). 
 The bound in \eqref{ineq-two-lower} now follows on sending $\delta_0 \to 0$. \hfill \qed

\subsection{Compactness of level sets of $I_2$.}
\label{sec:cptlevi2}
 In this section we show that the function $I_2$ defined in \eqref{eq:rate-function-2} is a rate function.
 For this it suffices to show that for every $M<\infty$, the set $B_M \doteq \{(\xi, \nu)\in \clx \times \clm_1: I_2(\xi, \nu) \le M\}$ is a compact subset of $\clx \times \clm_1$.
 Now  fix  a $M \in (0, \infty)$ and consider a sequence $\{\xi^n, \nu^n\} \subset B_M$. It suffices to argue that the sequence is relatively compact and there is a limit point of this sequence that belongs to $B_M$. From the definition of $I_2$, there is a $v^n \in \mathcal{U}(\xi^n, \nu^n)$ such that 
\begin{equation}\label{eq:909n}
      \frac{1}{2} \int_0^T \|v^n(s)\|^2 ds + \frac{1}{2} \int_0^T \int_{\mathbb{R}^d} \|\nabla_y U (\xi^n(s),y) \|^2 \nu^n(dy\, ds) \leq M + \frac{1}{n}, \quad \text{for all} \quad n \in \mathbb{N}.
 \end{equation}
Note that $\{v^n\}_{n \in \mathbb{N}} \subset S_{2(M+1)}$, where the latter space is defined as in \eqref{eq:954} (with $p= k$) and so $\{v^n\}_{n \in \mathbb{N}}$ is trivially relatively compact.
 Also,  from Assumption \ref{Assumptions:system}(2c), we have
\begin{equation}
     M+1 \geq \frac{1}{2}\int_0^T \int_{\mathbb{R}^d} \|\nabla_yU(\xi^n(s),y)\|^2 \nu^n(dy\,ds) \geq \frac{1}{2}\int_0^T \int_{\mathbb{R}^d} (L^1_{low}\|y\|^2-L^2_{low}) \nu^n(dy\, ds)
 \end{equation}
 and so 
 \begin{equation}
 \label{rate-system-gamman}
     \sup_{n \in \mathbb{N}} \int_{[0,T] \times \mathbb{R}^d } \|y\|^2 \nu^n (ds dy) \leq 
	 (L^1_{low})^{-1} [2(M+1) + L^2_{low} T]\doteq  \kappa_1 < \infty.
 \end{equation}
This proves that $\{\nu^n\}$ is relatively compact in $\mathcal{M}_1$. Since $v^n \in \mathcal{U}(\xi^n, \nu^n)$, we have that
\begin{equation}
 \label{rate-system-xin}
     \xi^n(t) = x_0 + \int_0^t \int_{\mathbb{R}^d} b(\xi^n(s),y) \nu^n (dy\, ds) + \int_0^t \alpha(\xi^n(s)) v^n(s) ds, \quad \text{for all} \quad t \in [0,T].
 \end{equation}
Using  Grönwall lemma, the linear growth of $b$, the boundedness of $\alpha$, and \eqref{rate-system-gamman}, \eqref{eq:909n}, we now see that
\begin{equation}
     \label{rate-system-total-bound-xi}
     \sup_{n \in \mathbb{N}} \sup_{0 \leq t \leq T} \|\xi^n(t)\|^2 \doteq \kappa_\xi < \infty.
 \end{equation}
Also, using \eqref{eq:909n}, \eqref{rate-system-total-bound-xi}, the linear growth of $b$, and boundedness of $\alpha$, we can find $\kappa_2 \in (0,\infty)$ such that, for all $n \in \mathbb{N}$,  and for $0 \leq s \leq t \leq T$,
\begin{equation}
 \|\xi^n(t)-\xi^n(s)\| \leq \| \int_s^t \int_{\mathbb{R}^d} b(\xi^n(u),y) \nu^n(du dy) \| + \| \int_s^t \alpha(\xi^n(u)) v^n(u) du\| 
     \leq \kappa_2 ( (t-s) + (t-s)^{1/2}).
 \end{equation}
 This estimate, together with \eqref{rate-system-total-bound-xi} shows that $\{ \xi^n \}$ is relatively compact in $\clx$. Now let $\{(\xi^n,\nu^n, v^n)\}_n$ converge along some subsequence (labeled again as $n$) in $\clx \times \mathcal{M}_1 \times \mathcal{S}_{2(M+1)}$ to $(\xi,\nu,v)$. Note that for every $t \in [0,T]$
\begin{equation}
     \begin{aligned}
     &\left\| \int_0^t \alpha(\xi^n(s)) v^n(s) ds - \int_0^t \alpha(\xi(s)) v(s) ds \right\| \\
    & \leq  \int_0^t \left\| \alpha(\xi^n(s)) - \alpha(\xi(s)) \right\| \|v^n(s)\| ds + \left\| \int_0^t \alpha(\xi(s)) (v^n(s) - v(s)) ds \right\| \\
     &\leq  L_{\alpha}(T(2M+1))^{1/2}\sup_{0 \leq s \leq T} \|\xi^n(s) - \xi(s)\| + \left\| \int_0^t \alpha(\xi(s)) (v^n(s) - v(s)) ds    \right\|.
     \end{aligned}
 \end{equation}
 %where the first inequality uses the fact that $\alpha$ is a bounded Lipschitz function and the estimate in ... . 
 From the convergence of $(\xi^n, v^n) \rightarrow (\xi,v)$ in $\clx \times \mathcal{S}_{2(M+1)}$ the last term converges to $0$ as $n \rightarrow \infty$. This shows that, as $n \rightarrow \infty$, for each $t \in [0,T]$
\begin{equation}
 \label{rate-system-alpha-conv}
     \int_0^t \alpha (\xi^n(s)) v^n(s) ds \rightarrow \int_0^t \alpha (\xi(s)) v(s) ds.
 \end{equation}
  Next,
\begin{equation}
 \label{rate-system-b-conv-1}
     \begin{aligned}
      &\left\| \int_0^t \int_{\mathbb{R}^d} b(\xi^n(s),y) \nu^n(dy\, ds) - \int_0^t \int_{\mathbb{R}^d} b(\xi(s),y) \nu(dy\, ds)   \right\| \\
      &\leq 
	  L_bT \sup_{0 \leq t \leq T} \|\xi^n(s) - \xi(s) \| + \left\| \int_0^t \int_{\mathbb{R}^d} b(\xi(s),y) \nu^n(dy\, ds) - \int_0^t \int_{\mathbb{R}^d} b(\xi(s),y) \nu(dy\, ds)   \right\|.
     \end{aligned}
 \end{equation}
 From the convergence  $(\xi^n, \nu^n) \to (\xi, \nu)$, the continuity of $b$,  the linear growth of $b$ and the square integrability estimate in \eqref{rate-system-gamman} we get that he above quantity converges to $0$ as $n\to \infty$.
Thus we have shown that for all $t \in [0,T]$, as $n \rightarrow \infty$,
\begin{equation}
     \int_0^t \int_{\mathbb{R}^d} b(\xi^n(s),y) \nu^n(dy\, ds) \rightarrow \int_0^t \int_{\mathbb{R}^d} b(\xi(s),y) \nu (dy\, ds).
 \end{equation}
 % . Using this observation in \eqref{rate-system-b-conv-1} gives that for all $t \in [0,T]$
%
%  \begin{equation}
%      \label{rate-system-b-conv-2}
%      \int_0^t \int_{\mathbb{R}^d} b(\xi^n(s),y) \nu^n (dy\, ds) \rightarrow \int_0^t \int_{\mathbb{R}^d} b(\xi(s),y) \gamma (dy\, ds),
%  \end{equation}
%  as $n \rightarrow \infty$.
Combining the last two convergence statements with \eqref{rate-system-xin} we now see that
 $v \in \clu(\xi, \nu)$. 
 % Next, using Assumption ... we see that
%
%  \begin{equation}
%      \nonumber
%      \begin{gathered}
%      \sup_{x \in \mathbb{R}^m} \|\nabla_y U(x,y) \| \leq k_5 \sup_{(x,y) \in \mathbb{R}^{m+d}}( \|\nabla_y U(x,0)\| + \|y\| \mathcal{H}_y U(x,y)) \\
%      \leq k_6 ( 1 + \|y\|).
%      \end{gathered}
%  \end{equation}
%
%  Thus, using the Lipschitz property of $(x,y) \mapsto \nabla_y U(\xi(s),y)$
Next, using \eqref{eq:424n}, as $n \rightarrow \infty$
\begin{equation}
     \label{rate-system-conv-U}
     \begin{gathered}
     \| \frac{1}{2} \int_{[0,T] \times {\mathbb{R}^d}} \left( \| \nabla_y U(\xi^n(s),y)\|^2 - \|\nabla_y U(\xi(s),y)\|^2 \right)  \nu^n(dy\, ds) \\
     \leq \frac{1}{2} C(U)\sup_{0 \leq s \leq T} \|\xi^n(s)-\xi(s)\| \int_{[0,T] \times {\mathbb{R}^d}} (1 + \|y\| + 2 \kappa_{\xi}) \nu^n(dy\, ds) \rightarrow 0,
     \end{gathered}
 \end{equation}
 where the last convergence uses \eqref{rate-system-gamman}.
 Finally,  using lower semicontinuity of $u \mapsto \int_0^T \|u(s)\|^2 ds$,  and the fact that $v \in \clu(\xi, \nu)$,
\begin{align*}
     I_2(\xi, \nu) &\le \frac{1}{2} \int_0^T \|v(s)\|^2 ds + \frac{1}{2} \int_{[0,T] \times {\mathbb{R}^d}} \| \nabla_y U(\xi(s),y)\|^2 \nu (dy\, ds) \\
     &\leq \liminf_{n \rightarrow \infty} \left[ \frac{1}{2} \int_0^T \|v^n(s)\|^2 ds + \frac{1}{2} \int_{[0,T] \times {\mathbb{R}^d}} \| \nabla_y U(\xi(s),y)\|^2 \nu^n (dy\, ds)\right] \\
     &= \liminf_{n \rightarrow \infty}  \left[\frac{1}{2} \int_0^T \|v^n(s)\|^2 ds + \frac{1}{2} \int_{[0,T] \times {\mathbb{R}^d}} \| \nabla_y U(\xi^n(s),y)\|^2 \nu^n (dy\, ds) \right]
     \leq \liminf_{n \rightarrow \infty} \left[ M + \frac{1}{n} \right] = M,
     \end{align*}
 where the inequality on second line is from Fatou's lemma and the equality on the third line is from \eqref{rate-system-conv-U}. Thus we have shown that the limit point $(\xi, \nu)$ of $(\xi^n, \nu^n)$ is in $B_M$.
 This completes the proof that $I_2$ is a rate function. \hfill \qed
\paragraph{Acknowledgement}
We gratefully acknowledge several useful conversations with Vivek Borkar and Siva Athreya on this problem. Research of AB supported in part by the NSF (DMS-1814894, DMS-1853968, DMS-2134107 and DMS-2152577). Research of PZ was partly supported by the 2022 Summer Fellowship awarded through UNC's Graduate School.

{\footnotesize %%\bibliographystyle{plain}
\bibliographystyle{is-abbrv}
%\bibliography{main}

}

\noindent{\scriptsize {\textsc{\noindent  A. Budhiraja and P. Zoubouloglou,\newline
Department of Statistics and Operations Research\newline
University of North Carolina\newline
Chapel Hill, NC 27599, USA\newline
email: budhiraj@email.unc.edu
\newline
email: pavlos@live.unc.edu
\vspace{\baselineskip} } }}

{\footnotesize %\begin{thebibliography}{10}
%
%\bibitem{SW}
%Shwartz, Adam and Weiss, Alan.
%{\it Large {D}eviations for {P}erformance {A}nalysis},
%Chapman \& Hall, London, (1995)
%
%
%\end{thebibliography}
}

{\footnotesize %%\footnotesize
%%
%%%\bibliographystyle{plain}
%%%\bibliographystyle{annotate}
%%%\bibliographystyle{apalike}
%%\bibliographystyle{is-abbrv}
%%%\bibliographystyle{is-alpha}
%%
%%\bibliography{../refs4bibtex/refs}
%
%%\begin{thebibliography}{90}
%%
%%
%%\end{thebibliography}
}

\end{document}